 \numberwithin{section}{chapter}
 \theoremstyle{plain}    
 \newtheorem{thm}{Theorem}[section]
 \numberwithin{equation}{section} 
 \numberwithin{figure}{section} 
 \theoremstyle{plain}
 \theoremstyle{plain}    
 \newtheorem{cor}[thm]{Corollary} 
 \theoremstyle{plain}    
 \newtheorem{lem}[thm]{Lemma} 
 \theoremstyle{plain}    
 \newtheorem{prop}[thm]{Proposition} 
 \theoremstyle{remark}
 \newtheorem{rem}[thm]{Remark}
 \theoremstyle{plain}    
 \newtheorem{fact}[thm]{Fact} 
\newcommand{\ZZ}{\mathbb{Z}}
\newcommand{\CC}{\mathbb{C}}
\newcommand{\CCh}{\hat{\mathbb{C}}}
\newcommand{\HH}{\mathbb{H}}
\newcommand{\RR}{\mathbb{R}}
\newcommand{\DD}{\mathbb{D}}
\newcommand{\ra}{\rightarrow}
\newcommand{\FF}{\mathcal{F}}
\newcommand{\re}{\text{Re }}
\newcommand{\im}{\text{Im }}
\newcommand{\Lg}{\mathbf{g}}
\newcommand{\Lk}{\mathbf{k}}
\newcommand{\TT}{\mathcal{T}}
\newcommand{\LL}{\mathcal{L}}
\newcommand{\Mb}{\mathbf{M}}
\begin{document}
\title{Discrete Inverse Scattering Theory for NMR Pulse Design}
\author{Jeremy F. Magland\footnote{This manuscript contains my Ph.D. thesis,
    which was accepted by the Department of Mathematics of the University of PA in May 2004.}\\Department of Radiology\\ Hospital of the
    University of Pennsylvania\\
Philadelphia, PA 19104}
\maketitle
\tableofcontents{}

\chapter{\label{cha:Introduction}Introduction}

The problem of selective excitation pulse design in nuclear magnetic
resonance (NMR) corresponds, in mathematics, to the inversion of a
certain mapping, $\TT$, which we call the \textit{selective excitation
transform}. This transform, which is a kind of non-linear Fourier
transform, maps complex-valued functions of time, called pulses, to
unit $3$-vector valued functions of frequency, called magnetization
profiles. The explicit definition of $\TT$ is given in Section \ref{sec:The-selective-excitation-transform}.
The theory of inverting $\TT$ has been shown to coincide with the
theory of inverse scattering for the \textit{Zakharov-Shabat (ZS)}
$2\times2$ system (see Section \ref{sec:The-Zakharov-Shabat-system}
and for example \cite{Rourke Morris}). Numerous authors have studied
this inverse scattering problem, for example Ablowitz et al. \cite{AKNS},
and the results have been applied to NMR pulse design (see for example
\cite{Rourke Morris,Carlson}). However, no stable and efficient algorithm
has been given in the literature for generating the full space of
solutions to the inverse problem. For this and other reasons, less
exact methods for NMR pulse design, such as the \textit{Fourier transform}
method and the \textit{Shinnar-Le Roux (SLR)} method, have been used
in practice instead of the more exact and more flexible \textit{inverse
scattering (IST)} method. In this thesis, we present the \textit{discrete
inverse scattering transform (DIST) algorithm} for efficiently solving
the full inverse scattering problem relating to NMR pulse design. 

In this introductory chapter, we define the continuum and discrete
selective excitation transforms and describe the problem of selective
excitation pulse design. The theoretical results are summarized in
Sections \ref{sec:Main results continuum}, \ref{sec:main results discrete},
and \ref{sec:energy formulas}, and the main algorithms are described
in Section \ref{sec:Algorithms}.

The second chapter is devoted to proving the main theorems and deriving
the algorithms. We introduce the discrete scattering theory, which
is completely analogous to the standard continuum theory.

The third chapter describes how to apply the theory to practical NMR
pulse design.

See \cite{Epstein 2} and \cite{Epstein} for a detailed mathematical
introduction to NMR imaging and NMR pulse design.

\section{\label{sec:The-selective-excitation-transform}The selective excitation
transform}

In this section we define the selective excitation transform, which
maps a complex function of time to a unit $3$-vector valued function
of frequency.

Let $\omega:\RR\rightarrow\CC$ be a function of time (usually we
think of $\omega$ as smooth and supported on a finite interval),
and suppose that for every frequency $z\in\RR$, there is a solution
$M_{-}(z;\cdot):\RR\rightarrow\RR^{3}$ to the frequency dependent
Bloch equation (without relaxation)\begin{equation}
\frac{d}{dt}M_{-}(z;t)=M_{-}(z;t)\times\left[\begin{array}{c}
\re\omega(t)\\
\im\omega(t)\\
z\end{array}\right]\label{eq:bloch frequency}\end{equation}
 normalized by \begin{equation}
\lim_{t\rightarrow-\infty}M_{-}(z;t)=\left[\begin{array}{c}
0\\
0\\
1\end{array}\right].\label{eq:bf 1}\end{equation}
 Let us show that such a solution is unique. If $M_{1}$ and $M_{2}$
are two solutions to (\ref{eq:bloch frequency}) then $M_{1}(z;t)^{T}M_{2}(z;t)$
is independent of $t$ because \begin{eqnarray*}
\frac{d}{dt}M_{1}(z;t)^{T}M_{2}(z;t) & = & \left[X_{B}(z;t)M_{1}(z;t)\right]^{T}M_{2}(z;t)+M_{1}(z;t)^{T}\left[X_{B}(z;t)M_{2}(z;t)\right]\\
 & = & M_{1}^{T}(z;t)\left(X_{B}(z;t)^{T}+X_{B}(z;t)\right)M_{2}(z;t)\\
 & = & 0,\end{eqnarray*}
where \[
X_{B}(z;t)=\left[\begin{array}{ccc}
0 & z & -\im\omega(t)\\
-z & 0 & \re\omega(t)\\
\im\omega(t) & -\re\omega(t) & 0\end{array}\right].\]
 Therefore, (\ref{eq:bf 1}) implies that $M_{-}(z;t)^{T}M_{-}(z;t)=1$.
So $M_{-}(z;t)$ is a unit vector for all $z$ and $t$. It is unique
because, if $M_{2}(z;t)$ is any unit vector satisfying $M_{-}(z;t)^{T}M_{2}(z;t)=1$,
then the Cauchy-Schwarz inequality implies that $M_{-}(z;t)=M_{2}(z;t)$.

Suppose that $\omega$ decays sufficiently so that \[
\Mb(z):=\lim_{t\rightarrow+\infty}\left[\begin{array}{ccc}
\re e^{izt} & -\im e^{izt} & 0\\
\im e^{izt} & \re e^{izt} & 0\\
0 & 0 & 1\end{array}\right]M_{-}(z;t)\]
 exists for all $t$. For example, this limit exists whenever $\omega$
is integrable (see \cite{Epstein}). We call $\omega$ the \textit{pulse}
and $\Mb$ the \textit{resulting magnetization profile}. The map $\omega\mapsto\Mb$
is called the \textit{selective excitation transform}, and we write
$\Mb=\TT\omega$. In Section \ref{sec:Examples} we plot several examples
of pulses and their resulting magnetization profiles.

\section{The problem of selective excitation pulse design}

In selective excitation pulse design, we usually start with an ideal
magnetization profile $\Mb_{\textrm{ideal}}:\RR\rightarrow S^{2}\subset\RR^{3}$.
This is a unit 3-vector valued function of frequency which is typically
equal to $\left[\begin{array}{ccc}
0 & 0 & 1\end{array}\right]^{t}$ outside some finite interval. The problem is to find a pulse $\omega:\RR\rightarrow\CC$
such that the resulting magnetization profile $\Mb=\TT\omega$ is
a good approximation to $\Mb_{\textrm{ideal}}$. For practical applications,
$\omega$ should have finite duration. That is, it should be supported
in some finite interval $[\rho-T,\rho]\subset\RR$. The number $T$
is called the \textit{duration} of the pulse, and the value $\rho$
is called the \textit{rephasing time}. Depending on the application,
it may be important to design a pulse with the shortest possible duration
and minimal (perhaps zero) rephasing time (pulses with $\rho=0$ are
called \textit{self refocused}). It is also often necessary to limit
the \textit{energy} \[
E_{\omega}:=\int_{-\infty}^{\infty}\left|\omega(t)\right|^{2}dt\]
 of the pulse, as well as the maximum amplitude. For many application
it is important that the pulse behaves well under imperfect magnetic
field conditions. That is, $\frac{\partial\Mb}{\partial\omega}$ should
not be too large.

The standard example of an ideal magnetization profile is \[
M_{ideal}(z)=\begin{cases}
\left[\begin{array}{c}
0\\
\sin\theta_{0}\\
\cos\theta_{0}\end{array}\right] & \textrm{if }|z-z_{0}|<c_{0}\\
\\\left[\begin{array}{c}
0\\
0\\
1\end{array}\right] & \textrm{if }|z-z_{0}|>c_{0}.\end{cases}\]
 Here, the magnetization is rotated $\theta_{0}$ radians around the
$x$-axis for frequencies in the interval $|z-z_{0}|<c_{0}$. Outside
of this frequency interval, the magnetization is kept at equilibrium.
The angle $\theta_{0}$ is called the \textit{flip angle}. See the
next section and Chapter \ref{cha:Pulses-with-finite} for plots of
pulses producing magnetization profiles that approximate such a profile.

\section{Examples\label{sec:Examples}}

When $\int_{-\infty}^{\infty}|\omega(t)|dt$ is small, the resulting
magnetization profile $\Mb=\TT\omega$ very closely resembles the
Fourier transform of $\omega$ (see for example \cite{Epstein}).
This fact is illustrated in Figures \ref{cap:const} and \ref{cap:sinc}.
Figure \ref{cap:const} shows a pulse which is constant over a finite
interval. Notice that at zero frequency, the magnetization is rotated
$\frac{\pi}{2}$ radians around the $x$-axis, and away from zero
the transverse magnetization resembles a $\textrm{sinc}$ function.
Figures \ref{cap:sinc} and \ref{cap:sinc120} show truncated sinc
pulses. Such pulses are often used in practice to produce a magnetization
profile which is approximately constant within some finite interval,
and approximately in equilibrium outside the interval. Notice that
as the flip angle increases from $90^{\circ}$ to $120^{\circ}$,
the resulting magnetization profile does a worse job approximating
the ideal.

\begin{figure}

\caption{\label{cap:const}A constant pulse and the resulting magnetization:
$T=1,$ $\rho=\frac{1}{2}$. }

\begin{center}\includegraphics[scale=0.7]{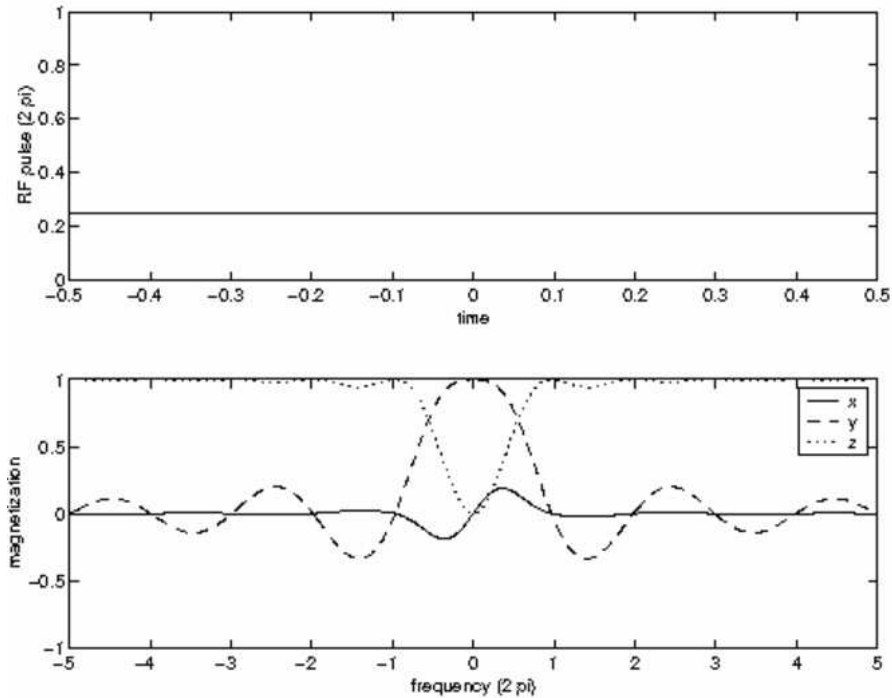}\end{center}
\end{figure}

\begin{figure}

\caption{\label{cap:sinc}A $90^{\circ}$ sinc pulse and the resulting magnetization:
N=10, $\rho\approx5$.}

\begin{center}\includegraphics[%
  scale=0.7]{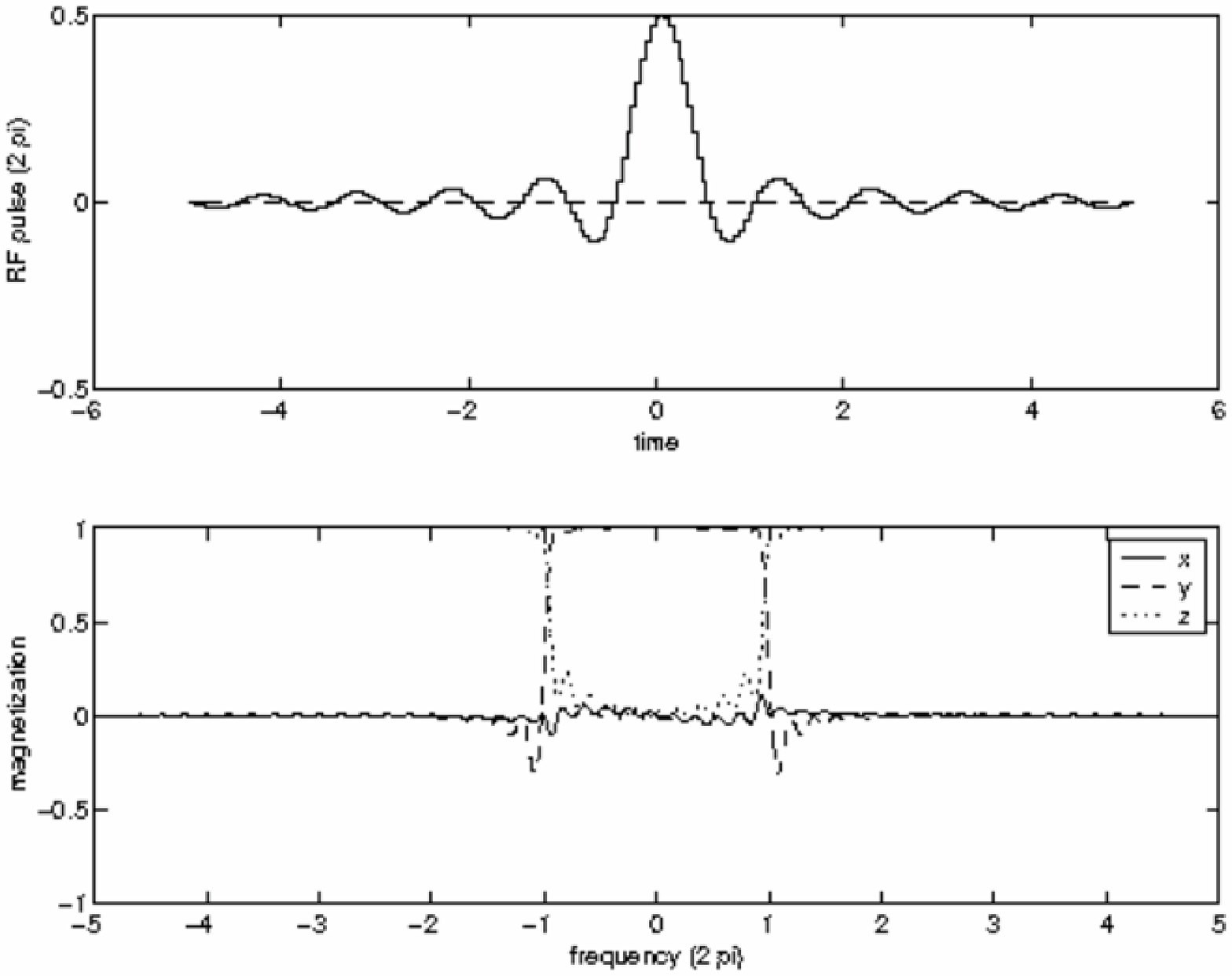}\end{center}
\end{figure}

\begin{figure}

\caption{\label{cap:sinc120}A $120^{\circ}$ sinc pulse and the resulting
magnetization: N=10, $\rho\approx5$.}

\begin{center}\includegraphics[%
  scale=0.7]{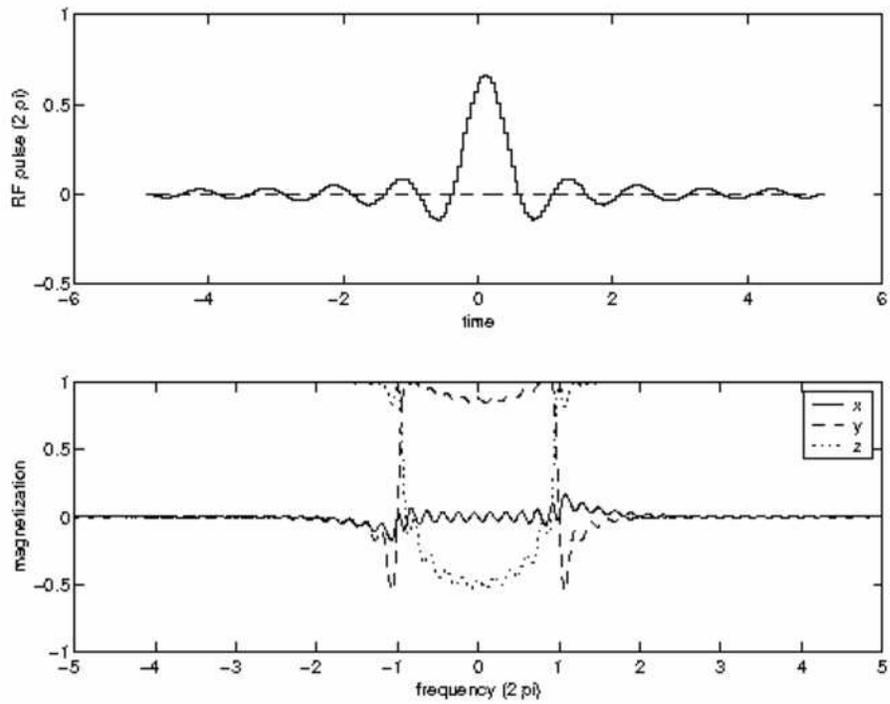}\end{center}
\end{figure}

To obtain a more accurate magnetization profile, one should invert
the selective excitation transform directly rather than use the Fourier
transform approximation. Figures \ref{cap:min120}, \ref{cap:sr120},
and \ref{cap:srb120} show three very different pulses which produce
magnetization profiles which accurately approximate a $120^{\circ}$
profile. The fact that the inversion of the selective excitation transform
is highly non-unique reflects the nonlinearity of the map $\omega\mapsto\TT\omega$. 

\begin{figure}

\caption{\label{cap:min120}A $120^{\circ}$ minimum energy pulse and the
resulting magnetization.}

\begin{center}\includegraphics[%
  scale=0.7]{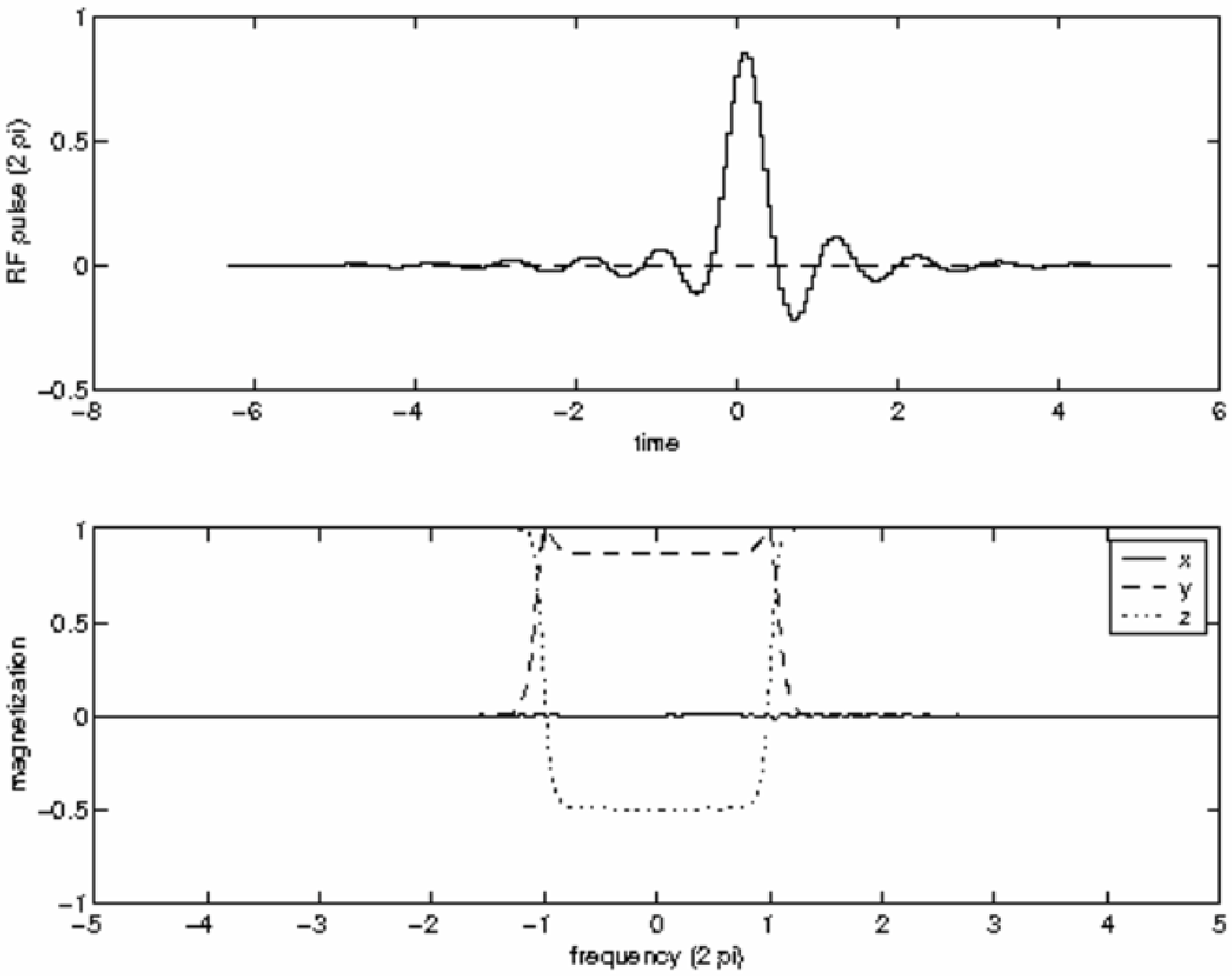}\end{center}
\end{figure}

\begin{figure}

\caption{\label{cap:sr120}A $120^{\circ}$ self refocused pulse.}

\begin{center}\includegraphics[%
  scale=0.7]{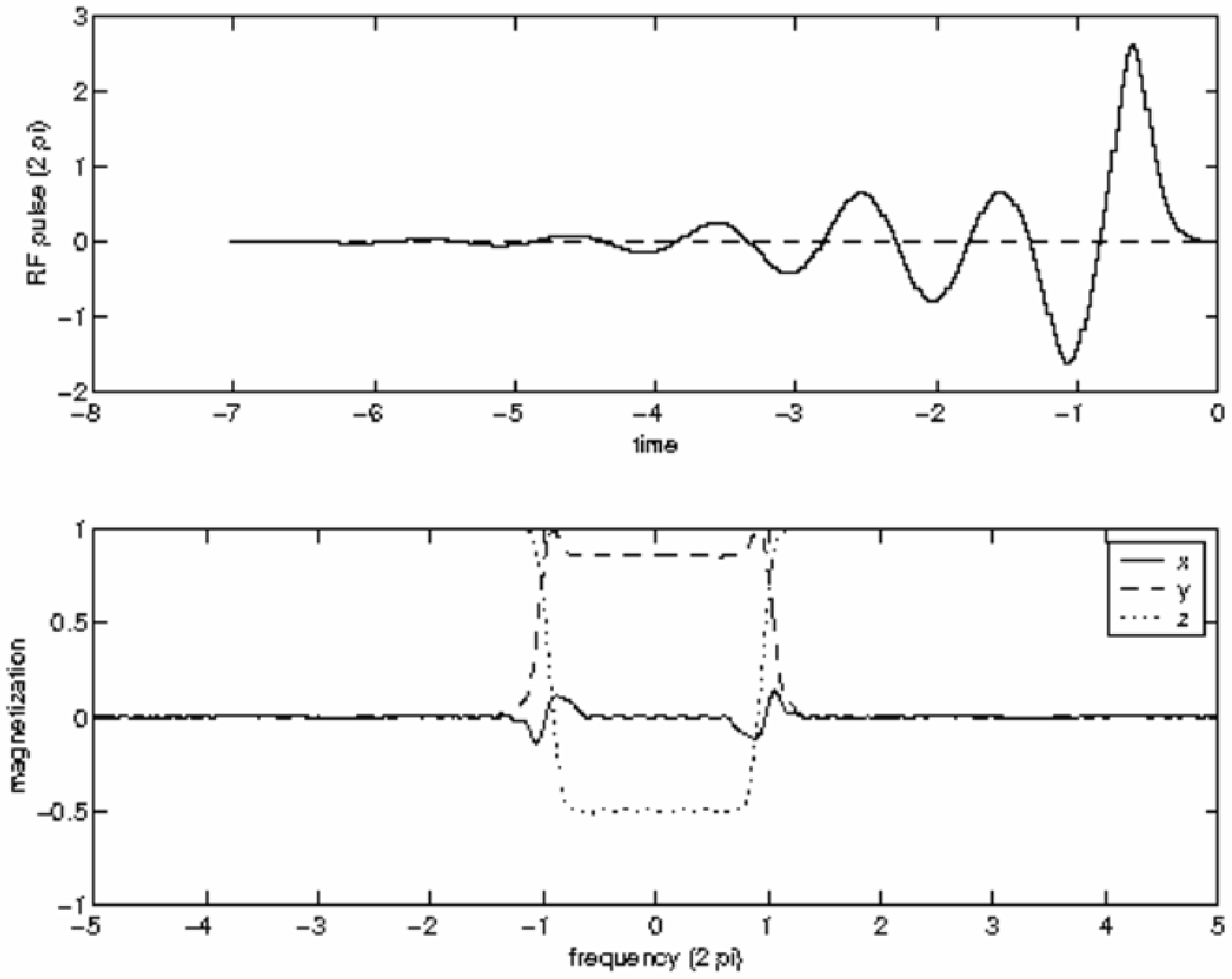}\end{center}
\end{figure}

\begin{figure}

\caption{\label{cap:srb120}A $120^{\circ}$ self refocused pulse.}

\begin{center}\includegraphics[%
  scale=0.7]{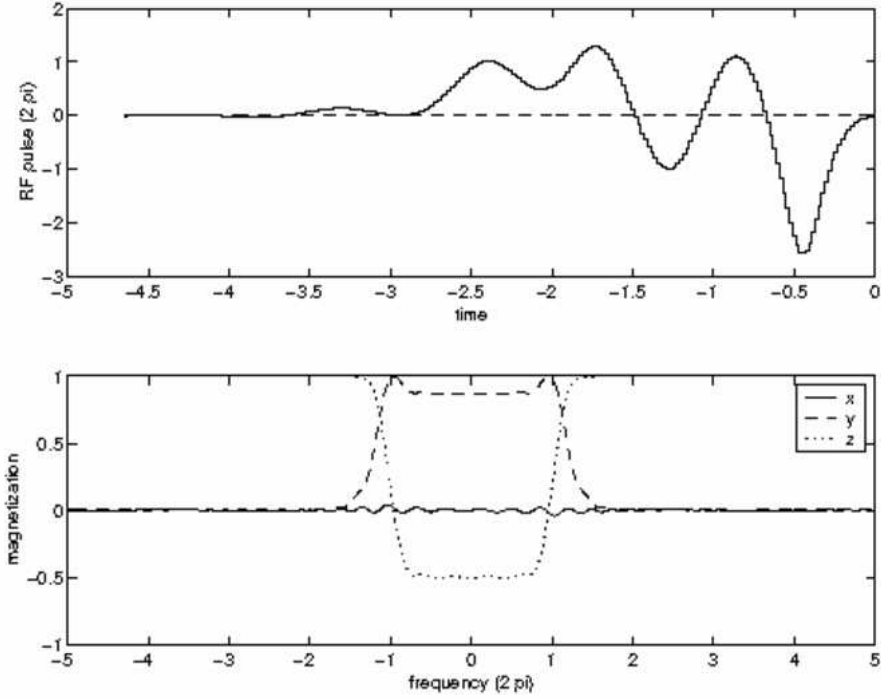}\end{center}
\end{figure}

\section{The discrete selective excitation transform (hard pulse approximation)\label{sec:The-discrete-selective-excitation-transform}}

For practical purposes it is useful to consider pulses of the form
\[
\Omega(t)=\sum_{j=-\infty}^{\infty}\omega_{j}\delta(t-j\Delta).\]
 For example, this is the type of pulse designed by the SLR algorithm
(see \cite{PLNM}). Such a pulse is called a \textit{hard pulse}.
If $\omega_{j}$ vanishes for all $j\geq\rho\in\ZZ$, then $\Omega$
is said to have $\rho$ \textit{rephasing time steps}. In practical
applications, a softened version, e.g., \begin{equation}
\Omega_{\epsilon}(t)=\sum_{j=-\infty}^{\infty}\frac{\omega_{j}}{\epsilon}\chi_{[0,\epsilon)}(t-j\Delta)\label{eq:softened}\end{equation}
 is typically used. The energy of the softened pulse is \begin{equation}
E_{\Omega_{\epsilon}}=\epsilon^{-1}\sum_{j=-\infty}^{\infty}|\omega_{j}|^{2},\label{eq:softened energy}\end{equation}
 which tends to infinity as $\epsilon\rightarrow0$. We think of the
hard pulse as an ideal non-physical pulse corresponding to $\epsilon=0$.
In Appendix \ref{sub:The-error-from-softening-a-pulse}, we explore
the relationship between the hard and softened pulses.

For a pulse of the form (\ref{eq:softened}), it is possible solve
the Bloch equation (\ref{eq:bloch frequency}) explicitly. In the
limit, as $\epsilon$ approaches $0$, the Bloch equation can be replaced
by the recursion \[
M_{-}(z;(j+1)\Delta)=P_{\Delta}R_{\omega_{j}}M_{-}(z;j\Delta),\]
 where $P_{\Delta}$ corresponds to a certain frequency-dependent
rotation around the z-axis, and $R_{\omega_{j}}$ corresponds to a
certain frequency-independent rotation around the $\left[\begin{array}{c}
\re\omega_{j}\\
\im\omega_{j}\\
0\end{array}\right]$-axis. The precise definitions of the operators $P_{\Delta}$ and
$R_{\omega_{j}}$ are given in Section \ref{sec:Discrete-Theory}.

For hard pulses, we evaluate the magnetization $M_{-}$ only at the
discrete time points $j\Delta$ for $j\in\ZZ$. It is easy to see
that, at such time points, $M_{-}$ is a periodic function of the
frequency -- it is a function of $w=e^{i\Delta z}$. We suppress the
dependence on $\Delta$ and $z$ and write $M_{-}=M_{-}(w;j)$ as
a function of $w\in S^{1}$ and $j\in\ZZ$. The resulting magnetization
profile is \[
\Mb(w):=\lim_{j\rightarrow\infty}P_{\Delta}^{-j}M_{-}(w;j),\]
 a function on $S^{1}$. We define the \textit{discrete selective
excitation transform} by \[
\TT^{\textrm{disc}}\Omega:=\Mb=\lim_{j\rightarrow\infty}P_{\Delta}^{-j}M_{-}(\cdot;j).\]
 A condition on $\Omega$ that guarantees the existence of this limit
is given in Section \ref{sec:Forward-discrete-scattering}. The transform
$\TT^{\textrm{disc}}$ maps hard pulses (sequences of complex numbers)
to periodic magnetization profiles ($S^{2}$ valued functions on $S^{1}$).

\section{Main results: continuum theory}

\label{sec:Main results continuum}The problem of inverting the selective
excitation transform has been solved using inverse scattering theory.
We prove in Section \ref{sec:Proof-main-continuum}

\begin{thm}
\label{thm:main continuum}Let $\Mb=\left[\begin{array}{c}
\Mb_{x}\\
\Mb_{y}\\
\Mb_{z}\end{array}\right]:\RR\rightarrow S^{2}\subset\RR^{3}$ be a magnetization profile, and set\[
r(\xi)=\frac{\Mb_{x}(2\xi)+i\Mb_{y}(2\xi)}{1+\Mb_{z}(2\xi)}.\]
Assume that $r$ and $\xi r$ are in $H^{1}(\RR)$. \\
(a) There are infinitely many pulses $\omega$ such that $\TT\omega=\Mb$.
These pulses are parameterized by their bound state data (see below).\\
(b) If $r$ has the form $r(\xi)=e^{-2i\xi\rho}r_{0}(\xi)$ where
$r_{0}$ has a meromorphic extension to the upper half plane with
finitely many poles, and if\[
\lim_{|\xi|\rightarrow\infty}r_{0}(\xi)=0,\]
 then there exists a pulse $\omega$ with rephasing time $\rho$ such
that $\TT\omega=\Mb$.\\
(c) Suppose that $r$ has the form $r(\xi)=e^{-2i\xi\rho}\frac{B(\xi)}{A(\xi)}$,
for $A-1$ and $B$ in $H_{+}(\RR)$ (see Section \ref{sub:proj})
satisfying \[
|A(\xi)|^{2}+|B(\xi)|^{2}=1\textrm{ for all }\xi\in\RR,\]
 and suppose that the Fourier transform of $B$ is supported on the
interval $[0,2T]\subset\RR$. Also assume that $A$ has an analytic
extension to the entire complex plane. Then there exists a pulse $\omega$
with duration $T$ and rephasing time $\rho$ such that $\TT\omega=\Mb$.
\end{thm}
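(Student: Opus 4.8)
The plan is to exhibit one explicit solution and then show it is supported in $[\rho-T,\rho]$. I would construct $\omega$ by running the inverse scattering transform on $r$, with bound--state data the zeros $\{\kappa_k\}$ of the entire function $A$ in the upper half--plane and norming constants $c_k=e^{-2i\kappa_k\rho}B(\kappa_k)/A'(\kappa_k)$ (the only choice making the reconstructed coefficient $b_{ZS}$ extend analytically through the $\kappa_k$). The identity $|A|^2+|B|^2=1$ on $\RR$ gives $|A|^2=(1+|r|^2)^{-1}$, which is exactly the modulus that inverse scattering assigns to $a_{ZS}$; together with the matching zeros and norming constants this forces the Zakharov--Shabat scattering coefficients of the resulting pulse $\omega$ to be $(a_{ZS},b_{ZS})=(A,\,e^{-2i\xi\rho}B)$, while $\TT\omega=\Mb$ by construction. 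Everything then reduces to showing that $\omega$ is supported in $[\rho-T,\rho]$.

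For that I would use a Paley--Wiener type characterisation of compactly supported Zakharov--Shabat potentials: such a potential is supported in an interval of length $T$ with the prescribed rephasing time $\rho$ precisely when the Fourier transform of $b_{ZS}$ is supported in an interval of length $2T$ whose position is pinned by $\rho$, and $a_{ZS}$ extends to an entire function of exponential type at most $2T$. One direction of this is the Volterra/triangular representation of the Jost solutions; the direction I need is the causality of the Marchenko equation, whose only delicate point is that the bound--state part of the Marchenko kernel -- the piece coming from $1/A$ -- stays compatible with compact support, which holds precisely because $A$ is entire. Our data satisfies both conditions: $\widehat{b_{ZS}}$ is a translate of $\widehat B$, which is supported in $[0,2T]$, and membership $B\in H_+$ fixes the translate correctly; and although the hypothesis only asserts that $A$ is entire, its exponential type is automatically $\le 2T$. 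Indeed, with $A^*(\xi)=\overline{A(\bar\xi)}$, $B^*(\xi)=\overline{B(\bar\xi)}$ and $C=1-BB^*$, one has $AA^*=C$ on $\RR$ hence identically; $\widehat{BB^*}=\widehat B*\widehat{B^*}$ is supported in $[-2T,2T]$, so $C$ is entire of exponential type $\le 2T$; and since $A-1\in H_+$ we have $A(iy)\to 1$ as $y\to+\infty$, whence $|A(-iy)|=|C(-iy)|/|A(iy)|$ is at most a constant times $e^{2Ty}$ for large $y$, and Phragm\'en--Lindel\"of in the two lower quadrants upgrades this to exponential type $\le 2T$ for $A$.

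The step I expect to be the main obstacle is the causality half of this Paley--Wiener statement: tracking Fourier supports through the nonlinear inverse scattering (Marchenko) map and showing that the reconstructed potential genuinely vanishes outside $[\rho-T,\rho]$, bound states included. An alternative that sidesteps the more delicate continuum estimates is to pass through the discrete scattering theory of the next chapter, where the scattering data of a finite hard pulse are polynomials in $w=e^{i\Delta z}$ whose degrees record the duration and the rephasing directly; there the analogue of (c) becomes essentially a degree count, and one recovers the continuum statement by letting the step size $\Delta\to 0$ with control on the hard--pulse approximation.
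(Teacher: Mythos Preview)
Your approach is correct in principle but takes a genuinely different route from the paper, and the paper's argument is considerably shorter. The paper does not prove or invoke a Paley--Wiener characterisation of compactly supported ZS potentials. Instead, it reduces part (c) to two applications of part (b) via a time-reversal symmetry. First, since $r_0=B/A$ is meromorphic in the upper half plane (poles at the zeros of $A$), part (b) with the appropriate bound state data gives a pulse vanishing for $t\ge\rho$; this is exactly the choice of norming constants you wrote down. For the left endpoint, the paper observes that the time-reversed potential $\overleftarrow{q}(t)=-q(-t)^{*}$ has scattering data $(A,\,-e^{2i\rho\xi}B^{*};\ldots)$. Because $\hat B$ is supported in $[0,2T]$, one has $\widehat{B^{*}}$ supported in $[-2T,0]$, so $e^{2iT\xi}B^{*}$ lies in $H_{+}(\RR)$; writing $-e^{2i\rho\xi}B^{*}=-e^{-2i(T-\rho)\xi}\cdot e^{2iT\xi}B^{*}$ puts the reversed data in the form required by part (b) with rephasing time $T-\rho$. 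Hence $\overleftarrow{q}$ vanishes for $t\ge T-\rho$, i.e.\ $q$ vanishes for $t\le\rho-T$. This two-line symmetry argument replaces the entire ``causality half'' of the Paley--Wiener statement that you correctly flag as the main obstacle.

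What your route buys is a self-contained structural explanation: the support of $q$ is read off directly from the analytic type of $(a_{ZS},b_{ZS})$, and your Phragm\'en--Lindel\"of argument that the entireness hypothesis on $A$ automatically forces exponential type $\le 2T$ is a nice observation not made in the paper. What the paper's route buys is economy: no new theorem is needed, only the already-established part (b) and a computation of how scattering data transforms under $q\mapsto -q(-\cdot)^{*}$. Your alternative of passing through the discrete theory and letting $\Delta\to 0$ is also not what the paper does for part (c); the discrete theory is developed in parallel and used for algorithms, not as a limiting device to prove the continuum support statement.
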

The function $r:\RR\rightarrow\CC$ is called the \textit{reflection
coefficient}. The magnetization profile $\Mb$ is uniquely determined
by the reflection coefficient:\begin{equation}
\Mb(z)=\left[\begin{array}{c}
\frac{2\re r}{1+|r|^{2}}\\
\frac{2\im r}{1+|r|^{2}}\\
\frac{1-|r|^{2}}{1+|r|^{2}}\end{array}\right](\frac{z}{2}).\label{eq:Mr}\end{equation}

The bound state data mentioned in part (a) of Theorem \ref{thm:main continuum}
is defined in Section \ref{sec:Continuum-theory}. In the basic and
generic case, this bound state data takes the form \[
D=(\xi_{1},\xi_{2},\dots,\xi_{m};C_{1}^{\prime},C_{2}^{\prime},\dots,C_{m}^{\prime}),\]
 where $\xi_{1},\dots,\xi_{m}$ are distinct complex numbers in the
upper half plane called \textit{energies}, and $C_{1}^{\prime},\dots,C_{m}^{\prime}$
are non-zero complex numbers called \textit{norming constants}. The
main result is that to each such $r$ and $D$, there corresponds
a unique pulse. The special case of no bound states ($m=0$) corresponds
to a pulse called the \textit{minimum energy pulse} (see \cite{Epstein}).

\section{Main results: discrete theory}

\label{sec:main results discrete}There is a completely analogous
theory for the discrete selective excitation transform. The following
Theorem is proved in Section \ref{sec:Proof-main-discrete}.

\begin{thm}
\label{thm:main discrete}Let $\Mb=\left[\begin{array}{c}
\Mb_{x}\\
\Mb_{y}\\
\Mb_{z}\end{array}\right]:S^{1}\rightarrow S^{2}\subset\RR^{3}$ be a periodic magnetization profile, and set\[
r=\frac{\Mb_{x}+i\Mb_{y}}{1+\Mb_{z}}.\]
Assume that $r$ is in $H^{1}(S^{1})$. \\
(a) There are infinitely many hard pulses $\Omega$ such that $\TT^{\textrm{disc}}\Omega=\Mb$.
These hard pulses are parameterized by their bound state data (see
below). \\
(b) If $r$ has the form $r(w)=w^{-\rho}r_{0}(w)$ where $r_{0}$
has a meromorphic extension to the unit disk which vanishes at the
origin, then there exists a hard pulse $\Omega$ with $\rho$ rephasing
time steps such that $\TT^{\textrm{disc}}\Omega=\Mb$.\\
(c) If $r$ has the form $r(w)=w^{-\rho}\frac{B(w)}{A(w)}$, where
$A$ and $B$ are polynomials of degree $T$ which satisfy\[
|A(w)|^{2}+|B(w)|^{2}=1\textrm{ for all }w\in S^{1},\]
 then there exists a hard pulse $\Omega$ with duration $T$ and $\rho$
rephasing time steps such that $\TT^{\textrm{disc}}\Omega=\Mb$.
\end{thm}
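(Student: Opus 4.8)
The plan is to develop a \emph{discrete} scattering theory for the one-step recursion $M_{-}(z;(j+1)\Delta)=P_{\Delta}R_{\omega_{j}}M_{-}(z;j\Delta)$ running exactly parallel to the continuum Zakharov--Shabat theory behind Theorem~\ref{thm:main continuum}, and then to read off (a), (b), (c) as successively finer statements about its inverse. First I would pass from the $SO(3)$ picture to the two-component spinor ($SU(2)$) picture of Section~\ref{sec:Discrete-Theory}, in which, up to a diagonal gauge, $P_{\Delta}$ and $R_{\omega_{j}}$ act on spinors by left multiplication by
\[
P=\begin{pmatrix}w&0\\0&1\end{pmatrix},\qquad R_{\omega}=\frac{1}{\sqrt{1+|\omega|^{2}}}\begin{pmatrix}1&\omega\\-\bar\omega&1\end{pmatrix}.
\]
A hard pulse $\Omega$ supported on $\{0,1,\dots,N-1\}$ then has discrete scattering matrix $R_{\omega_{N-1}}P\cdots R_{\omega_{0}}P$, which is an $SU(2)$-valued trigonometric polynomial
\[
\begin{pmatrix}A(w)&-B^{\ast}(w)\\B(w)&A^{\ast}(w)\end{pmatrix},\qquad A^{\ast}(w):=w^{N}\,\overline{A(1/\bar w)},
\]
with $A,B$ polynomials of degree $\le N$ satisfying $|A(w)|^{2}+|B(w)|^{2}\equiv1$ (a polynomial identity, since the left side is a polynomial equal to $1$ on all of $S^{1}$), and the reflection coefficient is, in suitable conventions, $r=B/A$ up to a power $w^{-\rho}$ recording the location of the support. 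Section~\ref{sec:Forward-discrete-scattering} should supply the forward analysis: for $\Omega$ in a suitable decay class the infinite product converges, $\TT^{\mathrm{disc}}\Omega$ exists, and $\Omega$ determines scattering data --- boundary functions $A,B$ with $|A|^{2}+|B|^{2}=1$, the finitely many zeros $\xi_{k}$ of $A$ in $\DD$ (the \emph{energies}), and norming constants --- with $r=B/A$ lying in $H^{1}(S^{1})$ exactly when $A$ has no zeros on $S^{1}$ and $\Omega$ is sufficiently regular.

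For part~(a) I would invert this map. Given $r\in H^{1}(S^{1})$ --- so $\log(1+|r|^{2})\in C(S^{1})\subset L^{1}(S^{1})$ --- and bound-state data $D=(\xi_{1},\dots,\xi_{m};C_{1}',\dots,C_{m}')$ as in Section~\ref{sec:Continuum-theory}, form the transmission coefficient $A$ by a spectral (Wiener--Hopf) factorization: take the outer function on $\DD$ with boundary modulus $|A|^{2}=1/(1+|r|^{2})$ and the standard normalization at the origin, multiplied by the finite Blaschke-type factor with zeros $\xi_{1},\dots,\xi_{m}$; set $B=Ar$. Then recover $(\omega_{j})$ by solving the discrete inverse problem --- either the discrete Gelfand--Levitan--Marchenko summation equation, into whose kernel the $C_{k}'$ enter through residues at the $\xi_{k}$, or, equivalently, the layer-stripping recursion that peels off one factor $P^{-1}R_{\omega_{j}}^{-1}$ at a time. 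Since distinct choices of $D$ produce distinct sequences but the same $r$, hence the same $\Mb$, this yields the asserted infinitude of pulses, with $m=0$ the minimum-energy hard pulse. One must of course verify that the reconstructed $\Omega$ lies in the forward class and that $\TT^{\mathrm{disc}}\Omega=\Mb$, i.e.\ that forward and inverse scattering are mutually inverse.

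Parts~(b) and~(c) are the rational and polynomial specializations. For~(c), if $A,B$ are polynomials of degree $T$ with $|A|^{2}+|B|^{2}=1$ on $S^{1}$, then (after normalizing away a unimodular constant) $\begin{pmatrix}A&-B^{\ast}\\B&A^{\ast}\end{pmatrix}$ is exactly the scattering matrix of a length-$T$ hard pulse: the layer-stripping step reads off $\omega_{T-1}$ as a ratio of the constant (or leading) coefficients of $A$ and $B$, and the key algebraic lemma --- which I expect to prove by a short induction on $T$, using the constant-term relation forced by $|A|^{2}+|B|^{2}=1$ --- is that multiplying by $P^{-1}R_{\omega_{T-1}}^{-1}$ lowers \emph{both} polynomial degrees by exactly one and preserves that identity. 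After $T$ steps the matrix is the identity, so the pulse has duration $T$, and the prefactor $w^{-\rho}$ translates its support to give $\rho$ rephasing time steps. For~(b), write $r=w^{-\rho}r_{0}$ with $r_{0}$ meromorphic in $\DD$ and $r_{0}(0)=0$; the finitely many poles of $r_{0}$ in $\DD$ are precisely the bound states, so (b) is the instance of (a) with that $D$, and the combination ``$r_{0}$ meromorphic plus $r_{0}(0)=0$'' is exactly what a discrete Paley--Wiener argument needs to force the Marchenko kernel, hence $\Omega$, to vanish for indices $j\ge\rho$.

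The main obstacle is not an isolated computation but the bookkeeping of spaces and the ``closing of the loop'': showing the spectral factorization produces an $A$ with the right regularity and zero structure, that the discrete Marchenko equation is uniquely solvable in the relevant class, and that inverse scattering followed by $\TT^{\mathrm{disc}}$ returns $\Mb$ (equivalently, that forward scattering is injective with the expected range). Compared with the continuum Theorem~\ref{thm:main continuum} the discreteness is a genuine simplification --- finite matrix products and ordinary polynomials replace ordered exponentials and entire functions, and the Marchenko equation becomes a finite or rapidly convergent linear system --- so the analytic subtleties are milder; the real care goes into the bound-state residue bookkeeping in~(a) and into the termination and degree-drop lemma underpinning~(c).
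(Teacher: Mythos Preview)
Your overall architecture is correct and, for parts (a) and (b), coincides with the paper's: the paper also builds $a$ from $r$ and the prescribed zero set via the outer-times-Blaschke factorization (Proposition~\ref{pro:refl DC}), solves the discrete Marchenko equation $(1+\Pi_{+}r_{j}^{*}\Pi_{-}r_{j})\frac{wB_{+,j}^{*}}{\hat A_{+,j}(0)}=-\Pi_{+}r_{j}^{*}$ to produce the Jost data, and then verifies through a chain of lemmas (Lemmas~\ref{lem:1}--\ref{lem:5}) that the resulting $\gamma_{j}$ actually generate a pulse with the prescribed scattering data and the right asymptotics. Your ``closing the loop'' concern is exactly what those lemmas address. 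For (b) the paper, like you, observes that choosing the norming constants so that $Q_{+,\rho}=r_{0}$ forces $r_{j}=0$ for $j\ge\rho$, hence $\gamma_{j}=0$ there.

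The one genuine divergence is part (c). You propose the classical SLR route: a direct induction showing that peeling off $P^{-1}R_{\omega_{T-1}}^{-1}$ drops the degree of both $A$ and $B$ by one while preserving $|A|^{2}+|B|^{2}=1$, terminating after $T$ steps. The paper instead deduces (c) from (b) by a \emph{time-reversal} trick: part (b) already gives vanishing for $j\ge\rho$, and one checks that the time-reversed pulse $\mu_{j}\mapsto-\mu_{-j}^{*}$ has scattering data $(A,-w^{\rho}B^{*};\dots)$; since $w^{T-\rho}\cdot w^{\rho}B^{*}$ is analytic in the disk, part (b) applied to the reversed pulse gives vanishing for $j\le\rho-T$. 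Your degree-drop argument is more elementary and self-contained (no need to track how bound-state data transforms under time reversal, which the paper admits it leaves incomplete), while the paper's argument is slicker once (b) is in hand and avoids re-proving a separate termination lemma. Either works.
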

Again, $r:S^{1}\rightarrow\CC$ is called the reflection coefficient,
and the magnetization profile $\Mb$ can be obtained from the reflection
coefficient using\begin{equation}
\Mb(w)=\left[\begin{array}{c}
\frac{2\re r}{1+|r|^{2}}\\
\frac{2\im r}{1+|r|^{2}}\\
\frac{1-|r|^{2}}{1+|r|^{2}}\end{array}\right](w).\label{eq:Mrd}\end{equation}

The bound state data mentioned in part (a) of Theorem \ref{thm:main discrete}
is defined in Section \ref{sec:Discrete-Theory}. In the basic and
generic case, this bound state data takes the form\[
D=(w_{1},w_{2},\dots,w_{m};c_{1}^{\prime},c_{2}^{\prime},\dots,c_{m}^{\prime}),\]
 where $w_{1},\dots,w_{m}$ are distinct complex numbers in the unit
disk called \textit{ene}rgies, and $c_{1}^{\prime},\dots,c_{m}^{\prime}$
are non-zero complex numbers called \textit{norming constants}. The
main result is that to each such $r$ and $D$, there corresponds
a unique hard pulse. The special case of no bound states ($m=0$)
corresponds to a hard pulse called the \textit{minimum energy hard
pulse}. In Section \ref{sec:Applying-the-discrete}, we discuss the
relationship between the continuum bound state data and the discrete
bound state data.

\section{Energy Formulas}

\label{sec:energy formulas}

The following theorem is proved in \cite{Faddeev}. The corollary
is immediate (see Section \ref{sec:Proof-main-discrete}).

\begin{thm}
\label{thm:energy}Let $r$ be a reflection coefficient as in Theorem
\ref{thm:main continuum}, and let $D=(\xi_{1},\xi_{2},\dots,\xi_{m};C_{1}^{\prime},C_{2}^{\prime},\dots,C_{m}^{\prime}),$
be bound state data. Then the energy of the corresponding pulse $\omega:\RR\rightarrow\CC$
is \[
E_{\omega}=\int_{-\infty}^{\infty}|\omega(t)|^{2}dt=\frac{4}{\pi}\int_{-\infty}^{\infty}\log(1+|r(\xi)|^{2})d\xi+16\sum_{k=1}^{m}\im\xi_{k}.\]

\end{thm}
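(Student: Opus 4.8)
The plan is to derive the energy formula as a consequence of a trace identity for the Zakharov--Shabat system, following the classical approach of Faddeev and Zakharov for the nonlinear Schr\"odinger / AKNS hierarchy. The starting point is the correspondence, already invoked in the excerpt, between the selective excitation transform and ZS inverse scattering: the pulse $\omega$ plays the role of the potential, and the reflection coefficient $r$ together with the bound state data $D=(\xi_1,\dots,\xi_m;C_1',\dots,C_m')$ constitutes the complete scattering data. What makes $\omega$ the \emph{energy} rather than some other conserved quantity is that $E_\omega=\int|\omega|^2\,dt$ is precisely the first nontrivial member of the infinite hierarchy of conserved densities associated with the ZS system, so it is expressible through the scattering data alone.

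First I would set up the transition matrix (Jost solutions) for the ZS system with potential built from $\omega$, and identify the entry $a(\xi)$ whose reciprocal is the transmission coefficient; the hypotheses on $r$ (that $r$ and $\xi r$ lie in $H^1(\RR)$) are exactly what is needed to guarantee $a(\xi)$ extends analytically to the upper half plane, has finitely many simple zeros at the bound state energies $\xi_k$, and tends to $1$ at infinity along $\RR$. The key analytic fact is the relation $|a(\xi)|^2(1+|r(\xi)|^2)=1$ on the real line (the conservation of flux, reflecting $|A|^2+|B|^2=1$ in the notation of Theorem \ref{thm:main continuum}(c)), so that $\log|a(\xi)| = -\tfrac12\log(1+|r(\xi)|^2)$ for real $\xi$.

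Next I would write down the asymptotic expansion of $\log a(\xi)$ as $|\xi|\to\infty$ in the upper half plane; the coefficient of $1/\xi$ in this expansion is, up to a universal constant, $\int|\omega(t)|^2\,dt$ --- this is the trace identity, obtained by iterating the Volterra integral equation for the Jost solution and extracting the first two orders. Then I would apply the argument principle / a contour integral (Cauchy's formula for $\log a$ against a large semicircle in the upper half plane, with small indentations around the zeros $\xi_k$) to express that same $1/\xi$-coefficient in terms of a real-line integral of $\log|a|$ plus a sum of residue contributions $\propto \im\xi_k$ from the bound states. Combining the two expressions for the $1/\xi$-coefficient, substituting $\log|a|=-\tfrac12\log(1+|r|^2)$, and tracking the constants yields
\[
E_\omega=\frac{4}{\pi}\int_{-\infty}^{\infty}\log\bigl(1+|r(\xi)|^2\bigr)\,d\xi+16\sum_{k=1}^{m}\im\xi_k.
\]

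The main obstacle I expect is bookkeeping: fixing the normalizations of the Bloch/ZS correspondence (the factor-of-two rescaling $z=2\xi$ appearing in the definition of $r$ in Theorem \ref{thm:main continuum}, and the precise form of the off-diagonal potential in terms of $\re\omega$ and $\im\omega$) so that the universal constants come out to $4/\pi$ and $16$ rather than some other multiples; and justifying the contour-integral manipulations --- interchanging limits, controlling the large-semicircle contribution, and handling possible multiple zeros or spectral singularities --- under exactly the stated $H^1$ hypotheses. Since the statement attributes the result to \cite{Faddeev}, I would cite that source for the trace identity itself and confine my write-up to (i) translating it into the NMR normalization and (ii) checking that the hypotheses of Theorem \ref{thm:main continuum} suffice; the corollary for hard pulses then follows by the same argument applied to the discrete ZS system of Section \ref{sec:Discrete-Theory}, with the real-line integral replaced by an integral over $S^1$ and $\im\xi_k$ replaced by the corresponding logarithmic quantity in the disk.
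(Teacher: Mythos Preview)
The paper does not give its own proof of this theorem: it simply states ``The following theorem is proved in \cite{Faddeev}'' and moves on to the immediate corollary. Your proposal --- the trace-identity route via the large-$\xi$ asymptotics of $\log a(\xi)$ combined with a contour integral in the upper half plane --- is exactly the classical Faddeev--Takhtajan argument being cited, so there is nothing to compare.

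One remark worth making: for the \emph{discrete} analogue (Theorem~\ref{thm:energy discrete}) the paper does supply a self-contained proof in Section~\ref{sub:The-energy-formula-discrete}, and it is more elementary than the trace-formula machinery you outline. There the recursion gives directly $a(0)=\prod_j(1+|\gamma_j|^2)^{-1/2}$, and the formula follows from the mean-value property of the harmonic function $\log|a_0|$ on the disk (after factoring out the Blaschke product for the zeros $w_k$). No asymptotic expansion or contour deformation is needed; the role of the $1/\xi$-coefficient in the continuum argument is played by simple evaluation at the center $w=0$. Your closing sentence about the discrete case ``following by the same argument'' is therefore slightly off --- the paper's discrete proof is genuinely simpler, not a transcription of the continuum trace identity.
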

\begin{cor}
\label{cor:energy}Let $r(\xi)=e^{-2i\xi\rho}r_{0}(\xi)$ be a reflection
coefficient as in part (b) of Theorem \ref{thm:main continuum}, and
let $\omega$ be the corresponding pulse with rephasing time $\rho$.
Then the energy of $\omega$ is \begin{equation}
E_{\omega}=\int_{-\infty}^{\infty}|\omega(t)|^{2}dt=\frac{4}{\pi}\int_{-\infty}^{\infty}\log(1+|r(\xi)|^{2})d\xi+16\sum_{k=1}^{m}d_{k}\im\xi_{k},\label{eq:energy cor}\end{equation}
 where $\xi_{1},\dots,\xi_{m}$ are the poles of $r_{0}$ in the upper
half plane, and $d_{1},\dots,d_{m}$ are the corresponding multiplicities.
\end{cor}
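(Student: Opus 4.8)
The plan is to derive Corollary \ref{cor:energy} from Theorem \ref{thm:energy} by identifying the bound state data that corresponds to a reflection coefficient of the special form $r(\xi)=e^{-2i\xi\rho}r_0(\xi)$ with $r_0$ meromorphic in the upper half plane. First I would recall the general principle (established in Section \ref{sec:Continuum-theory} and used in part (b) of Theorem \ref{thm:main continuum}): when the reflection coefficient extends meromorphically to the upper half plane, the bound states of the associated ZS system are forced to sit exactly at the poles of that extension, with the order of each pole dictating the structure of the bound state. In the basic/generic case a simple pole $\xi_k$ contributes a single bound state with energy $\xi_k$; a pole of multiplicity $d_k$ contributes a ``higher-order'' bound state which, for the purposes of the energy trace formula, counts the same as $d_k$ coincident simple bound states at $\xi_k$.

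The key steps, in order: (1) Write $r = e^{-2i\xi\rho} r_0$ and observe that $|r(\xi)| = |r_0(\xi)|$ for real $\xi$ — actually we keep it as $|r(\xi)|$ since that is what appears in the formula — so the continuous (integral) part of the energy in Theorem \ref{thm:energy} is unchanged when we pass to this special form; the phase factor $e^{-2i\xi\rho}$ is unimodular on $\RR$ and contributes nothing to $\int \log(1+|r|^2)\,d\xi$. (2) Invoke the identification of bound state energies with the poles of $r_0$ in the upper half plane: the multiset of energies $\{\xi_1,\dots,\xi_m\}$ appearing in Theorem \ref{thm:energy}, listed with multiplicity, is precisely the multiset of poles of $r_0$ each repeated according to its pole multiplicity $d_k$. (3) Substitute: $\sum_{k=1}^m \im \xi_k$ over the bound states (with multiplicity) becomes $\sum_{k=1}^m d_k \im \xi_k$ over the \emph{distinct} poles. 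This yields exactly \eqref{eq:energy cor}.

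For this to be rigorous I would need to justify the multiplicity bookkeeping in step (2)–(3), i.e.\ that a pole of order $d_k$ really enters the Faddeev trace formula with weight $d_k \im\xi_k$ rather than merely $\im\xi_k$. The cleanest way to see this is a limiting/deformation argument: perturb $r_0$ slightly so that the order-$d_k$ pole splits into $d_k$ nearby simple poles $\xi_{k,1},\dots,\xi_{k,d_k}$, each giving an ordinary simple bound state, apply Theorem \ref{thm:energy} in that generic situation, and then let the perturbation shrink, using continuity of both sides (the left side by continuity of the ZS map on the relevant function space, the right side trivially) to recover the coalesced case with the factor $d_k$. Alternatively one can cite the standard computation of the ZS trace identities for higher-order bound states directly from \cite{Faddeev}.

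The main obstacle I expect is precisely this last point: making the passage from simple to higher-order bound states airtight, including checking that the hypotheses of Theorem \ref{thm:main continuum}(b) (the conditions $r,\xi r \in H^1(\RR)$ and the vanishing of $r_0$ at infinity) are preserved along the deformation, so that the corresponding pulses — and hence their energies — vary continuously. Everything else (the unimodularity of $e^{-2i\xi\rho}$, the pole-equals-energy dictionary) is routine given the results already stated.
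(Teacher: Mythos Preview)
Your approach is essentially the same as the paper's. The paper simply declares the corollary ``immediate'' from Theorem \ref{thm:energy}, pointing to the proof of part (b) of Theorem \ref{thm:main continuum} (where the bound state data is chosen precisely so that $Q_{+,\rho}=r_0$, forcing the bound state energies to be the poles of $r_0$) and to Appendix \ref{app:non-simple} for the higher-multiplicity case; you have correctly reconstructed this logic and are, if anything, more explicit than the paper about the multiplicity bookkeeping.
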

The following theorem and its corollary are proved in Section \ref{sub:The-energy-formula-discrete}.

\begin{thm}
\label{thm:energy discrete}Let $r$ be a reflection coefficient as
in Theorem \ref{thm:main discrete}, and let $D=(w_{1},w_{2},\dots,w_{m};c_{1}^{\prime},c_{2}^{\prime},\dots,c_{m}^{\prime})$
be discrete bound state data. Let $\Omega(t)=\sum_{j=-\infty}^{\infty}\omega_{j}\delta(t-j\Delta)$
be the corresponding hard pulse. Then \[
\sum_{j=-\infty}^{\infty}\log(1+\tan^{2}\frac{|\omega_{j}|}{2})=\frac{1}{2\pi}\int_{0}^{2\pi}\log(1+|r(e^{i\theta})|^{2})d\theta-2\sum_{k=1}^{m}\log|w_{k}|.\]
 
\end{thm}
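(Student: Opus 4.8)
The plan is to read this off as the discrete counterpart of the Faddeev trace formula (Theorem \ref{thm:energy}), using two inputs: the discrete scattering coefficient $a(w)$ produced in Section \ref{sec:Discrete-Theory}, which converts the size of $r$ on the circle into the modulus of a function holomorphic on the disk, and Jensen's formula. Recall from the discrete scattering theory that the reflection coefficient is $r=b/a$ (up to the factor $w^{-\rho}$), that $a$ extends holomorphically to the open unit disk $\DD$, and that the zeros of $a$ in $\DD$ are exactly the bound-state energies $w_{1},\dots,w_{m}$ (with multiplicity in the generic case). On $S^{1}$ the scattering relation --- equivalently the identity $|A(w)|^{2}+|B(w)|^{2}=1$ of Theorem \ref{thm:main discrete}(c), together with $|r|^{2}=|b|^{2}/|a|^{2}$ --- gives $|a(w)|^{2}=(1+|r(w)|^{2})^{-1}$, so that $\log|a|=-\tfrac12\log(1+|r|^{2})$, which lies in $L^{1}(S^{1})$ since $r\in H^{1}(S^{1})$.

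The analytic half of the argument is then Jensen's formula applied to $a$. Factoring $a=B\cdot O$ with $B$ the Blaschke product on the zeros $w_{k}$ and $O$ the outer factor --- the absence of a singular inner factor being built into the inverse-scattering construction of $a$, and automatic in the polynomial case of part (c) where $a$ is literally a polynomial of degree $T$ --- and evaluating at the origin yields
\[
\log|a(0)|\;=\;\sum_{k=1}^{m}\log|w_{k}|\;+\;\frac{1}{2\pi}\int_{0}^{2\pi}\log|a(e^{i\theta})|\,d\theta\;=\;\sum_{k=1}^{m}\log|w_{k}|\;-\;\frac{1}{4\pi}\int_{0}^{2\pi}\log\!\big(1+|r(e^{i\theta})|^{2}\big)\,d\theta .
\]

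The combinatorial half is the identification $|a(0)|=\prod_{j}\cos\tfrac{|\omega_{j}|}{2}$. For this I would return to the transfer-matrix recursion of Section \ref{sec:Discrete-Theory}: the $2\times2$ (spinor) lift of the step operator $P_{\Delta}R_{\omega_{j}}$ is a diagonal $w$-shift composed with a rotation matrix whose two diagonal entries both equal $\cos\tfrac{|\omega_{j}|}{2}$ and whose off-diagonal entries are scalar multiples of $\sin\tfrac{|\omega_{j}|}{2}$. Forming the ordered product over all $j$ (convergent under the decay condition guaranteeing the existence of $\TT^{\mathrm{disc}}\Omega$), the entry that becomes $a$ is a power series in $w$ whose value at the base point $w=0$ collects precisely the diagonal factors, since every $w$-shift pushes all other contributions into strictly higher-order coefficients; unlike a smooth potential, the hard-pulse rotations do not wash out at the base point, which is exactly why $|a(0)|<1$ in general. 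Hence $a(0)=\prod_{j}\cos\tfrac{|\omega_{j}|}{2}$, i.e.
\[
\log|a(0)|\;=\;\sum_{j}\log\cos\tfrac{|\omega_{j}|}{2}\;=\;-\tfrac12\sum_{j}\log\!\Big(1+\tan^{2}\tfrac{|\omega_{j}|}{2}\Big).
\]
Equating the two expressions for $\log|a(0)|$ and multiplying through by $-2$ gives the theorem; the accompanying corollary then follows by substituting the explicit pole data into $\sum_{k}\log|w_{k}|$.

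The main obstacle is bookkeeping rather than ideas. On the combinatorial side one must pin down the conventions of Section \ref{sec:Discrete-Theory} exactly --- which normalization of $a$ is in force, whether the distinguished point of the disk at which $a$ picks up the product of cosines is $w=0$ or $w=\infty$ (equivalently, on which side of $S^{1}$ the coefficient $a$ is holomorphic), and the precise relation between $|\omega_{j}|$ and the half-angle entering the rotation $R_{\omega_{j}}$ --- so that the product $\prod_{j}\cos\tfrac{|\omega_{j}|}{2}$ emerges with no stray constants and Jensen's formula is applied on the correct disk. On the analytic side, the one genuine point to verify is that $a$ carries no singular inner factor, so that only the Blaschke contribution $\sum_{k}\log|w_{k}|$ appears; this is immediate in the finite-pulse setting of part (c), and in general should follow from the $H^{1}$ hypothesis on $r$ together with the structure of the solution constructed in Section \ref{sec:Discrete-Theory}.
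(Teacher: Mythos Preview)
Your proposal is correct and follows essentially the same route as the paper. The paper evaluates the recursion (\ref{eq:drmd5}) at $w=0$ to obtain $a(0)=\prod_{j}(1+|\gamma_{j}|^{2})^{-1/2}=\prod_{j}\cos\tfrac{|\omega_{j}|}{2}$, factors out the Blaschke product from $a$, and applies the mean value property to the harmonic function $\log|a_{0}|$ together with $|a|=(1+|r|^{2})^{-1/2}$ on $S^{1}$---exactly the Jensen argument you outline; your concern about a singular inner factor is handled in the paper implicitly, since $a$ is given by the explicit formula (\ref{eq:dt16}) as a Blaschke product times an outer function.
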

\begin{cor}
\label{cor:energy discrete}Let $r(w)=w^{-\rho}r_{0}(w)$ be a reflection
coefficient as in part (b) of Theorem \ref{thm:main discrete}, and
let $\Omega(t)=\sum_{j=-\infty}^{\rho}\omega_{j}\delta(t-j\Delta)$
be the corresponding pulse with $\rho$ rephasing time steps. Then
\[
\sum_{j=-\infty}^{\infty}\log(1+\tan^{2}\frac{|\omega_{j}|}{2})=\frac{1}{2\pi}\int_{0}^{2\pi}\log(1+|r(e^{i\theta})|^{2})d\theta-2\sum_{k=1}^{m}d_{k}\log|w_{k}|,\]
 where $w_{1},\dots,w_{m}$ are the poles of $r_{0}$ in the unit
disk, and $d_{1},\dots,d_{m}$ are the corresponding multiplicities.
\end{cor}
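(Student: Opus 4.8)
\section*{Proof proposal for Corollary~\ref{cor:energy discrete}}

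The plan is to deduce Corollary~\ref{cor:energy discrete} from Theorem~\ref{thm:energy discrete} by identifying the discrete bound state data attached to a reflection coefficient of the special meromorphic form $r(w)=w^{-\rho}r_0(w)$. Two of the three ingredients are essentially free. First, on the circle one has $|r(e^{i\theta})|=|r_0(e^{i\theta})|$ because $|w^{-\rho}|=1$ there, so the integral $\frac{1}{2\pi}\int_0^{2\pi}\log(1+|r(e^{i\theta})|^2)\,d\theta$ in Theorem~\ref{thm:energy discrete} is already exactly the term appearing in the corollary. Second, the factor $w^{-\rho}$ amounts to translating the hard pulse by $\rho$ time steps, and a translation changes neither the sum $\sum_j\log(1+\tan^2\frac{|\omega_j|}{2})$ nor the bound state energies. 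Hence the entire content of the corollary is the claim that, for $r$ of this form, the bound state energies of Theorem~\ref{thm:main discrete}(b) are precisely the poles of $r_0$ in the unit disk, a pole of multiplicity $d_k$ appearing as a bound state of multiplicity $d_k$.

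Next I would establish this identification using the forward/inverse scattering dictionary of Section~\ref{sec:Discrete-Theory}. Writing the scattering data of the constructed pulse as a pair $(A,B)$ with $|A(w)|^2+|B(w)|^2=1$ on $S^1$ and $r=w^{-\rho}B/A$, the bound states are the zeros of the Jost coefficient $A$ in the disk, counted with their order of vanishing. Taking $r_0=B/A$ in lowest terms (equivalently, after cancelling any common zeros of $A$ and $B$ inside the disk, which corresponds to passing to the reduced scattering data that actually enters the inverse construction), the poles of $r_0$ in the disk are exactly the zeros of $A$, and the pole order at $w_k$ equals the zero order $d_k$ of $A$ at $w_k$. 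So the bound state data is $(w_1,\dots,w_k$ with multiplicities $d_1,\dots,d_m$; norming constants$)$, and the norming constants do not enter the energy identity.

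With the bound states identified, the corollary is immediate from Theorem~\ref{thm:energy discrete}: the sum $\sum\log|w_k|$ there, taken over bound states counted with multiplicity, regroups as $\sum_{k=1}^m d_k\log|w_k|$, which yields the stated formula. This is the discrete analogue of the way Corollary~\ref{cor:energy} is read off from Theorem~\ref{thm:energy} in the continuum setting, and I would present it in parallel.

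The main obstacle is the non-generic case $d_k>1$, since Theorem~\ref{thm:energy discrete} is phrased for bound state data with distinct energies. There are two routes. One is to invoke the general form of the energy theorem, in which a pole of order $d_k$ contributes $d_k$ coincident simple bound states (Jordan-type bound state data); this requires carrying the corresponding residue/determinant contributions through the Marchenko-type reconstruction of Section~\ref{sub:The-energy-formula-discrete}. The other is a limiting argument: perturb $r_0$ so that each order-$d_k$ pole at $w_k$ splits into $d_k$ distinct simple poles near $w_k$, apply the generic formula, and pass to the limit. In the latter approach the delicate point is the stability of the discrete inverse scattering transform — one needs the hard pulse, and in particular the quantity $\sum_j\log(1+\tan^2\frac{|\omega_j|}{2})$, to depend continuously on the reflection coefficient and bound state data under such a splitting — and this is precisely where the quantitative estimates underlying the DIST algorithm would be brought to bear.
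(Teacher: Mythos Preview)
Your proposal is essentially correct and follows the paper's approach: deduce the corollary from Theorem~\ref{thm:energy discrete} by identifying the bound states of the pulse in part~(b) with the poles of $r_0$, then handle non-simple poles separately.

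One point where the paper is more direct than your write-up: you argue the identification ``bound states $=$ poles of $r_0$'' by analyzing the factorization $r_0=B/A$ and cancelling common zeros, but in the paper this identification is \emph{by construction}. In the proof of Theorem~\ref{thm:main discrete}(b) (Section~\ref{sec:Proof-main-discrete}) the bound state data is chosen precisely so that $Q_{+,\rho}=r_0$, which forces $r_j=0$ for $j\ge\rho$ and hence the pulse to terminate at step $\rho$; the energies $w_k$ are then exactly the poles of $r_0$ with their multiplicities, and no factorization argument is needed. Your $B/A$ route reaches the same conclusion but is slightly circuitous. For the higher-multiplicity case your route~(1) is what the paper does (Appendix~\ref{app:non-simple} supplies the generalized bound state data $Q_{+,0}(\xi)=\sum_k P_k(\xi)/(\xi-\xi_k)^{d_k}$ and its discrete analogue), so the limiting/perturbation argument of route~(2) is not required.
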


\section{Algorithms}

\label{sec:Algorithms}There are three main algorithms for producing
hard pulses: the SLR algorithm, the finite rephasing time algorithm,
and the DIST recursion.

\subsection{The SLR algorithm}

The SLR algorithm, discovered independently by M. Shinnar, and his
co-workers, and P. Le Roux, can only be used to design hard pulses
of finite duration, as in part (c) of Theorem \ref{thm:main discrete}.
The input to the algorithm consists of the number of rephasing time
steps, $\rho$, and the two polynomials $A$ and $B$ which must satisfy
\begin{equation}
|A|^{2}+|B|^{2}=1\label{eq:A2B2}\end{equation}
 on the unit circle. The designed magnetization profile is determined
using equation (\ref{eq:Mr}) with the reflection coefficient \[
r(w)=w^{-\rho}\frac{B(w)}{A(w)}.\]
The main drawback of this method is that one does not have direct
control over the magnetization profile (more precisely, the phase
of the reflection coefficient). On the other hand, one has complete
control of the duration, which coincides with the degree of the polynomial
$B$. The SLR technique uses the fact that the magnitude of $B$ is
related to the $z$-component of the resulting magnetization profile
$\Mb=\left[\begin{array}{c}
\Mb_{x}\\
\Mb_{y}\\
\Mb_{z}\end{array}\right]$ by the formula \[
\Mb_{z}=\frac{1-|r|^{2}}{1+|r|^{2}}=|A|^{2}-|B|^{2}=1-2|B|^{2}.\]
 The polynomial $B$ is designed so that $1-2|B|^{2}$ approximates
the ideal $z$-magnetization, and then $A$ is chosen to satisfy equation
(\ref{eq:A2B2}) . The phase of the transverse magnetization $\Mb_{x}+i\Mb_{y}$
is not directly specified by this procedure, and ad hoc methods must
be used to obtain a good approximation of the desired phase. This
disadvantage is discussed further in Section \ref{sub:Comparison-SLR-IST}
where we compare SLR pulses to inverse scattering pulses.

\subsection{The finite rephasing time algorithm}

The finite rephasing time algorithm resembles the SLR algorithm, but
it is more flexible. It produces pulses of infinite duration, and
finite rephasing time. Similar algorithms have been presented by various
authors. See for example \cite{Yagle,Buonocore}. 

The finite rephasing time algorithm can be used to produce the pulse
from part (b) of Theorem \ref{thm:main discrete}. The input to this
algorithm is the number of rephasing time steps, $\rho$, and a rational
function $r_{0}=\frac{P}{Q}$, where $P$ and $Q$ are polynomials
(possibly of very high degree). These polynomials do not need to satisfy
any kind of equation like (\ref{eq:A2B2}). The designed magnetization
profile is determined using equation (\ref{eq:Mr}) with the reflection
coefficient \[
r(w)=w^{-\rho}\frac{P(w)}{Q(w)}.\]
With this algorithm, one has direct control of the entire magnetization
profile (not just the $z$-component as in the SLR algorithm), but
control on the pulse duration is sacrificed. In fact, the designed
pulse almost always has technically infinite duration. However, we
will see in Chapter \ref{cha:Pulses-with-finite} that for many common
applications, the \textit{effective} duration of pulses designed with
this method is within a reasonable range. The loss of direct control
on the duration causes no disadvantage in practice.

This algorithm is more general than the SLR algorithm, and equally
efficient. In Chapter \ref{cha:Pulses-with-finite} we give a simple
derivation of the algorithm, and we describe several applications
in NMR pulse design.

\subsection{The DIST recursion}

The discrete inverse scattering transform (DIST) algorithm, which
we introduce in this thesis, efficiently handles the most general
case of hard pulse design. It is motivated by the inverse scattering
method of pulse design (see \cite{Epstein}). The input to the algorithm
is called the \textit{scattering data} and consists of:

(a) An arbitrary reflection coefficient, $r:S^{1}\rightarrow\CC$;

(b) Arbitrary bound state data (see Section \ref{sec:Discrete-Theory}). 

\noindent For technical reasons, we assume that $r$ is in $H^{1}(S^{1})$.

The derivation of this algorithm, and the proof that the output pulse
has the correct scattering data is the main result of this thesis.
This derivation and proof can be found in Chapter \ref{cha:Scattering-theory}.

\chapter{Scattering theory\label{cha:Scattering-theory}}

\section{Preliminaries}

\subsection{The projection operators $\Pi_{+}$ and $\Pi_{-}$\label{sub:proj}}

The Paley-Wiener theorem states that every function $f\in L^{2}(\RR)$
can be written uniquely in the form \[
f=f_{+}+f_{-},\]
 where $f_{+}\in L^{2}(\RR)$ has an analytic extension to the upper
half plane, and $f_{-}\in L^{2}(\RR)$ has an analytic extension to
the lower half plane satisfying \[
\lim_{|\xi|\rightarrow\infty}f_{\pm}(\xi)=0.\]
We define the projection operators $\Pi_{+},\Pi_{-}:L^{2}(\RR)\rightarrow L^{2}(\RR)$
by \begin{eqnarray*}
(\Pi_{+}f)(\xi) & = & f_{+}(\xi)=\frac{1}{2\pi}\int_{0}^{\infty}\hat{f}(y)e^{i\xi y}dy\\
(\Pi_{-}f)(\xi) & = & f_{-}(\xi)=\frac{1}{2\pi}\int_{-\infty}^{0}\hat{f}(y)e^{i\xi y}dy.\end{eqnarray*}
Let $H_{+}(\RR)$ and $H_{-}(\RR)$ denote the ranges of $\Pi_{+}$
and $\Pi_{-}$, respectively. 

Similarly, every function $f\in L^{2}(S^{1})$ can be written uniquely
in the form \[
f=f_{0}+f_{+}+f_{-},\]
 where $f_{0}\in\CC$ is a constant, $f_{+}\in L^{2}(S^{1})$ has
an analytic extension to the unit disk, vanishing at the origin, and
$f_{-}$ has an analytic extension to $\CCh\setminus\DD$, vanishing
at $\infty$. We define the operators $\Pi_{+},\Pi_{-},\tilde{\Pi}_{+},\tilde{\Pi}_{-}:L^{2}(S^{1})\rightarrow L^{2}(S^{1})$
by \begin{eqnarray*}
(\Pi_{+}f)(w) & = & f_{+}(w)=\sum_{j=1}^{\infty}\hat{f}(j)w^{j}\\
(\Pi_{-}f)(w) & = & f_{-}(w)=\sum_{j=-\infty}^{-1}\hat{f}(j)w^{j}\\
\tilde{\Pi}_{+}f & = & f_{+}+\frac{1}{2}\hat{f}(0)\\
\tilde{\Pi}_{-}f & = & f_{-}+\frac{1}{2}\hat{f}(0).\end{eqnarray*}
Let $H_{+}$, $H_{-}$, $\tilde{H}_{+}$, and $\tilde{H}_{-}$ denote
the ranges of $\Pi_{+}$, $\Pi_{-}$, $\tilde{\Pi}_{+}$, and $\tilde{\Pi}_{-}$,
respectively. 

The following lemma will be needed later in the chapter.

\begin{lem}
\label{fac:proj}Suppose that $f$ is in $L^{2}(\RR)$ and that $\xi f$
is in $L^{2}(\RR)$. Then \begin{equation}
i\xi\Pi_{+}f-\Pi_{+}i\xi f=-\frac{1}{2\pi}\hat{f}(0),\label{eq:proj 1}\end{equation}
 and \begin{equation}
i\xi\Pi_{-}f-\Pi_{-}i\xi f=\frac{1}{2\pi}\hat{f}(0).\label{eq:proj 2}\end{equation}

\end{lem}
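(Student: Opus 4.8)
The plan is to prove Lemma \ref{fac:proj} by a direct Fourier-analytic computation, using the explicit integral formulas given for $\Pi_+$ and $\Pi_-$ together with the elementary fact that multiplication by $i\xi$ on the physical side corresponds to differentiation $\frac{d}{dy}$ on the Fourier side. Concretely, I would start from $(\Pi_+ f)(\xi) = \frac{1}{2\pi}\int_0^\infty \hat f(y) e^{i\xi y}\,dy$ and compute $i\xi(\Pi_+ f)(\xi)$ by writing $i\xi e^{i\xi y} = \frac{d}{dy} e^{i\xi y}$ and integrating by parts in $y$ over $[0,\infty)$. The boundary term at $y=\infty$ vanishes (this is where one uses that both $f$ and $\xi f$ lie in $L^2(\RR)$, so that $\hat f$ is absolutely continuous with $\hat f(y)\to 0$ and $\hat f'\in L^2$), and the boundary term at $y=0$ contributes exactly $-\frac{1}{2\pi}\hat f(0)$. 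The remaining integral $-\frac{1}{2\pi}\int_0^\infty \hat f'(y) e^{i\xi y}\,dy$ is, by the Fourier differentiation rule, precisely $\Pi_+(i\xi f)(\xi)$, since the Fourier transform of $i\xi f$ is $\hat f'$ (up to the convention fixed by these formulas). Rearranging gives \eqref{eq:proj 1}.

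The identity \eqref{eq:proj 2} follows by the symmetric computation: from $(\Pi_- f)(\xi) = \frac{1}{2\pi}\int_{-\infty}^0 \hat f(y) e^{i\xi y}\,dy$, integration by parts in $y$ over $(-\infty, 0]$ produces a boundary term at $y=0$ equal to $+\frac{1}{2\pi}\hat f(0)$ (the sign flips because now $y=0$ is the upper limit of integration) and a vanishing term at $y=-\infty$. Alternatively, and perhaps more cleanly, one can simply observe that $\Pi_+ + \Pi_- = \mathrm{Id}$ on $L^2(\RR)$ by the Paley-Wiener decomposition, so adding \eqref{eq:proj 1} to the desired \eqref{eq:proj 2} must give $i\xi f - i\xi f = 0$ on the left and $-\frac{1}{2\pi}\hat f(0) + \frac{1}{2\pi}\hat f(0) = 0$ on the right; hence \eqref{eq:proj 2} is equivalent to \eqref{eq:proj 1} and needs no separate argument. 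I would present \eqref{eq:proj 1} in full and then deduce \eqref{eq:proj 2} this way.

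The one point requiring a little care — the main (though modest) obstacle — is justifying the integration by parts rigorously at the level of regularity assumed, namely $f, \xi f \in L^2(\RR)$. This hypothesis says exactly that $\hat f \in H^1$ of the half-line (in the relevant variable), so $\hat f$ has a continuous representative with a well-defined value $\hat f(0)$, and $\hat f(y)\to 0$ as $|y|\to\infty$; these are the facts that make the boundary terms meaningful and kill the one at infinity. One should note that the pointwise formula for $i\xi(\Pi_+ f)(\xi)$ as an integral is valid because $\xi f\in L^2$ forces $\Pi_+ f$ (whose Fourier transform is $\hat f\,\chi_{[0,\infty)}$, again in $H^1$ away from the origin) to be such that $\xi\,\Pi_+ f\in L^2$, so all four quantities in \eqref{eq:proj 1} genuinely lie in $L^2(\RR)$ and the identity is an equality of $L^2$ functions. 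Once these standard Sobolev-space facts are invoked, the computation is routine and the lemma follows immediately.
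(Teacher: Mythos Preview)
Your proposal is correct and follows essentially the same approach as the paper: integration by parts on the Fourier side, identifying the boundary term at $y=0$ as $-\frac{1}{2\pi}\hat f(0)$ and the remaining integral as $\Pi_+(i\xi f)$. The paper handles the regularity issue by first assuming enough smoothness and decay and then appealing to continuity, whereas you spell out the Sobolev-type justification directly; and the paper proves \eqref{eq:proj 2} by ``a similar computation'' rather than your $\Pi_+ + \Pi_- = \mathrm{Id}$ shortcut---but these are cosmetic differences, not a different route.
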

\begin{proof}
First note that the hypothesis implies that $\hat{f}(0)$ is well
defined. We will assume sufficient regularity and decay for $f$ so
that the below integrals make sense. The general result then follows
by continuity.

Integrating by parts gives\[
\frac{1}{2\pi}\int_{0}^{\infty}\hat{f}^{\prime}(t)e^{it\xi}dt+\frac{i\xi}{2\pi}\int_{0}^{\infty}\hat{f}(t)e^{it\xi}dt=-\frac{1}{2\pi}\hat{f}(0).\]
Notice that the first term on the left is $-\Pi_{+}i\xi f$ and the
second term is $i\xi\Pi_{+}f$ . This proves (\ref{eq:proj 1}). A
similar computation can be used to prove (\ref{eq:proj 2}).
\end{proof}

\subsection{The reflection coefficient}

Later in the chapter we will be working with functions of the form
$r=\frac{b}{a}$ where $a$ and $b$ satisfy $|a|^{2}+|b|^{2}=1$
on $\RR$. In this section we mention some results which are well
known in the inverse scattering literature.

\begin{prop}
\label{pro:refl CC}Let $r\in H^{1}(\RR)$, let $\xi_{1},\dots,\xi_{m}\in\HH$,
and let $d_{1},\dots,d_{m}$ be positive integers. Then there exist
unique continuous functions $a,b:\RR\rightarrow\CC$ such that\\
(i) $r=\frac{b}{a}$ on $\RR$;\\
(ii) $|a(\xi)|^{2}+|b(\xi)|^{2}=1$ for all $\xi\in\RR$;\\
(iii) $a$ has an analytic extension to the upper half plane with
zeros at $\xi_{1},\dots,\xi_{m}$ of orders $d_{1},\dots,d_{m}$;\\
(iv) $\lim_{|\xi|\rightarrow\infty}a(\xi)=1$.
\end{prop}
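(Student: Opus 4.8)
The plan is to reduce the claim to the classical factorization theorem for the scalar Riemann--Hilbert problem attached to the Zakharov--Shabat system, exactly in the form found in the inverse scattering literature (e.g.\ Faddeev--Takhtajan, or the treatment in \cite{AKNS}). Since $r\in H^1(\RR)$, the function $1+|r(\xi)|^2$ is continuous, bounded below by $1$, bounded above, and $\log(1+|r|^2)$ is in $L^1(\RR)$ (here one uses $|r|^2\le C|r|$ near infinity together with $r\in L^2$, and smoothness of $\log(1+x)$ near $x=0$; more carefully, $H^1(\RR)\hookrightarrow L^\infty\cap C_0$, so $r$ is bounded and $\log(1+|r|^2)\le |r|^2\in L^1$). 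This is precisely the Szeg\H{o} condition needed to define an outer function on the upper half plane.

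First I would construct the zero-free, normalized, outer piece. Set
\[
a_0(\xi)=\exp\!\left(\frac{1}{2\pi i}\int_{-\infty}^{\infty}\frac{\log\bigl(1+|r(t)|^2\bigr)^{-1/2}\cdot(-1)}{\;}\,\cdots\right),
\]
i.e.\ let $a_0$ be the outer function in $H^\infty$ of the upper half plane whose boundary modulus satisfies $|a_0(\xi)|^2=\frac{1}{1+|r(\xi)|^2}$ and whose value at $\infty$ is a positive real number; concretely
\[
a_0(\xi)=\exp\!\left(-\frac{1}{2}\,\frac{1}{\pi i}\int_{-\infty}^{\infty}\frac{\log\bigl(1+|r(t)|^2\bigr)}{t-\xi}\,dt\right),\qquad \im\xi>0 .
\]
The Poisson representation of the real part of the exponent gives $|a_0(\xi)|^2(1+|r(\xi)|^2)=1$ a.e.\ on $\RR$, continuity of $r$ upgrades this to every $\xi$, and since $\log(1+|r|^2)\to 0$ at $\infty$ one gets $a_0(\xi)\to 1$ as $|\xi|\to\infty$; also $a_0$ has no zeros in $\HH$. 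Next I would put back the prescribed zeros by multiplying by a finite Blaschke product for the upper half plane,
\[
\beta(\xi)=\prod_{k=1}^{m}\left(\frac{\xi-\xi_k}{\xi-\overline{\xi_k}}\right)^{d_k},
\]
which is unimodular on $\RR$, analytic in $\HH$, has exactly the required zeros with the required orders, and tends to $1$ at $\infty$. Define $a=\beta\, a_0$ and $b=ra$. Then (iii) and (iv) hold by construction, $|a|^2+|b|^2=|a|^2(1+|r|^2)=1$ on $\RR$ gives (ii), and (i) is immediate on $\RR$ where $a$ is nonvanishing (one checks $a\ne0$ on $\RR$ because $|a|^2=1/(1+|r|^2)>0$). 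Continuity of $a$ and $b$ on $\RR$ follows from continuity of $r$ and of $a_0$ (the latter from the $H^1$ hypothesis, which makes the Cauchy integral boundary values continuous).

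For uniqueness, suppose $a',b'$ is another such pair. Then $a'/a$ is analytic and zero-free in $\HH$ (the zeros of $a'$ and $a$ coincide with multiplicity by (iii), and each factor is bounded since $|a|,|a'|\le1$ on $\RR$ and both tend to $1$), has modulus $|a'/a|^2=(1+|r|^2)/(1+|r|^2)=1$ on $\RR$, and tends to $1$ at $\infty$. A bounded analytic function on $\HH$ with unimodular boundary values and limit $1$ at infinity is identically $1$ by the maximum principle applied to $a'/a$ and to $a/a'$ (both are in $H^\infty(\HH)$ with unimodular boundary values, hence inner; an inner function tending to $1$ at $\infty$ and whose reciprocal is also inner is constant). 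Hence $a'=a$ and then $b'=ra'=ra=b$.

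The main obstacle is the analytic/function-theoretic bookkeeping rather than anything deep: one must verify that $\log(1+|r|^2)\in L^1(\RR)$ under the stated hypothesis, that the Cauchy-type integral defining $a_0$ really produces a bounded outer function with continuous boundary values and the claimed boundary modulus and normalization at $\infty$, and that the uniqueness argument's appeal to ``bounded + unimodular boundary values + limit $1$ $\Rightarrow$ constant'' is correctly justified (this is where the normalization (iv) is essential — without it $a$ is only determined up to an inner factor, i.e.\ up to an additional Blaschke product, which is exactly the ambiguity the prescribed bound-state data $\xi_k,d_k$ removes). All of this is standard in the scattering literature, so I would cite \cite{Faddeev} for the detailed estimates and only sketch the construction as above.
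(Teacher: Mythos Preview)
Your proposal is correct and follows essentially the same route as the paper: construct the zero-free outer factor $a_0$ with $|a_0|^2=(1+|r|^2)^{-1}$ via the Cauchy integral (which is exactly the paper's $\exp(-\Pi_+\log(1+|r|^2))$), then multiply by the Blaschke product $\prod_k\bigl(\frac{\xi-\xi_k}{\xi-\xi_k^*}\bigr)^{d_k}$ and set $b=ra$. Your uniqueness argument via the inner-function characterization is more explicit than the paper's, which simply observes that $\log a$ is analytic in $\HH$, tends to $0$ at $\infty$, and has prescribed real part on $\RR$, forcing $\log a=-\Pi_+\log(1+|r|^2)$; but the content is the same.
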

\begin{proof}
We need only consider the case where $a$ is non-vanishing ($m=0$),
because adding zeros simply amounts to multiplying $a$ and $b$ by
a common Blaschke product. Therefore, $\log a$ is an analytic function
which, by condition (iv), must tend to $0$ at $\infty$. We know
that \[
\re\log a=\log|a|=-\frac{1}{2}\log(1+|r|^{2}).\]
Notice that $\log(1+|r|^{2})$ is in $H^{1}(\RR)$. Therefore, we
must have\[
\log a=-\Pi_{+}(1+|r|^{2}).\]
The general solution is \[
a=\prod_{k=1}^{m}\left(\frac{\xi-\xi_{k}}{\xi-\xi_{k}^{*}}\right)^{d_{k}}\cdot\exp(-\Pi_{+}(1+|r|^{2}))\]
 and \[
b=ra.\]

\end{proof}
\begin{prop}
\label{pro:refl DC}Let $r\in H^{1}(S^{1})$, let $w_{1},\dots,w_{m}\in\DD$,
and let $d_{1},\dots,d_{m}$ be positive integers. Then there exist
unique continuous functions $a,b:S^{1}\rightarrow\CC$ such that\\
(i) $r=\frac{b}{a}$ on $S^{1}$;\\
(ii) $|a|^{2}+|b|^{2}=1$ on $S^{1}$;\\
(iii) $a$ has an analytic extension to the unit disk with zeros at
$w_{1},\dots,w_{m}$ of orders $d_{1},\dots,d_{m}$;\\
(iv) $a(0)>0$.
\end{prop}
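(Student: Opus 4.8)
The plan is to mirror the proof of Proposition \ref{pro:refl CC}, transporting every step from the real line to the circle via the correspondence $H^1(\RR)\leftrightarrow H^1(S^1)$ and the analogous Paley--Wiener/Hardy space decomposition set up in Section \ref{sub:proj}. First I would reduce to the non-vanishing case $m=0$: if $a$ has the required zeros at $w_1,\dots,w_m$ with multiplicities $d_1,\dots,d_m$, then dividing by the Blaschke product $\prod_{k=1}^m\bigl((w-w_k)/(1-\overline{w_k}w)\bigr)^{d_k}$ (which has modulus $1$ on $S^1$) produces a zero-free analytic function on $\DD$, and conversely multiplying by this Blaschke product reinserts exactly the prescribed zeros without disturbing conditions (i), (ii), (iv); so it suffices to construct $a$ zero-free on $\DD$, continuous up to $S^1$, with $|a|^2+|b|^2=1$, $r=b/a$, and $a(0)>0$.

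In the zero-free case $\log a$ is a well-defined analytic function on $\DD$, and conditions (i)--(ii) force $\re\log a=\log|a|=-\tfrac12\log(1+|r|^2)$ on $S^1$. Since $r\in H^1(S^1)$ one checks that $\log(1+|r|^2)\in H^1(S^1)$ as well (it is real, bounded below, and as smooth as $|r|^2$). An analytic function on $\DD$ is determined by the boundary values of its real part together with its value at the origin, via $\log a=-\tilde\Pi_+\log(1+|r|^2)$; here the operator $\tilde\Pi_+$ of Section \ref{sub:proj} is the right one because it keeps half of the zeroth Fourier coefficient, which makes $\re(\tilde\Pi_+ g)=g/2$ on $S^1$ for real $g$, and it makes $\log a(0)=-\tfrac12\widehat{\log(1+|r|^2)}(0)$ real, hence $a(0)>0$. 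Thus I would set
\[
a=\exp\bigl(-\tilde\Pi_+\log(1+|r|^2)\bigr),\qquad b=ra,
\]
and in the general case
\[
a=\prod_{k=1}^m\left(\frac{w-w_k}{1-\overline{w_k}\,w}\right)^{d_k}\exp\bigl(-\tilde\Pi_+\log(1+|r|^2)\bigr),\qquad b=ra.
\]
Then (i) is immediate, (ii) follows from $|a|^2(1+|r|^2)=|a|^2+|b|^2$ together with $|a|^2=\exp(-\log(1+|r|^2))=(1+|r|^2)^{-1}$ on $S^1$ (the Blaschke factors have modulus $1$), (iii) and (iv) hold by construction, and uniqueness follows because any admissible $a$ must equal this one: the Blaschke factor is forced by (iii), the logarithm of the remaining zero-free part is analytic on $\DD$ with prescribed real part on $S^1$ and prescribed (positive real) value at $0$, hence uniquely determined.

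The main technical points to nail down — none of them deep, but all of them needing a line — are: that $\log(1+|r|^2)$ genuinely lies in $H^1(S^1)$ so that $\tilde\Pi_+$ applied to it lands in a space where $\exp$ of it is continuous up to $S^1$ (here I would use that $H^1(S^1)\subset C(S^1)$ by Sobolev embedding in one dimension, so $\log(1+|r|^2)$ is continuous and the boundary function $a$ is continuous and non-vanishing); that $b=ra$ is continuous, which needs $r$ continuous — again from $r\in H^1(S^1)\subset C(S^1)$; and the verification that $\re(\tilde\Pi_+ g)=\tfrac12 g$ on $S^1$ for real $g\in H^1(S^1)$, which is just the observation that $g=\hat g(0)+g_++g_-$ with $g_-=\overline{g_+}$ and $\hat g(0)$ real, so $\tilde\Pi_+g=\tfrac12\hat g(0)+g_+$ has real part $\tfrac12\hat g(0)+\tfrac12(g_++g_-)=\tfrac12 g$. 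I expect the only place that might feel like an obstacle is keeping the bookkeeping of the zeroth Fourier coefficient straight — it is exactly this coefficient that encodes condition (iv), and it is the single structural difference from the continuum Proposition \ref{pro:refl CC}, where the normalization was $a(\infty)=1$ rather than $a(0)>0$ and where there is no constant term to worry about.
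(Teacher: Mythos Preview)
Your approach is exactly the paper's: reduce to the zero-free case via a Blaschke product, then take $\log a=-\tilde\Pi_+\log(1+|r|^2)$ and set $b=ra$. One slip worth fixing, precisely in the bookkeeping you yourself flagged: your Blaschke factor $(w-w_k)/(1-\overline{w_k}w)$ equals $-w_k$ at $w=0$, so multiplying the zero-free $a$ by your product generally destroys condition (iv). The paper normalizes each factor to be positive at the origin, writing
\[
\frac{w_k^{*}}{|w_k|}\,\frac{w_k-w}{1-w_k^{*}w},
\]
which has value $|w_k|>0$ at $w=0$; with that adjustment your argument goes through verbatim.
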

\begin{proof}
The proof is essentially the same as the proof of Proposition \ref{pro:refl CC}.
The general solution is \[
a=\prod_{k=1}^{m}\left(\frac{w_{k}^{*}}{|w_{k}|}\frac{w_{k}-w}{1-w_{k}^{*}w}\right)^{d_{k}}\cdot\exp(-\tilde{\Pi}_{+}(1+|r|^{2}))\]
 and \[
b=ra.\]

\end{proof}

\subsection{Banach derivatives\label{sub:Banach-derivatives}}

In this section we define the derivative of a curve in a Banach space,
and prove some results which will be needed in Section \ref{sec:Proof-main-continuum}.

Let $X$ be a Banach space. A function $u:\RR\rightarrow X$ is said
to be \textit{differentiable} at $t_{0}\in\RR$ if there exists $u_{t_{0}}^{\prime}\in X$
such that \[
\lim_{t\rightarrow t_{0}}\left\Vert \frac{u_{t}-u_{t_{0}}}{t-t_{0}}-u_{t_{0}}^{'}\right\Vert =0.\]
 In this case $u_{t_{0}}^{\prime}$ is called the \textit{derivative}
of $u$ at $t_{0}$. This derivative is necessarily unique.

Let $X$ and $Y$ be Banach spaces, and let $\LL(X,Y)$ denote the
space of bounded linear maps from $X$ to $Y$. Since $\LL(X,Y)$
is a Banach space (with the operator norm), we can also speak of the
derivative of $A:\RR\rightarrow\LL(X,Y)$.

The following two lemmas can be proved directly by applying the above
definition of derivative.

\begin{lem}
\label{lem:bd 1}Let $A_{t}:\RR\rightarrow\LL(X,Y)$ and $u_{t}:\RR\rightarrow X$
. If $A_{t}$ and $u_{t}$ are differentiable at $t_{0}$, then so
is $v_{t}=A_{t}u_{t}:\RR\rightarrow Y$, and \[
v_{t_{0}}^{\prime}=A_{t_{0}}^{\prime}u_{t_{0}}+A_{t_{0}}u_{t_{0}}^{\prime}.\]

\end{lem}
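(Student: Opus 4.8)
The statement to prove is Lemma~\ref{lem:bd 1}: the product rule for Banach-space-valued curves, where $v_t = A_t u_t$ with $A_t \in \LL(X,Y)$ and $u_t \in X$.

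\medskip

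The plan is to work directly from the definition of the Banach derivative by algebraic manipulation of the difference quotient, exactly as one does for the scalar product rule, but being careful about which norm each quantity lives in. First I would write the difference quotient
\[
\frac{v_t - v_{t_0}}{t - t_0} = \frac{A_t u_t - A_{t_0} u_{t_0}}{t - t_0}
\]
and insert the standard mixed term $A_t u_{t_0}$ (or alternatively $A_{t_0} u_t$; either choice works), splitting it as
\[
\frac{v_t - v_{t_0}}{t - t_0} = A_t \left( \frac{u_t - u_{t_0}}{t - t_0} \right) + \left( \frac{A_t - A_{t_0}}{t - t_0} \right) u_{t_0}.
\]
The claim is that, as $t \to t_0$, the first summand converges in $Y$ to $A_{t_0} u_{t_0}'$ and the second to $A_{t_0}' u_{t_0}$, giving the stated formula $v_{t_0}' = A_{t_0}' u_{t_0} + A_{t_0} u_{t_0}'$.

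\medskip

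For the second summand this is immediate: $\frac{A_t - A_{t_0}}{t - t_0} \to A_{t_0}'$ in the operator norm of $\LL(X,Y)$ by hypothesis, and since evaluation at the fixed vector $u_{t_0}$ is a bounded linear map $\LL(X,Y) \to Y$ (with norm $\le \|u_{t_0}\|$), we get $\left(\frac{A_t - A_{t_0}}{t-t_0}\right) u_{t_0} \to A_{t_0}' u_{t_0}$ in $Y$. For the first summand I would further decompose:
\[
A_t \left( \frac{u_t - u_{t_0}}{t - t_0} \right) - A_{t_0} u_{t_0}'
= A_t\left( \frac{u_t - u_{t_0}}{t - t_0} - u_{t_0}' \right) + (A_t - A_{t_0}) u_{t_0}'.
\]
The second piece here tends to $0$ in $Y$ because differentiability of $A_t$ at $t_0$ implies continuity there, so $\|A_t - A_{t_0}\| \to 0$ and hence $\|(A_t - A_{t_0}) u_{t_0}'\| \le \|A_t - A_{t_0}\| \, \|u_{t_0}'\| \to 0$. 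The first piece is bounded in norm by $\|A_t\| \cdot \left\| \frac{u_t - u_{t_0}}{t - t_0} - u_{t_0}' \right\|$; the second factor $\to 0$ by differentiability of $u_t$, and the first factor $\|A_t\|$ stays bounded near $t_0$ (again by continuity of $t \mapsto A_t$), so the product $\to 0$.

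\medskip

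There is no real obstacle here; the only thing one must not overlook is precisely the point that makes the Banach case slightly more than the scalar case, namely that one needs $\|A_t\|$ to remain bounded as $t \to t_0$ and $\|A_t - A_{t_0}\| \to 0$ — both of which follow from the fact that a differentiable curve in $\LL(X,Y)$ is continuous, which in turn is a one-line consequence of the definition of the derivative. Assembling the three estimates via the triangle inequality and letting $t \to t_0$ completes the proof.
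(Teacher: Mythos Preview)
Your proof is correct and is exactly what the paper has in mind: the paper does not write out a proof at all, merely stating that the lemma ``can be proved directly by applying the above definition of derivative.'' Your argument does precisely this, with the standard add-and-subtract decomposition and the observation that differentiability implies continuity (hence local boundedness) of $t\mapsto A_t$, so there is nothing to add.
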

$\;$

\begin{lem}
\label{lem:bd 2}Suppose $A_{t}:\RR\rightarrow\LL(X,Y)$ is invertible
near $t_{0}\in\RR$, and assume that both $A_{t}$ and $A_{t}^{-1}$
are uniformly bounded in a neighborhood of $t_{0}$. If $A_{t}$ is
differentiable at $t_{0}$, then so is $A_{t}^{-1}$, and \[
(A^{-1})_{t_{0}}^{\prime}=-A_{t_{0}}^{-1}A_{t_{0}}^{\prime}A_{t_{0}}^{-1}.\]

\end{lem}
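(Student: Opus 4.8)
The plan is to work directly from the difference quotient of $t\mapsto A_t^{-1}$, using the elementary algebraic identity
\[
A_t^{-1}-A_{t_0}^{-1}=A_t^{-1}\bigl(A_{t_0}-A_t\bigr)A_{t_0}^{-1},
\]
valid whenever $A_t$ and $A_{t_0}$ are both invertible. Dividing by $t-t_0$ gives
\[
\frac{A_t^{-1}-A_{t_0}^{-1}}{t-t_0}=-A_t^{-1}\left(\frac{A_t-A_{t_0}}{t-t_0}\right)A_{t_0}^{-1},
\]
so it suffices to prove that the right-hand side converges in operator norm to $-A_{t_0}^{-1}A_{t_0}^{\prime}A_{t_0}^{-1}$ as $t\to t_0$.

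First I would record a preliminary continuity fact: $t\mapsto A_t^{-1}$ is continuous at $t_0$. Indeed, from the same identity,
\[
\left\|A_t^{-1}-A_{t_0}^{-1}\right\|\le \left\|A_t^{-1}\right\|\,\left\|A_t-A_{t_0}\right\|\,\left\|A_{t_0}^{-1}\right\|,
\]
and since differentiability of $A_t$ at $t_0$ forces $\|A_t-A_{t_0}\|\to 0$, while $\|A_t^{-1}\|$ stays bounded near $t_0$ by hypothesis and $\|A_{t_0}^{-1}\|$ is a fixed constant, the right side tends to $0$.

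Next I would split the error into two pieces:
\begin{align*}
-A_t^{-1}&\left(\frac{A_t-A_{t_0}}{t-t_0}\right)A_{t_0}^{-1}+A_{t_0}^{-1}A_{t_0}^{\prime}A_{t_0}^{-1}\\
&=-A_t^{-1}\left(\frac{A_t-A_{t_0}}{t-t_0}-A_{t_0}^{\prime}\right)A_{t_0}^{-1}-\left(A_t^{-1}-A_{t_0}^{-1}\right)A_{t_0}^{\prime}A_{t_0}^{-1}.
\end{align*}
For the first piece, $\|A_t^{-1}\|$ is uniformly bounded near $t_0$ and $\|A_{t_0}^{-1}\|$ is constant, so its norm is at most a fixed constant times $\left\|\frac{A_t-A_{t_0}}{t-t_0}-A_{t_0}^{\prime}\right\|$, which tends to $0$ by the definition of $A_{t_0}^{\prime}$. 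For the second piece, $\|A_{t_0}^{\prime}A_{t_0}^{-1}\|$ is a fixed constant and $\|A_t^{-1}-A_{t_0}^{-1}\|\to 0$ by the preliminary fact. Hence the error tends to $0$, which is exactly the assertion of the lemma.

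The argument is essentially routine; the only point needing care is that the difference-quotient manipulation naturally produces $A_t^{-1}$ (rather than $A_{t_0}^{-1}$) on the left, so one genuinely uses both hypotheses: the uniform bound on $\|A_t^{-1}\|$ to control the first piece, and the continuity of $t\mapsto A_t^{-1}$ at $t_0$ (itself a consequence of that bound together with differentiability of $A_t$) to control the second. I do not expect any serious obstacle beyond this bookkeeping.
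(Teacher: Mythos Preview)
Your proof is correct and is precisely the direct verification from the definition of derivative that the paper has in mind; the paper does not spell out the argument but simply remarks that the lemma ``can be proved directly by applying the above definition of derivative,'' which is exactly what you have done.
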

The following propositions will be used in Section \ref{sub:The-Marchenko-Equation}.

\begin{prop}
\label{pro:bd 3}Let $X$ be a complex Hilbert space. Suppose that
$A_{t}:\RR\rightarrow\LL(X,X)$ and $v_{t}:\RR\rightarrow X$ are
differentiable at $t_{0}$. If $A_{t}$ has the form \[
A_{t}=1+K_{t},\]
 where $K_{t}$ is a bounded, self adjoint, positive operator in a
neighborhood of $t_{0}$, then $A_{t}$ is invertible in a neighborhood
of $t_{0}$, and the function $u_{t}:\RR\rightarrow X$ given by \[
u_{t}=A_{t}^{-1}v_{t}\]
 is also differentiable at $t_{0}$.
\end{prop}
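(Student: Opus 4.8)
The plan is to establish the two conclusions in order: first that $A_{t}$ is invertible near $t_{0}$ with a uniform bound on $A_{t}^{-1}$, and then that $u_{t}=A_{t}^{-1}v_{t}$ is differentiable at $t_{0}$. For the first part, I would use the hypothesis that $K_{t}$ is self adjoint and positive in a neighborhood $U$ of $t_{0}$. Positivity and self adjointness give $\langle A_{t}x,x\rangle=\|x\|^{2}+\langle K_{t}x,x\rangle\geq\|x\|^{2}$ for all $x\in X$ and all $t\in U$; by Cauchy--Schwarz this forces $\|A_{t}x\|\geq\|x\|$, so $A_{t}$ is bounded below, hence injective with closed range. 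Since $A_{t}$ is self adjoint (being $1$ plus a self adjoint operator), its range is dense, so $A_{t}$ is a bijection of $X$, and the bound below yields $\|A_{t}^{-1}\|\leq 1$ throughout $U$. Thus both $A_{t}$ and $A_{t}^{-1}$ are uniformly bounded (by $\|A_{t_{0}}'\|\,|t-t_{0}|+\|A_{t_{0}}\|$ near $t_{0}$, and by $1$, respectively) on a neighborhood of $t_{0}$.

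With uniform invertibility in hand, the second part is essentially an application of the lemmas already proved. Since $A_{t}=1+K_{t}$ is differentiable at $t_{0}$ and $A_{t},A_{t}^{-1}$ are uniformly bounded near $t_{0}$, Lemma \ref{lem:bd 2} gives that $A_{t}^{-1}$ is differentiable at $t_{0}$ with $(A^{-1})_{t_{0}}'=-A_{t_{0}}^{-1}A_{t_{0}}'A_{t_{0}}^{-1}$. Then $u_{t}=A_{t}^{-1}v_{t}$ is a product of a differentiable $\LL(X,X)$-valued curve with a differentiable $X$-valued curve, so Lemma \ref{lem:bd 1} applies directly to conclude that $u_{t}$ is differentiable at $t_{0}$, with $u_{t_{0}}'=(A^{-1})_{t_{0}}'v_{t_{0}}+A_{t_{0}}^{-1}v_{t_{0}}'$.

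The only genuinely delicate point is verifying the hypotheses of Lemma \ref{lem:bd 2}, namely the \emph{uniform} boundedness of $A_{t}$ and $A_{t}^{-1}$ on a whole neighborhood of $t_{0}$ rather than just at $t_{0}$ itself. Uniform boundedness of $A_{t}$ follows from differentiability at $t_{0}$ (hence continuity there, so $\|A_{t}\|$ stays bounded on a small ball); uniform boundedness of $A_{t}^{-1}$ is exactly what the coercivity estimate $\langle A_{t}x,x\rangle\geq\|x\|^{2}$ delivers, and this is where positivity of $K_{t}$ is used in an essential way — it is what upgrades invertibility at a point to invertibility with a uniform bound on a neighborhood. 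Once that estimate is recorded, the rest is a routine concatenation of Lemmas \ref{lem:bd 1} and \ref{lem:bd 2}. I would also remark that no continuity of $t\mapsto K_{t}$ beyond differentiability at the single point $t_{0}$ is needed, because the invertibility of $A_{t}$ for $t\neq t_{0}$ near $t_{0}$ comes from the algebraic/positivity structure, not from a perturbation-of-$A_{t_{0}}$ argument.
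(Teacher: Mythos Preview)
Your proof is correct and follows essentially the same route as the paper: establish that $A_{t}=1+K_{t}$ is invertible near $t_{0}$ with the uniform bound $\|A_{t}^{-1}\|\leq 1$, then feed this into Lemmas \ref{lem:bd 1} and \ref{lem:bd 2}. The only cosmetic difference is that you obtain the bound $\|A_{t}^{-1}\|\leq 1$ via the coercivity estimate $\langle A_{t}x,x\rangle\geq\|x\|^{2}$, whereas the paper simply invokes its Proposition \ref{pro:1+A invertible} (whose proof uses a different rescaling trick to reach the same inequality).
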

\begin{proof}
Since $A$ is differentiable at $t_{0}$, it is certainly uniformly
bounded near $t_{0}$. Therefore, by Proposition \ref{pro:1+A invertible},
and its proof, $A_{t}$ is invertible, and $A_{t}^{-1}$ is uniformly
bounded in a neighborhood of $t_{0}$. The result now follows from
Lemmas \ref{lem:bd 1} and \ref{lem:bd 2}.
\end{proof}
\begin{prop}
\label{pro:bd 4}Let $(X,\left\Vert \cdot\right\Vert )$ be a Banach
space of complex-valued functions on some set $S$, and suppose that
there is a constant $C>0$ such that $|f(p)|\leq C\left\Vert f\right\Vert $
for all $f\in X$ and $p\in S$. If $u:\RR\rightarrow X$ is differentiable
at $t_{0}$, then for each point $p\in S$, the function $t\mapsto u_{t}(p)$
is differentiable at $t_{0}$, and its derivative is given by $u_{t_{0}}^{\prime}(p)$.
\end{prop}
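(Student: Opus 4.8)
The plan is to recognize that the hypothesis $|f(p)|\le C\left\Vert f\right\Vert$ is precisely the statement that the pointwise evaluation functional is bounded, and then to use the standard fact that bounded linear maps pass through Banach derivatives. First I would fix $p\in S$ and define $E_p:X\to\CC$ by $E_p(f)=f(p)$. This map is linear, and by hypothesis $|E_p(f)|\le C\left\Vert f\right\Vert$ for all $f\in X$, so $E_p\in\LL(X,\CC)$ with operator norm at most $C$.

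Next, for $t\neq t_0$ I would exploit the linearity of $E_p$ to write
\[
\frac{u_t(p)-u_{t_0}(p)}{t-t_0}-u_{t_0}^{\prime}(p)=E_p\!\left(\frac{u_t-u_{t_0}}{t-t_0}-u_{t_0}^{\prime}\right),
\]
since $E_p$ commutes with the scalar difference quotient and with the subtraction of the fixed vector $u_{t_0}^{\prime}$. Applying the bound on $E_p$ gives
\[
\left|\frac{u_t(p)-u_{t_0}(p)}{t-t_0}-u_{t_0}^{\prime}(p)\right|\le C\left\Vert \frac{u_t-u_{t_0}}{t-t_0}-u_{t_0}^{\prime}\right\Vert .
\]
By the definition of differentiability of $u$ at $t_0$ (Section \ref{sub:Banach-derivatives}), the right-hand side tends to $0$ as $t\to t_0$. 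Hence the scalar function $t\mapsto u_t(p)$ is differentiable at $t_0$ with derivative $E_p(u_{t_0}^{\prime})=u_{t_0}^{\prime}(p)$, which is the claim.

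There is essentially no obstacle in this argument: the only point requiring care is that $E_p$ must genuinely be linear (so that it distributes over the difference quotient and the subtracted vector), and that the displayed hypothesis is literally a bound on $\left\Vert E_p\right\Vert$. Once these two observations are in place, the conclusion follows from a single one-line estimate together with the definition of the Banach derivative of $u$.
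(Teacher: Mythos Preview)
Your proof is correct and is essentially the same as the paper's: the paper simply says the claim follows immediately from the definition of derivative together with the hypothesis $|f(p)|\le C\left\Vert f\right\Vert$, which is exactly your one-line estimate with the evaluation functional $E_p$ made explicit.
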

\begin{proof}
For simplicity, assume $t_{0}=0$. We want to show that \[
\lim_{t\rightarrow0}\left|\frac{u_{t}(p)-u_{0}(p)}{t}-u_{0}^{'}(p)\right|=0.\]
But this follows immediately from the definition of derivative, and
the hypothesis that $|f(p)|\leq C\left\Vert f\right\Vert $ for all
$f\in X$ and $p\in S$.
\end{proof}
\begin{prop}
\label{pro:bd 5}Let $r\in H^{1}(\RR)$, and suppose that $\xi r$
is in $H^{1}(\RR)$. Then the curve $t\mapsto re^{i\xi t}\in H^{1}(\RR)$
is differentiable at every $t_{0}\in\RR$, and its derivative at $t_{0}$
is given by $i\xi re^{i\xi t_{0}}.$
\end{prop}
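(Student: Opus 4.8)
\emph{Proof proposal.} The plan is to work in $H^{1}(\RR)$ as a Hilbert space with the Sobolev norm $\|f\|^{2}=\|f\|_{L^{2}}^{2}+\|f'\|_{L^{2}}^{2}$ (so that Banach derivatives in the sense of Section \ref{sub:Banach-derivatives} make sense), to write out the difference quotient of $t\mapsto re^{i\xi t}$ at $t_{0}$, factor off the unimodular exponential $e^{i\xi t_{0}}$, and finish with three applications of the dominated convergence theorem. The only genuinely delicate point is the choice of dominating functions: they must involve no more than one power of $\xi$, since the hypothesis provides $\xi r\in H^{1}(\RR)$ but not $\xi^{2}r\in L^{2}(\RR)$.

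First I would record the elementary pointwise inequalities, valid for real $u$,
\[
|e^{iu}-1|\le\min(|u|,2),\qquad |e^{iu}-1-iu|\le\min\bigl(\tfrac{u^{2}}{2},\,2|u|\bigr).
\]
Fixing $t_{0}\in\RR$ and writing $s=t-t_{0}$, one has
\[
\frac{re^{i\xi t}-re^{i\xi t_{0}}}{t-t_{0}}-i\xi re^{i\xi t_{0}}=re^{i\xi t_{0}}\,g_{s},\qquad g_{s}(\xi):=\frac{e^{i\xi s}-1}{s}-i\xi .
\]
The inequalities above give $|g_{s}(\xi)|\le\min\bigl(\tfrac{|s|}{2}\xi^{2},\,2|\xi|\bigr)\le 2|\xi|$, while $g_{s}'(\xi)=i(e^{i\xi s}-1)$ satisfies $|g_{s}'(\xi)|\le\min(|s\xi|,2)\le 2$; moreover $g_{s}(\xi)\to0$ and $g_{s}'(\xi)\to0$ pointwise as $s\to0$. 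Using $|e^{i\xi t}|=1$ on $\RR$ together with $r,\xi r\in H^{1}(\RR)$, one checks that $re^{i\xi t}$, $i\xi re^{i\xi t_{0}}$, and $re^{i\xi t_{0}}g_{s}$ all lie in $H^{1}(\RR)$, so the $H^{1}$-norm of the displayed expression is meaningful and it suffices to show $\|re^{i\xi t_{0}}g_{s}\|_{H^{1}}\to0$ as $s\to0$.

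For the $L^{2}$-part, $|re^{i\xi t_{0}}g_{s}|=|rg_{s}|\le 2|\xi r|\in L^{2}(\RR)$, and the integrand tends to $0$ pointwise, so dominated convergence gives $\|re^{i\xi t_{0}}g_{s}\|_{L^{2}}\to0$. For the derivative, the product rule gives
\[
\bigl(re^{i\xi t_{0}}g_{s}\bigr)'=(r'+it_{0}r)e^{i\xi t_{0}}g_{s}+re^{i\xi t_{0}}g_{s}'.
\]
The second term is bounded in modulus by $2|r|\in L^{2}(\RR)$ and tends to $0$ pointwise, so its $L^{2}$-norm $\to0$. The first term is bounded by $2|\xi(r'+it_{0}r)|$; here $\xi r\in L^{2}(\RR)$ by hypothesis and $\xi r'\in L^{2}(\RR)$ because $(\xi r)'=r+\xi r'\in L^{2}(\RR)$ (as $\xi r\in H^{1}$) while $r\in L^{2}(\RR)$, so this term is dominated by an $L^{2}$ function and tends to $0$ pointwise; hence its $L^{2}$-norm $\to0$ as well. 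Adding the three pieces yields $\|re^{i\xi t_{0}}g_{s}\|_{H^{1}}\to0$, which is precisely the assertion that $t\mapsto re^{i\xi t}$ is differentiable at $t_{0}$ with derivative $i\xi re^{i\xi t_{0}}$. The main obstacle, as noted, is that the crude Taylor bound $|g_{s}(\xi)|\lesssim|s|\xi^{2}$ would require $\xi^{2}r\in L^{2}$; the two-sided estimate for $|e^{iu}-1-iu|$ is what produces a dominating function linear in $\xi$, and one must likewise extract $\xi r'\in L^{2}(\RR)$ from $\xi r\in H^{1}(\RR)$. Everything else is routine dominated-convergence bookkeeping.
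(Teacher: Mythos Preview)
Your proof is correct. The approach differs from the paper's in technique, though the underlying idea is the same.

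The paper writes the difference quotient as $\phi_{t}(\xi)\cdot\xi r$ with $\phi_{t}(\xi)=\frac{e^{i\xi t}-1}{\xi t}-i$ (your $g_{s}=\xi\phi_{s}$), establishes uniform bounds $|\phi_{t}|,|\phi_{t}'|\le M$, and then runs an $\epsilon$-splitting argument: choose $\xi_{\epsilon}$ so that the $L^{2}$-tails of $\xi r$ and $\partial_{\xi}(\xi r)$ beyond $|\xi|>\xi_{\epsilon}$ are small, then use the uniform smallness of $\phi_{t}$ and $\phi_{t}'$ on $[-\xi_{\epsilon},\xi_{\epsilon}]$ for small $t$. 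You instead produce explicit $s$-independent dominating functions ($2|\xi r|$, $2|r|$, and $2|\xi r'|+2|t_{0}||\xi r|$) and invoke dominated convergence directly. Your route is somewhat cleaner and avoids the bookkeeping of the splitting; the paper's route has the minor advantage of not needing to isolate $\xi r'\in L^{2}$ explicitly, since it works with $\partial_{\xi}(\xi r)$ as a single object. Both arguments rest on the same essential point, which you identify clearly: one must use the bound $|g_{s}(\xi)|\le 2|\xi|$ rather than the Taylor estimate $|g_{s}(\xi)|\le\tfrac{|s|}{2}\xi^{2}$, since only $\xi r\in H^{1}$ is assumed.
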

\begin{proof}
Without loss of generality, we can assume that $t_{0}=0$. We need
to show that \[
\lim_{t\rightarrow0}\left\Vert \frac{e^{i\xi t}r-r}{t}-i\xi r\right\Vert _{H^{1}}=\lim_{t\rightarrow0}\left\Vert \left(\frac{e^{i\xi t}-1}{\xi t}-i\right)\xi r\right\Vert _{H^{1}}=0.\]
Set $\phi_{t}(\xi)=\left(\frac{e^{i\xi t}-1}{\xi t}-i\right)$. One
can show that there is a constant $M>0$ such that for all $0<t<1$
and for all $\xi\in\RR$, we have $|\phi_{t}(\xi)|<M$ and $|\phi_{t}^{\prime}(\xi)|<M$.
Given $\epsilon>0$, we can choose $\xi_{\epsilon}>0$ large enough
so that \[
\left\Vert \chi_{(-\infty,-\xi_{\epsilon}]\cup[\xi_{\epsilon},\infty)}\xi r\right\Vert _{L^{2}},\left\Vert \chi_{(-\infty,-\xi_{\epsilon}]\cup[\xi_{\epsilon},\infty)}\partial_{\xi}(\xi r)\right\Vert _{L^{2}}<\epsilon,\]
and $0<t<1$ small enough so that\[
\left\Vert \chi_{[-\xi_{\epsilon},\xi_{\epsilon}]}\phi_{t}\right\Vert _{L^{1}},\left\Vert \chi_{[-\xi_{\epsilon},\xi_{\epsilon}]}\phi_{t}^{\prime}\right\Vert _{L^{1}}<\epsilon.\]
 We have \begin{eqnarray}
\left\Vert \phi_{t}\xi r\right\Vert _{H^{1}}^{2} & = & \left\Vert \phi_{t}\xi r\right\Vert _{L^{2}}^{2}+\left\Vert \phi_{t}^{\prime}\xi r+\phi_{t}\partial_{\xi}(\xi r)\right\Vert _{L^{2}}^{2}\nonumber \\
 & \leq & \left\Vert \phi_{t}\xi r\right\Vert _{L^{2}}^{2}+2\left\Vert \phi_{t}^{\prime}\xi r\right\Vert _{L^{2}}^{2}+2\left\Vert \phi_{t}\partial_{\xi}(\xi r)\right\Vert _{L^{2}}^{2}.\label{eq:bd 5 1}\end{eqnarray}
We estimate each of the three terms in (\ref{eq:bd 5 1}):\begin{eqnarray*}
\left\Vert \phi_{t}\xi r\right\Vert _{L^{2}}^{2} & = & \left\Vert \chi_{(-\infty,-\xi_{\epsilon}]\cup[\xi_{\epsilon},\infty)}\phi_{t}\xi r\right\Vert _{L^{2}}^{2}+\left\Vert \chi_{[-\xi_{\epsilon},\xi_{\epsilon}]}\phi_{t}\xi r\right\Vert _{L^{2}}^{2}\\
 & \leq & M^{2}\epsilon^{2}+\epsilon^{2}\left\Vert \xi r\right\Vert _{L^{2}}^{2}\end{eqnarray*}
\begin{eqnarray*}
\left\Vert \phi_{t}^{\prime}\xi r\right\Vert _{L^{2}}^{2} & = & \left\Vert \chi_{(-\infty,-\xi_{\epsilon}]\cup[\xi_{\epsilon},\infty)}\phi_{t}^{\prime}\xi r\right\Vert _{L^{2}}^{2}+\left\Vert \chi_{[-\xi_{\epsilon},\xi_{\epsilon}]}\phi_{t}^{\prime}\xi r\right\Vert _{L^{2}}^{2}\\
 & \leq & M^{2}\epsilon^{2}+\epsilon^{2}\left\Vert \xi r\right\Vert _{L^{2}}^{2}\end{eqnarray*}
\begin{eqnarray*}
\left\Vert \phi_{t}\partial_{\xi}(\xi r)\right\Vert _{L^{2}}^{2} & = & \left\Vert \chi_{(-\infty,-\xi_{\epsilon}]\cup[\xi_{\epsilon},\infty)}\phi_{t}\partial_{\xi}(\xi r)\right\Vert _{L^{2}}^{2}+\left\Vert \chi_{[-\xi_{\epsilon},\xi_{\epsilon}]}\phi_{t}\partial_{\xi}(\xi r)\right\Vert _{L^{2}}^{2}\\
 & \leq & M^{2}\epsilon^{2}+\epsilon^{2}\left\Vert \partial_{\xi}(\xi r)\right\Vert _{L^{2}}^{2}.\end{eqnarray*}
Therefore we have $\left\Vert \phi_{t}\xi r\right\Vert _{H^{1}}^{2}\leq5(M^{2}+\left\Vert \xi r\right\Vert _{H^{1}}^{2})\epsilon^{2}$. 
\end{proof}

\subsection{The Marchenko equation\label{sub:The-Marchenko-Equation}}

Later in the chapter we work with the Marchenko equation:\begin{equation}
(1+\Pi_{+}r^{*}\Pi_{-}r)L=-\Pi_{+}r^{*},\label{eq:Mar}\end{equation}
 where $r$ is a complex function on $\Lambda=\RR\textrm{ or }S^{1}$.
The Marchenko equation comes from a system of equations:\begin{eqnarray*}
K & = & 1+\Pi_{+}r^{*}L^{*}\\
L & = & -\Pi_{+}r^{*}K^{*}.\end{eqnarray*}
In the literature $\Lambda$ is $\RR$, and the system is typically
written in the Fourier domain: \begin{eqnarray*}
\FF(K-1)(x) & = & \int_{0}^{\infty}f(-x-y)^{*}\hat{L}(y)^{*}dy\\
\hat{L}(x) & = & -\int_{0}^{\infty}f(-x-y)^{*}\hat{K}(y)^{*}dy,\end{eqnarray*}
where $f=\hat{r}$. In this section, we discuss hypotheses on $r$
which guarantee that there is a unique solution $L\in H_{+}^{1}(\Lambda)$
to equation (\ref{eq:Mar}). 

\begin{lem}
\label{lem:TME 1}If $r\in H^{1}(\Lambda)$, then $A:=\Pi_{+}r^{*}\Pi_{-}r\Pi_{+}$
is a bounded, positive, self adjoint operator from $H^{1}(\Lambda)$
to itself.
\end{lem}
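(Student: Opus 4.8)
The plan is to present $A=\Pi_{+}M_{r^{*}}\Pi_{-}M_{r}\Pi_{+}$ as a composition of operators that are simultaneously bounded on $L^{2}(\Lambda)$ and on $H^{1}(\Lambda)$, and then to read off self-adjointness and positivity from the $L^{2}$ structure. Here $M_{g}$ denotes multiplication by $g$, and $\Pi_{\pm}$ are the projections of Section \ref{sub:proj}.

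First I would record two building blocks. \emph{(i)} Since $\Lambda$ is one-dimensional, $H^{1}(\Lambda)$ embeds continuously in $L^{\infty}(\Lambda)$, and the Leibniz rule then makes $H^{1}(\Lambda)$ a Banach algebra; hence for $r\in H^{1}(\Lambda)$ (so also $r^{*}\in H^{1}(\Lambda)$) the operators $M_{r}$ and $M_{r^{*}}$ are bounded on $H^{1}(\Lambda)$, and they are bounded on $L^{2}(\Lambda)$ as well since $\|M_{r}\|_{L^{2}\to L^{2}}=\|r\|_{L^{\infty}}\leq C\|r\|_{H^{1}}$. \emph{(ii)} On the Fourier side, $\Pi_{+}$ and $\Pi_{-}$ act as multiplication by the indicator of a half-line (resp.\ of a one-sided set of integers when $\Lambda=S^{1}$), so they are orthogonal projections on $L^{2}(\Lambda)$, in particular idempotent and self-adjoint there; and since $\|f\|_{H^{1}}^{2}=\|f\|_{L^{2}}^{2}+\|f'\|_{L^{2}}^{2}$ and these Fourier multipliers commute with differentiation, $\Pi_{\pm}$ are bounded on $H^{1}(\Lambda)$ as well (indeed with norm $\leq 1$).

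Granting \emph{(i)} and \emph{(ii)}, $A$ is bounded on $H^{1}(\Lambda)$, and likewise on $L^{2}(\Lambda)$, simply as a composition of five bounded maps. For the remaining assertions I work with the $L^{2}(\Lambda)$ inner product. Self-adjointness follows by taking the adjoint of the composition, using $\Pi_{\pm}^{*}=\Pi_{\pm}$ and $M_{r}^{*}=M_{r^{*}}$: this gives $A^{*}=\Pi_{+}M_{r^{*}}\Pi_{-}M_{r}\Pi_{+}=A$. Positivity follows from idempotency and self-adjointness of the projections: for $f\in L^{2}(\Lambda)$, pushing the outer $\Pi_{+}$ and the factor $M_{r^{*}}$ across the pairing yields $\langle Af,f\rangle=\langle\Pi_{-}M_{r}\Pi_{+}f,\,M_{r}\Pi_{+}f\rangle=\|\Pi_{-}M_{r}\Pi_{+}f\|_{L^{2}}^{2}\geq 0$.

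I do not expect a genuine obstacle here; the lemma is essentially bookkeeping. The only place the hypothesis $r\in H^{1}$ is really used (rather than merely $r\in L^{2}\cap L^{\infty}$) is step \emph{(i)}, to keep $M_{r}$ invariant on $H^{1}(\Lambda)$, which rests on the one-dimensional Sobolev embedding. The one point worth stating carefully is that ``positive'' and ``self-adjoint'' are meant with respect to the $L^{2}(\Lambda)$ pairing (for the $H^{1}$ inner product $M_{r}$ is not self-adjoint), so the content of the lemma is that $A$ restricts to a bounded operator on $H^{1}(\Lambda)$ while extending to a bounded, positive, self-adjoint operator on $L^{2}(\Lambda)$. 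For $\Lambda=S^{1}$ the identical computation applies, the relevant projections $\Pi_{\pm}$ (and, where needed, $\tilde{\Pi}_{\pm}$) being again bounded on both spaces, with $\Pi_{\pm}$ orthogonal projections on $L^{2}(S^{1})$.
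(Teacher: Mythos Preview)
Your argument is correct and follows essentially the same route as the paper: boundedness on $H^{1}$ via the algebra property of $H^{1}(\Lambda)$ (the paper cites Fact~\ref{fac:Sob 4}), and self-adjointness/positivity by recognizing $A=T^{*}T$ with $T=\Pi_{-}M_{r}\Pi_{+}$ (the paper phrases this as ``$(\Pi_{+}r^{*}\Pi_{-},\Pi_{-}r\Pi_{+})$ forms an adjoint pair'' and invokes Fact~\ref{fac:adjoint pair}). Your explicit remark that the adjointness and positivity hold with respect to the $L^{2}$ pairing rather than the $H^{1}$ inner product is a fair reading---the paper's proof, taken literally, also only establishes $M_{r}^{*}=M_{r^{*}}$ in $L^{2}$---and is worth keeping in mind when the lemma is fed into Proposition~\ref{pro:TME 2}.
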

\begin{proof}
By Fact \ref{fac:Sob 4}, multiplication by $r$ or $r^{*}$is a bounded
operator from $H^{1}(\Lambda)$ to itself. It is clear that $\Pi_{-}$
and $\Pi_{+}$ are also bounded operators on $H^{1}(\Lambda)$. Therefore
$A$ is itself such an operator. We can use Fact \ref{fac:adjoint pair}
to show that $A$ is positive and self adjoint once we establish that
$(\Pi_{+}r^{*}\Pi_{-},\Pi_{-}r\Pi_{+})$ forms an adjoint pair. This
can easily be shown by proving that $(\Pi_{-},\Pi_{+})$ and $(r,r^{*})$
are each adjoint pairs.
\end{proof}
\begin{prop}
\label{pro:TME 2}If $r\in H^{1}(\Lambda)$, then there is a unique
solution $L\in H_{+}^{1}(\Lambda)$ to the Marchenko equation (\ref{eq:Mar}).
The norm of this solution satisfies the estimate\[
\left\Vert L\right\Vert _{H^{1}}\leq\left\Vert r\right\Vert _{H^{1}}.\]

\end{prop}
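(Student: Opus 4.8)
The plan is to recognize the Marchenko operator $1+A$, with $A=\Pi_{+}r^{*}\Pi_{-}r\Pi_{+}$, as an operator of the form $1+K$ with $K$ bounded, self-adjoint, and positive on the Hilbert space $H^{1}(\Lambda)$ — which is exactly what Lemma \ref{lem:TME 1} provides. For such an operator, $1+K\geq 1$, so it is bounded below by $1$, hence injective with closed range; being self-adjoint it also has dense range, so it is a bijection of $H^{1}(\Lambda)$ onto itself with $\|(1+K)^{-1}\|\leq 1$. (This is presumably the content of the \texttt{Proposition \ref{pro:1+A invertible}} invoked in the proof of Proposition \ref{pro:bd 3}; I would cite it here.) Since the right-hand side $-\Pi_{+}r^{*}$ of (\ref{eq:Mar}) lies in $H_{+}^{1}(\Lambda)$ — because $r\in H^{1}(\Lambda)$ implies $r^{*}\in H^{1}(\Lambda)$ by Fact \ref{fac:Sob 4}, and $\Pi_{+}$ is bounded on $H^{1}$ with range in $H_{+}^{1}$ — the unique solution is $L=-(1+A)^{-1}\Pi_{+}r^{*}$. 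One must check that $L$ actually lies in the \emph{subspace} $H_{+}^{1}(\Lambda)$ and not merely in $H^{1}(\Lambda)$: this follows because $A$ maps into $H_{+}^{1}(\Lambda)$ (the outermost operator applied is $\Pi_{+}$), so $1+A$ preserves the closed subspace $H_{+}^{1}(\Lambda)\oplus\CC\cdot(\text{const})$ appropriately; more cleanly, restrict the whole problem to the invariant subspace $H_{+}^{1}(\Lambda)$ from the start, on which $1+A$ is again $1+(\text{positive self-adjoint})$, and solve there.

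For the norm estimate I would argue as follows. Write $L=-\Pi_{+}r^{*}-AL$ from (\ref{eq:Mar}), or better, pair the equation with $L$ itself: taking the $H^{1}$ inner product of $(1+A)L=-\Pi_{+}r^{*}$ with $L$ gives
\[
\|L\|_{H^{1}}^{2}+\langle AL,L\rangle_{H^{1}} = -\langle \Pi_{+}r^{*},L\rangle_{H^{1}}.
\]
Since $A$ is positive, $\langle AL,L\rangle_{H^{1}}\geq 0$, so $\|L\|_{H^{1}}^{2}\leq |\langle \Pi_{+}r^{*},L\rangle_{H^{1}}|\leq \|\Pi_{+}r^{*}\|_{H^{1}}\,\|L\|_{H^{1}}$, whence $\|L\|_{H^{1}}\leq\|\Pi_{+}r^{*}\|_{H^{1}}$. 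It then remains to see that $\|\Pi_{+}r^{*}\|_{H^{1}}\leq\|r\|_{H^{1}}$. This needs the fact that $\Pi_{+}$ (and the splitting $f=f_{0}+f_{+}+f_{-}$, resp. $f=f_{+}+f_{-}$) is an \emph{orthogonal} decomposition in the $H^{1}$ norm — which it is, since the Fourier-side weight $1+|y|^{2}$ (continuum) or $1+|j|^{2}$ (discrete) multiplies each frequency component independently, so the pieces supported on disjoint frequency sets are $H^{1}$-orthogonal. Hence $\|\Pi_{+}r^{*}\|_{H^{1}}\leq\|r^{*}\|_{H^{1}}=\|r\|_{H^{1}}$, completing the estimate.

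The main obstacle — really the only non-bookkeeping point — is establishing that $1+A$ is genuinely invertible with bounded inverse on $H^{1}(\Lambda)$, i.e. that positivity plus self-adjointness plus the bound $A\geq 0$ yields a \emph{bounded} inverse even though $A$ is not assumed compact and $1+A$ could in principle have spectrum accumulating at... no: because $\langle(1+A)f,f\rangle\geq\|f\|^{2}$, the operator $1+A$ is bounded below by $1$ in the strong sense, so $0$ is in the resolvent set and $\|(1+A)^{-1}\|\leq1$ automatically. So in fact there is no real obstacle; the proposition is a direct consequence of Lemma \ref{lem:TME 1} together with the elementary Hilbert-space fact (Proposition \ref{pro:1+A invertible}) that $1+(\text{positive self-adjoint bounded})$ is invertible with inverse of norm $\leq1$. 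The one place to be careful is the $H^{1}$-orthogonality of the Paley–Wiener / Hardy-space splitting used in the last step of the norm bound, and the verification that the solution lands in $H_{+}^{1}$ rather than merely $H^{1}$; both are handled by working on the invariant subspace $H_{+}^{1}(\Lambda)$ throughout.
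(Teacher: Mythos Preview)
Your proposal is correct and follows essentially the same approach as the paper: invoke Lemma~\ref{lem:TME 1} to see that $A=\Pi_{+}r^{*}\Pi_{-}r\Pi_{+}$ is bounded, self-adjoint, and positive on $H^{1}(\Lambda)$, then apply Proposition~\ref{pro:1+A invertible} (whose content you guessed correctly) to conclude that $1+A$ is invertible with $\|(1+A)^{-1}\|\leq 1$, yielding both existence/uniqueness and the bound $\|L\|_{H^{1}}\leq\|\Pi_{+}r^{*}\|_{H^{1}}\leq\|r\|_{H^{1}}$. Your pairing argument for the norm estimate and your care in checking that $L$ actually lies in $H_{+}^{1}$ (by restricting to the invariant subspace) are more explicit than the paper's one-line appeal to Proposition~\ref{pro:1+A invertible}, but the substance is the same; the only trivial slip is citing Fact~\ref{fac:Sob 4} for $r^{*}\in H^{1}$, which is immediate from the definition and needs no product estimate.
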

\begin{proof}
The Marchenko equation can be written \[
(1+A)L=-\Pi_{+}r^{*}\]
 for $A$ defined in Lemma \ref{lem:TME 1}. The Lemma tells us that
$A$ is a bounded, positive, self adjoint operator from $H^{1}(\Lambda)$
to itself. The desired result then follows immediately from Proposition
\ref{pro:1+A invertible}.
\end{proof}
\begin{prop}
Let $r_{0}\in H^{1}(\RR)$, and let $A:\RR\rightarrow\LL(H^{1}(\RR),H^{1}(\RR))$
be given by \[
A_{t}=\Pi_{+}r_{0}^{*}e^{-2i\xi t}\Pi_{-}r_{0}e^{2i\xi t}.\]
If $\xi r_{0}$ is in $H^{1}(\RR)$, then the solution $L:\RR\rightarrow H^{1}(\RR)$
to the Marchenko equation\[
(1+A_{t})L_{t}=-\Pi_{+}r_{0}^{*}e^{-2i\xi t}\]
 is differentiable (in the sense of Section \ref{sub:Banach-derivatives})
at every $t_{0}\in\RR$.
\end{prop}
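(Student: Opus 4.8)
The plan is to read this off from Proposition \ref{pro:bd 3}, with the operator $A_t$ there taken to be $1+A_t$ here (so that $K_t=A_t$) and the vector $v_t$ there taken to be $v_t:=-\Pi_{+}r_{0}^{*}e^{-2i\xi t}$. Three ingredients must be assembled: first, that $A_t$ is a bounded, self adjoint, positive operator on $H^{1}(\RR)$ in a neighborhood of $t_{0}$ (this makes $1+A_t$ invertible and $L_t$ well defined, and supplies the structural hypothesis of Proposition \ref{pro:bd 3}); second, that $t\mapsto v_t$ is differentiable at $t_{0}$ in $H^{1}(\RR)$; and third, that $t\mapsto A_t$ is differentiable at $t_{0}$ in the operator norm of $\LL(H^{1}(\RR),H^{1}(\RR))$. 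Once these are in hand, Proposition \ref{pro:bd 3} gives the differentiability of $L_t=(1+A_t)^{-1}v_t$ directly.

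For the first ingredient, fix $t$ and set $r:=r_{0}e^{2i\xi t}$. Then $|r|=|r_{0}|$ pointwise and $\partial_{\xi}(r_{0}e^{2i\xi t})=r_{0}^{\prime}e^{2i\xi t}+2it\,r_{0}e^{2i\xi t}\in L^{2}(\RR)$, so $r\in H^{1}(\RR)$. Since $A_t=\Pi_{+}r^{*}\Pi_{-}r\Pi_{+}$ on $H_{+}^{1}(\RR)$, Lemma \ref{lem:TME 1} applied to this $r$ shows $A_t$ is bounded, positive and self adjoint; Proposition \ref{pro:1+A invertible} then gives invertibility of $1+A_t$ (uniformly near $t_{0}$), and Proposition \ref{pro:TME 2} that $L_t\in H_{+}^{1}(\RR)$. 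For the second ingredient, note $r_{0}^{*}\in H^{1}(\RR)$ and $\xi r_{0}^{*}=(\xi r_{0})^{*}\in H^{1}(\RR)$; rerunning the proof of Proposition \ref{pro:bd 5} with $-2\xi t$ in place of $\xi t$ (the relevant multiplier $\psi_t(\xi)=(e^{-2i\xi t}-1)/(\xi t)+2i$ is again uniformly bounded with uniformly bounded derivative for $0<t<1$) shows $t\mapsto r_{0}^{*}e^{-2i\xi t}$ is differentiable in $H^{1}(\RR)$ at $t_{0}$, with derivative $-2i\xi r_{0}^{*}e^{-2i\xi t_{0}}$. Composing with the bounded linear operator $\Pi_{+}$, via the product rule of Lemma \ref{lem:bd 1} with a constant operator factor, gives differentiability of $t\mapsto v_t$.

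For the third ingredient, write $A_t=\Pi_{+}\circ M_{g_t}\circ\Pi_{-}\circ M_{h_t}$, where $M_{f}$ denotes multiplication by $f$, $g_t=r_{0}^{*}e^{-2i\xi t}$ and $h_t=r_{0}e^{2i\xi t}$. By Fact \ref{fac:Sob 4}, $\|M_{f}\|_{\LL(H^{1},H^{1})}\le C\|f\|_{H^{1}}$, hence $\|M_{g_t}-M_{g_{t_{0}}}\|_{\LL(H^{1},H^{1})}\le C\|g_t-g_{t_{0}}\|_{H^{1}}$ and similarly for $h_t$; combined with the $H^{1}$-differentiability of $t\mapsto g_t$ and $t\mapsto h_t$ established above, this shows $t\mapsto M_{g_t}$ and $t\mapsto M_{h_t}$ are differentiable at $t_{0}$ in operator norm. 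Since $\Pi_{+}$ and $\Pi_{-}$ are constant, the product rule for operator-valued curves (the argument of Lemma \ref{lem:bd 1}, applied to composition, which is a bounded bilinear operation) shows $t\mapsto A_t$ is differentiable at $t_{0}$.

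I expect the main obstacle to be this third step: one must upgrade $H^{1}$-convergence of the multipliers $g_t,h_t$ to operator-norm convergence of the associated multiplication operators — which is exactly what the algebra/embedding estimate of Fact \ref{fac:Sob 4} is for — and then carry the product rule through the $\Pi_{-}$ that is sandwiched between the two multiplication operators. The remaining steps are a routine assembly of Lemma \ref{lem:TME 1}, Proposition \ref{pro:bd 5}, and Proposition \ref{pro:bd 3}.
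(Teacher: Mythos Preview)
Your proposal is correct and follows essentially the same route as the paper: reduce to Proposition \ref{pro:bd 3}, supply the positivity/self-adjointness of $A_t$ via Lemma \ref{lem:TME 1}, and obtain operator-norm differentiability of $A_t$ by first getting $H^{1}$-differentiability of the multipliers from Proposition \ref{pro:bd 5} and then upgrading via Fact \ref{fac:Sob 4}. If anything you are more careful than the paper, which only says ``we just need to show that $A$ is differentiable'' and leaves the (equally necessary) differentiability of $v_t=-\Pi_{+}r_{0}^{*}e^{-2i\xi t}$ implicit.
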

\begin{proof}
By Proposition \ref{pro:bd 3}, we just need to show that $A$ is
differentiable at $t_{0}$. It is sufficient to show that multiplication
by $r_{0}e^{2i\xi t}$ is differentiable at $t_{0}.$ This can easily
be shown using Proposition \ref{pro:bd 5}.
\end{proof}

\section{The Zakharov-Shabat system\label{sec:The-Zakharov-Shabat-system}}

The selective excitation transform has been shown to be equivalent
to the scattering transform for the Zakharov-Shabat (ZS) system of
equations. In this section we give the details of this relationship.
We mainly follow the notation of \cite{Epstein}.

The Bloch equation (\ref{eq:bloch frequency}) can be written as \[
\partial_{t}M(z;t)=\left[\begin{array}{ccc}
0 & z & -\im\omega(t)\\
-z & 0 & \re\omega(t)\\
\im\omega(t) & -\re\omega(t) & 0\end{array}\right]M(z;t).\]
We can think of the $3\times3$ matrix in this equation as an element
of the Lie algebra of $SO_{3}\RR$ mapping $M\in S^{2}$ to an element
of the tangent space $T_{M}S^{2}$. If we lift to the universal cover,
$SU_{2}\CC$, this equation becomes \begin{equation}
\partial_{t}\left[\begin{array}{c}
\psi_{1}(\xi;t)\\
\psi_{2}(\xi;t)\end{array}\right]=\left[\begin{array}{cc}
-i\xi & q(t)\\
-q^{*}(t) & i\xi\end{array}\right]\left[\begin{array}{c}
\psi_{1}(\xi;t)\\
\psi_{2}(\xi;t)\end{array}\right]\label{eq:ZS1}\end{equation}
 where \begin{equation}
M(z;t)=\left[\begin{array}{c}
2\re\psi_{1}^{*}\psi_{2}\\
2\im\psi_{1}^{*}\psi_{2}\\
|\psi_{1}|^{2}-|\psi_{2}|^{2}\end{array}\right](\frac{z}{2};t)\label{eq:ZS2}\end{equation}
 and \begin{equation}
q(t)=-\frac{i}{2}\omega^{*}(t).\label{eq:ZS3}\end{equation}
 The function $q$ is called the \textit{potential} for the ZS-system.

\section{Continuum theory\label{sec:Continuum-theory}}

In this section we outline the scattering theory for the ZS-system.
Many of the formulas can be found in \cite{Epstein}. 

Let $q:\RR\rightarrow\CC$ be an integrable potential for the ZS-system.
Then there exist solutions $\psi_{1\pm}=\left[\begin{array}{c}
\psi_{11\pm}\\
\psi_{12\pm}\end{array}\right]$ to the differential equation \begin{equation}
\partial_{t}\psi_{1\pm}(\xi;t)=\left[\begin{array}{cc}
-i\xi & q(t)\\
-q^{*}(t) & i\xi\end{array}\right]\psi_{1\pm}(\xi;t)\label{eq:ct1}\end{equation}
 satisfying \begin{equation}
\lim_{t\rightarrow\pm\infty}e^{i\xi t}\psi_{1\pm}(\xi;t)=\left[\begin{array}{c}
1\\
0\end{array}\right]\;\;\;\textrm{for all }\xi\in\RR.\label{eq:ct2}\end{equation}
The matrix \begin{equation}
\left[\begin{array}{cc}
a(\xi) & -b^{*}(\xi)\\
b(\xi) & a^{*}(\xi)\end{array}\right]=\left[\begin{array}{cc}
\psi_{11+}^{*}(\xi;t) & \psi_{12+}^{*}(\xi;t)\\
-\psi_{12+}(\xi;t) & \psi_{11+}(\xi;t)\end{array}\right]\left[\begin{array}{cc}
\psi_{11-}(\xi;t) & -\psi_{12-}^{*}(\xi;t)\\
\psi_{12-}(\xi;t) & \psi_{11-}^{*}(\xi;t)\end{array}\right]\label{eq:ct3}\end{equation}
 is independent of $t$, and is called the \textit{scattering matrix}.
Let us define $A_{\pm}$ and $B_{\pm}$ by \begin{equation}
\left[\begin{array}{c}
A_{\pm,t}(\xi)e^{-i\xi t}\\
B_{\pm,t}(\xi)e^{-i\xi t}\end{array}\right]=\psi_{1\pm}(\xi;t),\label{eq:ct4}\end{equation}
 so we have\begin{equation}
\lim_{t\rightarrow\pm\infty}\left[\begin{array}{c}
A_{\pm,t}(\xi)\\
B_{\pm,t}(\xi)\end{array}\right]=\left[\begin{array}{c}
1\\
0\end{array}\right]\;\;\;\textrm{for all }\xi\in\RR\label{eq:ct5}\end{equation}
 and \begin{equation}
\left[\begin{array}{cc}
a(\xi) & -b^{*}(\xi)e^{-2i\xi t}\\
b(\xi)e^{2i\xi t} & a^{*}(\xi)\end{array}\right]=\left[\begin{array}{cc}
A_{+,t}^{*}(\xi) & B_{+,t}^{*}(\xi)\\
-B_{+,t}(\xi) & A_{+,t}(\xi)\end{array}\right]\left[\begin{array}{cc}
A_{-,t}(\xi) & -B_{-,t}^{*}(\xi)\\
B_{-,t}(\xi) & A_{-,t}^{*}(\xi)\end{array}\right]\label{eq:ct6}\end{equation}
 for all $t$ and $\xi$. One can show that the resulting magnetization
profile, $\Mb=\TT\omega$, from Section \ref{sec:The-selective-excitation-transform}
is given by equation (\ref{eq:Mr}) for the reflection coefficient
\[
r(\xi)=\frac{b(\xi)}{a(\xi)}.\]
 The following is an outline of the main elements of the scattering
theory (see \cite{Epstein,Faddeev}). The Marchenko equations below
do not appear in their typical forms. The derivations of these Marchenko
equations are given in Sections \ref{sec:Derivation-right-continuum}
and \ref{sec:Derivation-left-continuum}. 

\begin{itemize}
\item The functions $a$ and $b$ satisfy \begin{equation}
|a(\xi)|^{2}+|b(\xi)|^{2}=1\;\;\;\textrm{for }\xi\in\RR\label{eq:6.5}\end{equation}
 and \begin{equation}
\lim_{|\xi|\rightarrow\infty}\left[\begin{array}{c}
a(\xi)\\
b(\xi)\end{array}\right]=\left[\begin{array}{c}
1\\
0\end{array}\right].\label{eq:cc6.7}\end{equation}

\item For every $t,\xi\in\RR$, we have \begin{equation}
|A_{\pm,t}(\xi)|^{2}+|B_{\pm,t}(\xi)|^{2}=1.\label{eq:ct7}\end{equation}

\item For each $t$, the functions $A_{+,t}^{*}$, $B_{+,t}^{*}$, $A_{-,t}$,
and $B_{-,t}$ have analytic extensions to the upper half plane. 
\item The function $a=A_{+,t}^{*}A_{-,t}+B_{+,t}^{*}B_{-,t}$ has an analytic
extension to the upper half plane. We assume that $a$ has finitely
many zeros $\left\{ \xi_{1},\dots,\xi_{m}\right\} $ in the upper
half plane, which are all simple. For each zero $\xi_{k}$, of $a$,
there is a constant $C_{k}^{\prime}$ such that \begin{equation}
\left[\begin{array}{c}
A_{-,t}(\xi_{k})\\
B_{-,t}(\xi_{k})\end{array}\right]=C_{k}^{\prime}\left[\begin{array}{c}
-B_{+,t}^{*}(\xi_{k})e^{2i\xi_{k}t}\\
A_{+,t}^{*}(\xi_{k})e^{2i\xi_{k}t}\end{array}\right]\textrm{ for all }t\in\RR.\label{eq:ct8}\end{equation}
Set\begin{equation}
C_{k}=\frac{C_{k}^{\prime}}{a^{\prime}(\xi_{k})}\label{eq:ct9}\end{equation}
and \begin{equation}
\tilde{C}_{k}=\frac{(C_{k}^{\prime})^{-1}}{a^{\prime}(\xi_{k})}=\frac{-1}{C_{k}\left[a^{\prime}(\xi_{k})\right]^{2}}.\label{eq:ct10}\end{equation}
 
\item The data $(a,b;\xi_{1},\dots,\xi_{m};C_{1}^{\prime},\dots,C_{m}^{\prime})$
is called the \textit{scattering data} for the potential $q$. 
\item The data $(r;\xi_{1},\dots,\xi_{m};C_{1},\dots,C_{m})$ is called
the \textit{reduced scattering data} for the potential $q$. The functions
$a$ and $b$ can be determined from the reduced scattering data by
the formulas \begin{eqnarray*}
a & = & \prod_{k=1}^{m}\left(\frac{\xi-\xi_{k}}{\xi-\xi_{k}^{*}}\right)\cdot\exp(-\Pi_{+}(1+|r|^{2}))\\
b & = & ra.\end{eqnarray*}

\item The function $B_{+,t}^{*}$ can be determined from the reduced scattering
data. It is the unique solution (see Proposition \ref{pro:TME 2})
to the Marchenko equation: \begin{equation}
(1+\Pi_{+}r_{t}^{*}\Pi_{-}r_{t})B_{+,t}^{*}=-\Pi_{+}r_{t}^{*}\label{eq:ct11}\end{equation}
where \begin{equation}
r_{t}(\xi)=\Pi_{-}re^{2i\xi t}-\sum_{k=1}^{m}\frac{C_{k}e^{2i\xi_{k}t}}{\xi-\xi_{k}}.\label{eq:ct12}\end{equation}

\item The function $B_{-,t}$ can be determined from the \textit{left reduced
scattering data} \[
\tilde{S}=(s;\xi_{1},\dots,\xi_{m};\tilde{C}_{1},\dots,\tilde{C}_{m}),\]
where \begin{equation}
s(\xi)=-\frac{b^{*}(\xi)}{a(\xi)}.\label{eq:ct13}\end{equation}
 It is the unique solution (see Proposition \ref{pro:TME 2}) to the
left Marchenko equation: \begin{equation}
(1+\Pi_{+}s_{t}^{*}\Pi_{-}s_{t})B_{-,t}=-\Pi_{+}s_{t}^{*}\label{eq:ct14}\end{equation}
where \begin{equation}
s_{t}(\xi)=\Pi_{-}s(\xi)e^{-2i\xi t}-\sum_{k=1}^{m}\frac{\tilde{C}_{k}e^{-2i\xi_{k}t}}{\xi-\xi_{k}}.\label{eq:ct15}\end{equation}

\item The potential $q$ can be recovered using \begin{equation}
q(t)=\frac{1}{\pi}\FF(B_{+,t}^{*})(0^{+})\label{eq:ct16}\end{equation}
 or \begin{equation}
-q^{*}(t)=\frac{1}{\pi}\FF(B_{-,t})(0^{+}).\label{eq:ct17}\end{equation}

\end{itemize}
The following is a restatement of part (a) of Theorem \ref{thm:main continuum}
in terms of the ZS-system framework. The proof is given in Section
\ref{sec:Proof-main-continuum}.

\begin{thm}
Let $S=(a,b;\xi_{1},\dots,\xi_{m};C_{1}^{\prime},\dots,C_{m}^{\prime})$
be arbitrary scattering data, as above, such that $r=\frac{b}{a}$
and $\xi r$ are both in $H^{1}(\RR)$. Then there is a well defined
potential $q$ for the ZS-system such that $S$ is the corresponding
scattering data. This potential can be found either by using equations
(\ref{eq:ct11}), (\ref{eq:ct12}) and (\ref{eq:ct16}), or by using
equations (\ref{eq:ct14}), (\ref{eq:ct15}) and (\ref{eq:ct17}).
\end{thm}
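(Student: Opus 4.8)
The plan is to reconstruct the potential $q$ from the scattering data by running the Marchenko machinery ``in reverse.'' Given $S=(a,b;\xi_1,\dots,\xi_m;C_1',\dots,C_m')$ with $r=b/a$ and $\xi r$ in $H^1(\RR)$, I first pass to the reduced scattering data $(r;\xi_1,\dots,\xi_m;C_1,\dots,C_m)$ via $C_k=C_k'/a'(\xi_k)$, noting that $a$ is recoverable from $r$ and the $\xi_k$ by the product formula $a=\prod_k\frac{\xi-\xi_k}{\xi-\xi_k^*}\exp(-\Pi_+(1+|r|^2))$, so $a'(\xi_k)$ makes sense. Then for each $t$ I form $r_t$ as in (\ref{eq:ct12}) and solve the Marchenko equation (\ref{eq:ct11}) for $B_{+,t}^*$; Proposition \ref{pro:TME 2} guarantees a unique solution in $H^1_+(\RR)$ with $\|B_{+,t}^*\|_{H^1}\le\|r_t\|_{H^1}$, and the differentiability proposition at the end of Section \ref{sub:The-Marchenko-Equation} (together with Proposition \ref{pro:bd 4}) ensures $t\mapsto B_{+,t}^*$ is differentiable and that $q(t):=\frac1\pi\FF(B_{+,t}^*)(0^+)$ via (\ref{eq:ct16}) is a well-defined, suitably regular function of $t$. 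The symmetric construction using $s=-b^*/a$, the left Marchenko equation (\ref{eq:ct14})--(\ref{eq:ct15}), and (\ref{eq:ct17}) produces another candidate potential, and part of the theorem is to check these agree.

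The heart of the proof is verifying that the $q$ so constructed actually \emph{has} $S$ as its scattering data. For this I would: (1) define $A_{+,t}^*,B_{+,t}^*$ from the Marchenko solution and $A_{-,t},B_{-,t}$ from the left Marchenko solution, and show these pairs solve the ZS system (\ref{eq:ct1})--(\ref{eq:ct2}) with the potential $q$ just extracted — this is the standard ``Marchenko equation implies Lax-type ODE'' computation, where differentiating the Marchenko equation in $t$ and using Lemma \ref{fac:proj} to commute $\Pi_\pm$ past multiplication by $i\xi$ reproduces the system; (2) check the boundary conditions (\ref{eq:ct5}) at $t\to\pm\infty$, which follow because $r_t,s_t\to 0$ in $H^1$ as $t\to\mp\infty$ (the $e^{\pm2i\xi t}$ factors kill the $\Pi_-$ part by Riemann--Lebesgue-type decay, and the bound-state sums decay since $\im\xi_k>0$), forcing $B_{\pm,t}\to 0$ and $A_{\pm,t}\to 1$; (3) recover $a,b$ from the asymptotics via (\ref{eq:ct6}) and confirm they match the input, and verify the bound-state relation (\ref{eq:ct8}) with the correct norming constants $C_k'$ by tracking how the residue terms in $r_t$ enter the Marchenko solution. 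Consistency of the two constructions (right vs.\ left) then follows from uniqueness of solutions to the ZS system with given asymptotics.

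The main obstacle I anticipate is step (1): showing that the Marchenko solution $B_{+,t}^*$, a priori only an $H^1_+$ function depending on $t$, genuinely satisfies the ZS ODE with $q(t)=\frac1\pi\FF(B_{+,t}^*)(0^+)$. This requires differentiating the operator identity $(1+\Pi_+r_t^*\Pi_-r_t)B_{+,t}^*=-\Pi_+r_t^*$ in the Banach-space sense, applying Lemmas \ref{lem:bd 1}, \ref{lem:bd 2} and Proposition \ref{pro:bd 3} to justify that the derivative exists and can be manipulated, and then using Lemma \ref{fac:proj} to identify the commutator terms $i\xi\Pi_+ - \Pi_+ i\xi$ as the source of the $q(t)$ coupling — the constant $\frac1\pi\hat f(0)$ appearing in that lemma is precisely what produces the factor in (\ref{eq:ct16}). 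Making this rigorous (as opposed to the usual formal manipulation in the Fourier domain) is where the Banach-derivative groundwork of Section \ref{sub:Banach-derivatives} pays off, and it is the step most likely to require care about which function spaces everything lives in and whether the evaluation $\FF(\cdot)(0^+)$ is continuous on those spaces. A secondary technical point is handling the bound states cleanly; restricting to the simple-zero case as the theorem does keeps the residue bookkeeping manageable, but one still must verify that the meromorphic terms in $r_t$ do not spoil membership of $r_t$ in $H^1$ — they are genuinely $H^1_-$ functions since each pole $\xi_k$ lies strictly in the upper half plane, away from $\RR$.
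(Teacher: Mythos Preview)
Your proposal is correct and follows essentially the same approach as the paper. The paper organizes the argument into a sequence of lemmas (Lemmas \ref{lem:1 cc}--\ref{lem:5 cc}, culminating in Proposition \ref{prop: proof main cc}) that do exactly what you outline: construct $A_{\pm,t},B_{\pm,t}$ from the Marchenko equations with the correct analyticity and bound-state conditions, differentiate the Marchenko equation in $t$ using the Banach-derivative machinery and Lemma \ref{fac:proj} to recover the ZS system with $q(t)=\frac{1}{\pi}\FF(B_{+,t}^*)(0^+)$, and verify the asymptotics $A_{+,t}\to 1$, $B_{+,t}\to 0$ as $t\to+\infty$ from the Marchenko norm bound; the left-equation analogue and uniqueness then tie the two constructions together. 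One small slip: your decay direction is reversed---it is $\Pi_+ r_t^*\to 0$ as $t\to+\infty$ (not $-\infty$) that forces $B_{+,t}^*\to 0$ and hence gives the normalization (\ref{eq:ct5}) at $+\infty$; symmetrically $s_t$ handles $t\to-\infty$.
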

\begin{rem}
Typically the right Marchenko equation (\ref{eq:ct11}) is used for
the positive values of $t$, and the left Marchenko equation (\ref{eq:ct14})
is used for the negative values of $t$.
\end{rem}

\section{Discrete Theory\label{sec:Discrete-Theory}}

In this section we describe an analogous scattering theory for hard
pulses.

Consider a potential of the form\[
q(t)=\sum_{j=-\infty}^{\infty}\mu_{k}\delta(t-j\delta),\]
 such that \[
\sum_{j=-\infty}^{\infty}|\mu_{j}|<\infty.\]
We will call such a function a \textit{discrete potential}. For these
potentials, the differential equation (\ref{eq:ct1}) is replaced
by a recursion: \begin{equation}
\psi_{1\pm}(\xi;(j+1)\Delta)=\left[\begin{array}{cc}
e^{-i\Delta\xi} & 0\\
0 & e^{i\Delta\xi}\end{array}\right]\left[\begin{array}{cc}
\cos\left|\mu_{j}\right| & \frac{\mu_{j}}{|\mu_{j}|}\sin\left|\mu_{j}\right|\\
-\frac{\mu_{j}^{*}}{|\mu_{j}|}\sin\left|\mu_{j}\right| & \cos\left|\mu_{j}\right|\end{array}\right]\psi_{1\pm}(\xi;j\Delta).\label{eq:dt1}\end{equation}
 For each integer $j$, $\psi_{1-}(\xi;j\Delta)$ and $\psi_{2-}(\xi;j\Delta)$
are periodic functions of $w^{\frac{1}{2}}=e^{i\xi\Delta}$. Let us
set \begin{equation}
\Psi_{\pm,j}(w)=\left[\begin{array}{c}
\Psi_{1+,j}(w)\\
\Psi_{2+,j}(w)\end{array}\right]=\psi_{1\pm}(\xi;j\Delta).\label{eq:dt2}\end{equation}
 Then the recursion is\begin{equation}
\Psi_{\pm,j+1}(w)=\left[\begin{array}{cc}
w^{-\frac{1}{2}} & 0\\
0 & w^{\frac{1}{2}}\end{array}\right]\left[\begin{array}{cc}
\cos\left|\mu_{j}\right| & \frac{\mu_{j}}{|\mu_{j}|}\sin\left|\mu_{j}\right|\\
-\frac{\mu_{j}^{*}}{|\mu_{j}|}\sin\left|\mu_{j}\right| & \cos\left|\mu_{j}\right|\end{array}\right]\Psi_{\pm,j}(w),\label{eq:dt3}\end{equation}
and the scattering matrix is\begin{equation}
\left[\begin{array}{cc}
a & -b^{*}\\
b & a^{*}\end{array}\right]=\left[\begin{array}{cc}
\Psi_{1+,j}^{*} & \Psi_{2+,j}^{*}\\
-\Psi_{2+,j} & \Psi_{1+,j}\end{array}\right]\left[\begin{array}{cc}
\Psi_{1-,j} & -\Psi_{2-,j}^{*}\\
\Psi_{2-,j} & \Psi_{1-,j}^{*}\end{array}\right].\label{eq:dt4}\end{equation}
 Let us define \begin{equation}
\left[\begin{array}{c}
A_{\pm,j}(w)w^{-\frac{j}{2}}\\
B_{\pm,j}(w)w^{\frac{j}{2}}\end{array}\right]=\Psi_{\pm,j}(w),\label{eq:dt5}\end{equation}
 so we have\begin{equation}
\lim_{j\rightarrow\pm\infty}\left[\begin{array}{c}
A_{\pm,j}(w)\\
B_{\pm,j}(w)\end{array}\right]=\left[\begin{array}{c}
1\\
0\end{array}\right]\;\;\;\textrm{for all }w\in S^{1}\label{eq:dt6}\end{equation}
 and \begin{equation}
\left[\begin{array}{cc}
a & -b^{*}w^{-j}\\
bw^{j} & a^{*}\end{array}\right]=\left[\begin{array}{cc}
A_{+,j}^{*} & B_{+,j}^{*}\\
-B_{+,j} & A_{+,j}\end{array}\right]\left[\begin{array}{cc}
A_{-,j} & -B_{-,j}^{*}\\
B_{-,j} & A_{-,j}^{*}\end{array}\right]\label{eq:dt7}\end{equation}
 for all $j\in\ZZ$. One can show that the resulting magnetization
profile from Section \ref{sec:The-discrete-selective-excitation-transform}
is given by equation (\ref{eq:Mrd}) for the reflection coefficient
\[
r(w)=\frac{b(w)}{a(w)}.\]
The following is an outline of the main elements of the discrete scattering
theory. Some of these statements are proved in Section \ref{sec:Forward-discrete-scattering}.
The discrete Marchenko equations are derived in Section \ref{sec:Derivation-discrete-marchenko}.

\begin{itemize}
\item The functions $a$ and $b$ are in $L^{2}(S^{1})$ and satisfy \begin{equation}
|a(w)|^{2}+|b(w)|^{2}=1\;\;\;\textrm{for }w\in S^{1}\label{eq:dt9}\end{equation}
 and \begin{equation}
\hat{a}(0)>0.\label{eq:dt10}\end{equation}

\item The functions $A_{\pm}$ and $B_{\pm}$ are in $L^{2}(S^{1})$ and
satisfy\begin{equation}
|A_{\pm,j}(w)|^{2}+|B_{\pm,j}(w)|^{2}=1\textrm{ for }w\in S^{1}\label{eq:dt11}\end{equation}
 and \begin{equation}
\hat{A}_{\pm,j}(0)>0.\label{eq:dt12}\end{equation}

\item For each $j$, the functions $A_{+,t}^{*}$, $B_{+,t}^{*}$, $A_{-,t}$,
and $w^{-1}B_{-,t}$ have analytic extensions to the unit $w$-disk
$\DD$. 
\item The function $a=A_{+,j}^{*}A_{-,j}+B_{+,j}^{*}B_{-,j}$ has an analytic
extension to the unit disk. We assume that $a$ has finitely many
zeros $\left\{ w_{1},\dots,w_{m}\right\} $ in the unit disk, which
are all simple. For each zero $w_{k}$ of $a$, there is a constant
$c_{k}^{\prime}$ such that \begin{equation}
\left[\begin{array}{c}
A_{-,j}(w_{k})\\
B_{-,j}(w_{k})\end{array}\right]=c_{k}^{\prime}\left[\begin{array}{c}
-B_{+,j}^{*}(w_{k})w_{k}^{j}\\
A_{+,j}^{*}(w_{k})w_{k}^{j}\end{array}\right]\textrm{ for all }j\in\ZZ.\label{eq:dt13}\end{equation}
Set\begin{equation}
c_{k}=\frac{c_{k}^{\prime}}{a^{\prime}(w_{k})}\label{eq:dt14}\end{equation}
and \begin{equation}
\tilde{c}_{k}=\frac{-(c_{k}^{\prime})^{-1}w_{k}^{-1}}{a^{\prime}(w_{k})}=\frac{-w_{k}^{-1}}{c_{k}\left[a^{\prime}(w_{k})\right]^{2}}.\label{eq:dt15}\end{equation}
 
\item The data $(a,b;w_{1},\dots,w_{m};c_{1}^{\prime},\dots,c_{m}^{\prime})$
is called the \textit{discrete scattering data} for the potential
$q$.
\item The data $(r;w_{1},\dots,w_{m};c_{1},\dots,c_{m})$ is called the
\textit{reduced discrete scattering data} for the potential $q$.
The functions $a$ and $b$ can be determined from the reduced scattering
data by the formulas \begin{eqnarray}
a & = & \prod_{k=1}^{m}\left(\frac{w_{k}^{*}}{|w_{k}|}\frac{w_{k}-w}{1-w_{k}^{*}w}\right)\cdot\exp(-\tilde{\Pi}_{+}(1+|r|^{2}))\label{eq:dt16}\\
b & = & ra.\nonumber \end{eqnarray}

\item The function $\frac{wB_{+,j}^{*}}{\hat{A}_{+,j}(0)}$ can be determined
from the reduced scattering data. It is the unique solution (see Proposition
\ref{pro:TME 2}) to the Marchenko equation: \begin{equation}
(1+\Pi_{+}r_{j}^{*}\Pi_{-}r_{j})\frac{wB_{+,j}^{*}}{\hat{A}_{+,j}(0)}=-\Pi_{+}r_{j}^{*}\label{eq:dt17}\end{equation}
where \begin{equation}
r_{j}=\Pi_{-}rw^{j-1}-\sum_{k=1}^{m}\frac{c_{k}w_{k}^{j}}{w-w_{k}}.\label{eq:dt18}\end{equation}

\item The function $\frac{B_{-,j}}{\hat{A}_{-,t}(0)}$ can be determined
from the \textit{left reduced scattering data} $(s;w_{1},\dots,w_{n};\tilde{c}_{1},\dots,\tilde{c}_{N})$,
where \begin{equation}
s=-\frac{b^{*}}{a}.\label{eq:dt19}\end{equation}
 It is the unique solution (see Proposition \ref{pro:TME 2}) to the
left Marchenko equation: \begin{equation}
(1+\Pi_{+}s_{j}^{*}\Pi_{-}s_{j})\frac{B_{-,j}}{\hat{A}_{-,j}(0)}=-s_{j}^{*}\label{eq:dt20}\end{equation}
where \begin{equation}
s_{j}=\Pi_{-}sw^{-j}-\sum_{k=1}^{m}\frac{\tilde{c}_{k}w_{k}^{-j}}{w-w_{k}}.\label{eq:dt21}\end{equation}

\item The potential $q(t)=\sum_{j=-\infty}^{\infty}\mu_{j}\delta(t-j\Delta)$
can be recovered using\begin{equation}
\mu_{j}=\frac{\gamma_{j}}{|\gamma_{j}|}\arctan|\gamma_{j}|\label{eq:dt22}\end{equation}
 for\begin{equation}
\gamma_{j}=\FF(\frac{wB_{+,j}^{*}}{\hat{A}_{+,j}(0)})(1)\label{eq:dt23}\end{equation}
or\begin{equation}
-\gamma_{j}^{*}=\FF(\frac{B_{-,j+1}}{\hat{A}_{-,j+1}(0)})(1).\label{eq:dt24}\end{equation}

\item The functions $A_{+,j}$ and $B_{+,j}$ can also be approximately
computed recursively. We start by setting \begin{equation}
\left[\begin{array}{c}
A_{+,M}(w)\\
B_{+,M}(w)\end{array}\right]=\left[\begin{array}{c}
1\\
0\end{array}\right]\label{eq:dt25}\end{equation}
 for $M>>0$, and then use the recursion \begin{equation}
\left[\begin{array}{c}
A_{+,j}\\
B_{+,j}\end{array}\right]=(1+|\gamma_{j}|^{2})^{-\frac{1}{2}}\left[\begin{array}{cc}
1 & -\gamma_{j}w^{-1}\\
\gamma_{j}^{*} & w^{-1}\end{array}\right]\left[\begin{array}{c}
A_{+,j+1}\\
B_{+,j+1}\end{array}\right]\label{eq:dt26}\end{equation}
 for \begin{equation}
-\gamma_{j}^{*}=\frac{\FF(wr_{j}A_{+,j+1}^{*})(0)}{\FF(A_{+,j+1}^{*})(0)-\FF(wr_{j}(wB_{+,j+1}^{*}))(0)}.\label{eq:dt27}\end{equation}

\item Similarly, the functions $A_{-,j}$ and $B_{-,j}$ can be approximately
computed recursively. We start by setting \begin{equation}
\left[\begin{array}{c}
A_{-,-M}(w)\\
B_{-,-M}(w)\end{array}\right]=\left[\begin{array}{c}
1\\
0\end{array}\right]\label{eq:dt28}\end{equation}
 for $M>>0$, and then use the recursion\begin{equation}
\left[\begin{array}{c}
A_{-,j+1}\\
B_{-,j+1}\end{array}\right]=(1+|\gamma_{j}|^{2})^{-\frac{1}{2}}\left[\begin{array}{cc}
1 & \gamma_{j}\\
-\gamma_{j}^{*}w & w\end{array}\right]\left[\begin{array}{c}
A_{-,j}\\
B_{-,j}\end{array}\right]\label{eq:dt29}\end{equation}
for \begin{equation}
\gamma_{j}=\frac{\FF(ws_{j+1}A_{-,j})(0)}{\FF(A_{-,j})(0)-\FF(ws_{j+1}B_{-,j})(0)}.\label{eq:dt30}\end{equation}

\end{itemize}
The recursions (\ref{eq:dt26}) and (\ref{eq:dt29}) along with equations
(\ref{eq:dt27}) and (\ref{eq:dt30}) form the discrete inverse scattering
transform (DIST) algorithm. These equations are derived in Section
\ref{sec:Derivation-of-the-DIST}.

The following is a restatement of part (a) of Theorem \ref{thm:main discrete}
in terms of the ZS-system framework. The proof is given in Section
\ref{sec:Proof-main-discrete}.

\begin{thm}
Let $S=(a,b;w_{1},\dots,w_{m};c_{1},\dots,c_{m})$ be arbitrary discrete
scattering data, as above, such that $r=\frac{b}{a}$ is in $H^{1}(S^{1})$.
Then there is a well defined discrete potential $q(t)=\sum_{j=-\infty}^{\infty}\mu_{j}\delta(t-j\Delta)$
for the ZS-system such that $S$ is the corresponding discrete scattering
data. This potential can be found either by using equations (\ref{eq:dt17}),
(\ref{eq:dt18}) and (\ref{eq:dt23}), or by using equations (\ref{eq:dt20}),
(\ref{eq:dt21}) and (\ref{eq:dt24}).
\end{thm}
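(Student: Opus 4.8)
The plan is to mirror the continuum argument, constructing the potential from the scattering data and then verifying that the constructed potential reproduces the given data. First I would use Proposition \ref{pro:refl DC} to recover the analytic functions $a$ and $b$ from the reduced scattering data $(r;w_1,\dots,w_m;c_1,\dots,c_m)$ via equation (\ref{eq:dt16}), noting that the hypothesis $r\in H^1(S^1)$ guarantees $\log(1+|r|^2)\in H^1(S^1)$ so the formula makes sense and produces the unique $a$ with the correct zeros and with $\hat a(0)>0$. Next, for each $j\in\ZZ$, I would form the modified reflection coefficient $r_j$ from (\ref{eq:dt18}) — observe that $r_j\in H^1(S^1)$ since $\Pi_-rw^{j-1}$ inherits $H^1$ regularity from $r$ and each pole term $c_k w_k^j/(w-w_k)$ is smooth on $S^1$ because $w_k\in\DD$ — and invoke Proposition \ref{pro:TME 2} to obtain a unique solution $L_j\in H^1_+(S^1)$ to the discrete Marchenko equation (\ref{eq:dt17}). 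Setting $\gamma_j=\FF(L_j)(1)$ as in (\ref{eq:dt23}) and then $\mu_j=\frac{\gamma_j}{|\gamma_j|}\arctan|\gamma_j|$ as in (\ref{eq:dt22}) defines the candidate discrete potential $q(t)=\sum_j\mu_j\delta(t-j\Delta)$.

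The core of the proof is then to show that this $q$ has $S$ as its scattering data. I would run the forward recursion (\ref{eq:dt3}) with these $\mu_j$, defining $\Psi_{\pm,j}$ and hence $A_{\pm,j},B_{\pm,j}$ via (\ref{eq:dt5}); the main task is to prove that the $B_{+,j}^*$ arising this way satisfies the same Marchenko equation (\ref{eq:dt17}) with the same $r_j$, so that by uniqueness $wB_{+,j}^*/\hat A_{+,j}(0)=L_j$. This identification is exactly what the derivation in Sections \ref{sec:Forward-discrete-scattering}, \ref{sec:Derivation-discrete-marchenko} and \ref{sec:Derivation-of-the-DIST} establishes: the Marchenko equation is not postulated but derived from the analyticity properties of $A_{\pm,j}^*$, $B_{\pm,j}^*$ (that $A_{+,j}^*$, $B_{+,j}^*$, $A_{-,j}$, $w^{-1}B_{-,j}$ extend analytically to $\DD$), from the norm relation $|A_{\pm,j}|^2+|B_{\pm,j}|^2=1$, and from the residue conditions (\ref{eq:dt13}) at the zeros $w_k$ of $a$. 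Once the Marchenko equation is known to hold for the forward-constructed data, reading off $\gamma_j=\FF(wB_{+,j}^*/\hat A_{+,j}(0))(1)$ and comparing with the recursion relating $\Psi_{\pm,j+1}$ to $\Psi_{\pm,j}$ shows that this $\gamma_j$ is precisely the one feeding back into $\mu_j$, closing the loop and confirming that $q$ has scattering data $S$.

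I expect the main obstacle to be establishing convergence and the requisite analyticity/decay as $j\to\pm\infty$ for the forward problem with a discrete potential — i.e.\ showing that the limits in (\ref{eq:dt6}) exist and that $A_{\pm,j},B_{\pm,j}\in L^2(S^1)$ with the stated half-plane analyticity — since the hypothesis is only $\sum_j|\mu_j|<\infty$ (equivalently, only $r\in H^1(S^1)$, not more). This is where the $H^1$ regularity of $r$ is genuinely used: it controls $\|L_j\|_{H^1}\le\|r_j\|_{H^1}$ uniformly enough (via the estimate in Proposition \ref{pro:TME 2}) to make the telescoping recursion converge and to ensure $\sum|\mu_j|<\infty$, so that the forward transform of $q$ is well-defined in the first place. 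The left-hand recipe using (\ref{eq:dt20}), (\ref{eq:dt21}), (\ref{eq:dt24}) is handled by the identical argument applied to the left reduced scattering data $(s;w_1,\dots,w_m;\tilde c_1,\dots,\tilde c_m)$ with $s=-b^*/a$, and the consistency of the two recipes follows because both compute the same $\mu_j$ from the same underlying $\Psi_{\pm,j}$. Finally, parts (b) and (c) of Theorem \ref{thm:main discrete} follow as special cases: taking $r=w^{-\rho}r_0$ with $r_0$ meromorphic in $\DD$ vanishing at $0$ makes the poles of $r_0$ in $\DD$ the bound-state energies $w_k$ and forces $\mu_j=0$ for $j\ge\rho$ (finite rephasing), while the polynomial case with $|A|^2+|B|^2=1$ and $\deg B=T$ additionally forces $\mu_j=0$ for $j<\rho-T$ (finite duration $T$), as will be verified in Section \ref{sec:Proof-main-discrete}.
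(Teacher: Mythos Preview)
Your proposal has the right ingredients but contains a logical circularity in the main step. You propose to (i) define $\mu_j$ from the Marchenko solution $L_j$, (ii) run the \emph{forward} recursion with these $\mu_j$ to produce $A_{\pm,j},B_{\pm,j}$, and then (iii) show that the forward-constructed $B_{+,j}^*$ satisfies the Marchenko equation (\ref{eq:dt17}) with the \emph{given} $r_j$. But the derivation in Sections \ref{sec:Forward-discrete-scattering} and \ref{sec:Derivation-discrete-marchenko} only shows that the forward wave functions satisfy the Marchenko equation built from \emph{their own} scattering data $\tilde S=(\tilde a,\tilde b;\tilde w_k;\tilde c_k')$, i.e.\ with $\tilde r_j$, not with the $r_j$ you started from. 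To conclude $\tilde r_j=r_j$ you would already need $\tilde S=S$, which is precisely what you are trying to prove. There is also a secondary issue: running the forward problem at all (Proposition \ref{pro:forward}) requires $\sum_j|\gamma_j|<\infty$, and the estimate $\|L_j\|_{H^1}\le\|r_j\|_{H^1}$ from Proposition \ref{pro:TME 2} only yields $\gamma_j\to 0$ (and, via Lemma \ref{lem:4}, $\sum|\gamma_j|^2<\infty$), not $\ell^1$ summability.

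The paper sidesteps both problems by reversing the logic: it never runs the forward problem on a pre-defined potential. Instead, for each $j$ it \emph{defines} $A_{\pm,j},B_{\pm,j}$ directly from the Marchenko equation with the given $r_j$ (Lemma \ref{lem:1}), and then proves by hand that these Marchenko-constructed functions satisfy the one-step recursion (\ref{eq:dt29}) with $\gamma_j^*=\hat B_{+,j}(0)/\hat A_{+,j}(0)$ (Lemma \ref{lem:2}), have the correct limits as $j\to\pm\infty$ (Lemmas \ref{lem:3} and \ref{lem:5}), and satisfy $|A_{+,j}|^2+|B_{+,j}|^2=1$ on $S^1$ (Lemma \ref{lem:5}). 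The potential is then \emph{read off} from the recursion. Because the matrix equation (\ref{eq:lem 1}) and the residue conditions (\ref{eq:lem 1Q1})--(\ref{eq:lem 1Q2}) are satisfied by construction at every $j$, the scattering data of the resulting potential is $S$ on the nose --- no comparison with a separately-run forward problem is needed, and no a priori summability of $\{\mu_j\}$ is required to make the argument go through. Your remarks on parts (b) and (c) of Theorem \ref{thm:main discrete} match the paper's treatment.
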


\section{Forward discrete scattering\label{sec:Forward-discrete-scattering}}

In this section we prove the analyticity properties of $A_{\pm}$
and $B_{\pm}$ from Section \ref{sec:Discrete-Theory}.

\begin{prop}
\label{pro:forward}Let $\gamma:\ZZ\rightarrow\CC$ be a sequence
of complex numbers such that \[
\sum_{j=-\infty}^{\infty}|\gamma_{j}|<\infty.\]
 Then there are unique solutions $A_{\pm}$ and $B_{\pm}$ to equations
(\ref{eq:dt26}), (\ref{eq:dt29}) and (\ref{eq:dt6}). Furthermore,
for each integer $j$, the functions $A_{-,j}$, $w^{-1}B_{-,j}$,
$A_{+,j}^{*}$ and $B_{+,j}^{*}$ are all in $\tilde{H}_{+}(S^{1})$. 
\end{prop}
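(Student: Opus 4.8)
The plan is to read the recursions (\ref{eq:dt26}) and (\ref{eq:dt29}) as infinite products of transfer matrices and to exploit that each transfer matrix is \emph{unitary} on $S^{1}$. The statement decouples: $A_{+},B_{+}$ must solve (\ref{eq:dt26}) together with (\ref{eq:dt6}) at $j\to+\infty$, while $A_{-},B_{-}$ must solve (\ref{eq:dt29}) together with (\ref{eq:dt6}) at $j\to-\infty$. For the right-hand case, write (\ref{eq:dt26}) as $v_{j}=T_{j}v_{j+1}$ with $v_{j}=\left[\begin{array}{c}A_{+,j}\\B_{+,j}\end{array}\right]$ and $T_{j}$ the matrix appearing in (\ref{eq:dt26}). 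A one-line check shows $T_{j}(w)$ is unitary for every $w\in S^{1}$: its two columns are orthogonal and each has squared length $1+|\gamma_{j}|^{2}$, since $|w|=1$. Hence, on the Banach space $X=C(S^{1})^{2}$ with norm $\|(f,g)\|=\sup_{w\in S^{1}}(|f(w)|^{2}+|g(w)|^{2})^{1/2}$, multiplication by each $T_{j}$, and by each finite product $T_{j}T_{j+1}\cdots T_{M-1}$, is an isometry.

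For existence I would imitate the approximation (\ref{eq:dt25}): for $M>j$ set $v_{j}^{(M)}:=T_{j}T_{j+1}\cdots T_{M-1}\left[\begin{array}{c}1\\0\end{array}\right]$. Since $\left\|T_{m}\left[\begin{array}{c}1\\0\end{array}\right]-\left[\begin{array}{c}1\\0\end{array}\right]\right\|\leq C|\gamma_{m}|$ with $C=1+\sup_{k}|\gamma_{k}|<\infty$, and the partial products are isometries, telescoping gives $\left\|v_{j}^{(M')}-v_{j}^{(M)}\right\|\leq C\sum_{m\geq M}|\gamma_{m}|$ for $M'>M\geq j$, a tail of a convergent series. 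So $\{v_{j}^{(M)}\}_{M}$ is Cauchy in $X$; let $v_{j}$ be its limit. Continuity of multiplication by $T_{j}$ gives $v_{j}=T_{j}v_{j+1}$, i.e. (\ref{eq:dt26}), and the same telescoping with $M'\to\infty$ gives $\left\|v_{j}-\left[\begin{array}{c}1\\0\end{array}\right]\right\|\leq C\sum_{m\geq j}|\gamma_{m}|\to0$, which is (\ref{eq:dt6}). Uniqueness is immediate: if $v_{j},\tilde v_{j}$ both solve (\ref{eq:dt26}) and (\ref{eq:dt6}), then $w_{j}:=v_{j}-\tilde v_{j}$ satisfies $w_{j}=T_{j}\cdots T_{M-1}w_{M}$, so $\|w_{j}\|\leq\|w_{M}\|\to0$ and $w_{j}\equiv0$.

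For analyticity I would observe, by downward induction on $j$, that both components of every approximant $v_{j}^{(M)}$ are polynomials in $w^{-1}$: this holds for $\left[\begin{array}{c}1\\0\end{array}\right]$, and the entries of $T_{j}$ are supported on the powers $w^{0}$ and $w^{-1}$. Hence $(A_{+,j}^{(M)})^{*}$ and $(B_{+,j}^{(M)})^{*}$ are polynomials in $w$, so they lie in $\tilde{H}_{+}(S^{1})=\{f\in L^{2}(S^{1}):\hat{f}(n)=0\textrm{ for }n<0\}$; since this space is closed in $L^{2}(S^{1})$ and uniform convergence on $S^{1}$ implies $L^{2}(S^{1})$ convergence (conjugation being continuous), the limits $A_{+,j}^{*}$ and $B_{+,j}^{*}$ also lie in $\tilde{H}_{+}(S^{1})$. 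The left-hand functions are handled identically: (\ref{eq:dt29}) reads $u_{j+1}=S_{j}u_{j}$ with $u_{j}=\left[\begin{array}{c}A_{-,j}\\B_{-,j}\end{array}\right]$ and $S_{j}$ the matrix of (\ref{eq:dt29}), which is again unitary on $S^{1}$ (indeed $S_{j}(w)=T_{j}(w)^{-1}$); running this recursion forward from the initialization (\ref{eq:dt28}) at index $-M$, the same isometry-and-telescoping argument yields existence, uniqueness, and (\ref{eq:dt6}) as $j\to-\infty$. For analyticity, a forward induction shows each $A_{-,j}^{(M)}$ is a polynomial in $w$ and each $B_{-,j}^{(M)}$ a polynomial in $w$ divisible by $w$ (the bottom row of $S_{j}$ carries a factor $w$), so in the limit $A_{-,j}\in\tilde{H}_{+}(S^{1})$ and $w^{-1}B_{-,j}\in\tilde{H}_{+}(S^{1})$.

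I expect the only genuine subtlety to be the convergence of the transfer-matrix products: the products $T_{j}\cdots T_{M-1}$ themselves do \emph{not} converge as $M\to\infty$ — the factors of $w^{-1}$ accumulate — so convergence must be extracted from their action on the distinguished vector $\left[\begin{array}{c}1\\0\end{array}\right]$, using both the uniform isometry property (from unitarity on $S^{1}$) and the summability of $(\gamma_{j})$. Everything after that is routine bookkeeping.
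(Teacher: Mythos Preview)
Your argument is correct and rests on the same three ingredients the paper uses: unitarity of the one-step transfer matrices on $S^{1}$, the summability $\sum|\gamma_{j}|<\infty$ to force convergence, and the polynomial (in $w$ or $w^{-1}$) structure of the finite approximants to obtain the $\tilde H_{+}$ membership in the limit. The packaging differs slightly: the paper introduces an auxiliary solution $(A_{0,j},B_{0,j})$ normalized at $j=0$, shows $A_{0,j}$ and $w^{-j}B_{0,j}$ converge in $L^{\infty}(S^{1})$ as $j\to+\infty$ to limits $a_{0},b_{0}$, and then right-multiplies by $\left[\begin{smallmatrix}a_{0}^{*}&b_{0}^{*}\\-b_{0}&a_{0}\end{smallmatrix}\right]$ to extract $A_{+},B_{+}$; you instead construct $A_{+,j},B_{+,j}$ directly as $M\to\infty$ limits of the truncations $T_{j}\cdots T_{M-1}\left[\begin{smallmatrix}1\\0\end{smallmatrix}\right]$, which avoids the conjugation step. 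Your route is a bit more streamlined; the paper's has the minor advantage that the auxiliary $(A_{0},B_{0})$ is manifestly well defined from the outset (no limit needed to define it), with convergence entering only when identifying $A_{+},B_{+}$.
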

\begin{proof}
Let $A_{0}$ and $B_{0}$ be the solutions to the recursion\begin{equation}
\small{\left[\begin{array}{cc}
A_{0,j+1} & -w^{j+1}B_{0,j+1}^{*}\\
w^{-j-1}B_{0,j+1} & A_{0,j+1}^{*}\end{array}\right]=(1+|\gamma_{j}|^{2})^{-\frac{1}{2}}\left[\begin{array}{cc}
1 & \gamma_{j}w^{j}\\
-\gamma_{j}^{*}w^{-j} & 1\end{array}\right]\left[\begin{array}{cc}
A_{0,j} & -w^{j}B_{0,j}^{*}\\
w^{-j}B_{0,j} & A_{0,j}^{*}\end{array}\right]}\label{eq:dfs1}\end{equation}
 normalized by \[
\left[\begin{array}{c}
A_{0,0}\\
B_{0,0}\end{array}\right]=\left[\begin{array}{c}
1\\
0\end{array}\right].\]
Notice that equation (\ref{eq:dfs1}) is equivalent to (\ref{eq:dt29}).
Clearly we have $|A_{0,j}|^{2}+|B_{0,j}|^{2}=1$ on $S^{1}$ for all
integers $j$. Therefore, we can estimate \[
\left|A_{0,j+1}-A_{0,j}\right|\leq\left|\gamma_{j}\right|+\left|1-\frac{1}{\sqrt{1+|\gamma_{j}|^{2}}}\right|\]
 and \[
\left|w^{-j-1}B_{0,j+1}-w^{-j}B_{0,j}\right|\leq\left|\gamma_{j}\right|+\left|1-\frac{1}{\sqrt{1+|\gamma_{j}|^{2}}}\right|,\]
which implies that the sequences $A_{0,0},A_{0,1},A_{0,2},\dots$
and $B_{0.0},w^{-1}B_{0,1},w^{-2}B_{0,2},\dots$ converge in $L^{\infty}(S^{1})$
to some functions $a_{0}$ and $b_{0}$, respectively, in $L^{\infty}(S^{1})\subset L^{2}(S^{1})$.
One can show inductively that $A_{0,j}$ and $w^{j}B_{0,j}^{*}$ are
in $\tilde{H}_{+}(S^{1})$ for all $j\geq0$. Thus, $a_{0}$ and $b_{0}^{*}$
also must be in $\tilde{H}_{+}(S^{1})$. By multiplying the matrix
recursion on the right by $\left[\begin{array}{cc}
a_{0}^{*} & b_{0}^{*}\\
-b_{0} & a_{0}\end{array}\right]$, we see that $A_{+.j}$ and $B_{+,j}$ must be equal to \begin{eqnarray*}
A_{+,j} & = & a_{0}^{*}A_{0,j}+b_{0}w^{j}B_{0,j}^{*}\\
B_{+,j} & = & a_{0}^{*}w^{-j}B_{0,j}-b_{0}A_{0,j}^{*},\end{eqnarray*}
 which implies that $A_{+,0}^{*}=a_{0}$ and $B_{+,0}^{*}=-b_{0}^{*}$
are in $\tilde{H}_{+}(S^{1})$, as desired. By similar reasoning,
$A_{+,j}^{*}$ and $B_{+,j}^{*}$ must be in $\tilde{H}_{+}(S^{1})$,
for all $j$. 
\end{proof}

\section{Derivation of the right Marchenko equation\label{sec:Derivation-right-continuum}}

In the next two sections we derive the right and left Marchenko equations
for the ZS-system. We assume that $q$ is an integrable potential
with scattering data \[
S=(a,b;\xi_{1},\dots,\xi_{m};C_{1}^{\prime},\dots,C_{m}^{\prime}).\]
 Let $A_{\pm}$ and $B_{\pm}$ be as in Section \ref{sec:Continuum-theory}.
Recall that $A_{+,t}^{*}$, $B_{+,t}^{*}$, $A_{-,t}$, and $B_{-,t}$
all have analytic extensions to the upper half $\xi$-plane. Rearranging
equation (\ref{eq:ct6}) gives

\begin{equation}
\left[\begin{array}{cc}
A_{-,t}(\xi) & -B_{-,t}^{*}(\xi)\\
B_{-,t}(\xi) & A_{-,t}^{*}(\xi)\end{array}\right]=\left[\begin{array}{cc}
A_{+,t}(\xi) & -B_{+,t}^{*}(\xi)\\
B_{+,t}(\xi) & A_{+,t}^{*}(\xi)\end{array}\right]\left[\begin{array}{cc}
a(\xi) & -b^{*}(\xi)e^{-2i\xi t}\\
b(\xi)e^{2i\xi t} & a^{*}(\xi)\end{array}\right]\label{eq:drm 0}\end{equation}
 or\begin{eqnarray*}
\frac{1}{a}A_{-,t} & = & A_{+,t}-re^{2i\xi t}B_{+,t}^{*}\\
\frac{1}{a}B_{-,t} & = & B_{+,t}+re^{2i\xi t}A_{+,t}^{*}.\end{eqnarray*}
We apply $\Pi_{-}$, conjugate the second equation, and rearrange:\begin{eqnarray}
A_{+,t} & = & 1+\Pi_{-}re^{2i\xi t}B_{+,t}^{*}+\Pi_{-}\frac{1}{a}A_{-,t}\label{eq:drm1}\\
B_{+,t}^{*} & = & -\Pi_{+}r^{*}e^{-2i\xi t}A_{+,t}+\Pi_{+}\frac{1}{a^{*}}B_{-,t}^{*}.\label{eq:drm2}\end{eqnarray}
Using the properties of $C_{1}^{\prime},\dots,C_{k}^{\prime}$ we
get\begin{eqnarray*}
\Pi_{-}\frac{1}{a}A_{-,t} & = & \sum_{k=1}^{N}\frac{A_{-,t}(\xi_{k})}{a^{\prime}(\xi_{k})}\cdot\frac{1}{\xi-\xi_{k}}\\
 & = & -\sum_{k=1}^{N}\frac{C_{k}^{\prime}e^{2i\xi_{k}t}B_{+,t}^{*}(\xi_{k})}{a^{\prime}(\xi_{k})}\cdot\frac{1}{\xi-\xi_{k}}\\
 & = & -\sum_{k=1}^{N}C_{k}e^{2i\xi_{k}t}B_{+,t}^{*}(\xi_{k})\cdot\frac{1}{\xi-\xi_{k}}\\
 & = & -\Pi_{-}Q_{+,t}B_{+,t}^{*}\end{eqnarray*}
and \begin{eqnarray*}
\Pi_{-}\frac{1}{a}B_{-,t} & = & \sum_{k=1}^{N}\frac{B_{-,t}(\xi_{k})}{a^{\prime}(\xi_{k})}\cdot\frac{1}{\xi-\xi_{k}}\\
 & = & \sum_{k=1}^{N}\frac{C_{k}^{\prime}e^{2i\xi_{k}t}A_{+,t}^{*}(\xi_{k})}{a^{\prime}(\xi_{k})}\cdot\frac{1}{\xi-\xi_{k}}\\
 & = & \sum_{k=1}^{N}C_{k}e^{2i\xi_{k}t}A_{+,t}^{*}(\xi_{k})\cdot\frac{1}{\xi-\xi_{k}}\\
 & = & \Pi_{-}Q_{+,t}A_{+,t}^{*}\end{eqnarray*}
where \[
Q_{+,t}(\xi)=\sum_{k=1}^{m}\frac{C_{k}e^{2i\xi_{k}t}}{\xi-\xi_{k}}.\]
Therefore (\ref{eq:drm1}) and (\ref{eq:drm2}) become \begin{eqnarray}
A_{+,t} & = & 1+\Pi_{-}(re^{2i\xi t}-Q_{+,t})B_{+,t}^{*}\label{eq:drm3p}\\
B_{+,t}^{*} & = & -\Pi_{+}(re^{2i\xi t}-Q_{+,t})^{*}A_{+,t}\label{eq:drm4p}\end{eqnarray}
or\begin{eqnarray}
A_{+,t} & = & 1+\Pi_{-}r_{t}B_{+,t}^{*}\label{eq:drm3}\\
B_{+,t}^{*} & = & -\Pi_{+}r_{t}^{*}A_{+,t}\label{eq:drm4}\end{eqnarray}
 where \begin{eqnarray*}
r_{t}(\xi) & = & \Pi_{-}re^{2i\xi t}-Q_{+,t}(\xi)\\
 & = & \Pi_{-}re^{2i\xi t}-\sum_{k=1}^{m}\frac{C_{k}e^{2i\xi_{k}t}}{\xi-\xi_{k}}.\end{eqnarray*}
Equations (\ref{eq:drm3}) and (\ref{eq:drm4}) can be combined into
the single Marchenko equation\begin{equation}
(1+\Pi_{+}r_{t}^{*}\Pi_{-}r_{t})B_{+,t}^{*}=-\Pi_{+}r_{t}^{*}.\label{eq:drm mar}\end{equation}

To prove equation (\ref{eq:ct16}) we rewrite the ZS-system in terms
of $A_{\pm}$ and $B_{\pm}$:\[
\frac{d}{dt}\left[\begin{array}{c}
A_{\pm,t}(\xi)\\
B_{\pm,t}(\xi)\end{array}\right]=\left[\begin{array}{cc}
0 & q(t)\\
-q^{*}(t) & 2i\xi\end{array}\right]\left[\begin{array}{c}
A_{\pm,t}(\xi)\\
B_{\pm,t}(\xi)\end{array}\right].\]
Taking the $t$-derivative of both sides of equation (\ref{eq:drm4})
gives:\begin{eqnarray*}
-q(t)A_{+,t}^{*}-2i\xi B_{+,t}^{*} & = & -\Pi_{+}(\frac{d}{dt}r_{t})^{*}A_{+,t}-q(t)\Pi_{+}r_{t}^{*}B_{+,t}\\
 & = & \Pi_{+}2i\xi r_{t}^{*}A_{+,t}-q(t)A_{+,t}^{*}+q(t).\end{eqnarray*}
Here we used the fact that \[
\Pi_{-}\frac{d}{dt}r_{t}=\Pi_{-}2i\xi r_{t}.\]
We then use equation (\ref{eq:proj 1}) from Lemma \ref{fac:proj}
to solve for the potential:\begin{eqnarray*}
q(t) & = & 2i\xi\Pi_{+}r_{t}^{*}A_{+,t}-\Pi_{+}2i\xi r_{t}^{*}A_{+,t}\\
 & = & -\frac{1}{\pi}\FF(r_{t}^{*}A_{+,t})(0)\\
 & = & \frac{1}{\pi}\FF(B_{+,t}^{*})(0^{+}).\end{eqnarray*}

\section{Derivation of the left Marchenko equation\label{sec:Derivation-left-continuum}}

A similar method can be used to derive the left Marchenko equation.
Instead of equation (\ref{eq:drm 0}), we use \[
\left[\begin{array}{cc}
A_{+,t}^{*}(\xi) & B_{+,t}^{*}(\xi)\\
-B_{+,t}(\xi) & A_{+,t}(\xi)\end{array}\right]=\left[\begin{array}{cc}
a(\xi) & -b^{*}(\xi)e^{-2i\xi t}\\
b(\xi)e^{2i\xi t} & a^{*}(\xi)\end{array}\right]\left[\begin{array}{cc}
A_{-,t}^{*}(\xi) & B_{-,t}^{*}(\xi)\\
-B_{-,t}(\xi) & A_{-,t}(\xi)\end{array}\right]\]
or

\begin{eqnarray*}
\frac{1}{a}A_{+,t}^{*} & = & A_{-,t}^{*}+\frac{b^{*}}{a}e^{-2i\xi t}B_{-,t}\\
\frac{1}{a}B_{+,t}^{*} & = & B_{-,t}^{*}-\frac{b^{*}}{a}e^{-2i\xi t}A_{-,t}.\end{eqnarray*}
Again, we apply $\Pi_{-}$ and conjugate the second equation:\begin{eqnarray}
A_{-,t}^{*} & = & 1+\Pi_{-}se^{-2i\xi t}B_{-,t}+\Pi_{-}\frac{1}{a}A_{+,t}^{*}\label{eq:dlm1}\\
B_{-,t} & = & -\Pi_{+}s^{*}e^{2i\xi t}A_{-,t}^{*}+\Pi_{+}\frac{1}{a^{*}}B_{+,t}.\label{eq:dlm2}\end{eqnarray}
This time, we have\begin{eqnarray*}
\Pi_{-}\frac{1}{a}A_{+,t}^{*} & = & \sum_{k=1}^{N}\frac{A_{+,t}^{*}(\xi_{k})}{a^{\prime}(\xi_{k})}\cdot\frac{1}{\xi-\xi_{k}}\\
 & = & \sum_{k=1}^{N}\frac{(C_{k}^{\prime})^{-1}e^{-2i\xi_{k}t}B_{-,t}(\xi_{k})}{a^{\prime}(\xi_{k})}\cdot\frac{1}{\xi-\xi_{k}}\\
 & = & \sum_{k=1}^{N}\tilde{C}_{k}e^{-2i\xi_{k}t}B_{-,t}(\xi_{k})\cdot\frac{1}{\xi-\xi_{k}}\\
 & = & -\Pi_{-}Q_{-,t}B_{-,t}\end{eqnarray*}
and \begin{eqnarray*}
\Pi_{-}\frac{1}{a}B_{+,t}^{*} & = & \sum_{k=1}^{N}\frac{B_{+,t}^{*}(\xi_{k})}{a^{\prime}(\xi_{k})}\cdot\frac{1}{\xi-\xi_{k}}\\
 & = & -\sum_{k=1}^{N}\frac{(C_{k}^{\prime})^{-1}e^{-2i\xi_{k}t}A_{-,t}(\xi_{k})}{a^{\prime}(\xi_{k})}\cdot\frac{1}{\xi-\xi_{k}}\\
 & = & -\sum_{k=1}^{N}\tilde{C}_{k}e^{-2i\xi_{k}t}A_{-,t}(\xi_{k})\cdot\frac{1}{\xi-\xi_{k}}\\
 & = & \Pi_{-}Q_{-,t}A_{-,t}\end{eqnarray*}
where \[
Q_{-,t}(\xi)=-\sum_{k=1}^{m}\frac{\tilde{C}_{k}e^{-2i\xi_{k}t}}{\xi-\xi_{k}}.\]
Therefore (\ref{eq:dlm1}) and (\ref{eq:dlm2}) become\begin{eqnarray*}
A_{-,t}^{*} & = & 1+\Pi_{-}(se^{-2i\xi t}-Q_{-,t})B_{-,t}\\
B_{-,t} & = & -\Pi_{+}(se^{-2i\xi t}-Q_{-,t})^{*}A_{-,t}^{*}\end{eqnarray*}
or\begin{eqnarray}
A_{-,t}^{*} & = & 1+\Pi_{-}s_{t}B_{-,t}\label{eq:dlm3}\\
B_{-,t} & = & -\Pi_{+}s_{t}^{*}A_{-,t}^{*}\label{eq:dlm4}\end{eqnarray}
 where \begin{eqnarray*}
s_{t}(\xi) & = & \Pi_{-}\frac{-b^{*}}{a}e^{-2i\xi t}-Q_{-,t}(\xi)\\
 & = & \Pi_{-}\frac{-b^{*}}{a}e^{-2i\xi t}+\sum_{k=1}^{m}\frac{\tilde{C}_{k}e^{-2i\xi_{k}t}}{\xi-\xi_{k}}.\end{eqnarray*}
Equations (\ref{eq:dlm3}) and (\ref{eq:dlm4}) can be combined into
the single Marchenko equation:\[
(1+\Pi_{+}s_{t}^{*}\Pi_{-}s_{t})B_{-,t}=-\Pi_{+}s_{t}^{*}.\]

The proof of equation (\ref{eq:ct17}) is identical to the above proof
of equation (\ref{eq:ct16}):

\begin{eqnarray*}
-q^{*}(t)A_{-,t}+2i\xi B_{-,t} & = & -\Pi_{+}(\frac{d}{dt}d_{t})^{*}A_{-,t}^{*}-q^{*}(t)\Pi_{+}s_{t}^{*}B_{-,t}^{*}\\
 & = & -\Pi_{+}2i\xi s_{t}^{*}A_{-,t}^{*}-q^{*}(t)A_{-,t}+q^{*}(t)\end{eqnarray*}
 \begin{eqnarray*}
q^{*}(t) & = & -2i\xi\Pi_{+}s_{t}^{*}A_{-,t}^{*}+\Pi_{+}2i\xi s_{t}^{*}A_{-,t}^{*}\\
 & = & \frac{1}{\pi}\FF(s_{t}^{*}A_{-,t}^{*})(0)\\
 & = & -\frac{1}{\pi}\FF(B_{-,t})(0^{+}).\end{eqnarray*}

\section{Derivation of the discrete Marchenko equations\label{sec:Derivation-discrete-marchenko}}

In this section we derive the right Marchenko equations for hard pulses.
We omit the derivation of the left equation, but the reader should
be able to reproduce it using the techniques from this section and
the previous two sections. 

Assume that $q$ has the form \[
q(t)=\sum_{j=-\infty}^{\infty}\mu_{j}\delta(t-j\Delta),\]
 where \[
\sum_{j=-\infty}^{\infty}|\mu_{j}|<\infty,\]
 and let $S=(a,b;w_{1},\dots,w_{m};c_{1}^{\prime},\dots,c_{m}^{\prime})$
be the corresponding discrete scattering data. Let $A_{\pm}$ and
$B_{\pm}$ be as in Section \ref{sec:Discrete-Theory}. Recall that
$A_{+,j}^{*}$, $B_{+,j}^{*}$, $A_{-,j}$, and $w^{-1}B_{-,j}$ all
have analytic extensions to unit $w$-disk. Rearranging equation (\ref{eq:dt7})
gives

\[
\left[\begin{array}{cc}
A_{-,j} & -B_{-,j}^{*}\\
B_{-,j} & A_{-,j}^{*}\end{array}\right]=\left[\begin{array}{cc}
A_{+,j} & -B_{+,j}^{*}\\
B_{+,j} & A_{+,j}^{*}\end{array}\right]\left[\begin{array}{cc}
a & -b^{*}w^{-j}\\
bw^{j} & a^{*}\end{array}\right]\]
 or

\begin{eqnarray*}
\frac{1}{a}A_{-,j} & = & A_{+,j}-rw^{j-1}(wB_{+,j}^{*})\\
\frac{1}{a}w^{-1}B_{-,j} & = & w^{-1}B_{+,j}+rw^{j-1}A_{+,j}^{*}.\end{eqnarray*}
We apply $\Pi_{-}$ and conjugate the second equation:\begin{eqnarray}
A_{+,j} & = & \hat{A}_{+,j}(0)+\Pi_{-}rw^{j-1}(wB_{+,j}^{*})+\Pi_{-}\frac{1}{a}A_{-,j}\label{eq:drmd1}\\
wB_{+,j}^{*} & = & -\Pi_{+}(rw^{j-1})^{*}A_{+,j}+\Pi_{+}\frac{1}{a^{*}}wB_{-,j}^{*}.\label{eq:drmd2}\end{eqnarray}
Using the properties of $c_{1}^{\prime},\dots,c_{m}^{\prime}$, we
get\begin{eqnarray*}
\Pi_{-}\frac{1}{a}A_{-,j} & = & \sum_{k=1}^{N}\frac{A_{-,j}(w_{k})}{a^{\prime}(w_{k})}\cdot\frac{1}{w-w_{k}}\\
 & = & -\sum_{k=1}^{N}\frac{c_{k}^{\prime}w_{k}^{j}B_{+,j}^{*}(w_{k})}{a^{\prime}(w_{k})}\cdot\frac{1}{w-w_{k}}\\
 & = & -\sum_{k=1}^{N}c_{k}w_{k}^{j}B_{+,j}^{*}(w_{k})\cdot\frac{1}{w-w_{k}}\\
 & = & -\Pi_{-}Q_{+,j}(wB_{+,j}^{*})\end{eqnarray*}
and \begin{eqnarray*}
\Pi_{-}\frac{1}{a}w^{-1}B_{-,j} & = & \sum_{k=1}^{N}\frac{w_{k}^{-1}B_{-,j}(w_{k})}{a^{\prime}(w_{k})}\cdot\frac{1}{w-w_{k}}\\
 & = & \sum_{k=1}^{N}\frac{c_{k}^{\prime}w_{k}^{j-1}A_{+,j}^{*}(w_{k})}{a^{\prime}(w_{k})}\cdot\frac{1}{w-w_{k}}\\
 & = & \sum_{k=1}^{N}c_{k}w_{k}^{j-1}A_{+,j}^{*}(w_{k})\cdot\frac{1}{w-w_{k}}\\
 & = & \Pi_{-}Q_{+,j}A_{+,j}^{*}\end{eqnarray*}
where \[
Q_{+,j}(w)=\sum_{k=1}^{m}\frac{c_{k}w_{k}^{j-1}}{w-w_{k}}.\]
So, equations (\ref{eq:drmd1}) and (\ref{eq:drmd2}) become\begin{eqnarray}
A_{+,j} & = & \hat{A}_{+,j}(0)+\Pi_{-}(rw^{j-1}-Q_{+,j})(wB_{+,j}^{*})\label{eq:drmd3p}\\
wB_{+,j}^{*} & = & -\Pi_{+}(rw^{j-1}-Q_{+,j})^{*}A_{+,j}\label{eq:drmd4p}\end{eqnarray}
 or\begin{eqnarray}
A_{+,j} & = & \hat{A}_{+,j}(0)+\Pi_{-}r_{j}(wB_{+,j}^{*})\label{eq:drmd3}\\
wB_{+,j}^{*} & = & -\Pi_{+}r_{j}^{*}A_{+,j}\label{eq:drmd4}\end{eqnarray}
 where \begin{eqnarray*}
r_{j} & = & \Pi_{-}rw^{j-1}-Q_{+,j}\\
 & = & \Pi_{-}rw^{j-1}-\sum_{k=1}^{N}\frac{c_{k}w_{k}^{j-1}}{w-w_{k}}.\end{eqnarray*}
Equations (\ref{eq:drmd3}) and (\ref{eq:drmd4}) can be combined
into the single Marchenko equation:\begin{equation}
(1+\Pi_{+}r_{j}^{*}\Pi_{-}r_{j})\frac{wB_{+,j}^{*}}{\hat{A}_{+,j}(0)}=-\Pi_{+}r_{j}^{*}.\label{eq:drmd mar}\end{equation}

To prove equations (\ref{eq:dt23}) and (\ref{eq:dt24}), we write
the recursion (\ref{eq:dt1}) in terms of $A_{\pm}$ and $B_{\pm}$:\begin{equation}
\left[\begin{array}{c}
A_{\pm,j+1}\\
B_{\pm,j+1}\end{array}\right]=(1+|\gamma_{j}|^{2})^{-\frac{1}{2}}\left[\begin{array}{cc}
1 & \gamma_{j}\\
-\gamma_{j}^{*}w & w\end{array}\right]\left[\begin{array}{c}
A_{\pm,j}\\
B_{\pm,j}\end{array}\right]\label{eq:drmd5}\end{equation}
 \begin{equation}
\left[\begin{array}{c}
A_{\pm,j}\\
B_{\pm,j}\end{array}\right]=(1+|\gamma_{j}|^{2})^{-\frac{1}{2}}\left[\begin{array}{cc}
1 & -\gamma_{j}w^{-1}\\
\gamma_{j}^{*} & w^{-1}\end{array}\right]\left[\begin{array}{c}
A_{\pm,j+1}\\
B_{\pm,j+1}\end{array}\right]\label{eq:drmd6}\end{equation}
for \[
\gamma_{j}=\frac{\mu_{j}}{|\mu_{j}|}\tan|\mu_{j}|.\]
These recursions immediately imply\[
-\gamma_{j}^{*}=\frac{\hat{B}_{-,j+1}(1)}{\hat{A}_{-,j+1}(0)}\]
 and\[
\gamma_{j}^{*}=\frac{\hat{B}_{+,j}(0)}{\hat{A}_{+,j}(0)},\]
 as desired.

\section{Derivation of the DIST recursion\label{sec:Derivation-of-the-DIST}}

To derive equation (\ref{eq:dt23}) we simply plug equation (\ref{eq:drmd4})
into the recursion (\ref{eq:drmd6}):

\begin{eqnarray*}
wB_{+,j}^{*} & = & -\Pi_{+}r_{j}^{*}A_{+,j}\\
w(\gamma_{j}A_{+,j+1}^{*}+wB_{+,j+1}^{*}) & = & -\Pi_{+}r_{j}^{*}(A_{+,j+1}-\gamma_{j}w^{-1}B_{+,j+1})\\
\gamma_{j}(wA_{+,j+1}^{*}-\Pi_{+}w^{-1}r_{j}^{*}B_{+,j+1}) & = & -\Pi_{+}r_{j}^{*}A_{+,j+1}-w^{2}B_{+,j+1}^{*}\end{eqnarray*}
Examining the coefficient of $w^{1}$, we have\[
\gamma_{j}\left(\FF(A_{+,j+1})(0)-\FF(w^{-1}r_{j}^{*}(w^{-1}B_{+,j+1}))(0)\right)=-\FF(w^{-1}r_{j}^{*}A_{+,j+1})(0).\]
This immediately gives the desired result.

A similar computation can be used to obtain equation (\ref{eq:dt24}):\begin{eqnarray*}
B_{-,j+1} & = & -\Pi_{+}s_{j+1}^{*}A_{-,j+1}^{*}\\
-\gamma_{j}^{*}wA_{-,j}+wB_{-,j} & = & -\Pi_{+}s_{j+1}^{*}A_{-,j}^{*}-\gamma_{j}^{*}\Pi_{+}s_{j+1}^{*}B_{-,j}^{*}\\
-\gamma_{j}^{*}\left(wA_{-,j}-\Pi_{+}s_{j+1}^{*}B_{-,j}^{*}\right) & = & -\Pi_{+}s_{j+1}^{*}A_{-,j}^{*}-wB_{-,j}\\
-\gamma_{j}^{*}\left(\FF(A_{-,j})(0)-\FF(w^{-1}s_{j+1}^{*}B_{-,j}^{*})(0)\right) & = & -\FF(w^{-1}s_{j+1}^{*}A_{-,j}^{*})(0).\end{eqnarray*}

\section{The discrete energy formula\label{sub:The-energy-formula-discrete}}

In this section, we prove Theorem \ref{thm:energy discrete} and Corollary
\ref{cor:energy discrete}. Let \[
S=(a,b;w_{1},\dots,w_{m};c_{1}^{\prime},\dots,c_{m}^{\prime})\]
 be discrete scattering data, and let $A_{\pm}$ and $B_{\pm}$ be
as in Section \ref{sec:Discrete-Theory}. By equation (\ref{eq:dt7}),
we know that $a$ is given by \[
a=\lim_{j\rightarrow+\infty}A_{-,j}.\]
Therefore, since $\lim_{j\rightarrow-\infty}A_{-,j}=1$, the recursion
(\ref{eq:drmd5}) tells us that \[
a(0)=\hat{a}(0)=\prod_{j=-\infty}^{\infty}(1+|\gamma_{j}|^{2})^{-\frac{1}{2}}.\]
 Let us write \[
a(w)=a_{0}(w)\cdot\prod_{k=1}^{m}\frac{w_{k}^{*}}{|w_{k}|}\frac{w-w_{k}}{1-w_{k}^{*}w},\]
 where $a_{0}$ is analytic in the unit disk, and $w_{1},\dots,w_{m}$
are the zeros of $a$ in the unit disk. Since $\log|a_{0}|$ is harmonic
in the unit disk, and since $a(0)$ is positive, we have \begin{eqnarray*}
\frac{1}{2\pi}\int_{0}^{2\pi}\log|a_{0}(e^{i\theta})|d\theta & = & \log|a_{0}(0)|\\
 & = & \log a(0)-\sum_{k=1}^{m}\log|w_{k}|\\
 & = & -\frac{1}{2}\sum_{j=-\infty}^{\infty}\log(1+|\gamma_{j}|^{2})-\sum_{k=1}^{m}\log|w_{k}|.\end{eqnarray*}
Therefore, \begin{equation}
\sum_{j=-\infty}^{\infty}\log(1+|\gamma_{j}|^{2})=-\frac{1}{\pi}\int_{0}^{2\pi}\log|a_{0}(e^{i\theta})|d\theta-2\sum_{k=1}^{m}\log|w_{k}|.\label{eq:energy 1}\end{equation}
For $w$ on the unit circle, we have $|a_{0}(w)|=|a(w)|=(1+|r(w)|^{2})^{-\frac{1}{2}}$,
where $r=\frac{b}{a}$ is the reflection coefficient. Therefore (\ref{eq:energy 1})
becomes \[
\sum_{j=-\infty}^{\infty}\log(1+|\gamma_{j}|^{2})=\frac{1}{2\pi}\int_{0}^{2\pi}\log(1+|r(e^{i\theta})|^{2})d\theta-2\sum_{k=1}^{m}\log|w_{k}|.\]
By equation (\ref{eq:ZS3}), this is \[
\sum_{j=-\infty}^{\infty}\log(1+\tan^{2}\frac{|\omega_{j}|}{2})=\frac{1}{2\pi}\int_{0}^{2\pi}\log(1+|r(e^{i\theta})|^{2})d\theta-2\sum_{k=1}^{m}\log|w_{k}|.\]
This proves Theorem \ref{thm:energy discrete}. Corollary \ref{cor:energy discrete}
follows immediately in light of the proof of part (b) of Theorem \ref{thm:main discrete}.
Notice that if the time step, $\Delta$, is small, then the magnitude
of $\omega_{j}$ is also small, and so we have\[
\sum_{j=-\infty}^{\infty}|\omega_{j}|^{2}\approx\frac{4}{2\pi}\int_{0}^{2\pi}\log(1+|r(e^{i\theta})|^{2})d\theta-8\sum_{k=1}^{m}\log|w_{k}|.\]

\section{Proof of the main result: discrete case\label{sec:Proof-main-discrete}}

In this section, we prove Theorem \ref{thm:main discrete}.

\textbf{Proof of part (a):} We start with the reduced discrete scattering
data \[
\tilde{S}=(r;w_{1},\dots,w_{m};c_{1},\dots,c_{m})\]
 where $r$ is in $H^{1}(S^{1})$. This corresponds to unique discrete
scattering data \[
S=(a,b;w_{1},\dots,w_{m};c_{1}^{\prime},\dots,c_{m}^{\prime}).\]
 Proposition \ref{prop: proof main dc} below tells us that there
is a unique hard pulse with scattering data $S$.

\textbf{Proof of part (b):} This follows from part (a), in the special
case where, in the notation of Section \ref{sec:Derivation-discrete-marchenko},
$Q_{+,\rho}=r_{0}$, because in this case we have $r_{j}=0$ for all
$j\geq\rho$. This implies that the hard pulse vanishes for time steps
$j\geq\rho$, as desired. See Appendix \ref{app:non-simple} for the
case of non-simple poles.

\textbf{Proof of part (c):} This follows from part (b). Simply note
that the scattering data in this case has the form \[
S=(A,w^{-\rho}B;w_{1},\dots,w_{m};c_{1},\dots,c_{m}).\]
 One can check that the scattering data for the time reversed pulse
$\mu_{j}=-\mu_{-j}^{*}$ is \[
\overleftarrow{S}=(A,-w^{\rho}B^{*};\tilde{w}_{1},\dots,\tilde{w}_{m};\tilde{c}_{1},\dots,\tilde{c}_{m}).\]
 Notice that $w^{T-\rho}w^{\rho}B^{*}$ is analytic in the upper half
plane, which implies that the time reversed pulse ends at time step
$T-\rho$, as desired. We omit some details about the bound state
data which need to be worked out.

\begin{lem}
\label{lem:1}Let $S=(a,b;w_{1},\dots,w_{m};c_{1},\dots,c_{m})$ be
discrete scattering data such that $r=\frac{b}{a}$ is in $H^{1}(S^{1})$.
For each $j$ there exist unique functions $A_{-,j}$, $w^{-1}B_{-,j}$,
$A_{+,j}^{*}$, and $B_{+,j}^{*}$ in $\tilde{H}_{+}^{1}(S^{1})$
such that \begin{equation}
\left[\begin{array}{cc}
A_{-,j} & -B_{-,j}^{*}\\
B_{-,j} & A_{-,j}^{*}\end{array}\right]=\left[\begin{array}{cc}
A_{+,j} & -B_{+,j}^{*}\\
B_{+,j} & A_{+,j}^{*}\end{array}\right]\left[\begin{array}{cc}
a & -b^{*}w^{-j}\\
bw^{j} & a^{*}\end{array}\right],\label{eq:lem 1}\end{equation}
\begin{equation}
\frac{1}{2\pi}\int_{0}^{2\pi}|A_{\pm,j}(e^{i\theta})|^{2}+|B_{\pm,j}(e^{i\theta})|^{2}d\theta=1,\label{eq:lem 1.1}\end{equation}
\begin{equation}
\hat{A}_{\pm,j}(0)>0,\label{eq:lem 1.2}\end{equation}
 and \begin{eqnarray}
\Pi_{-}\frac{1}{a}A_{-,j} & = & -\Pi_{-}Q_{+,j}wB_{+,j}^{*}\label{eq:lem 1Q1}\\
\Pi_{-}\frac{1}{a}w^{-1}B_{-,j} & = & \Pi_{-}Q_{+,j}A_{+,j}^{*}\label{eq:lem 1Q2}\end{eqnarray}
 where \begin{eqnarray*}
Q_{+,j}(w) & = & \sum_{k=1}^{m}\frac{c_{k}w_{k}^{j-1}}{w-w_{k}}.\end{eqnarray*}

\end{lem}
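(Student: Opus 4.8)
The plan is to build the four functions $A_{\pm,j}$ and $B_{\pm,j}$ from the Marchenko machinery already set up, and then verify that they satisfy the list of required properties. First I would form the reduced scattering data's associated functions $a$ and $b$ via Proposition \ref{pro:refl DC} (formula (\ref{eq:dt16})), so that $|a|^2+|b|^2=1$ on $S^1$, $a$ is analytic in $\DD$ with the prescribed zeros $w_1,\dots,w_m$, and $\hat a(0)>0$. Then, for each fixed $j$, I would introduce the shifted reflection coefficient $r_j = \Pi_-rw^{j-1}-Q_{+,j}$ as in (\ref{eq:drmd3}), note that $r_j\in H^1(S^1)$ since $r\in H^1(S^1)$ and $Q_{+,j}$ is a finite sum of simple rational functions with poles in $\DD$ (hence analytic on $S^1$), and invoke Proposition \ref{pro:TME 2} to get the unique solution $wB_{+,j}^*/\hat A_{+,j}(0)\in H_+^1(S^1)$ to the Marchenko equation (\ref{eq:drmd mar}). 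This \emph{defines} $wB_{+,j}^*$ up to the positive scalar $\hat A_{+,j}(0)$, which I would then pin down by the normalization (\ref{eq:lem 1.1}): set $A_{+,j}$ via (\ref{eq:drmd3}) (so $A_{+,j}=\hat A_{+,j}(0)+\Pi_-r_j(wB_{+,j}^*)$ with the constant term chosen to make $\|A_{+,j}\|^2+\|B_{+,j}\|^2=1$ and $\hat A_{+,j}(0)>0$; such a choice exists and is unique because the Marchenko solution scales linearly with $\hat A_{+,j}(0)$ while the $L^2$-norm-squared is a positive-definite quadratic in it).

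Next I would \emph{define} $A_{-,j}$ and $B_{-,j}$ by equation (\ref{eq:lem 1}) itself — that is, as the explicit matrix product of the $+$ data with the scattering matrix. With this definition, (\ref{eq:lem 1}) holds tautologically, and since $|a|^2+|b|^2=1$ the scattering matrix is (pointwise on $S^1$) unitary, so the rotation-invariance of the $L^2$ norm gives (\ref{eq:lem 1.1}) for the $-$ data as well. The work is then to check analyticity: I must show $A_{-,j}$ and $w^{-1}B_{-,j}$ lie in $\tilde H_+^1(S^1)$. Reading off the matrix product, $A_{-,j}=A_{+,j}a-B_{+,j}^*\,bw^j$ and $w^{-1}B_{-,j}=w^{-1}B_{+,j}a+r w^{j-1}A_{+,j}^*$ (after dividing by $a$; cf.\ the displayed equations preceding (\ref{eq:drmd1})). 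The potential singularities are exactly at the zeros $w_k$ of $a$; the conditions (\ref{eq:lem 1Q1})--(\ref{eq:lem 1Q2}) are precisely what is needed for the residues of $\frac1a A_{-,j}$ and $\frac1a w^{-1}B_{-,j}$ at the $w_k$ to cancel, leaving functions analytic across $\DD$. Conversely, rewriting (\ref{eq:lem 1Q1})--(\ref{eq:lem 1Q2}) using (\ref{eq:lem 1}) reduces them to identities about the residues of $\Pi_-\frac1a A_{-,j}$ and $\Pi_-\frac1a w^{-1}B_{-,j}$, which can be evaluated exactly as in the derivation in Section \ref{sec:Derivation-discrete-marchenko} once one knows $A_{-,j}(w_k)$ and $B_{-,j}(w_k)$ in terms of $B_{+,j}^*(w_k)$ and $A_{+,j}^*(w_k)$; the needed relation is forced by evaluating (\ref{eq:lem 1}) at $w=w_k$ and using $a(w_k)=0$, which yields $\bigl[A_{-,j}(w_k),B_{-,j}(w_k)\bigr] = b(w_k)w_k^j\bigl[-B_{+,j}^*(w_k),A_{+,j}^*(w_k)\bigr]$, i.e.\ exactly (\ref{eq:dt13}) with $c_k' = b(w_k)/ (\text{something})$ matched to $c_k=c_k'/a'(w_k)$ through (\ref{eq:dt14}).

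The logical structure I would use to get the \emph{existence} of a consistent set (rather than circular definitions) is: (i) $a,b$ from the reduced data; (ii) $wB_{+,j}^*/\hat A_{+,j}(0)$ from the Marchenko equation (Prop.\ \ref{pro:TME 2}); (iii) fix $\hat A_{+,j}(0)>0$ and hence $A_{+,j}$ by the quadratic normalization; (iv) define the $-$ data by (\ref{eq:lem 1}); (v) verify (\ref{eq:lem 1.1}) from unitarity, (\ref{eq:lem 1.2}) from the construction in (iii) plus a separate argument that $\hat A_{-,j}(0)>0$ (this follows by taking $w\to$ inside $\DD$ / evaluating the analytic extension at $0$: $\hat A_{-,j}(0)=A_{-,j}(0)$, and from $A_{-,j}=A_{+,j}a - B_{+,j}^* b w^j$ together with $b=ra$ and $\hat A_{+,j}(0),\hat a(0)>0$ one gets a positive value — this needs a short computation but no new ideas), and (vi) verify the analyticity claims and (\ref{eq:lem 1Q1})--(\ref{eq:lem 1Q2}) via the residue bookkeeping above. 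For \emph{uniqueness}, I would run the argument backwards: any quadruple satisfying (\ref{eq:lem 1})--(\ref{eq:lem 1Q2}) must, by the residue-cancellation reading of (\ref{eq:lem 1Q1})--(\ref{eq:lem 1Q2}), have $\frac1a A_{-,j}$ and $\frac1a w^{-1}B_{-,j}$ analytic in $\DD$, so applying $\Pi_-$ to $\frac1a A_{-,j}=A_{+,j}-r_j(wB_{+,j}^*)$ and conjugating recovers the Marchenko equation (\ref{eq:drmd mar}); Prop.\ \ref{pro:TME 2} then forces $wB_{+,j}^*/\hat A_{+,j}(0)$, and the normalization (\ref{eq:lem 1.1})--(\ref{eq:lem 1.2}) forces the remaining scalar. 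I expect the main obstacle to be item (vi): carefully justifying that the partial-fraction conditions (\ref{eq:lem 1Q1})--(\ref{eq:lem 1Q2}) are equivalent to removability of the poles of $\frac1a A_{-,j}$ and $\frac1a w^{-1}B_{-,j}$ at the $w_k$, and doing the residue computation cleanly when one only \emph{a priori} knows $wB_{+,j}^*\in H_+^1$ rather than having a closed form — this is exactly the subtlety that was glossed in the derivation in Section \ref{sec:Derivation-discrete-marchenko}, where the scattering data was assumed to come from an actual potential.
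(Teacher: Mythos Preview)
Your overall construction matches the paper's: solve the Marchenko equation (\ref{eq:drmd mar}) for $wB_{+,j}^{*}/\hat A_{+,j}(0)$, recover $A_{+,j}$ from (\ref{eq:drmd3}), fix the scalar by (\ref{eq:lem 1.1})--(\ref{eq:lem 1.2}), and then \emph{define} $A_{-,j},B_{-,j}$ via the matrix product (\ref{eq:lem 1}). The uniqueness argument is also the same.

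The gap is in your item (vi). Your plan is to verify (\ref{eq:lem 1Q1})--(\ref{eq:lem 1Q2}) by evaluating (\ref{eq:lem 1}) at the interior points $w_k\in\DD$ and reading off residues of $\frac{1}{a}A_{-,j}$. But at this stage $A_{-,j}$ is only defined on $S^{1}$, and the right-hand side $A_{+,j}a-B_{+,j}^{*}bw^{j}$ does not extend into $\DD$ either: $A_{+,j}$ is analytic only \emph{outside} the disk (since $A_{+,j}^{*}\in\tilde H_{+}$), and $b$ is analytic only outside the disk (since $b^{*}\in\tilde H_{+}$), so ``$b(w_k)$'' and ``$A_{-,j}(w_k)$'' are not available. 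This is exactly the circularity you flag at the end, and your proposed residue bookkeeping does not break it.

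The paper's resolution avoids interior evaluation entirely. From the matrix equation on $S^{1}$ and the known fact $A_{+,j}^{*}\in\tilde H_{+}$ one gets (\ref{eq:drmd1}) directly (apply $\Pi_{-}$ to $\tfrac{1}{a}A_{-,j}=A_{+,j}-rw^{j-1}(wB_{+,j}^{*})$ and use that $\Pi_{-}A_{+,j}=A_{+,j}-\hat A_{+,j}(0)$); similarly one gets (\ref{eq:drmd2}). On the other hand, (\ref{eq:drmd3p})--(\ref{eq:drmd4p}) hold \emph{by construction} of the Marchenko solution. Subtracting (\ref{eq:drmd3p}) from (\ref{eq:drmd1}) gives (\ref{eq:lem 1Q1}) immediately, and likewise (\ref{eq:lem 1Q2}); no value of anything at $w_k$ is needed. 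Only \emph{after} the $Q$-equations are in hand does one argue analyticity: (\ref{eq:lem 1Q1}) says $\frac{1}{a}A_{-,j}+Q_{+,j}\,wB_{+,j}^{*}$ has vanishing $\Pi_{-}$ part, and since $\frac{1}{a}$ and $Q_{+,j}$ share the same poles in $\DD$ while $wB_{+,j}^{*}$ is analytic there, $A_{-,j}$ must be analytic in $\DD$. Replacing your residue computation by this comparison-of-two-expressions argument closes the gap.
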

\begin{proof}
We first prove uniqueness. From Section \ref{sec:Derivation-discrete-marchenko}
we see that $\frac{wB_{+,j}^{*}}{\hat{A}_{+,j}(0)}$ must be the unique
solution to equation (\ref{eq:drmd mar}) where \[
r_{j}=\Pi_{-}rw^{j-1}-Q_{+,j}.\]
Using equation (\ref{eq:drmd3}) we can determine\[
\frac{A_{+,j}}{\hat{A}_{+,j}(0)}=1+\Pi_{-}r_{j}\frac{wB_{+,j}^{*}}{\hat{A}_{+,j}(0)}.\]
The functions $A_{+,j}$ and $B_{+,j}$ are then uniquely defined
by (\ref{eq:lem 1.1}) and (\ref{eq:lem 1.2}). Finally, $A_{-,j}$
and $B_{-,j}$ can be computed from $A_{+,j}$ and $B_{+,j}$ using
the matrix equation (\ref{eq:lem 1}). This proves uniqueness.

To prove existence, we just need to show that $A_{-,j}$, $w^{-1}B_{-,j}$,
$A_{+,j}^{*}$, and $B_{+,j}^{*}$ as defined above are all in $\tilde{H}_{+}^{1}(S^{1})$,
and satisfy equations (\ref{eq:lem 1Q1}) and (\ref{eq:lem 1Q2}).
By construction, we know that $A_{+,j}^{*}$ and $B_{+,j}^{*}$ are
in $\tilde{H}_{+}^{1}(S^{1})$. Also, by construction, we know that
equations (\ref{eq:drmd1}), (\ref{eq:drmd2}), (\ref{eq:drmd3p}),
and (\ref{eq:drmd4p}) hold. Comparing these equations immediately
gives (\ref{eq:lem 1Q1}) and (\ref{eq:lem 1Q2}). So we just need
to show that $A_{-,j}$ and $w^{-1}B_{-,j}$ are analytic in the unit
disk. But this actually follows from equations (\ref{eq:lem 1Q1})
and (\ref{eq:lem 1Q2}). Indeed, equation (\ref{eq:lem 1Q1}) implies
that $\frac{1}{a}A_{-,j}+Q_{+,j}wB_{+,j}^{*}$ is analytic in the
unit disk. Since $\frac{1}{a}$ and $Q_{+,j}$ have the same poles
and multiplicities, and since $wB_{+,j}^{*}$ is analytic in the disk,
it follows that $A_{-,j}$ must also be analytic in the unit disk.
Similarly, equation (\ref{eq:lem 1Q2}) proves the analyticity of
$w^{-1}B_{-,j}$.
\end{proof}
\begin{lem}
\label{lem:2}The functions $A_{\pm,j}$ and $B_{\pm,j}$ given in
Lemma \ref{lem:1} satisfy the recursion \[
\left[\begin{array}{c}
A_{\pm,j+1}\\
B_{\pm,j+1}\end{array}\right]=(1+|\gamma_{j}|^{2})^{-\frac{1}{2}}\left[\begin{array}{cc}
1 & \gamma_{j}\\
-\gamma_{j}^{*}w & w\end{array}\right]\left[\begin{array}{c}
A_{\pm,j}\\
B_{\pm,j}\end{array}\right]\]
 where \[
\gamma_{j}^{*}=\frac{\hat{B}_{+,j}(0)}{\hat{A}_{+,j}(0)}.\]

\end{lem}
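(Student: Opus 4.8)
The plan is to fix $j\in\ZZ$ and reduce the claim to the uniqueness clause of Lemma \ref{lem:1}. Since $\hat{A}_{+,j}(0)>0$ by (\ref{eq:lem 1.2}), I would first \emph{define} $\gamma_{j}$ by $\gamma_{j}^{*}=\hat{B}_{+,j}(0)/\hat{A}_{+,j}(0)$, set $\kappa_{j}=(1+|\gamma_{j}|^{2})^{-1/2}$, and let $\tilde{A}_{\pm,j+1}$ and $\tilde{B}_{\pm,j+1}$ be obtained by applying the matrix $\kappa_{j}\left[\begin{array}{cc}1 & \gamma_{j}\\ -\gamma_{j}^{*}w & w\end{array}\right]$ of the statement to the column $\left[\begin{array}{c}A_{\pm,j}\\ B_{\pm,j}\end{array}\right]$. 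The goal is then to check that the quadruple $\tilde{A}_{-,j+1}$, $w^{-1}\tilde{B}_{-,j+1}$, $\tilde{A}_{+,j+1}^{*}$, $\tilde{B}_{+,j+1}^{*}$ satisfies every hypothesis of Lemma \ref{lem:1} with $j$ replaced by $j+1$; the uniqueness assertion of that lemma then identifies these functions with $A_{\pm,j+1}$ and $B_{\pm,j+1}$, which is exactly the asserted recursion, and the formula $\gamma_{j}^{*}=\hat{B}_{+,j}(0)/\hat{A}_{+,j}(0)$ is the one used in the construction.

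First I would dispose of the membership, normalization and positivity hypotheses. Since $\kappa_{j},\gamma_{j},\gamma_{j}^{*}$ are constants and $A_{+,j}^{*},B_{+,j}^{*}\in\tilde{H}_{+}^{1}(S^{1})$, the function $\tilde{A}_{+,j+1}^{*}=\kappa_{j}(A_{+,j}^{*}+\gamma_{j}^{*}B_{+,j}^{*})$ lies in $\tilde{H}_{+}^{1}(S^{1})$; and $\tilde{B}_{+,j+1}^{*}=\kappa_{j}w^{-1}(B_{+,j}^{*}-\gamma_{j}A_{+,j}^{*})$, where $B_{+,j}^{*}-\gamma_{j}A_{+,j}^{*}\in\tilde{H}_{+}^{1}(S^{1})$ has value $\overline{\hat{B}_{+,j}(0)}-\gamma_{j}\hat{A}_{+,j}(0)$ at the origin, which vanishes precisely because of the choice of $\gamma_{j}$, so $\tilde{B}_{+,j+1}^{*}\in\tilde{H}_{+}^{1}(S^{1})$ as well. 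The analyticity of $\tilde{A}_{-,j+1}$ and $w^{-1}\tilde{B}_{-,j+1}$ requires no constraint on $\gamma_{j}$, since $A_{-,j},w^{-1}B_{-,j}\in\tilde{H}_{+}^{1}(S^{1})$ and $B_{-,j}$ vanishes at $0$. A one-line computation gives $|\tilde{A}_{\pm,j+1}|^{2}+|\tilde{B}_{\pm,j+1}|^{2}=|A_{\pm,j}|^{2}+|B_{\pm,j}|^{2}$ pointwise on $S^{1}$, so (\ref{eq:lem 1.1}) carries over; and using $\hat{B}_{+,j}(0)=\gamma_{j}^{*}\hat{A}_{+,j}(0)$ and $\hat{B}_{-,j}(0)=0$ one finds $\hat{\tilde{A}}_{+,j+1}(0)=\kappa_{j}^{-1}\hat{A}_{+,j}(0)>0$ and $\hat{\tilde{A}}_{-,j+1}(0)=\kappa_{j}\hat{A}_{-,j}(0)>0$, which is (\ref{eq:lem 1.2}).

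Next I would verify the matrix identity (\ref{eq:lem 1}) and the relations (\ref{eq:lem 1Q1})--(\ref{eq:lem 1Q2}) at level $j+1$. Writing $R_{j}$ for the recursion matrix, $D=\textrm{diag}(1,w)$, and $U_{j}^{+},U_{j}^{-},G_{j}$ for the three $2\times2$ matrices appearing in (\ref{eq:lem 1}) at level $j$ (so that equation reads $U_{j}^{-}=U_{j}^{+}G_{j}$), a direct multiplication shows that the level-$(j+1)$ matrices assembled from the tilded functions are exactly $R_{j}U_{j}^{+}D^{-1}$ and $R_{j}U_{j}^{-}D^{-1}$, while $G_{j+1}=DG_{j}D^{-1}$; hence $(R_{j}U_{j}^{+}D^{-1})G_{j+1}=R_{j}U_{j}^{+}G_{j}D^{-1}=R_{j}U_{j}^{-}D^{-1}$, which is (\ref{eq:lem 1}) at level $j+1$. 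For the remaining relations I would use that $1/a$ and $Q_{+,j+1}$ share the poles $w_{k}$ (the zeros of $a$, simple and nonzero since $a(0)>0$) with matching orders while every other factor is analytic on $\DD$, so that (\ref{eq:lem 1Q1})--(\ref{eq:lem 1Q2}) at level $j+1$ are equivalent to the residue conditions $\tilde{A}_{-,j+1}(w_{k})=-c_{k}'w_{k}^{j+1}\tilde{B}_{+,j+1}^{*}(w_{k})$ and $\tilde{B}_{-,j+1}(w_{k})=c_{k}'w_{k}^{j+1}\tilde{A}_{+,j+1}^{*}(w_{k})$, with $c_{k}'=c_{k}a'(w_{k})$; these follow by substituting the definitions of the tilded functions at $w=w_{k}$ into the level-$j$ relations (\ref{eq:lem 1Q1})--(\ref{eq:lem 1Q2}), read off as $A_{-,j}(w_{k})=-c_{k}'w_{k}^{j}B_{+,j}^{*}(w_{k})$ and $B_{-,j}(w_{k})=c_{k}'w_{k}^{j}A_{+,j}^{*}(w_{k})$, and simplifying.

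Almost all of this is bookkeeping in $2\times2$ matrices over the circle; the one point with genuine content — and the step I would be most careful about — is that the membership $\tilde{B}_{+,j+1}^{*}\in\tilde{H}_{+}^{1}(S^{1})$ forces $\gamma_{j}$ to equal $\overline{\hat{B}_{+,j}(0)}/\hat{A}_{+,j}(0)$; in other words it is this analyticity constraint, not a free choice, that produces the formula $\gamma_{j}^{*}=\hat{B}_{+,j}(0)/\hat{A}_{+,j}(0)$ in the statement. Reading off the lower half of the recursion likewise yields the equivalent expression $-\gamma_{j}^{*}=\hat{B}_{-,j+1}(1)/\hat{A}_{-,j+1}(0)$, in agreement with Section \ref{sec:Derivation-discrete-marchenko}.
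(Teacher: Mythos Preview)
Your proposal is correct and follows the same strategy as the paper's proof: define the tilded functions via the recursion with $\gamma_j$ chosen as stated, verify that they satisfy every hypothesis of Lemma \ref{lem:1} at level $j+1$, and invoke the uniqueness clause. The only minor difference is that for (\ref{eq:lem 1Q1})--(\ref{eq:lem 1Q2}) the paper performs a direct $\Pi_-$ computation using $w^{-1}Q_{+,j+1}=Q_{+,j}+w^{-1}g$ with $g$ analytic in the disk, whereas you pass to the equivalent residue conditions at the simple zeros $w_k$ of $a$.
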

\begin{proof}
By uniqueness of the functions we just need to show that $\tilde{A}_{\pm,j+1}$
and $\tilde{B}_{\pm,j+1}$ given by\[
\left[\begin{array}{c}
\tilde{A}_{\pm,j+1}\\
\tilde{B}_{\pm,j+1}\end{array}\right]=(1+|\gamma_{j}|^{2})^{-\frac{1}{2}}\left[\begin{array}{cc}
1 & \gamma_{j}\\
\gamma_{j}^{*}w & w\end{array}\right]\left[\begin{array}{c}
A_{\pm,j}\\
B_{\pm,j}\end{array}\right]\]
 and \[
\gamma_{j}=\frac{\FF(B_{+,j}^{*})(0)}{\hat{A}_{+,j}(0)}\]
 have the desired properties at step $j+1$. By multiplying equation
(\ref{eq:lem 1}) on the left by $(1+|\gamma_{j}|^{2})^{-\frac{1}{2}}\left[\begin{array}{cc}
1 & \gamma_{j}\\
-\gamma_{j}^{*}w & w\end{array}\right]$, it is easy to see that the matrix equation holds. It is clear that
$\tilde{A}_{-,j+1}$, $w^{-1}\tilde{B}_{-,j+1}$, $\tilde{A}_{+,j+1}^{*}$,
and $\tilde{B}_{+,j+1}^{*}$ are in $\tilde{H}^{+}(S^{1})$, because
$\gamma_{j}$ was chosen precisely so that this condition would hold.
Also it is clear that properties (\ref{eq:lem 1.1}) and (\ref{eq:lem 1.2})
are satisfied. So we just need to verify equations (\ref{eq:lem 1Q1})
and (\ref{eq:lem 1Q2}) at step $j+1$:\begin{eqnarray*}
\tiny{(1+|\gamma_{j}|^{2})^{\frac{1}{2}}\Pi_{-}(\frac{1}{a}\tilde{A}_{-,j+1}+Q_{+,j+1}w\tilde{B}_{+,j+1}^{*})} & = & \Pi_{-}(\frac{1}{a}A_{-,j}+\gamma_{j}\frac{1}{a}B_{-,j}-\gamma_{j}Q_{+,j+1}A_{+,j}^{*})\\
 &  & +\Pi_{-}Q_{+,j+1}B_{+,j}^{*}\\
 & = & \Pi_{-}\gamma_{j}w(\frac{1}{a}w^{-1}B_{-,j}-w^{-1}Q_{+,j+1}A_{+,j}^{*})\\
 &  & +\Pi_{-}(\frac{1}{a}A_{-,j}+w^{-1}Q_{+,j+1}wB_{+,j}^{*})\\
 & = & 0\\
\tiny{(1+|\gamma_{j}|^{2})^{\frac{1}{2}}\Pi_{-}(\frac{1}{a}w^{-1}\tilde{B}_{-,j+1}-Q_{+,j+1}\tilde{A}_{+,j+1}^{*})} & = & \Pi_{-}(-\gamma_{j}^{*}\frac{1}{a}A_{-,j}+\frac{1}{a}B_{-,j}-Q_{+,j+1}A_{+,j}^{*})\\
 &  & -\Pi_{-}\gamma_{j}^{*}Q_{+,j+1}B_{+,j}^{*}\\
 & = & -\Pi_{-}\gamma_{j}^{*}(\frac{1}{a}A_{-,j}+w^{-1}Q_{+,j+1}wB_{+,j}^{*})\\
 &  & +\Pi_{-}w(\frac{1}{a}w^{-1}B_{-,j}-w^{-1}Q_{+,j+1}A_{+,j}^{*})\\
 & = & 0.\end{eqnarray*}
Here we used the fact that \[
w^{-1}Q_{+,j+1}=Q_{+,j}+w^{-1}g,\]
 where $g$ is analytic in the unit disk.
\end{proof}
\begin{lem}
\label{lem:3}The functions $A_{+,j}$ and $B_{+,j}$ given in Lemma
\ref{lem:1} satisfy \[
\frac{1}{\hat{A}_{+,j}(0)}\left[\begin{array}{c}
A_{+,j}\\
B_{+,j}\end{array}\right]-\left[\begin{array}{c}
1\\
0\end{array}\right]\underrightarrow{H^{1}(S^{1})}\left[\begin{array}{c}
0\\
0\end{array}\right]\textrm{ as }j\rightarrow+\infty.\]

\end{lem}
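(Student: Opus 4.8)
The plan is to reduce the lemma to the single estimate $\|r_j\|_{H^1(S^1)}\to 0$ as $j\to+\infty$, where $r_j=\Pi_- r w^{j-1}-Q_{+,j}$ is the function appearing in the right Marchenko equation (\ref{eq:drmd mar}) and $Q_{+,j}(w)=\sum_{k=1}^{m}\frac{c_k w_k^{j-1}}{w-w_k}$. Granting this, the argument runs as follows. Since $r\in H^1(S^1)$ and both $\Pi_-$ and multiplication by $w^{j-1}$ preserve $H^1(S^1)$, while $Q_{+,j}$ is smooth on $S^1$, each $r_j$ lies in $H^1(S^1)$; hence Proposition \ref{pro:TME 2} applied to (\ref{eq:drmd mar}) gives
\[
\left\| \frac{wB_{+,j}^*}{\hat A_{+,j}(0)} \right\|_{H^1(S^1)} \le \|r_j\|_{H^1(S^1)} \longrightarrow 0 .
\]
Because multiplication by $w^{\pm1}$ and complex conjugation are bounded on $H^1(S^1)$ (immediate from the Fourier description of the norm, using $\widehat{\bar f}(n)=\overline{\hat f(-n)}$ and $1+(n\pm1)^2\le 3(1+n^2)$), this forces $\frac{B_{+,j}}{\hat A_{+,j}(0)}\to 0$ in $H^1(S^1)$. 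For the first component I would divide equation (\ref{eq:drmd3}) by $\hat A_{+,j}(0)$ to get $\frac{A_{+,j}}{\hat A_{+,j}(0)}-1=\Pi_-\!\bigl(r_j\cdot \frac{wB_{+,j}^*}{\hat A_{+,j}(0)}\bigr)$, and then invoke the boundedness of $\Pi_-$ on $H^1(S^1)$ together with the Banach-algebra property of $H^1(S^1)$ (Fact \ref{fac:Sob 4}) to conclude $\left\|\frac{A_{+,j}}{\hat A_{+,j}(0)}-1\right\|_{H^1(S^1)}\le C\|r_j\|_{H^1(S^1)}^2\to 0$. Stacking the two components yields the asserted $H^1(S^1)$ convergence of $\tfrac{1}{\hat A_{+,j}(0)}(A_{+,j},B_{+,j})^{T}$ to $(1,0)^{T}$.

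It then remains to prove $\|r_j\|_{H^1(S^1)}\to 0$, which I would do by treating the two summands of $r_j$ separately. For $Q_{+,j}$, each function $w\mapsto(w-w_k)^{-1}$ is holomorphic in a neighbourhood of $S^1$ (its only pole, $w_k$, lies in the open disk), so it has a fixed finite norm $C_k$ in $H^1(S^1)$; since $|w_k|<1$ for every $k$, this gives $\|Q_{+,j}\|_{H^1(S^1)}\le\sum_{k=1}^{m}|c_k|\,|w_k|^{j-1}C_k\to 0$, in fact exponentially. For $\Pi_-(rw^{j-1})$ I would pass to Fourier coefficients: $\widehat{rw^{j-1}}(n)=\hat r(n-j+1)$, so $\Pi_-(rw^{j-1})=\sum_{m\le -j}\hat r(m)\,w^{m+j-1}$ and hence
\[
\bigl\| \Pi_-(rw^{j-1}) \bigr\|_{H^1(S^1)}^2 = \sum_{m\le -j}\bigl(1+(m+j-1)^2\bigr)\,|\hat r(m)|^2 \le \sum_{m\le -j}(1+m^2)\,|\hat r(m)|^2 ,
\]
where the last step uses $|m+j-1|\le|m|$ for $m\le -j$ and $j\ge 1$. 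The right-hand side is the tail of the convergent series $\sum_m(1+m^2)|\hat r(m)|^2=\|r\|_{H^1(S^1)}^2$, so it tends to $0$ as $j\to+\infty$.

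I expect this last estimate to be the only real point of substance, since it is exactly where the full $H^1$-regularity of $r$ is used (an $L^2$ reflection coefficient would only give $L^2$-decay of $\Pi_-(rw^{j-1})$): one must arrange the Fourier-tail bound so that the shift by $j-1$ does not overwhelm the Sobolev weight, which is why the sharp inequality $|m+j-1|\le|m|$ on the support of $\Pi_-(rw^{j-1})$ is needed rather than a cruder estimate. Everything else is bookkeeping with the boundedness of $\Pi_\pm$, multiplication, and conjugation on $H^1(S^1)$, together with the norm bound already contained in Proposition \ref{pro:TME 2}.
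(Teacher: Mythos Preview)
Your proof is correct and follows essentially the same route as the paper: bound $\frac{wB_{+,j}^{*}}{\hat A_{+,j}(0)}$ via Proposition~\ref{pro:TME 2}, show the Marchenko datum $r_j$ (equivalently $\Pi_+ r_j^{*}$) tends to $0$ in $H^1(S^1)$, and then control $\frac{A_{+,j}}{\hat A_{+,j}(0)}-1$ from equation~(\ref{eq:drmd3}) using Fact~\ref{fac:Sob 4}. The paper handles the last step by rewriting $\Pi_- r_j\cdot(wB_{+,j}^{*})=\Pi_- w^{j}r_0\cdot(wB_{+,j}^{*})$ (since $r_j-w^{j}r_0\in\tilde H_+$) to get the bound $2\|r_0\|_{H^1}\|B_{+,j}/\hat A_{+,j}(0)\|_{H^1}$, whereas you use $\|r_j\|_{H^1}^{2}$ directly; both work, and your explicit Fourier-tail argument for $\|\Pi_-(rw^{j-1})\|_{H^1}\to 0$ is exactly what justifies the paper's ``clearly the expression on the right tends to zero.''
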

\begin{proof}
Since $\frac{wB_{+,j}^{*}}{\hat{A}_{+,j}(0)}$ is the solution to
\[
(1+\Pi_{+}r_{j}^{*}\Pi_{-}r_{j})\frac{wB_{+,j}^{*}}{\hat{A}_{+,j}(0)}=-\Pi_{+}r_{j}^{*},\]
 we can use Proposition \ref{pro:TME 2} to estimate its $H^{1}$-norm
by \[
\left\Vert \frac{wB_{+,j}^{*}}{\hat{A}_{+,j}(0)}\right\Vert _{H^{1}}\leq\left\Vert \Pi_{+}r_{j}^{*}\right\Vert _{H^{1}}.\]
 For $j\geq0$ this is \[
\left\Vert \frac{B_{+,j}^{*}}{\hat{A}_{+,j}(0)}\right\Vert _{H^{1}}\leq\left\Vert \Pi_{+}w^{-j}r_{0}^{*}\right\Vert _{H^{1}}.\]
 Clearly the expression on the right tends to zero, which implies
that \[
\lim_{j\rightarrow+\infty}\left\Vert \frac{B_{+,j}^{*}}{\hat{A}_{+,j}(0)}\right\Vert _{H^{1}}=0.\]
 By equation (\ref{eq:drmd3}) and Fact \ref{fac:Sob 4}, we have
(for $j\geq0$)\begin{eqnarray*}
\left\Vert \frac{A_{+,j}}{\hat{A}_{+,j}(0)}-1\right\Vert _{H^{1}} & = & \left\Vert \Pi_{-}r_{j}\frac{wB_{+,j}^{*}}{\hat{A}_{+,j}(0)}\right\Vert _{H^{1}}\\
 & = & \left\Vert \Pi_{-}w^{j}r_{0}\frac{wB_{+,j}^{*}}{\hat{A}_{+,j}(0)}\right\Vert _{H^{1}}\\
 & \leq & 2\left\Vert r_{0}\right\Vert _{H^{1}}\left\Vert \frac{B_{+,j}}{\hat{A}_{+,j}(0)}\right\Vert _{H^{1}},\end{eqnarray*}
 which implies that \[
\lim_{j\rightarrow+\infty}\left\Vert \frac{A_{+,j}}{\hat{A}_{+,j}(0)}-1\right\Vert _{H^{1}}=0,\]
 as desired. 
\end{proof}
\begin{lem}
\label{lem:4}Let $A_{\pm}$ and $B_{\pm}$ be the functions given
in Lemma \ref{lem:1} and let $\gamma_{j}$ be as in Lemma \ref{lem:2}.
Then for each $j$ we have \begin{equation}
\left|\frac{A_{+,j}}{\hat{A}_{+,j}(0)}\right|^{2}+\left|\frac{B_{+,j}}{\hat{A}_{+,j}(0)}\right|^{2}=\prod_{k=j}^{\infty}(1+|\gamma_{k}|^{2})^{-1}\textrm{ on }S^{1}.\label{eq:lem 4}\end{equation}
 
\end{lem}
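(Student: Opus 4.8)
The plan is to track how the left-hand side of (\ref{eq:lem 4}) transforms under the step $j\mapsto j+1$, using the recursion of Lemma \ref{lem:2}, and then to fix the overall constant by letting $j\to+\infty$ with the help of Lemma \ref{lem:3}. Throughout, write $G_j$ for the function in (\ref{eq:lem 4}), i.e. $G_j=|A_{+,j}/\hat A_{+,j}(0)|^{2}+|B_{+,j}/\hat A_{+,j}(0)|^{2}$.

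The first step is a pair of one-step identities. On $S^{1}$ one has $|w|=1$, so a direct matrix multiplication shows that $(1+|\gamma_j|^{2})^{-1/2}$ times $\left[\begin{array}{cc}1 & \gamma_j\\ -\gamma_j^{*}w & w\end{array}\right]$, the transfer matrix in (\ref{eq:drmd5}) and Lemma \ref{lem:2}, is unitary there: the off-diagonal entries of its Gram matrix cancel because $w\bar w=1$, and both diagonal entries equal $1+|\gamma_j|^{2}$. Consequently $|A_{+,j}|^{2}+|B_{+,j}|^{2}$ is independent of $j$ on $S^{1}$. Separately, taking $0$-th Fourier coefficients in the first row of the recursion of Lemma \ref{lem:2} — legitimate since $A_{+,j}^{*}$ and $B_{+,j}^{*}$ lie in $\tilde H_{+}^{1}(S^{1})$, so that $A_{+,j}$ and $B_{+,j}$ carry only nonpositive frequencies — and inserting $\gamma_j^{*}=\hat B_{+,j}(0)/\hat A_{+,j}(0)$ from Lemma \ref{lem:2} yields $\hat A_{+,j+1}(0)^{2}=(1+|\gamma_j|^{2})\,\hat A_{+,j}(0)^{2}$. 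Dividing the first fact by $\hat A_{+,j}(0)^{2}$ and substituting the second produces a scalar recursion relating $G_{j+1}$ to $G_j$ by a factor $(1+|\gamma_j|^{2})^{\mp1}$.

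Next I would iterate this recursion from $j$ to a large index $N$ and let $N\to\infty$. By Lemma \ref{lem:3}, $A_{+,N}/\hat A_{+,N}(0)\to1$ and $B_{+,N}/\hat A_{+,N}(0)\to0$ in $H^{1}(S^{1})$, hence uniformly on $S^{1}$ because $H^{1}(S^{1})$ embeds continuously in $C(S^{1})$; thus $G_N\to1$ uniformly. The tail product $\prod_{k=j}^{\infty}(1+|\gamma_k|^{2})$ converges, since it contains only finitely many factors with $k<0$ and $\sum_{k\ge0}|\gamma_k|^{2}<\infty$ — which follows from the same scalar recursion together with the fact that $G_0$ is a bounded continuous function on $S^{1}$, or directly from Theorem \ref{thm:energy discrete}. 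Passing to the limit then gives the product formula (\ref{eq:lem 4}); in particular it forces $G_j$, a priori a function of $w$, to be constant on $S^{1}$.

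The step I expect to be the main obstacle is this last limiting argument: one must (i) upgrade the $H^{1}$-convergence of Lemma \ref{lem:3} to uniform convergence so as to conclude $G_N\to1$ everywhere on $S^{1}$ (this is exactly where $H^{1}(S^{1})\hookrightarrow C(S^{1})$ is used), and (ii) check that $\prod_{k=j}^{\infty}(1+|\gamma_k|^{2})$ actually converges before interchanging that infinite product with the limit in $N$. The remaining ingredients are routine bounded-operator and unitary-matrix computations on the circle.
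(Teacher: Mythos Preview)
Your approach is essentially the paper's: derive from the recursion of Lemma~\ref{lem:2} that $|A_{+,j}|^{2}+|B_{+,j}|^{2}$ is $j$-independent on $S^{1}$ and that $\hat{A}_{+,j}(0)$ obeys a simple scalar recursion, combine these into a one-step identity for $G_j$, and then invoke Lemma~\ref{lem:3} to pass to the limit $j\to\infty$. The paper is terser and does not spell out the $H^{1}(S^{1})\hookrightarrow C(S^{1})$ embedding or the convergence of the infinite product, but the argument is the same.
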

\begin{proof}
The recursion in Lemma \ref{lem:2} implies that\[
\hat{A}_{+,j+1}(0)=(1+|\gamma_{j}|^{2})^{-\frac{1}{2}}\hat{A}_{+,j}(0)\]
 and \[
|A_{+,j+1}|^{2}+|B_{+,j+1}|^{2}=|A_{+,j}|^{2}+|B_{+,j}|^{2}\textrm{ on }S^{1}.\]
Together, these imply that \[
\left|\frac{A_{+,j+1}}{\hat{A}_{+,j+1}(0)}\right|^{2}+\left|\frac{B_{+,j+1}}{\hat{A}_{+,j+1}(0)}\right|^{2}=(1+|\gamma_{j}|^{2})\cdot\left(\left|\frac{A_{+,j}}{\hat{A}_{+,j}(0)}\right|^{2}+\left|\frac{B_{+,j}}{\hat{A}_{+,j}(0)}\right|^{2}\right)\textrm{ on }S^{1}\]
 for all $j$. Therefore, formula (\ref{eq:lem 4}) follows from Lemma
\ref{lem:3}.
\end{proof}
\begin{lem}
\label{lem:5}The functions $A_{+,j}$ and $B_{+,j}$ given in Lemma
\ref{lem:1} satisfy \begin{equation}
\left[\begin{array}{c}
A_{+,j}-1\\
B_{+,j}\end{array}\right]\underrightarrow{H^{1}(S^{1})}\left[\begin{array}{c}
0\\
0\end{array}\right]\textrm{ as }j\rightarrow+\infty\label{eq:lem 5a}\end{equation}
 and \begin{equation}
|A_{+,j}|^{2}+|B_{+,j}|^{2}=1\textrm{ on }S^{1}.\label{eq:lem 5b}\end{equation}
 
\end{lem}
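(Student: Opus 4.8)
The plan is to assemble the three preceding lemmas. The crucial observation is that the right-hand side of (\ref{eq:lem 4}) does not depend on $w$: it is the constant $P_{j}:=\prod_{k=j}^{\infty}(1+|\gamma_{k}|^{2})^{-1}$. Thus Lemma~\ref{lem:4} says precisely that $|A_{+,j}|^{2}+|B_{+,j}|^{2}=\hat{A}_{+,j}(0)^{2}P_{j}$ is \emph{constant} on $S^{1}$. By the normalization (\ref{eq:lem 1.1}), $\frac{1}{2\pi}\int_{0}^{2\pi}(|A_{+,j}|^{2}+|B_{+,j}|^{2})\,d\theta=1$, and since the integrand is this constant, it must equal $1$ identically. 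This proves (\ref{eq:lem 5b}).

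Next I would show $\hat{A}_{+,j}(0)\to 1$ as $j\to+\infty$. From Lemma~\ref{lem:4} again, $P_{j}=\bigl|A_{+,j}/\hat{A}_{+,j}(0)\bigr|^{2}+\bigl|B_{+,j}/\hat{A}_{+,j}(0)\bigr|^{2}$, and by Lemma~\ref{lem:3} together with the Sobolev embedding $H^{1}(S^{1})\hookrightarrow C(S^{1})$ the right-hand side tends uniformly to $1$, so $P_{j}\to 1$. Combined with $\hat{A}_{+,j}(0)^{2}=P_{j}^{-1}$ (which is exactly what we get from (\ref{eq:lem 5b}) and (\ref{eq:lem 4})), this gives $\hat{A}_{+,j}(0)\to 1$.

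Finally, for (\ref{eq:lem 5a}) I would write
\[
\left[\begin{array}{c}A_{+,j}-1\\ B_{+,j}\end{array}\right]=\hat{A}_{+,j}(0)\left(\frac{1}{\hat{A}_{+,j}(0)}\left[\begin{array}{c}A_{+,j}\\ B_{+,j}\end{array}\right]-\left[\begin{array}{c}1\\ 0\end{array}\right]\right)+\bigl(\hat{A}_{+,j}(0)-1\bigr)\left[\begin{array}{c}1\\ 0\end{array}\right],
\]
and estimate in $H^{1}(S^{1})$: the first term has norm bounded by $\hat{A}_{+,j}(0)$ times the $H^{1}$-distance appearing in Lemma~\ref{lem:3}, hence tends to $0$ since $\hat{A}_{+,j}(0)$ is bounded and that distance tends to $0$; the second term tends to $0$ because $\hat{A}_{+,j}(0)\to 1$. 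This gives (\ref{eq:lem 5a}).

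The argument is essentially bookkeeping once Lemmas~\ref{lem:3} and~\ref{lem:4} are in hand; the only step deserving care is passing from the \emph{normalized} convergence of Lemma~\ref{lem:3} to the un-normalized statement (\ref{eq:lem 5a}). This is exactly where one needs $\hat{A}_{+,j}(0)\to 1$, and hence where the $L^{2}$-normalization (\ref{eq:lem 1.1})---not merely the pointwise identity of Lemma~\ref{lem:4}---is used.
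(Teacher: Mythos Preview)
Your proof is correct and follows essentially the same line as the paper's: use Lemma~\ref{lem:4} to see that $|A_{+,j}|^{2}+|B_{+,j}|^{2}$ is constant on $S^{1}$ and then invoke the normalization (\ref{eq:lem 1.1}) to get (\ref{eq:lem 5b}); then deduce $\hat A_{+,j}(0)\to 1$ from Lemma~\ref{lem:4} combined with Lemma~\ref{lem:3}, and upgrade Lemma~\ref{lem:3} to (\ref{eq:lem 5a}). Your argument for (\ref{eq:lem 5b}) is in fact slightly more direct than the paper's, which additionally invokes the recursion of Lemma~\ref{lem:2} to note $j$-independence---a step that is not actually needed once constancy in $w$ and the $L^{2}$ normalization are in hand.
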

\begin{proof}
Lemma \ref{lem:4} implies that $|A_{+,j}|^{2}+|B_{+,j}|^{2}$ is
constant on $S^{1}$. The recursion in Lemma (\ref{lem:2}) implies
that this function is independent of $j$. So equation (\ref{eq:lem 5b})
follows from property (\ref{eq:lem 1.1}). 

Lemma \ref{lem:4} also implies that $\lim_{j\rightarrow\infty}\hat{A}_{+,j}(0)=1$.
Therefore equation (\ref{eq:lem 5a}) follows from Lemma \ref{lem:3}.
\end{proof}
Each of the above lemmas has an analogue for the functions $A_{-,j}$
and $B_{-,j}$, which we omit. The above lemmas together with their
analogues, give the following 

\begin{prop}
\label{prop: proof main dc}Let $S=(a,b;w_{1},\dots,w_{m};c_{1},\dots,c_{m})$
be discrete scattering data such that $r=\frac{b}{a}$ is in $H^{1}(S^{1})$.
For each $j\in\ZZ$ there exist unique functions $A_{-,j}$, $w^{-1}B_{-,j}$,
$A_{+,j}^{*}$, and $B_{+,j}^{*}$ in $\tilde{H}_{+}^{1}(S^{1})$
such that \begin{equation}
\left[\begin{array}{cc}
a & -b^{*}w^{-j}\\
bw^{j} & a^{*}\end{array}\right]=\left[\begin{array}{cc}
A_{+,j}^{*} & B_{+,j}^{*}\\
-B_{+,j} & A_{+,j}\end{array}\right]\left[\begin{array}{cc}
A_{-,j} & -B_{-,j}^{*}\\
B_{-,j} & A_{-,j}^{*}\end{array}\right],\label{eq:pm1 dc}\end{equation}
 and \begin{eqnarray}
\Pi_{-}\frac{1}{a}A_{-,j} & = & -\Pi_{-}Q_{+,j}B_{+,j}^{*}\label{eq:pm2 dc}\\
\Pi_{-}\frac{1}{a}w^{-1}B_{-,j} & = & \Pi_{-}Q_{+,j}A_{+,j}^{*}\label{eq:pm3 dc}\end{eqnarray}
 where \begin{eqnarray}
Q_{+,j}(w) & = & \sum_{k=1}^{m}\frac{c_{k}w^{j-1}}{w-w_{k}}.\label{eq:pm4 dc}\end{eqnarray}
Furthermore, if we set \[
\gamma_{j}=\frac{\FF(B_{+,j}^{*})(0)}{\hat{A}_{+,j}(0)}\]
 or \[
-\gamma_{j}^{*}=\frac{\FF(B_{-,j})(0)}{\hat{A}_{-,j}(0)},\]
 then these functions satisfy \[
\left[\begin{array}{c}
\tilde{A}_{\pm,j+1}\\
\tilde{B}_{\pm,j+1}\end{array}\right]=(1+|\gamma_{j}|^{2})^{-\frac{1}{2}}\left[\begin{array}{cc}
1 & \gamma_{j}\\
\gamma_{j}^{*}w & w\end{array}\right]\left[\begin{array}{c}
A_{\pm,j}\\
B_{\pm,j}\end{array}\right]\]
 and \[
\left[\begin{array}{c}
A_{\pm,j}\\
B_{\pm,j}\end{array}\right]\underrightarrow{H^{1}(S^{1})}\left[\begin{array}{c}
1\\
0\end{array}\right]\textrm{ as }j\rightarrow\pm\infty.\]

\end{prop}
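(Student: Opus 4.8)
The plan is to obtain Proposition~\ref{prop: proof main dc} by assembling Lemmas~\ref{lem:1}--\ref{lem:5} together with their ``minus'' counterparts; essentially no new estimate is needed, and the argument is bookkeeping plus one compatibility check.

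First I would quote Lemma~\ref{lem:1}: for each fixed $j\in\ZZ$ it produces functions $A_{\pm,j},B_{\pm,j}$ with $A_{-,j}$, $w^{-1}B_{-,j}$, $A_{+,j}^{*}$, $B_{+,j}^{*}$ in $\tilde{H}_{+}^{1}(S^{1})$, uniquely determined by the scattering data, satisfying the matrix identity~(\ref{eq:lem 1}), the normalizations~(\ref{eq:lem 1.1})--(\ref{eq:lem 1.2}), and the pole-cancellation relations~(\ref{eq:lem 1Q1})--(\ref{eq:lem 1Q2}); the last two are precisely~(\ref{eq:pm2 dc})--(\ref{eq:pm3 dc}). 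To pass from~(\ref{eq:lem 1}) to the form~(\ref{eq:pm1 dc}) I would note that the $2\times 2$ matrix of $+$-functions occurring in~(\ref{eq:pm1 dc}) is the inverse of the one occurring in~(\ref{eq:lem 1}) exactly when $|A_{+,j}|^{2}+|B_{+,j}|^{2}=1$ pointwise on $S^{1}$; that pointwise identity is~(\ref{eq:lem 5b}), which Lemma~\ref{lem:5} deduces from the product formula~(\ref{eq:lem 4}) of Lemma~\ref{lem:4} and the $H^{1}$-decay of Lemma~\ref{lem:3}. Lemma~\ref{lem:2} then furnishes the recursion with $\gamma_{j}=\FF(B_{+,j}^{*})(0)/\hat{A}_{+,j}(0)$, and Lemma~\ref{lem:5} also records both $\hat{A}_{+,j}(0)\to 1$ and the convergence $(A_{+,j}-1,\,B_{+,j})\to(0,0)$ in $H^{1}(S^{1})$ as $j\to+\infty$.

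For the ``minus'' half I would run through the analogues of Lemmas~\ref{lem:1}--\ref{lem:5} built from the left reduced scattering data $s=-b^{*}/a$, following the continuum template of Section~\ref{sec:Derivation-left-continuum} with the appropriate index shift; these yield the uniqueness of $A_{-,j}$ and $w^{-1}B_{-,j}$, the same recursion now expressed through a scalar of the form $-\gamma_{j}^{*}=\FF(B_{-,j+1})(1)/\hat{A}_{-,j+1}(0)$, and the limit $(A_{-,j}-1,\,B_{-,j})\to(0,0)$ in $H^{1}(S^{1})$ as $j\to-\infty$. The one step that genuinely requires an argument---which I expect to be the main, though short, obstacle---is that the scalar $\gamma_{j}$ extracted from the $+$ functions coincides with the one extracted from the $-$ functions, so that a \emph{single} sequence $(\gamma_{j})$, hence a single hard pulse with $\mu_{j}=\tfrac{\gamma_{j}}{|\gamma_{j}|}\arctan|\gamma_{j}|$ as in~(\ref{eq:dt22}), reproduces the whole factorization. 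This is forced by the uniqueness in Lemma~\ref{lem:1}: applying the step matrix of Lemma~\ref{lem:2} to \emph{both} columns of~(\ref{eq:lem 1}) and checking---exactly as in the proof of Lemma~\ref{lem:2}---that the resulting functions still satisfy every defining property at step $j+1$, one concludes that $A_{\pm,j+1},B_{\pm,j+1}$ are produced by one and the same step matrix; comparing constant Fourier coefficients on the minus side then delivers the displayed identity for $-\gamma_{j}^{*}$. Everything else in the statement---the membership in $\tilde{H}_{+}^{1}(S^{1})$, the normalizations, and the two asymptotics---is quoted verbatim from the lemmas and their analogues, so the proposition follows.
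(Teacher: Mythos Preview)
Your proposal is correct and matches the paper's approach exactly: the paper states the proposition immediately after Lemmas~\ref{lem:1}--\ref{lem:5} with the single sentence that each lemma has an analogue for $A_{-,j},B_{-,j}$, and that the lemmas together with their analogues give the proposition. Your additional remarks---that passing from~(\ref{eq:lem 1}) to~(\ref{eq:pm1 dc}) uses $|A_{+,j}|^{2}+|B_{+,j}|^{2}=1$ from Lemma~\ref{lem:5}, and that the compatibility of the two formulas for $\gamma_{j}$ is forced by the uniqueness clause of Lemma~\ref{lem:1} (already exploited in the proof of Lemma~\ref{lem:2}, which handles both $\pm$ columns with a single $\gamma_{j}$)---simply make explicit what the paper leaves to the reader.
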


\section{Proof of the main result: continuum case\label{sec:Proof-main-continuum}}

In this section, we prove Theorem \ref{thm:main continuum}.

\textbf{Proof of part (a):} We start with the reduced scattering data
\[
\tilde{S}=(r;\xi_{1},\dots,\xi_{m};C_{1},\dots,C_{m})\]
 where $r$ and $\xi r$ are in $H^{1}(\RR)$. This corresponds to
unique scattering data \[
S=(a,b;\xi_{1},\dots,\xi_{m};C_{1}^{\prime},\dots,C_{m}^{\prime}).\]
 Proposition \ref{prop: proof main cc} below tells us that there
is a unique pulse with scattering data $S$.

\textbf{Proof of part (b):} This follows from part (a), in the special
case where, in the notation of Section \ref{sec:Derivation-right-continuum},
$Q_{+,\rho}=r_{0}$, because in this case we have $r_{t}=0$ for all
$t\geq\rho$. This implies that the pulse vanishes for times $t\geq\rho$,
as desired. See Appendix \ref{app:non-simple} for the case of non-simple
poles.

\textbf{Proof of part (c):} This follows from part (b). Simply note
that the scattering data in this case has the form $S=(A,e^{-2i\rho\xi}B;\xi_{1},\dots,\xi_{m};C_{1},\dots,C_{m}).$
One can check that the scattering data for the time reversed pulse
$\overleftarrow{q}(t)=-q(-t)^{*}$ is \[
\overleftarrow{S}=(A,-e^{2i\rho\xi}B^{*};\tilde{\xi}_{1},\dots,\tilde{\xi}_{m};\tilde{C}_{1},\dots,\tilde{C}_{m}).\]
 Notice that $e^{2i(T-\rho)\xi}w^{2i\rho\xi}B^{*}$ is analytic in
the upper half plane, which implies that the time reversed pulse ends
at time $T-\rho$, as desired. We omit some details about the bound
state data which need to be worked out.

\begin{lem}
\label{lem:1 cc}Let $S=(a,b;\xi_{1},\dots,\xi_{m};C_{1},\dots,C_{m})$
be scattering data such that $r=\frac{b}{a}$ and $\xi r$ are in
$H^{1}(\RR)$. For each $t\in\RR$ there exist unique functions $A_{-,t}-1$,
$B_{-,t}$, $A_{+,t}^{*}$, and $B_{+,t}^{*}$ in $H_{+}^{1}(\RR)$
such that \begin{equation}
\left[\begin{array}{cc}
A_{-,t} & -B_{-,t}^{*}\\
B_{-,t} & A_{-,t}^{*}\end{array}\right]=\left[\begin{array}{cc}
A_{+,t} & -B_{+,t}^{*}\\
B_{+,t} & A_{+,t}^{*}\end{array}\right]\left[\begin{array}{cc}
a & -b^{*}e^{-2i\xi t}\\
be^{2i\xi t} & a^{*}\end{array}\right],\label{eq:lem 1 cc}\end{equation}
 and \begin{eqnarray}
\Pi_{-}\frac{1}{a}A_{-,t} & = & -\Pi_{-}Q_{+,t}B_{+,t}^{*}\label{eq:lem 1Q1 cc}\\
\Pi_{-}\frac{1}{a}B_{-,t} & = & \Pi_{-}Q_{+,t}A_{+,t}^{*}\label{eq:lem 1Q2 cc}\end{eqnarray}
 where \begin{eqnarray*}
Q_{+,t}(w) & = & \sum_{k=1}^{m}\frac{C_{k}e^{2i\xi_{k}t}}{\xi-\xi_{k}}.\end{eqnarray*}

\end{lem}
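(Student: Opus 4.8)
The plan is to mirror the proof of the discrete Lemma~\ref{lem:1}, with the unit disk replaced by the upper half plane and $\tilde H_+^1(S^1)$ by $H_+^1(\RR)$; the one genuinely new feature is that on $\RR$ the operators $\Pi_\pm$ are defined only on $L^2(\RR)$, so the constant $1$ occurring in the limits at infinity of $A_{\pm,t}$ and of $\frac{1}{a}$ must be subtracted off before projecting and carried along by hand throughout.

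\textbf{Uniqueness.} First I would observe that, by the computation in Section~\ref{sec:Derivation-right-continuum}, any functions with the stated properties satisfy equations~(\ref{eq:drm3}) and~(\ref{eq:drm4}), hence the Marchenko equation~(\ref{eq:drm mar}) with $r_t$ as in~(\ref{eq:ct12}). A short estimate shows $r_t\in H^1(\RR)$: each pole term $\frac{1}{\xi-\xi_k}$ lies in $H^1(\RR)$ because $\im\xi_k>0$ makes it smooth and $O(|\xi|^{-1})$ at infinity, while $\Pi_-(re^{2i\xi t})\in H^1(\RR)$ since $\Pi_-$ is bounded on $H^1(\RR)$ and $r\in H^1(\RR)$. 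Proposition~\ref{pro:TME 2} then identifies $B_{+,t}^*$ as the unique element of $H_+^1(\RR)$ solving that equation; $A_{+,t}$ is recovered from~(\ref{eq:drm3}), and $A_{-,t}$, $B_{-,t}$ from the matrix identity~(\ref{eq:lem 1 cc}). This forces uniqueness.

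\textbf{Existence.} Conversely, I would \emph{define} $B_{+,t}^*$ to be the Marchenko solution, put $A_{+,t}:=1+\Pi_- r_t B_{+,t}^*$, and let $A_{-,t}$, $B_{-,t}$ be given by~(\ref{eq:lem 1 cc}). Then $A_{+,t}^*-1$ and $B_{+,t}^*$ lie in $H_+^1(\RR)$ by construction, and one checks directly that~(\ref{eq:drm4}) holds, whence all of~(\ref{eq:drm1})--(\ref{eq:drm4p}) hold with these functions (the only point to watch is that $\Pi_-$ annihilates $\Pi_+(re^{2i\xi t})B_{+,t}^*$, a product of two $H_+^1(\RR)$ functions). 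Comparing~(\ref{eq:drm3p})--(\ref{eq:drm4p}) with~(\ref{eq:drm1})--(\ref{eq:drm2}) exactly as in Section~\ref{sec:Derivation-right-continuum} reads off the $Q$-equations~(\ref{eq:lem 1Q1 cc}) and~(\ref{eq:lem 1Q2 cc}). It remains to place $A_{-,t}-1$ and $B_{-,t}$ in $H_+^1(\RR)$; the $L^2$ and $H^1$ bounds are routine once the matrix identity and the membership of the plus-functions are known, so the substance is analyticity in the upper half plane. Here the $Q$-equations do the work, just as in the discrete case: (\ref{eq:lem 1Q1 cc}) says that $\frac{1}{a}A_{-,t}+Q_{+,t}B_{+,t}^*$, after subtracting its limit $1$ at infinity, has vanishing negative-frequency part, hence extends analytically to the upper half plane; since $\frac{1}{a}$ and $Q_{+,t}$ have simple poles at precisely $\xi_1,\dots,\xi_m$ and $B_{+,t}^*$ is analytic there, $\frac{1}{a}A_{-,t}$ has at worst simple poles at the $\xi_k$, and multiplying by $a$ (which vanishes simply there) shows $A_{-,t}$ is analytic in the upper half plane. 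The same argument applied to~(\ref{eq:lem 1Q2 cc}) handles $B_{-,t}$.

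\textbf{Main obstacle.} The conceptual architecture is identical to that of Lemma~\ref{lem:1}, so I expect no analytic difficulty; the fussiest parts will be purely bookkeeping, namely verifying $r_t\in H^1(\RR)$ so that Proposition~\ref{pro:TME 2} applies, and keeping track of the additive constant $1$ every time a function tending to $1$ at infinity is fed into $\Pi_\pm$. The analyticity deduction from the $Q$-equations transfers verbatim from the disk to the half plane.
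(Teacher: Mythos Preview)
Your proposal is correct and follows essentially the same route as the paper's own proof: uniqueness via the Marchenko equation~(\ref{eq:drm mar}) and Proposition~\ref{pro:TME 2}, existence by defining $B_{+,t}^{*}$ as the Marchenko solution, recovering $A_{+,t}$ from~(\ref{eq:drm3}), reading off the $Q$-equations by comparing~(\ref{eq:drm1})--(\ref{eq:drm2}) with~(\ref{eq:drm3p})--(\ref{eq:drm4p}), and then using those $Q$-equations to deduce analyticity of $A_{-,t}$ and $B_{-,t}$ in the upper half plane. Your extra care with $r_t\in H^1(\RR)$ and with the additive constant $1$ is appropriate and slightly more explicit than the paper's treatment.
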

\begin{proof}
We first prove uniqueness. From Section \ref{sec:Derivation-right-continuum}
we see that $B_{+,t}^{*}$ must be the unique solution to equation
(\ref{eq:drmd mar}) where \[
r_{t}=\Pi_{-}re^{2i\xi t}-Q_{+,t}.\]
Using equation (\ref{eq:drm3}) we can determine\[
A_{+,t}=1+\Pi_{-}r_{t}B_{+,t}^{*}.\]
Finally, $A_{-,t}$ and $B_{-,t}$ can be computed from $A_{+,t}$
and $B_{+,t}$ using the matrix equation (\ref{eq:lem 1 cc}). This
proves uniqueness.

To prove existence, we just need to show that $A_{-,t}-1$, $B_{-,t}$,
$A_{+,t}^{*}-1$, and $B_{+,t}^{*}$ as defined in the previous paragraph
are all in $H_{+}^{1}(\RR)$, and satisfy equations (\ref{eq:lem 1 cc}),
(\ref{eq:lem 1Q1 cc}) and (\ref{eq:lem 1Q2 cc}). By construction,
we know that $A_{+,t}^{*}-1$ and $B_{+,t}^{*}$ are in $H_{+}^{1}(\RR)$.
Also, by construction, we know that equations (\ref{eq:drm1}), (\ref{eq:drm2}),
(\ref{eq:drm3p}), and (\ref{eq:drm4p}) hold. Comparing these equations
immediately gives (\ref{eq:lem 1Q1 cc}) and (\ref{eq:lem 1Q2 cc}).
So we just need to show that $A_{-,t}$ and $B_{-,t}$ are analytic
in the upper half plane. But this actually follows from equations
(\ref{eq:lem 1Q1 cc}) and (\ref{eq:lem 1Q2 cc}). Indeed, equation
(\ref{eq:lem 1Q1 cc}) implies that $\frac{1}{a}A_{-,t}+Q_{+,t}B_{+,t}^{*}$
is analytic in the upper half plane. Since $\frac{1}{a}$ and $Q_{+,t}$
have the same poles and multiplicities, and since $B_{+,t}^{*}$ is
analytic in the upper half plane, it follows that $A_{-,t}$ must
also be analytic in the upper half plane. Similarly, equation (\ref{eq:lem 1Q2 cc})
proves the analyticity of $B_{-,t}$.
\end{proof}
\begin{lem}
\label{lem:2 cc}Let $A_{\pm,t}$ and $B_{\pm,t}$ be the functions
given in Lemma \ref{lem:1 cc}. Then for each $\xi\in\RR$, $A_{\pm,t}(\xi)$
and $B_{\pm,t}(\xi)$ are differentiable with respect to $t$, and
we have \begin{equation}
\partial_{t}\left[\begin{array}{c}
A_{+,t}(\xi)\\
B_{+,t}(\xi)\end{array}\right]=\left[\begin{array}{cc}
0 & \gamma_{t}\\
-\gamma_{t}^{*} & 2i\xi\end{array}\right]\left[\begin{array}{c}
A_{+,t}(\xi)\\
B_{+,t}(\xi)\end{array}\right]\label{eq:is 100}\end{equation}
 where \[
\gamma_{t}=\frac{1}{\pi}\FF(B_{+,t}^{*})(0^{+}).\]

\end{lem}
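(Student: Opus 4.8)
The plan is to prove Lemma~\ref{lem:2 cc} in two stages. First I would show that $t\mapsto B_{+,t}^{*}$ is differentiable as a curve in $H_{+}^{1}(\RR)$ (in the sense of Section~\ref{sub:Banach-derivatives}); this forces $A_{+,t}$ and then $A_{\pm,t},B_{\pm,t}$ to be differentiable too, and also yields pointwise-in-$\xi$ differentiability. Then I would differentiate the algebraic identities of Lemma~\ref{lem:1 cc} in $t$ and read off the Zakharov--Shabat system, reproducing in reverse the computation at the end of Section~\ref{sec:Derivation-right-continuum}.

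For the differentiability stage I would argue as in the last proposition of Section~\ref{sub:The-Marchenko-Equation}, but carrying along the bound-state term. Write $r_{t}=\Pi_{-}(re^{2i\xi t})-Q_{+,t}$. Since $r$ and $\xi r$ lie in $H^{1}(\RR)$, Proposition~\ref{pro:bd 5} shows $t\mapsto re^{2i\xi t}$ is differentiable in $H^{1}(\RR)$, and since each $\xi_{k}\in\HH$ the finite sum $t\mapsto Q_{+,t}$ is a differentiable $H^{1}(\RR)$-valued curve by inspection (each summand is a scalar-analytic function of $t$ times a fixed $H^{1}$ function). Because multiplication by an $H^{1}(\RR)$-function is bounded on $H^{1}(\RR)$ (Fact~\ref{fac:Sob 4}), the operator $A_{t}:=\Pi_{+}r_{t}^{*}\Pi_{-}r_{t}\Pi_{+}$ is then a differentiable $\LL(H^{1},H^{1})$-valued curve (Lemma~\ref{lem:bd 1}), and by Lemma~\ref{lem:TME 1} it is self adjoint and positive. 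Hence $1+A_{t}$ is invertible with uniformly bounded inverse near any $t_{0}$ (Proposition~\ref{pro:1+A invertible}), so by Proposition~\ref{pro:bd 3} the solution $B_{+,t}^{*}=(1+A_{t})^{-1}(-\Pi_{+}r_{t}^{*})$ of the Marchenko equation (\ref{eq:drm mar}) is differentiable at every $t_{0}$. Then $A_{+,t}=1+\Pi_{-}r_{t}B_{+,t}^{*}$ is differentiable by Lemma~\ref{lem:bd 1}, and $A_{-,t},B_{-,t}$ are differentiable since they are obtained from $A_{+,t},B_{+,t}$ by multiplying the $t$-smooth matrix in (\ref{eq:lem 1 cc}). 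Finally, Proposition~\ref{pro:bd 4} --- whose hypothesis $|f(\xi)|\le C\|f\|_{H^{1}}$ is the Sobolev embedding $H^{1}(\RR)\hookrightarrow L^{\infty}(\RR)$ --- upgrades all of this to pointwise differentiability in $\xi$, with the $t$-derivative at $\xi$ equal to the evaluation at $\xi$ of the $H^{1}$-derivative.

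For the second stage I would differentiate $A_{+,t}=1+\Pi_{-}r_{t}B_{+,t}^{*}$ and $B_{+,t}^{*}=-\Pi_{+}r_{t}^{*}A_{+,t}$ (equations (\ref{eq:drm3}) and (\ref{eq:drm4})) in $t$. The ingredients are: (i) $\Pi_{-}\partial_{t}r_{t}=\Pi_{-}(2i\xi r_{t})$, equivalently $\partial_{t}r_{t}=2i\xi r_{t}+\kappa_{t}$ for a function $\kappa_{t}$ of $t$ alone, which follows from Lemma~\ref{fac:proj} applied to $re^{2i\xi t}$ together with the identity $2i\xi Q_{+,t}=\partial_{t}Q_{+,t}+2i\sum_{k}C_{k}e^{2i\xi_{k}t}$; (ii) the conjugate of the first identity, $\Pi_{+}(r_{t}^{*}B_{+,t})=A_{+,t}^{*}-1$; and (iii) Lemma~\ref{fac:proj} once more, to commute $\Pi_{\pm}$ past multiplication by $i\xi$, the corrections being the constants $\pm\frac{1}{2\pi}\FF(\cdot)(0)$. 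After the same cancellations as in Section~\ref{sec:Derivation-right-continuum}, one obtains $\partial_{t}A_{+,t}=\gamma_{t}B_{+,t}$ and $\partial_{t}B_{+,t}=-\gamma_{t}^{*}A_{+,t}+2i\xi B_{+,t}$, the surviving constant being exactly $\gamma_{t}=\frac{1}{\pi}\FF(B_{+,t}^{*})(0^{+})=-\frac{1}{\pi}\FF(r_{t}^{*}A_{+,t})(0)$. As an alternative to tracking the cancellations by hand, one can verify directly that $-\gamma_{t}A_{+,t}^{*}-2i\xi B_{+,t}^{*}$ lies in $H_{+}^{1}(\RR)$ and solves the same Marchenko equation (\ref{eq:drm mar}) as $\partial_{t}B_{+,t}^{*}$, and then invoke the uniqueness in Proposition~\ref{pro:TME 2}.

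I expect the first stage to be the main obstacle; the ODE derivation is essentially a transcription, run backwards, of the computation already performed in Section~\ref{sec:Derivation-right-continuum}. The delicate points are the uniform bounds needed to apply the Banach-derivative lemmas in the presence of bound states, and the bookkeeping forced by the fact that $1\notin L^{2}(\RR)$: quantities like $A_{+,t}$, $2i\xi r_{t}$ and $2i\xi B_{+,t}^{*}$ are only ``$H^{1}$ plus a $\xi$-independent constant'', so one must keep careful track of which $\Pi_{\pm}$-corrections from Lemma~\ref{fac:proj} land in $L^{2}$ and which supply the constant that becomes $\gamma_{t}$.
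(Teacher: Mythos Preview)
Your proposal is correct and follows essentially the same route as the paper: first establish $H^{1}$-differentiability of $B_{+,t}^{*}$ via the Marchenko equation using Propositions~\ref{pro:bd 3}, \ref{pro:bd 5} and Fact~\ref{fac:Sob 4}, deduce differentiability of $A_{+,t}$ from (\ref{eq:drm3}), and upgrade to pointwise differentiability via Proposition~\ref{pro:bd 4}; then differentiate (\ref{eq:drm3})--(\ref{eq:drm4}) and extract the ODE. The only organizational difference is that the paper's second stage proceeds by your ``alternative'': rather than tracking the cancellations directly, it observes that the differentiated system combines into a Marchenko-type equation with a unique solution, and then simply checks that the proposed right-hand side of (\ref{eq:is 100}) satisfies that system.
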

\begin{proof}
Let us first show that for each $\xi\in\RR$, $A_{+,t}(\xi)$ and
$B_{+,t}(\xi)$ are differentiable with respect to $t$. By Proposition
\ref{pro:bd 4}, it is enough to show that $A_{+,t}-1$ and $B_{+,t}$
are differentiable as curves in the Banach space $H^{1}(\RR)$. Let
us prove this for $B_{+,t}$ using the fact that it is a solution
to the Marchenko equation (\ref{eq:drm mar}). By Proposition \ref{pro:bd 3}
and Lemma \ref{lem:bd 1}, it is sufficient to show that $(1+\pi_{+}r_{t}^{*}\pi_{-}r_{t})$
and $\pi_{+}r_{t}^{*}$ are differentiable as curves in $\LL(H^{1}(\RR),H^{1}(\RR))$
and $H^{1}(\RR)$, respectively. But this follows easily from Proposition
\ref{pro:bd 5} and Fact \ref{fac:Sob 4}. Finally, the differentiability
of $A_{+,t}$ is apparent using equation (\ref{eq:drm3}).

So we know that $A_{+,t}$ and $B_{+,t}$ have $t$-derivatives $\dot{A}_{+,t}$
and $\dot{B}_{+,t}$ in $H^{1}(\RR)$. Differentiating equations (\ref{eq:drm3})
and (\ref{eq:drm4}) with respect to $t$ gives \begin{eqnarray}
\dot{A}_{+,t} & = & \Pi_{-}2i\xi r_{t}B_{+,t}^{*}+\Pi_{-}r_{t}\dot{B}_{+,t}^{*}\label{eq:is 101}\\
\dot{B}_{+,t} & = & -\Pi_{-}2i\xi r_{t}A_{+,t}^{*}-\Pi_{-}r_{t}\dot{A}_{+,t}^{*}.\label{eq:is 102}\end{eqnarray}
 This system has a unique solution for $\dot{A}_{+,t}$ and $\dot{B}_{+,t}$
since it can be combined into the single Marchenko type equation \[
(1+\Pi_{-}r_{t}\Pi_{+}r_{t}^{*})\dot{B}_{+,t}=-\Pi_{-}2i\xi r_{t}A_{+,t}^{*}+\Pi_{-}r_{t}\Pi_{+}2i\xi r_{t}^{*}B_{+,t}\]
 (see Section \ref{sub:The-Marchenko-Equation}). Plugging \ref{eq:is 100}
into \ref{eq:is 101} and \ref{eq:is 102} gives \begin{eqnarray*}
\gamma_{t}B_{+,t} & = & \Pi_{-}2i\xi r_{t}B_{+,t}^{*}-\gamma_{t}\Pi_{-}r_{t}A_{+,t}^{*}-\Pi_{-}2i\xi r_{t}B_{+,t}^{*}\\
-\gamma_{t}^{*}A_{+,t}+2i\xi B_{+,t} & = & -\Pi_{-}2i\xi r_{t}A_{+,t}^{*}-\gamma_{t}^{*}\Pi_{-}r_{t}B_{+,t}^{*}.\end{eqnarray*}
Using (\ref{eq:drm3}) and (\ref{eq:drm4}), these equations become\begin{eqnarray*}
0 & = & 0\\
2i\xi B_{+,t} & = & -\Pi_{-}2i\xi r_{t}A_{+,t}^{*}+\gamma_{t}^{*}.\end{eqnarray*}
The second of these equations is satisfied if we set \begin{eqnarray*}
-\gamma_{t}^{*} & = & -2i\xi B_{+,t}-\Pi_{-}2i\xi r_{t}A_{+,t}^{*}\\
 & = & 2i\xi\Pi_{-}r_{t}A_{+,t}^{*}-\Pi_{-}2i\xi r_{t}A_{+,t}^{*}\\
 & = & \frac{1}{\pi}\FF(r_{t}A_{+,t}^{*})(0)\\
 & = & -\frac{1}{\pi}\hat{B}_{+,t}(0^{-})\end{eqnarray*}
(see Lemma\ref{fac:proj}).
\end{proof}
\begin{lem}
\label{lem:3 cc}The functions $A_{+,t}$ and $B_{+,t}$ given in
Lemma \ref{lem:1 cc} satisfy \[
\left[\begin{array}{c}
A_{+,t}-1\\
B_{+,t}\end{array}\right]\underrightarrow{H^{1}(\RR)}\left[\begin{array}{c}
0\\
0\end{array}\right]\textrm{ as }t\rightarrow+\infty.\]

\end{lem}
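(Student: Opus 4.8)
The plan is to reduce the lemma to the single claim that $r_t\to 0$ in $H^1(\RR)$ as $t\to+\infty$, where $r_t(\xi)=\Pi_-re^{2i\xi t}-\sum_{k=1}^m\frac{C_ke^{2i\xi_kt}}{\xi-\xi_k}$ is the modified reflection coefficient of Section~\ref{sec:Derivation-right-continuum}. Granting this, the conclusion for $B_{+,t}$ is immediate: by Section~\ref{sec:Derivation-right-continuum}, $B_{+,t}^*$ is the solution to the Marchenko equation (\ref{eq:drm mar}), namely $(1+\Pi_+r_t^*\Pi_-r_t)B_{+,t}^*=-\Pi_+r_t^*$, so Proposition~\ref{pro:TME 2} gives $\|B_{+,t}^*\|_{H^1}\le\|r_t\|_{H^1}\to 0$, and since conjugation is an isometry of $H^1(\RR)$ we get $\|B_{+,t}\|_{H^1}\to 0$. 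For $A_{+,t}$ I would use the identity $A_{+,t}=1+\Pi_-r_tB_{+,t}^*$ from (\ref{eq:drm3}): by Fact~\ref{fac:Sob 4} multiplication by $r_t$ is bounded on $H^1(\RR)$ with operator norm controlled by a fixed constant times $\|r_t\|_{H^1}$, and $\Pi_-$ is bounded on $H^1(\RR)$, so $\|A_{+,t}-1\|_{H^1}\le C\|r_t\|_{H^1}\|B_{+,t}^*\|_{H^1}\le C\|r_t\|_{H^1}^2\to 0$. Thus the lemma follows once the claim is proved.

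To prove the claim, split $r_t=\Pi_-(re^{2i\xi t})-Q_{+,t}$ with $Q_{+,t}(\xi)=\sum_{k=1}^m\frac{C_ke^{2i\xi_kt}}{\xi-\xi_k}$, and estimate each piece. For $Q_{+,t}$: each $(\xi-\xi_k)^{-1}$ lies in $H^1(\RR)$ because its only pole $\xi_k$ is off the real axis, so the triangle inequality gives $\|Q_{+,t}\|_{H^1}\le\sum_{k=1}^m|C_k|\,e^{-2t\,\im\xi_k}\,\|(\cdot-\xi_k)^{-1}\|_{H^1}$, which tends to $0$ as $t\to+\infty$ since every $\im\xi_k>0$. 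For $\Pi_-(re^{2i\xi t})$: by Plancherel, with the paper's Fourier conventions, $\|f\|_{H^1}^2=\frac{1}{2\pi}\int_\RR(1+y^2)|\hat f(y)|^2\,dy$; the Fourier transform of $re^{2i\xi t}$ is the translate $y\mapsto\hat r(y-2t)$, and $\Pi_-$ multiplies the Fourier transform by $\chi_{(-\infty,0)}$, so the substitution $u=y-2t$ yields $\|\Pi_-(re^{2i\xi t})\|_{H^1}^2=\frac{1}{2\pi}\int_{-\infty}^{-2t}(1+(u+2t)^2)|\hat r(u)|^2\,du$. For $t\ge 0$ and $u\le-2t$ one has $(u+2t)^2\le u^2$, so this is at most $\frac{1}{2\pi}\int_{-\infty}^{-2t}(1+u^2)|\hat r(u)|^2\,du$, the tail of the convergent integral $\int_\RR(1+u^2)|\hat r(u)|^2\,du$ (finite since $r\in H^1(\RR)$), which tends to $0$ as $t\to+\infty$. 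Combining the two pieces, $r_t\to 0$ in $H^1(\RR)$.

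The only step requiring a genuine estimate is this last Fourier-tail bound; everything else is bookkeeping with Proposition~\ref{pro:TME 2}, Fact~\ref{fac:Sob 4}, and the identities of Section~\ref{sec:Derivation-right-continuum}. I expect the only conceptual point to be deciding to isolate the clean intermediate statement $r_t\to 0$ in $H^1(\RR)$ — which already packages the uniformity in $t$ that one would otherwise have to chase through the Marchenko equation by hand — after which there is no real obstacle. The companion assertions for $A_{-,t}-1$ and $B_{-,t}$ as $t\to-\infty$ (the analogues invoked right after this lemma) follow by running exactly the same argument on the left Marchenko equation of Section~\ref{sec:Derivation-left-continuum}, where the sign of the exponent makes the corresponding $s_t$ tend to $0$ as $t\to-\infty$.
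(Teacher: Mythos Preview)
Your proof is correct and follows essentially the same route as the paper: the Marchenko bound from Proposition~\ref{pro:TME 2}, the identity (\ref{eq:drm3}) for $A_{+,t}$, and a Fourier-tail estimate showing the relevant $H^1$ norms vanish. The only cosmetic difference is that you prove $r_t\to 0$ in $H^1$ directly by splitting off the bound-state piece $Q_{+,t}$, whereas the paper first uses the identity $r_t=\Pi_-(e^{2i\xi t}r_0)$ for $t\ge 0$ and applies the same tail estimate to the fixed function $r_0$.
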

\begin{proof}
Since $B_{+,t}^{*}$ is the solution to \[
(1+\Pi_{+}r_{t}^{*}\Pi_{-}r_{t})B_{+,t}^{*}=-\Pi_{+}r_{t}^{*},\]
 we can use \ref{pro:TME 2} to estimate its $H^{1}$-norm by \[
\left\Vert B_{+,t}^{*}\right\Vert _{H^{1}}\leq\left\Vert \Pi_{+}r_{t}^{*}\right\Vert _{H^{1}}.\]
 For $t\geq0$ this is \[
\left\Vert B_{+,t}^{*}\right\Vert _{H^{1}}\leq\left\Vert \Pi_{+}e^{-2i\xi t}r_{0}^{*}\right\Vert _{H^{1}}.\]
 Clearly the expression on the right tends to zero, which implies
that \[
\lim_{t\rightarrow+\infty}\left\Vert B_{+,t}^{*}\right\Vert _{H^{1}}=0.\]
 By equation (\ref{fac:Sob 4}) we have (for $t\geq0$)\begin{eqnarray*}
\left\Vert A_{+,t}-1\right\Vert _{H^{1}} & = & \left\Vert \Pi_{-}r_{t}B_{+,t}^{*}\right\Vert _{H^{1}}\\
 & = & \left\Vert \Pi_{-}e^{2i\xi t}r_{0}B_{+,t}^{*}\right\Vert _{H^{1}}\\
 & \leq & 2\left\Vert r_{0}\right\Vert _{H^{1}}\left\Vert B_{+,t}\right\Vert _{H^{1}},\end{eqnarray*}
 which implies that \[
\lim_{t\rightarrow+\infty}\left\Vert A_{+,t}-1\right\Vert _{H^{1}}=0,\]
 as desired. 
\end{proof}
\begin{lem}
\label{lem:5 cc}The functions $A_{+,t}$ and $B_{+,t}$ given in
Lemma \ref{lem:1 cc} satisfy \begin{equation}
|A_{+,t}|^{2}+|B_{+,t}|^{2}=1\textrm{ on }\RR.\label{eq:lem 5b cc}\end{equation}
 
\end{lem}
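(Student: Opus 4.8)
The plan is to show that for each fixed $\xi\in\RR$ the real number $|A_{+,t}(\xi)|^{2}+|B_{+,t}(\xi)|^{2}$ does not depend on $t$, and then to read off its value by sending $t\to+\infty$.

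First I would invoke Lemma \ref{lem:2 cc}. For each fixed $\xi\in\RR$ the functions $t\mapsto A_{+,t}(\xi)$ and $t\mapsto B_{+,t}(\xi)$ are differentiable, and, setting
\[
v_{t}=\left[\begin{array}{c}A_{+,t}(\xi)\\ B_{+,t}(\xi)\end{array}\right],\qquad M_{t}=\left[\begin{array}{cc}0 & \gamma_{t}\\ -\gamma_{t}^{*} & 2i\xi\end{array}\right],
\]
they satisfy $\partial_{t}v_{t}=M_{t}v_{t}$. Since $\xi\in\RR$, the matrix $M_{t}$ is skew-Hermitian, i.e. $M_{t}^{*}=-M_{t}$. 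Therefore
\[
\frac{d}{dt}\bigl(|A_{+,t}(\xi)|^{2}+|B_{+,t}(\xi)|^{2}\bigr)=\frac{d}{dt}\bigl(v_{t}^{*}v_{t}\bigr)=v_{t}^{*}\bigl(M_{t}^{*}+M_{t}\bigr)v_{t}=0
\]
for every $t$, so $t\mapsto|A_{+,t}(\xi)|^{2}+|B_{+,t}(\xi)|^{2}$ is constant.

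To evaluate this constant I would use Lemma \ref{lem:3 cc}, which says that $A_{+,t}-1\to0$ and $B_{+,t}\to0$ in $H^{1}(\RR)$ as $t\to+\infty$. As already used in the proof of Lemma \ref{lem:2 cc} (via the hypothesis of Proposition \ref{pro:bd 4} applied to $X=H^{1}(\RR)$), convergence in $H^{1}(\RR)$ implies pointwise convergence, so $A_{+,t}(\xi)\to1$ and $B_{+,t}(\xi)\to0$ for every fixed $\xi$. Passing to the limit $t\to+\infty$ in the ($t$-independent) quantity $|A_{+,t}(\xi)|^{2}+|B_{+,t}(\xi)|^{2}$ then gives the value $|1|^{2}+|0|^{2}=1$, which is exactly (\ref{eq:lem 5b cc}).

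There is essentially no obstacle here: the only step that warrants a comment is the deduction of pointwise convergence from $H^{1}$-convergence, which is the Sobolev embedding already in use in this chapter. The companion identity $|A_{-,t}|^{2}+|B_{-,t}|^{2}=1$ on $\RR$ follows by the same computation, using the $A_{-},B_{-}$ analogue of Lemma \ref{lem:2 cc} together with the normalization $A_{-,t}\to1$, $B_{-,t}\to0$ as $t\to-\infty$.
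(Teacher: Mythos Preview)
Your proof is correct and follows essentially the same approach as the paper: the paper also argues that the differential equation in Lemma~\ref{lem:2 cc} forces $|A_{+,t}|^{2}+|B_{+,t}|^{2}$ to be independent of $t$, and then invokes Lemma~\ref{lem:3 cc} to identify the constant as $1$. You have simply spelled out the skew-Hermitian computation and the passage from $H^{1}$-convergence to pointwise convergence that the paper leaves implicit.
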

\begin{proof}
The differential equation in Lemma (\ref{lem:2 cc}) implies that
$|A_{+,t}|^{2}+|B_{+,t}|^{2}$ is independent of $t$. Therefore,
this result follows from Lemma \ref{lem:3 cc}.
\end{proof}
Each of the above lemmas has analogue for the functions $A_{-,t}$
and $B_{-,t}$, which we omit. The above lemmas together with their
analogues, give the following 

\begin{prop}
\label{prop: proof main cc}Let $S=(a,b;\xi_{1},\dots,\xi_{m};C_{1},\dots,C_{m})$
be scattering data such that $r=\frac{b}{a}$ and $\xi r$ are in
$H^{1}(\RR)$. For each $t\in\RR$ there exist unique functions $A_{-,t}-1$,
$B_{-,t}$, $A_{+,t}^{*}-1$, and $B_{+,t}^{*}$ in $H_{+}^{1}(\RR)$
such that \begin{equation}
\left[\begin{array}{cc}
a & -b^{*}e^{-2i\xi t}\\
be^{2i\xi t} & a^{*}\end{array}\right]=\left[\begin{array}{cc}
A_{+,t}^{*} & B_{+,t}^{*}\\
-B_{+,t} & A_{+,t}\end{array}\right]\left[\begin{array}{cc}
A_{-,t} & -B_{-,t}^{*}\\
B_{-,t} & A_{-,t}^{*}\end{array}\right],\label{eq:pm1 cc}\end{equation}
 and \begin{eqnarray}
\Pi_{-}\frac{1}{a}A_{-,t} & = & -\Pi_{-}Q_{+,t}B_{+,t}^{*}\label{eq:pm2 cc}\\
\Pi_{-}\frac{1}{a}B_{-,t} & = & \Pi_{-}Q_{+,t}A_{+,t}^{*}\label{eq:pm3 cc}\end{eqnarray}
 where \begin{eqnarray}
Q_{+,t}(\xi) & = & \sum_{k=1}^{m}\frac{C_{k}e^{2i\xi_{k}t}}{\xi-\xi_{k}}.\label{eq:pm4 cc}\end{eqnarray}
Furthermore, these functions satisfy \begin{equation}
\partial_{t}\left[\begin{array}{c}
A_{\pm,t}(\xi)\\
B_{\pm,t}(\xi)\end{array}\right]=\left[\begin{array}{cc}
0 & q(t)\\
-q(t)^{*} & 2i\xi\end{array}\right]\left[\begin{array}{c}
A_{\pm,t}(\xi)\\
B_{\pm,t}(\xi)\end{array}\right]\label{eq:is 100.1}\end{equation}
 and \[
\left[\begin{array}{c}
A_{\pm,t}-1\\
B_{\pm,t}\end{array}\right]\underrightarrow{H^{1}(\RR)}\left[\begin{array}{c}
0\\
0\end{array}\right]\textrm{ as }t\rightarrow\pm\infty,\]
where \[
q(t)=\frac{1}{\pi}\FF(B_{+,t}^{*})(0^{+})\]
 and \[
-q(t)^{*}=\frac{1}{\pi}\FF(B_{-,t})(0^{+}).\]
 
\end{prop}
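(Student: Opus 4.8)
The plan is to deduce the Proposition by assembling Lemmas \ref{lem:1 cc}, \ref{lem:2 cc}, \ref{lem:3 cc} and \ref{lem:5 cc}, together with their mirror images — the analogous statements, proved the same way, for the functions built from the \textit{left} reduced scattering data $\tilde S=(s;\xi_1,\dots,\xi_m;\tilde C_1,\dots,\tilde C_m)$ of (\ref{eq:ct13})--(\ref{eq:ct15}) via the left Marchenko equation (\ref{eq:ct14}). Throughout write $v_{\pm,t}=\left[\begin{array}{c}A_{\pm,t}\\ B_{\pm,t}\end{array}\right]$ and, for a scalar $\gamma$, $M(\gamma)=\left[\begin{array}{cc}0&\gamma\\ -\gamma^*&2i\xi\end{array}\right]$, so that the goal (\ref{eq:is 100.1}) reads $\partial_t v_{\pm,t}=M(q(t))v_{\pm,t}$.

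First I would apply Lemma \ref{lem:1 cc} to $S$: for each $t$ it produces $A_{\pm,t},B_{\pm,t}$ with $A_{\pm,t}-1,B_{\pm,t}$ in the stated analytic classes, satisfying the matrix identity (\ref{eq:lem 1 cc}) and the partial-fraction identities (\ref{eq:lem 1Q1 cc})--(\ref{eq:lem 1Q2 cc}), which are literally (\ref{eq:pm2 cc})--(\ref{eq:pm3 cc}). The mirror image of Lemma \ref{lem:1 cc}, run on $\tilde S$, a priori produces another such quadruple; but $\tilde S$ is precisely the left data attached to $S$ by (\ref{eq:ct13})--(\ref{eq:ct15}), so the uniqueness clause of Lemma \ref{lem:1 cc} forces the two quadruples to coincide. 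To upgrade (\ref{eq:lem 1 cc}) to (\ref{eq:pm1 cc}) I would invoke Lemma \ref{lem:5 cc}: since $|A_{+,t}|^2+|B_{+,t}|^2\equiv1$ on $\RR$, the matrix $\left[\begin{array}{cc}A_{+,t}&-B_{+,t}^*\\ B_{+,t}&A_{+,t}^*\end{array}\right]$ has determinant $1$ and inverse $\left[\begin{array}{cc}A_{+,t}^*&B_{+,t}^*\\ -B_{+,t}&A_{+,t}\end{array}\right]$, and left-multiplying (\ref{eq:lem 1 cc}) by it yields (\ref{eq:pm1 cc}). The uniqueness assertion of the Proposition then reduces to that of Lemma \ref{lem:1 cc}.

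For the evolution equation, Lemma \ref{lem:2 cc} gives $\partial_t v_{+,t}=M(\gamma_t)v_{+,t}$ with $\gamma_t=\frac1\pi\FF(B_{+,t}^*)(0^+)$, and its mirror image gives $\partial_t v_{-,t}=M(\tilde\gamma_t)v_{-,t}$ with $-\tilde\gamma_t^*=\frac1\pi\FF(B_{-,t})(0^+)$. The essential step is to show $\tilde\gamma_t=\gamma_t$. I would read this off (\ref{eq:pm1 cc}), which rearranges to $v_{-,t}=a\,v_{+,t}+b\,e^{2i\xi t}\left[\begin{array}{c}-B_{+,t}^*\\ A_{+,t}^*\end{array}\right]$; conjugating $\partial_t v_{+,t}=M(\gamma_t)v_{+,t}$ shows $\left[\begin{array}{c}-B_{+,t}^*\\ A_{+,t}^*\end{array}\right]$ satisfies $\partial_t(\,\cdot\,)=(M(\gamma_t)-2i\xi I)(\,\cdot\,)$, and the stray $-2i\xi I$ is absorbed by differentiating $e^{2i\xi t}$, so $\partial_t v_{-,t}=M(\gamma_t)v_{-,t}$. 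Comparing with the mirror image of Lemma \ref{lem:2 cc}, and using $|A_{-,t}|^2+|B_{-,t}|^2\equiv1$ (the mirror of Lemma \ref{lem:5 cc}, so $v_{-,t}$ is nowhere zero), forces $\tilde\gamma_t=\gamma_t$. Setting $q(t):=\gamma_t$ then gives (\ref{eq:is 100.1}) for both signs along with the two displayed formulas for $q$. Finally, $A_{\pm,t}-1\to0$ and $B_{\pm,t}\to0$ in $H^1(\RR)$ as $t\to\pm\infty$ come from Lemma \ref{lem:3 cc} (for $+$ as $t\to+\infty$) and its mirror image (for $-$ as $t\to-\infty$). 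Together with the previous paragraph this establishes the Proposition, and hence — as recorded at the start of this section — part (a) of Theorem \ref{thm:main continuum}.

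The main obstacle, I expect, is the reconciliation $\tilde\gamma_t=\gamma_t$: that the potential produced by the right Marchenko equation coincides with the one produced by the left Marchenko equation. This is the single place where the factorization (\ref{eq:pm1 cc}), the conjugation symmetry of the ZS evolution, and the normalizations $|A_{\pm,t}|^2+|B_{\pm,t}|^2\equiv1$ all have to be used together; everything else is bookkeeping with the lemmas already in hand. A subsidiary point requiring care is making the "mirror image" lemmas precise — in particular verifying that $\tilde S$ in (\ref{eq:ct13})--(\ref{eq:ct15}) really is the left scattering data of $S$ — so that the uniqueness in Lemma \ref{lem:1 cc} legitimately glues the two constructions into a single quadruple $A_{\pm,t},B_{\pm,t}$.
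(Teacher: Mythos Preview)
Your proposal is correct and follows essentially the same route as the paper, which simply states that Lemmas \ref{lem:1 cc}--\ref{lem:5 cc} together with their (omitted) analogues for $A_{-,t},B_{-,t}$ yield the Proposition. You have in fact filled in more than the paper does: the inversion of (\ref{eq:lem 1 cc}) to obtain (\ref{eq:pm1 cc}) via Lemma \ref{lem:5 cc}, and especially the reconciliation $\tilde\gamma_t=\gamma_t$ by transporting the ZS evolution along the factorization, are left implicit in the paper but made explicit and correctly argued in your outline.
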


\section{Applying the discrete algorithm to continuum scattering data\label{sec:Applying-the-discrete}}

Let $S=(r;\xi_{1},\dots,\xi_{m};C_{1},\dots,C_{m})$ be reduced continuum
scattering data with $r,\xi r\in H^{1}(\RR)$. The main theorem tells
us that there is a unique potential $q:\RR\rightarrow\CC$ corresponding
to $S$. However, to practically compute $q(t)$ for some $t\in\RR$,
one needs to somehow discretize the Marchenko equation\[
(1+\Pi_{+}r_{t}^{*}\Pi_{-}r_{t})h_{t}=-\Pi_{+}r_{t}.\]
 Let us describe one method of doing this.

Choose a time step $\Delta$. We can replace $r_{j\Delta}$ by the
periodic function \[
\tilde{r}_{j\Delta}(\xi)=\frac{\Delta}{\pi}\sum_{n=-\infty}^{-1}\hat{r}_{j\Delta}(2n\Delta)e^{2i\xi n\Delta},\]
 which is an approximation to $r_{j\Delta}$ in a neighborhood of
zero. Let us consider $\tilde{r}_{j\Delta}$ as a function of $w=e^{2i\xi\Delta}$,
so $\tilde{r}_{j\Delta}$ is in $H^{1}(S^{1})$. There is a unique
solution $\tilde{h}_{j\Delta}\in H^{1}(S^{1})$ to the discretized
Marchenko equation \[
(1+\Pi_{+}\tilde{r}_{j\Delta}^{*}\Pi_{-}\tilde{r}_{j\Delta})\tilde{h}_{j\Delta}=-\Pi_{+}\tilde{r}_{t}^{*}.\]
After solving this equation, we can approximate the potential by \begin{equation}
q(j\Delta)=\frac{1}{\pi}\FF(h_{j\Delta})(0^{+})\approx\Delta^{-1}\hat{\tilde{h}}_{j\Delta}(1).\label{eq:approx 1}\end{equation}

This, of course, greatly resembles the inverse scattering theory for
hard pulses! Let us make the correspondence explicit. We have \[
r_{j\Delta}(\xi)=r(\xi)e^{2i\xi j\Delta}-\sum_{k=1}^{m}\frac{C_{k}e^{2i\xi_{k}j\Delta}}{\xi-\xi_{k}},\]
 which implies that \[
\hat{r}_{j\Delta}(2n\Delta)=\hat{r}(2(n-j)\Delta)-2\pi i\sum_{k=1}^{m}C_{k}e^{2i\xi_{k}(j-n)\Delta}.\]
 Therefore,\begin{eqnarray*}
\tilde{r}_{j\Delta}(w) & = & \frac{\Delta}{\pi}\sum_{n=-\infty}^{-1}\hat{r}(2(n-j)\Delta)w^{n}-2\Delta i\sum_{k=1}^{m}C_{k}\sum_{n=-\infty}^{-1}e^{2i\xi_{k}(j-n)\Delta}w^{n}\\
 & = & \frac{\Delta}{\pi}\sum_{n=-\infty}^{-1}\hat{r}(2(n-j)\Delta)w^{n}-2\Delta i\sum_{k=1}^{m}C_{k}e^{2i\xi_{k}j\Delta}\sum_{n=-\infty}^{-1}\left(\frac{w}{e^{2i\xi_{k}n\Delta}}\right)^{n}\\
 & = & \Pi_{-}\tilde{r}(w)w^{j-1}-\sum_{k=1}^{m}\frac{c_{k}w_{k}^{j}}{w-w_{k}},\end{eqnarray*}
where \[
\tilde{r}(w)=\frac{\Delta}{\pi}w\sum_{n=-\infty}^{\infty}\hat{r}(2n\Delta)w^{n},\]
\[
w_{k}=e^{2i\xi_{k}\Delta},\]
 and \[
c_{k}=2\Delta iw_{k}C_{k}.\]
So we see that replacing the reduced scattering data $S$ by the reduced
discrete scattering data \[
\tilde{S}=(\tilde{r};w_{1},\dots,w_{k};c_{1},\dots,c_{k})\]
 is algorithmically equivalent to discretizing the Marchenko equation
in the above manner. In the discrete algorithm, we set \[
\mu_{j}=\frac{\gamma_{j}}{|\gamma_{j}|}\arctan|\gamma_{j}|\]
 where $\gamma_{j}=\hat{\tilde{h}}_{j\Delta}(1)$. This leads to the
approximation \[
q(j\Delta)\approx\Delta^{-1}\mu_{j},\]
 which is very close to the right hand side of (\ref{eq:approx 1})
when $\gamma_{j}$ is small.

In light of the above discussion, it may seem as though the discrete
Marchenko equation is nothing more than a simple discretization of
the continuum Marchenko equation. Such a discretization has been discussed
in the literature, for example see \cite{Frangos Jaggard}. However,
we need to consider the following subtlety, which explains why the
discrete theory is needed to obtain good results. Discretizing the
left and right continuum Marchenko equations, separately, in the above
sense, will not produce the correct pulse. Instead, one should first
replace the scattering data by discrete scattering data, in the above
manner, and then derive the data for the left equation. This will
guarantee that the resulting hard pulse has the correct scattering
data. In particular, the reflection coefficient corresponding to the
discrete potential will be a very good approximation to the original
reflection coefficient in a neighborhood of zero.

\chapter{Pulses with finite rephasing time and applications\label{cha:Pulses-with-finite}}

In most NMR applications the designed pulses have a fixed rephasing
time, $\rho$. In this chapter we derive a simple, SLR-type algorithm
for generating hard pulses with finitely many rephasing time steps.
We then describe several applications in NMR pulse design.

\section{A recursive algorithm for pulses of finite rephasing time}

Part (b) of Theorem \ref{thm:main discrete} tells us that designing
a hard pulse with a fixed number of rephasing time steps, $\rho$,
amounts to specifying a function, $r_{0}$, which is meromorphic in
the unit disk and vanishing at the origin. Once $r=w^{-\rho}r_{0}$
has been specified, one can, of course, generate the pulse using the
recursion described in Section \ref{sec:Discrete-Theory}. However,
there is a simpler, more direct recursive algorithm which can be used
in the case of finite rephasing time. This algorithm, which we derive
below, resembles the SLR algorithm (see \cite{PLNM}).

We are dealing with a potential of the form \[
q(t)=\sum_{j=-\infty}^{\rho-1}\mu_{j}\delta(t-j\Delta).\]
It is easy to check that\[
r_{0}=\lim_{j\rightarrow\infty}\frac{w^{\rho-j}B_{-,j}}{A_{-,j}}=\frac{B_{-,\rho}}{A_{-,\rho}}.\]
So let us set \[
R_{-,j}=\frac{B_{-,j}}{A_{-,j}}.\]
Notice that $R_{-,j}$ is a meromorphic function on the unit disk
which vanishes at the origin. The recursion (\ref{eq:drmd6}) induces
the following recursion on $R_{-,j}$:\[
R_{-,j}=\frac{1-\gamma_{j}w^{-1}R_{-,j+1}}{\gamma_{j}^{*}+w^{-1}R_{-,j+1}}.\]
Since $R_{-,j}$ vanishes at the origin, we must have \[
\gamma_{j}=\left(\left.\frac{R_{-,j+1}}{w}\right|_{w=0}\right)^{-1}.\]
Thus we can reconstruct the potential from the initial data $R_{-,\rho}=r_{0}$. 

For example, we can specify $r_{0}$ as the ratio of two polynomials:\[
r_{0}=R_{-,\rho}=\frac{P_{\rho}(w)}{Q_{\rho}(w)},\]
 with $P_{\rho}(w)=0$. Then, the recursion is simply\[
R_{-,j}=\frac{P_{j}(w)}{Q_{j}(w)}=\frac{Q_{j+1}-\gamma_{j}w^{-1}P_{j+1}}{\gamma_{j}^{*}Q_{j+1}+w^{-1}P_{j+1}},\]
 where $\gamma_{j}=\frac{\hat{Q}_{j+1}(0)}{\hat{P}_{j+1}(1)}.$ 

This very much resembles the SLR recursion. Notice, however, that
there is no need to choose polynomials $A$ and $B$ satisfying $|A|^{2}+|B|^{2}=1$
on the unit circle. As a result, the resulting pulses will generally
have infinite duration.

\section{Equiripple pulse design}

\subsection{\label{sub:IST equiripple}The IST method}

In this section, we describe a method for designing a pulse with a
fixed rephasing time $\rho<\infty$, which gives a profile which uniformly
approximates some real ideal magnetization profile. Suppose we are
given an ideal profile \[
\Mb_{\textrm{ideal}}(z)=\left[\begin{array}{c}
\frac{2r_{\textrm{ideal}}}{1+|r_{\textrm{ideal}}|^{2}}\\
0\\
\frac{1-|r_{\textrm{ideal}}|^{2}}{1+|r_{\textrm{ideal}}|^{2}}\end{array}\right](\frac{z}{2}),\]
 where $r_{\textrm{ideal}}:\RR\rightarrow\RR$ is a real reflection
coefficient. According to Theorem \ref{thm:main continuum}(b), we
should uniformly approximate $r_{\textrm{ideal}}$ by a real reflection
coefficient $r=e^{-2i\xi\rho}r_{0}$ where $r_{0}$ has a meromorphic
extension to the upper half plane and $\lim_{|\xi|\rightarrow\infty}r_{0}(\xi)=0$.
In fact, we should design $r_{0}$ to be analytic in the upper half
plane, so that the resulting pulse has minimum energy (see Corollary
\ref{cor:energy}). So the Fourier transform of $r$ should be supported
on the half ray $[-2\rho,\infty)$. Since $r$ is to be real, $\FF(r)$
should actually be supported on the symmetric interval $[-2\rho,2\rho].$
Therefore the problem reduces to uniformly approximating a real function
$r_{\textrm{ideal}}$ by a function whose Fourier transform is supported
on a given interval $[-2\rho,2\rho]$. To practically implement this
procedure it is best to work in the discrete theory, and use an algorithm
such as the Remez algorithm. 

Let us focus on the most typical example where $r_{\textrm{ideal}}=\chi_{[-1,1]}$,
which corresponds to a single slice selective $90^{\circ}$ pulse.
To use the Remez algorithm, the user would specify the time step $\Delta$,
and three of the following parameters:

(i) The rephasing time: $\rho$;

(ii) The transition width: $\tau$;

(iii) The in-slice ripple: $\delta_{1,\textrm{IST}}$;

(iv) The out-of-slice ripple: $\delta_{2,\textrm{IST}}$.

\noindent The unspecified of these four parameters can then be determined
using, for example, the parameter relations given in \cite{Digital}.
The Remez algorithm then produces a periodic function (of period $\frac{2\pi}{\Delta})$
which approximates $r_{\textrm{ideal}}$ with a maximum error $\delta_{1,\textrm{IST}}$
inside the interval $[-1,1]$ and a maximum error of $\delta_{2,\textrm{IST}}$
outside of the interval $[-1-\tau,1+\tau].$ The algorithm does not
attempt to control the function in the transition region $[-1-\tau,-1]\cup[1,1+\tau]$.
See Section \ref{sub:Comparison-SLR-IST} for plots of pulses obtained
using this method.

\subsection{\label{sub:SLR equiripple}The SLR method}

The SLR method is a procedure for designing a pulse with duration
$T$ such that the resulting flip angle profile approximates some
ideal flip angle profile. The duration is controlled by specifying
$B$ from part (c) of Theorem \ref{thm:main continuum}. The Fourier
transform of $B$ must be supported on $[0,2T]\subset\RR$, where
$T$ is the desired pulse duration. This function is designed so that
$1-2|B|^{2}$ approximates the $z$-component $M_{z}$ of the desired
magnetization profile (or the cosine of the flip angle profile). One
can then compute $A$ to be analytic and non-vanishing in the upper
half plane with $|A|^{2}=1-|B|^{2}$ on $\RR$. The reflection coefficient
is given by \[
r(\xi)=e^{-2\rho\xi i}\frac{B(\xi)}{A(\xi)}.\]
 By specifying the rephasing time, $\rho$, and the zeros of $A$
in the upper half plane, one has some limited control on the phase
of the transverse magnetization.

Let us focus on the case of a selective $90^{\circ}$ pulse where
$r_{\textrm{ideal}}=\chi_{[-1,1]}$ and $|B_{\textrm{ideal}}|=\frac{\sqrt{2}}{2}\chi_{[-1,1]}$.
Again, for practical purposes, it is best to work in the discrete
theory so that $B$ is a polynomial. To design this polynomial, the
Remez algorithm can be used with the following parameters:

(i) The rephasing time: $\rho$;

(ii) The transition width: $\tau$;

(iii) The in-slice ripple: $\delta_{1,\textrm{SLR}}$;

(iv) The out-of-slice ripple: $\delta_{2,\textrm{SLR}}$.

\noindent As before, three of these parameters are specified by the
user, and the fourth is determined by the parameter relations from
\cite{Digital}.

\subsection{Comparison of the SLR and IST methods for selective $90^{\circ}$
pulses\label{sub:Comparison-SLR-IST}}

In this section we compare the SLR and IST methods from Sections \ref{sub:IST equiripple}
and \ref{sub:SLR equiripple}. We make the comparison for various
values of the rephasing time, $\rho$, the transition width, $\tau$,
and the out-of-slice ripple, $\delta_{2,\textrm{trans}}$. Here $\delta_{2,\textrm{trans}}$
represents the maximum magnitude of the transverse magnetization,
$\Mb_{x}+i\Mb_{y}$, for out-of-slice frequencies (recall that $\Mb_{x}+i\Mb_{y}$
is ideally zero out-of-slice). One can check that $\delta_{2,\textrm{trans}}$
is related to $\delta_{2,\textrm{IST}}$ and $\delta_{2,\textrm{SLR}}$
by \begin{eqnarray}
\delta_{2,\textrm{trans}} & = & \frac{2\delta_{2,\textrm{IST}}}{1+\delta_{2,\textrm{IST}}^{2}}\label{eq:del 1}\\
\delta_{2,\textrm{trans}} & = & 2\delta_{2,\textrm{SLR}}\sqrt{1-\delta_{2,\textrm{SLR}}^{2}}.\label{eq:del 2}\end{eqnarray}
Given $\tau$, $\rho$, and $\delta_{2,\textrm{trans}}$, the in-slice
ripple $\delta_{1,\textrm{long}}$ is determined. This is defined
to be the maximum error in the longitudinal magnetization, $\Mb_{z}$,
for in-slice frequencies (recall that $\Mb_{z}$ is ideally zero in-slice).
One can check that $\delta_{1,\textrm{long}}$ is related to $\delta_{1,\textrm{IST}}$
and $\delta_{1,\textrm{SLR}}$ by \begin{eqnarray}
\delta_{1,\textrm{long}} & = & \frac{\delta_{1,\textrm{IST}}-\frac{1}{2}\delta_{1,\textrm{IST}}^{2}}{1-(\delta_{1,\textrm{IST}}-\frac{1}{2}\delta_{1,\textrm{IST}}^{2})}\label{eq:del 3}\\
\delta_{1,\textrm{long}} & = & 2\sqrt{2}\delta_{1,\textrm{SLR}}+2\delta_{1,\textrm{SLR}}^{2}.\label{eq:del 4}\end{eqnarray}

In Figures \ref{cap:90-3-01-20}, \ref{cap:90-2-10-20}, and \ref{cap:90-1-05-50}
we compare SLR and IST pulses for various values of the transition
width, $\tau$, the rephasing time, $\rho$, and the out-of-slice
ripple $\delta_{2,\textrm{trans}}$. We see that, in each case, the
inverse scattering pulse produces a better profile. Of course, the
IST pulses are somewhat longer in duration. In many applications,
however, the extra duration causes no problem, because the important
duration is often the duration of the portion of the pulse following
the peak (see \cite{Magland}).

\begin{figure}

\caption{\label{cap:90-3-01-20}Comparison of SLR and IST equiripple pulses:
$\rho=3$, $\delta_{2,\textrm{trans}}=0.01$, $\tau=0.2\times2\pi$. }

\begin{center}\includegraphics[%
  scale=0.7]{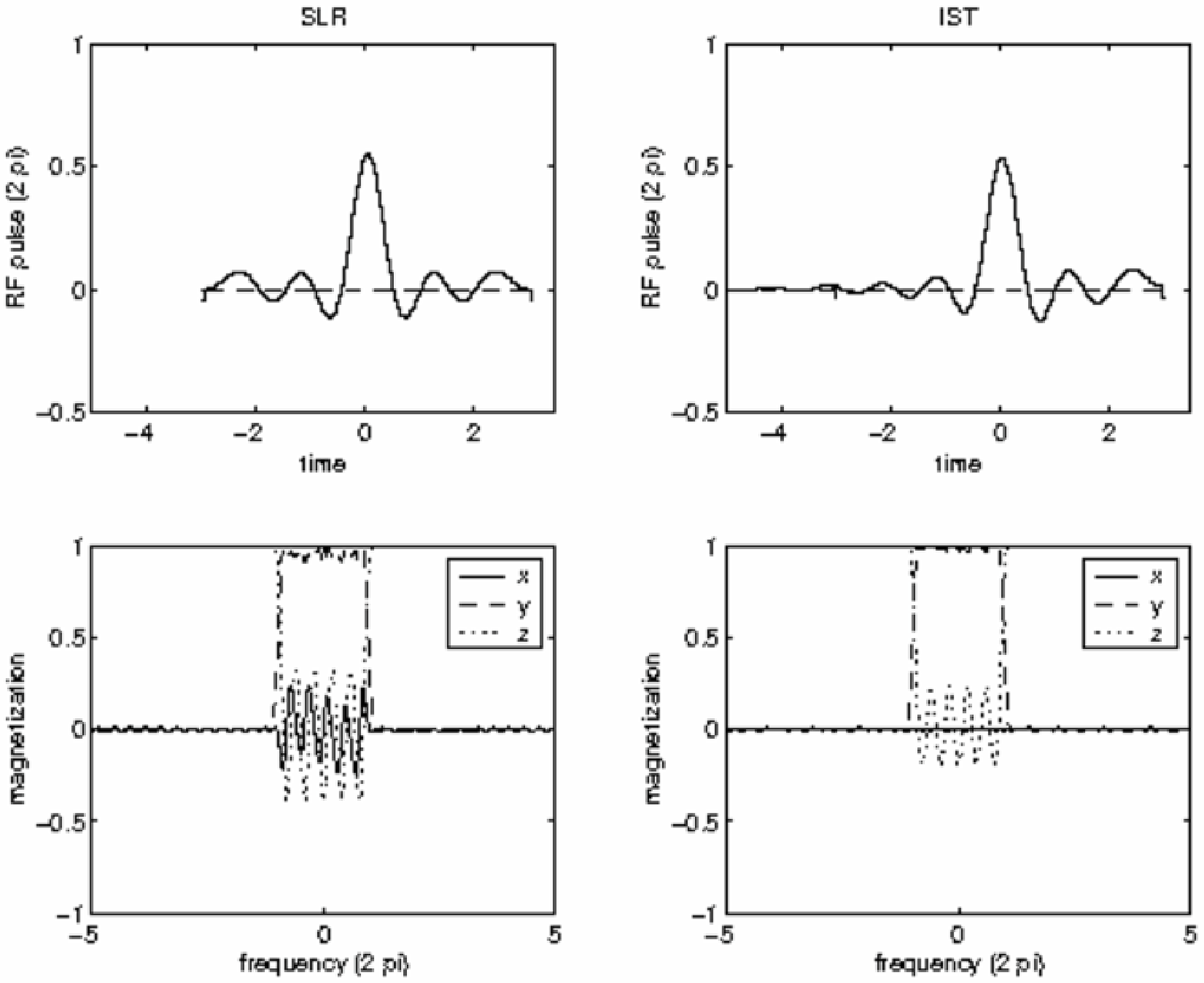}\end{center}
\end{figure}

\begin{figure}

\caption{\label{cap:90-2-10-20}Comparison of SLR and IST equiripple pulses
$\rho=2$, $\delta_{2,\textrm{trans}}=0.1$, $\tau=0.2\times2\pi$. }

\begin{center}\includegraphics[%
  scale=0.7]{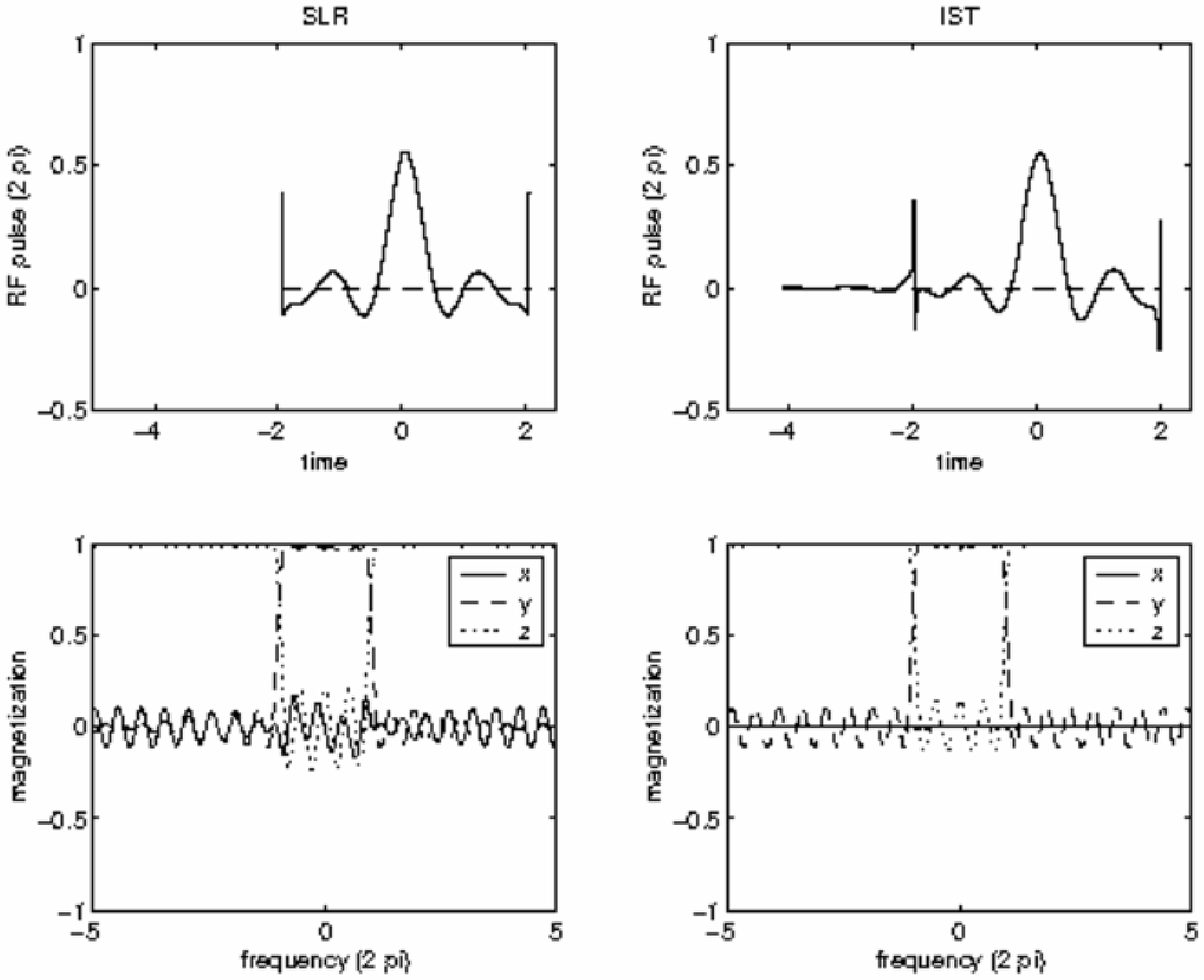}\end{center}
\end{figure}

\begin{figure}

\caption{\label{cap:90-1-05-50}Comparison of SLR and IST equiripple pulses:
$\rho=1$, $\delta_{2,\textrm{trans}}=0.05$, $\tau=0.5\times2\pi$. }

\begin{center}\includegraphics[%
  scale=0.7]{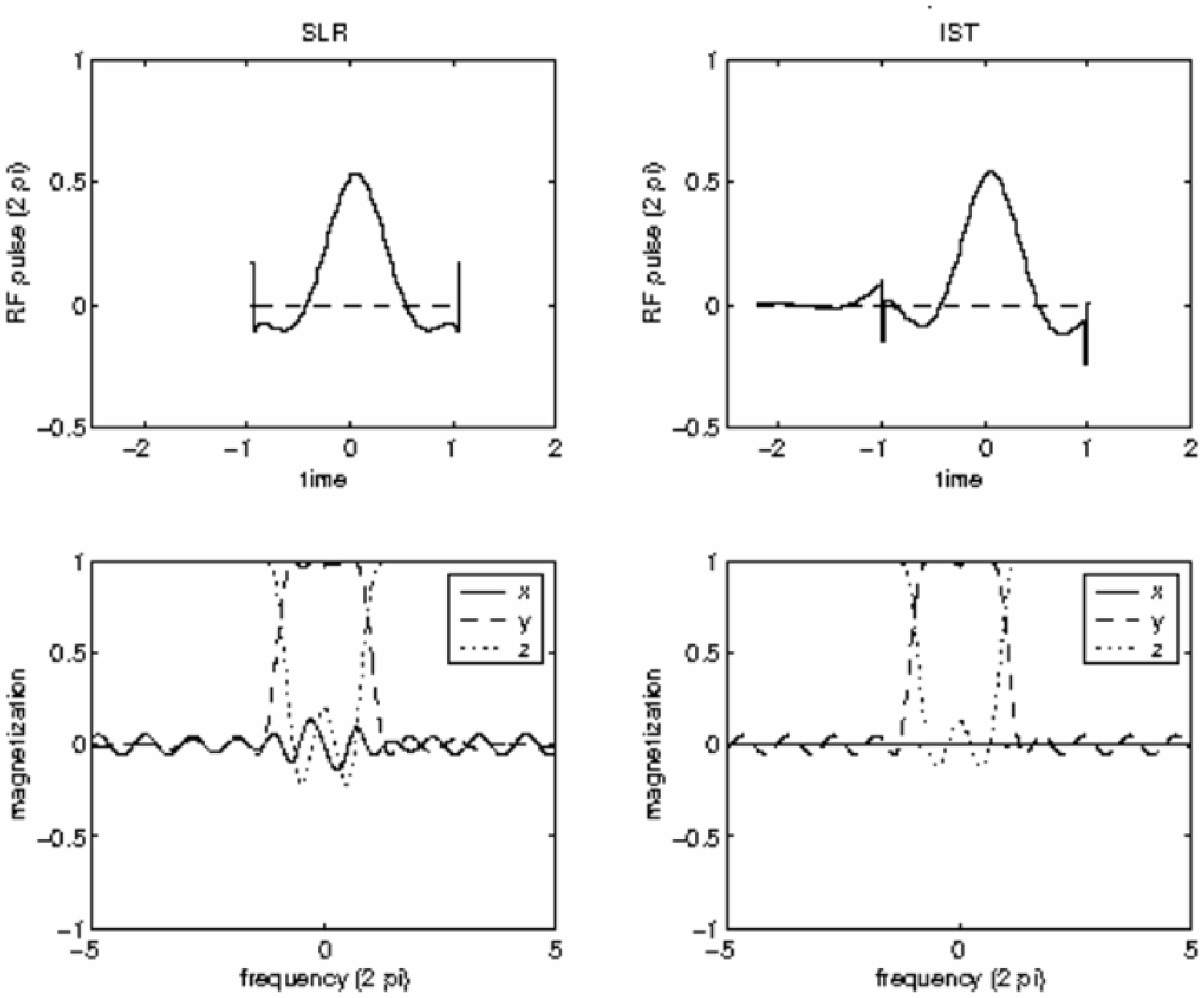}\end{center}
\end{figure}

\section{Self refocused pulse design}

Suppose we want to design a pulse with zero rephasing time ($\rho=0$).
Such a pulse is called a \textit{self refocused} pulse. According
to part (c) of Theorem \ref{thm:main continuum}, we should approximate
the ideal reflection coefficient, $r_{\textrm{ideal}}$, by a function
$r:\RR\rightarrow\CC$ which has a meromorphic extension to the upper
half plane with $\lim_{|z|\rightarrow\infty}r(z)=0$. For example,
$r$ could be a rational function with numerator degree strictly smaller
than denominator degree. Of course, there are many ways to approximate
$r_{\textrm{ideal}}$ in this way. The energy formula (\ref{eq:energy cor})
tells us that the energy of the resulting pulse depends on the locations
of the poles of $r$ in the upper half plane. If energy is a major
concern, then $r$ should be designed to have a small number of poles,
close to the real axis. In this way, there is a delicate trade-off
between the energy of the pulse and the accuracy of the approximation.
Another concern is the stability of the pulse under imperfect magnetic
field conditions. For example, in many applications it is desirable
for the pulse to maintain its selectivity when it is scaled by, say,
90\% or 110\%. 

In this section we describe one method for designing relatively low
energy, self refocused $90^{\circ}$ pulses. We need to approximate
$r_{\textrm{ideal}}=\chi_{[-2,2]}$ by a function $r$ which has a
meromorphic extension to the upper half plane. Let us write \[
r=\frac{e^{R}}{1+e^{R}},\]
 where $R:\RR\rightarrow\CC$ has an analytic extension to the upper
half plane. The idea is to choose $R$, so that $|e^{R}|$ is very
large in-slice and very small out of slice. Then $r$ will be close
to $r_{\textrm{ideal}}$. The magnitude of $e^{R}$ is determined
by the real part of $R$, so we should design $\re R$ to be a smooth
function of the form \[
\re R(\xi)=\begin{cases}
k_{1} & \textrm{if }|\xi|<2\\
-k_{2} & \textrm{if }|\xi|>2+2\tau,\end{cases}\]
 where $k_{1}$, $k_{2}$, and $\tau$ are positive numbers. The imaginary
part of $R$ is then chosen so that $R$ has an analytic extension
to the upper half plane. The parameters $k_{1}$ and $k_{2}$ control
the in-slice and out-of-slice errors, respectively, and $\tau$ is
the transition width. One can experiment with different values of
these parameters to obtain a variety of pulses, with different energies.
We plot one of these pulses in figure \ref{cap:sr90}.

\begin{figure}

\caption{\label{cap:sr90}A self refocused pulse and its resulting magnetization.}

\includegraphics[%
  scale=0.7]{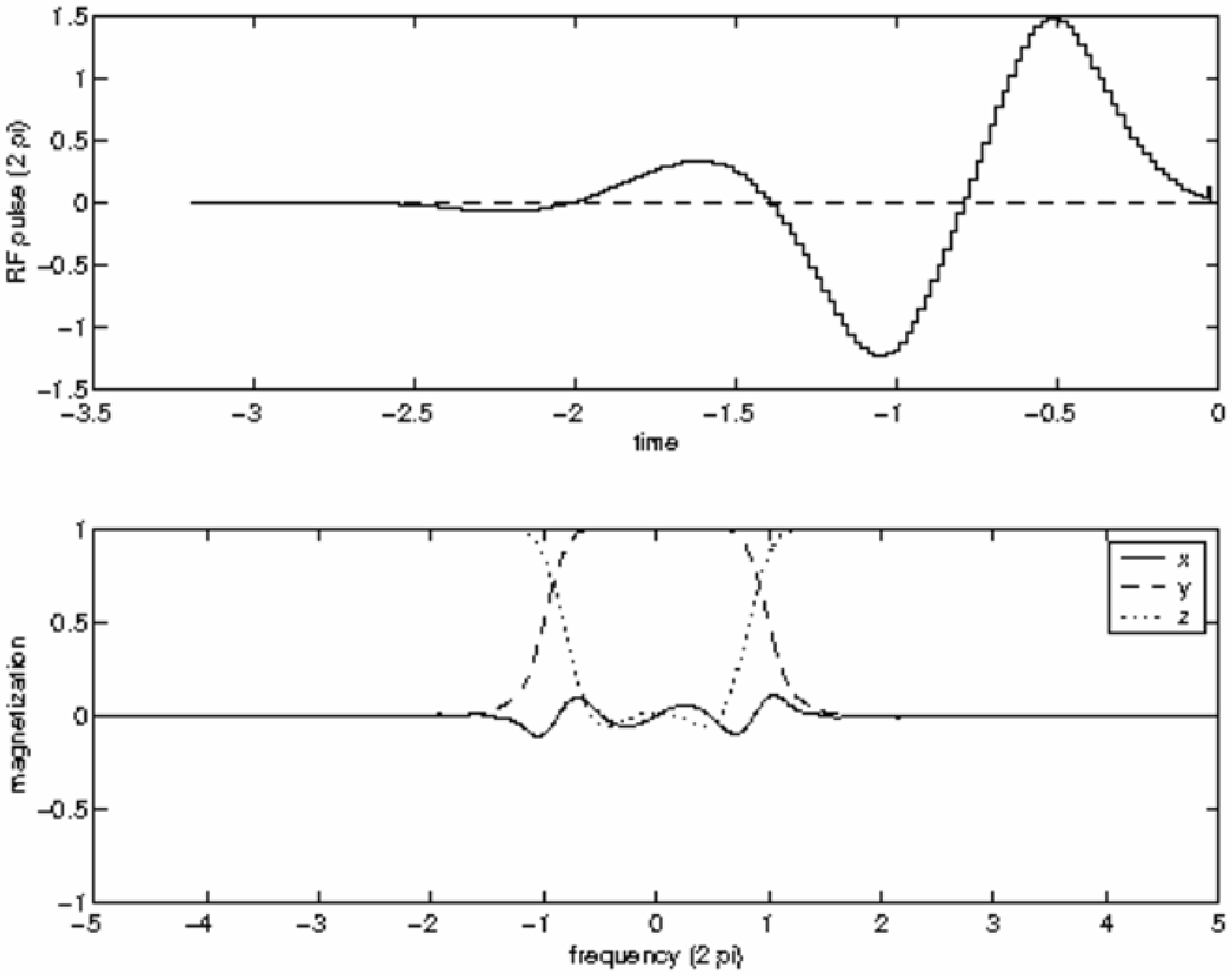}
\end{figure}

Remark: Notice that the $r$ designed in the previous paragraph does
not satisfy $\lim_{|\xi|\rightarrow\infty}r(\xi)=0$. This can be
easily remedied by subtracting an appropriate, very small constant.

\section{Half pulse design}

In some applications it is only necessary to approximate the $x$-component
of the ideal magnetization profile using a self refocused pulse. For
example, see \cite{Half}. Let $\Mb_{\textrm{ideal},x}:\RR\rightarrow\RR$
be an ideal $x$-magnetization profile. We want to find a function
$r$, which has a meromorphic extension to the upper half plane with
$\lim_{|z|\rightarrow\infty}r(z)=0$ such that \[
\frac{2\re r}{1+|r|^{2}}\approx\Mb_{x}.\]
 The trick is to set \[
r=\frac{1-R}{1+R},\]
 where \[
R=\frac{1-r}{1+r}.\]
Then we have \begin{eqnarray*}
\frac{2\re r}{1+|r|^{2}} & = & \frac{r+r^{*}}{1+r^{*}r}\\
 & = & \frac{\frac{1-R}{1+R}+\frac{1-R^{*}}{1+R^{*}}}{1+\frac{1-R^{*}}{1+R^{*}}\frac{1-R}{1+R}}\\
 & = & \frac{(1-R)(1+R^{*})+(1+R)(1-R^{*})}{(1+R)(1+R^{*})+(1-R)(1-R^{*})}\\
 & = & \frac{1-|R|^{2}}{1+|R|^{2}}.\end{eqnarray*}
Therefore, we should design $R$ so that \[
|R|=\sqrt{\frac{1-\Mb_{x}}{1+\Mb_{x}}},\]
 where $\Mb_{x}$ is a perhaps smoothed out version of $\Mb_{\textrm{ideal},x}$.
The above calculations lead to the following

\begin{prop}
Let $\Mb_{x}:\RR\rightarrow\RR$ be an $x$-magnetization profile
with sufficient smoothness and decay, and assume that $|\Mb_{x}(z)|<1$
for all $z\in\RR$. Then there exist infinitely many self refocused
pulses $\omega:\RR\rightarrow\CC$ such that $\TT\omega=\Mb$. These
pulses are parameterized by the poles of $\frac{1-r}{1+r}$ in the
upper half plane, where $r$ is the reflection coefficient. 
\end{prop}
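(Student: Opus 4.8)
The plan is to reduce the assertion to Theorem \ref{thm:main continuum}(b) with rephasing time $\rho=0$ by means of the algebraic identity established just above the statement, the only substantive step being the construction of an admissible function $R$ with a prescribed boundary modulus. With $R=\frac{1-r}{1+r}$ and $r=\frac{1-R}{1+R}$, the computation preceding the statement shows that the reflection coefficient $r$ produces, via (\ref{eq:Mr}), a profile whose $x$-component equals $\frac{1-|R|^{2}}{1+|R|^{2}}$ evaluated at $z/2$. Hence, to obtain a self refocused pulse whose profile has $x$-component $\Mb_{x}$, it suffices to exhibit $R$, meromorphic in the upper half plane with $R\to1$ at infinity, such that
\[
|R(\xi)|^{2}=\frac{1-\Mb_{x}(2\xi)}{1+\Mb_{x}(2\xi)}\qquad\textrm{on }\RR,
\]
and then to verify that $r=\frac{1-R}{1+R}$ meets the hypotheses of Theorem \ref{thm:main continuum}(b).

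The canonical such $R$ comes from outer factorization, exactly as in Proposition \ref{pro:refl CC}. Set $g(\xi)=\log\frac{1-\Mb_{x}(2\xi)}{1+\Mb_{x}(2\xi)}$. The hypotheses ($|\Mb_{x}|<1$ pointwise, together with continuity and decay) make $g$ real-valued, smooth, and decaying, so $g$ lies in the class on which $\Pi_{+}$ acts and $g\to0$ at infinity. Put $R_{0}=\exp(\Pi_{+}g)$. Since $g$ is real, $\Pi_{-}g=\overline{\Pi_{+}g}$, so $2\re\Pi_{+}g=g$ on $\RR$, whence $|R_{0}|^{2}=e^{g}=\frac{1-\Mb_{x}(2\xi)}{1+\Mb_{x}(2\xi)}$; moreover $R_{0}$ is analytic and zero-free in the upper half plane and tends to $1$ at infinity. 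The general admissible $R$ is then obtained by attaching poles: for any finite tuple $\xi_{1},\dots,\xi_{m}\in\HH$ and positive integers $d_{1},\dots,d_{m}$,
\[
R=R_{0}\cdot\prod_{k=1}^{m}\left(\frac{\xi-\xi_{k}^{*}}{\xi-\xi_{k}}\right)^{d_{k}}
\]
is meromorphic in the upper half plane with poles precisely at the $\xi_{k}$ of order $d_{k}$, still tends to $1$ at infinity, and, each reciprocal Blaschke factor being unimodular on $\RR$, still satisfies the modulus identity; correspondingly $r=\frac{1-R}{1+R}$ is meromorphic in the upper half plane with $\lim_{|\xi|\rightarrow\infty}r(\xi)=0$.

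Granting that $r$ lies in the required class, Theorem \ref{thm:main continuum}(b) applied with $\rho=0$ and $r_{0}=r$ produces a self refocused pulse $\omega$ whose profile is (\ref{eq:Mr}) applied to $r$, and by the reduction this profile has $x$-component $\Mb_{x}$. Distinct choices of the pole data $(\xi_{k},d_{k})$ give distinct $R$, hence distinct $r$ (the M\"obius map is injective), hence distinct profiles and, by the uniqueness half of the inverse scattering correspondence, distinct pulses; this yields the asserted infinitude and exhibits the family as parameterized by the poles of $R=\frac{1-r}{1+r}$ in the upper half plane.

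I expect the real obstacle to be the parenthetical step: checking that the constructed $r=\frac{1-R}{1+R}$ genuinely satisfies the hypotheses of Theorem \ref{thm:main continuum}(b) and of Theorem \ref{thm:main continuum} itself, namely that $r_{0}=r$ has only finitely many poles in the upper half plane (equivalently that $1+R$ has only finitely many zeros there, and none on $\RR$) and that $r,\xi r\in H^{1}(\RR)$. The zeros of $1+R$ are confined to a bounded region since $R\to1$ at infinity, but a priori they could accumulate on $\RR$; ruling this out is precisely where the ``sufficient smoothness and decay'' hypothesis on $\Mb_{x}$ must be spent, for instance by arranging that $\Pi_{+}g$ extends continuously past $\RR$, so that $1+R$ is continuous and, away from the exceptional set where $\Mb_{x}$ vanishes, nonvanishing up to the real axis. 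Everything else is the routine bookkeeping of the M\"obius change of variables.
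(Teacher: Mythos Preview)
Your approach is essentially the paper's own: the proposition is stated immediately after the computation showing $\frac{2\re r}{1+|r|^{2}}=\frac{1-|R|^{2}}{1+|R|^{2}}$, and the paper simply says ``design $R$ so that $|R|=\sqrt{\frac{1-\Mb_{x}}{1+\Mb_{x}}}$,'' leaving the construction implicit. You have filled in exactly what the paper omits, namely the outer factorization $R_{0}=\exp(\Pi_{+}g)$ followed by attachment of reciprocal Blaschke factors, which is the natural way (and the way suggested by Proposition~\ref{pro:refl CC}) to realize a prescribed boundary modulus with prescribed poles. Your identification of the one genuine technical point---that $1+R$ must have only finitely many zeros in $\HH$ and none on $\RR$ for Theorem~\ref{thm:main continuum}(b) to apply---is also correct; the paper buries this in the phrase ``sufficient smoothness and decay'' and only addresses it in the remark following the proposition, where it observes that if $\Mb_{x}\geq0$ then $|R|\leq1$ and the maximum modulus principle forces $R\neq-1$ throughout $\HH$.
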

\begin{rem}
The special case of no poles is often a minimum energy pulse. For
example, this is the case whenever $\Mb_{x}$ is non-negative. Indeed
then we have $|R|\leq1$, which implies, by the maximum modulus principle,
that $R$ is never equal to $-1$ in the upper half plane, which is
equivalent to saying that $r=\frac{1-R}{1+R}$ has no poles in the
upper half plane.

Figure \ref{cap:half} shows a pulse designed using this method.

\begin{figure}

\caption{\label{cap:half}A half pulse and its resulting magnetization}

\begin{center}\includegraphics[%
  scale=0.7]{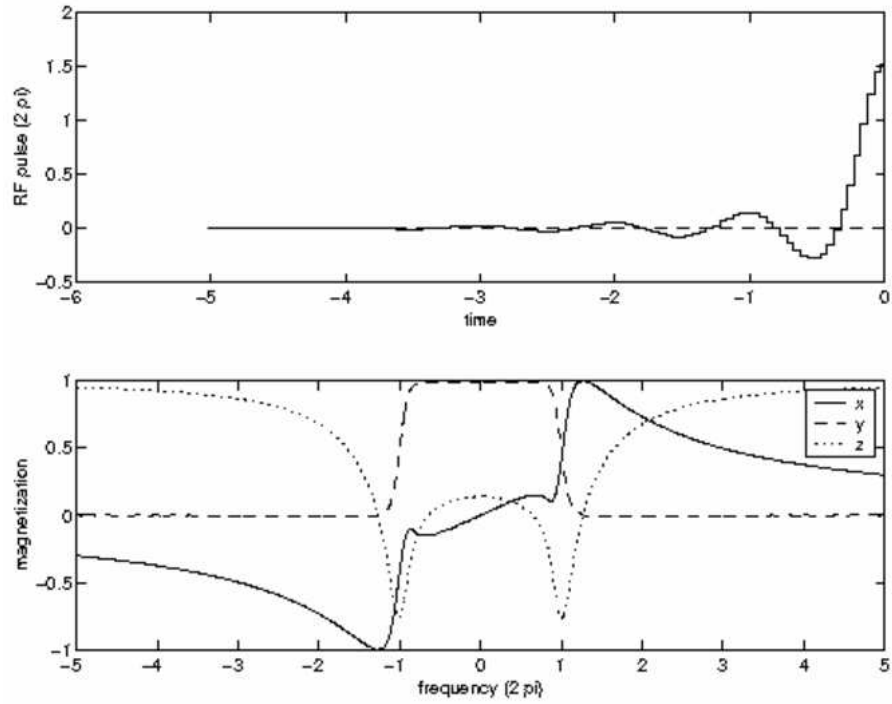}\end{center}
\end{figure}

\end{rem}

\chapter{Conclusion}

We have seen that the hard pulse approximation (the approximation
of an RF-pulse by a sum of $\delta$-functions) leads to a discrete
scattering theory which is completely analogous to the standard continuum
scattering theory for the ZS-system. We introduced the DIST algorithm,
a recursive algorithm for solving the full discrete inverse scattering
problem relating to NMR pulse design, and we explained how this algorithm
could be used to efficiently approximate the continuum inverse scattering
transform. In the past, numerical techniques have been used to approximate
the solutions to the Marchenko equations. In this thesis we have provided
a more exact method of pulse design, which involves replacing the
standard scattering data by, so called, discrete scattering data.

The case of pulses with finite rephasing time is particularly simple,
and useful in practice. We described some new applications to NMR
pulse design. Specifically we explained how to maintain control on
the phase of the magnetization profile while producing equiripple
pulses, self-refocused pulses, and half pulses.

\appendix

\chapter{Notation and background material}

\subsection{Functions on the real line and the unit circle}

In this thesis, we work with complex functions on the real line, $\RR$,
and the unit circle, $S^{1}$. If $f:\RR\rightarrow\CC$ is a function
on the real line, then we say that $f$ has an analytic (meromorphic)
extension to the upper half plane, $\HH$, if there exists an analytic
(meromorphic) function $\tilde{f}$ on $\HH$ such that \[
\lim_{y\rightarrow0^{+}}\tilde{f}(x+iy)=f(x)\]
 for almost every $x\in\RR$. Such an extension is necessarily unique.
In this case we abuse notation and let $f$ also denote this extension
$\tilde{f}$. Similarly, if $f:S^{1}\rightarrow\CC$ is a function
on the unit circle, then we say that $f$ has an analytic (meromorphic)
extension to the unit disk, $\DD$, if there exists an analytic (meromorphic)
function $\tilde{f}$ on $\DD$ such that the radial limits exist
almost everywhere, and coincide almost everywhere with $f$. Again,
we let $f$ denote both the function on $S^{1}$ and its extension
to the unit disk.

When they are not being used to represent complex numbers, the symbols
$\xi$ and $w$ will denote the identity functions on $\RR$ and $S^{1}$,
respectively. For example, if $r:\RR\rightarrow\CC$ is a function
on the real line, then so is $\xi r$. Furthermore, if $r$ has a
meromorphic extension to the upper half plane, then so does $\xi r$.

Let $\alpha^{*}$ denote the complex conjugate of the complex number
$\alpha$. If $f$ is a function on the real line, we let $f^{*}$
denote the complex conjugate of $f$. If $f$ has an analytic (meromorphic)
extension to the upper half plane, then we consider $f^{*}$ also
as an analytic (meromorphic) function on the lower half plane given
by \[
f^{*}(\xi)=f(\xi^{*})^{*}.\]
 We follow similar conventions if $f$ is a function on $S^{1}$.
Specifically, if $f$ has an analytic (meromorphic) extension to $\DD$,
then $f^{*}$ is considered as an analytic (meromorphic) function
on $\CCh\setminus\DD$ given by \[
f^{*}(w)=f(\frac{1}{w^{*}})^{*}.\]
Here, $\CCh$ is the Riemann sphere.

\subsection{The Fourier transform}

For $k>0$, we let $L^{k}(\RR)$ denote the space of equivalence classes
of measurable functions $f:\RR\rightarrow\CC$ for which the quantity
\[
\left\Vert f\right\Vert _{L^{k}}:=(\int_{-\infty}^{\infty}|f(z)|^{k}dz)^{\frac{1}{k}}\]
 is finite. The functions $f$ and $g$ in $L^{k}(\RR)$ are considered
to be equivalent if $\left\Vert f-g\right\Vert _{L^{k}}=0$. The space
$L^{2}(\RR)$ is a Hilbert space with respect to the inner product
\[
\left\langle f,g\right\rangle :=\int_{-\infty}^{\infty}f(z)g^{*}(z)dz.\]
We let $\FF:L^{2}(\RR)\rightarrow L^{2}(\RR)$ denote the \textit{Fourier
transform}, and we write $\FF(f)=\hat{f}$. For integrable $f$, the
Fourier transform takes the form\[
\hat{f}(t)=\FF(f)(t):=\int_{-\infty}^{\infty}f(z)e^{-itz}dz.\]
The Fourier transform is a unitary map, meaning it is a linear isomorphism
which preserves the inner product, up to a factor of $2\pi$. We let
$\FF^{-1}$ denote the inverse of $\FF$. For integrable $f$ we have
\[
\check{f}(z)=\FF^{-1}(f)(z)=\frac{1}{2\pi}\int_{-\infty}^{\infty}f(t)e^{itz}dt.\]

Similarly, we let $L^{k}(S^{1})$ denote the space of measurable functions
$f:S^{1}\rightarrow\CC$ for which the quantity \[
\left\Vert f\right\Vert _{L^{k}}:=(\frac{1}{2\pi}\int_{0}^{2\pi}|f(e^{i\theta})|^{k}d\theta)^{\frac{1}{k}}\]
 is finite. The functions $f$ and $g$ in $L^{k}(S^{1})$ are considered
to be equivalent if $\left\Vert f-g\right\Vert _{L^{k}}=0$. The space
$L^{2}(S^{1})$ is a Hilbert space with respect to the inner product
\[
\left\langle f,g\right\rangle :=\frac{1}{2\pi}\int_{0}^{2\pi}f(e^{i\theta})g^{*}(e^{i\theta})dz.\]
We let $\FF:L^{2}(S^{1})\rightarrow\hat{L}^{2}(S^{1})$ denote the
\textit{discrete Fourier transform}, and we write $\FF(f)=\hat{f}$.
Here $\hat{L}^{2}(S^{1})$ is the Hilbert space of sequences $\hat{f}:\ZZ\rightarrow\CC$
for which \[
\left\Vert \hat{f}\right\Vert _{\hat{L}^{2}}:=\sum_{j=-\infty}^{\infty}|\hat{f}(j)|^{2}\]
 is finite. The inner product on $\hat{L}^{2}(S^{1})$ is given by
\[
\left\langle \hat{f},\hat{g}\right\rangle :=\sum_{j=-\infty}^{\infty}\hat{f}(j)\hat{g}(j)^{*}.\]
Explicitly, we have\[
\hat{f}(j)=\FF(f)(j):=\frac{1}{2\pi}\int_{0}^{2\pi}f(e^{i\theta})e^{-ij\theta}d\theta.\]
 The discrete Fourier transform is a unitary map, meaning it is a
linear isomorphism which preserves the inner product, up to a factor
of $2\pi$. Again, we let $\FF^{-1}$ denote its inverse. For absolutely
summable sequences $S=\left\{ a_{j}\right\} _{j\in\ZZ}$ we have \[
\FF^{-1}(S)(e^{i\theta})=\sum_{j=-\infty}^{\infty}a_{j}e^{ij\theta}.\]

\subsection{The Sobolev Space $H^{k}$\label{sub:The-Sobolev-Spaces}}

We say that $f:\RR\rightarrow\CC$ has a \textit{weak derivative}
$f':\RR\rightarrow\CC$ if for all smooth functions $\phi:\RR\rightarrow\CC$
with compact support we have \[
\int_{-\infty}^{\infty}f(x)\phi'(x)dx+\int_{-\infty}^{\infty}f'(x)\phi(x)dx=0.\]
The weak derivative of a function on $S^{1}$ is defined similarly.

Let $\Lambda=\RR\textrm{ or }S^{1}$, and let $k$ be a non-negative
integer. The \textit{Sobolev space} $H^{k}(\Lambda)\subset L^{2}(\Lambda)$
consists of those functions which have $k$ weak derivatives in $L^{2}$.
The spaces $H^{k}(\RR)$ and $H^{k}(S^{1})$ are Hilbert spaces with
respect to the inner products\[
\left\langle f,g\right\rangle :=\sum_{j=0}^{k}\int_{-\infty}^{\infty}\partial^{j}f(x)\partial^{j}g^{*}(x)dx\]
 and \[
\left\langle f,g\right\rangle :=\sum_{j=0}^{k}\frac{1}{2\pi}\int_{0}^{2\pi}\partial^{j}f(e^{i\theta})\partial^{j}g^{*}(e^{i\theta})d\theta,\]
 respectively. Notice that $H^{0}(\Omega)=L^{2}(\Omega).$ 

These spaces are particularly simple in the Fourier domain. Let us
define \[
\hat{H}^{k}(\RR):=\left\{ u\in L^{2}(\RR):\;\int_{-\infty}^{\infty}(1+|x|)^{2k}|u(x)|^{2}dx<\infty\right\} \]
 with the inner product \[
\left\langle u,v\right\rangle :=\frac{1}{2\pi}\int_{-\infty}^{\infty}(1+x^{2}+\dots+x^{2k})u(x)v^{*}(x)dx\]
and \[
\hat{H}^{k}(S^{1}):=\left\{ u:\ZZ\rightarrow\CC:\;\sum_{n=-\infty}^{\infty}(1+|n|)^{2k}|u(n)|^{2}<\infty\right\} \]
with the inner product \[
\left\langle u,v\right\rangle :=\sum_{n=-\infty}^{\infty}(1+n^{2}+\dots+n^{2k})u(n)v(n)^{*}.\]

\begin{fact}
\label{fac:Sob 3}Let $\Lambda=\RR\textrm{ or }S^{1}$, and let $f$
be in $H^{1}(\Lambda)$. Then $\left\Vert f\right\Vert _{L^{\infty}}\leq\left\Vert f\right\Vert _{H^{1}}.$
\end{fact}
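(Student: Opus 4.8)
The statement is the one--dimensional Sobolev embedding $H^{1}(\Lambda)\subset L^{\infty}(\Lambda)$, and the plan is to prove it first for smooth functions, where the fundamental theorem of calculus does the work in essentially one line, and then to pass to the general case by density.

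Consider $\Lambda=\RR$ first. For $f\in C^{\infty}_{c}(\RR)$ and any $x\in\RR$, since $f$ vanishes near $-\infty$,
\[
|f(x)|^{2}=\int_{-\infty}^{x}\frac{d}{dt}\,|f(t)|^{2}\,dt=\int_{-\infty}^{x}2\,\re\bigl(\overline{f(t)}\,f'(t)\bigr)\,dt\le 2\int_{-\infty}^{\infty}|f(t)|\,|f'(t)|\,dt,
\]
and by Cauchy--Schwarz followed by $2ab\le a^{2}+b^{2}$ the last expression is at most $\|f\|_{L^{2}}^{2}+\|f'\|_{L^{2}}^{2}=\|f\|_{H^{1}}^{2}$. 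So $\|f\|_{L^{\infty}}\le\|f\|_{H^{1}}$ for smooth compactly supported $f$. For general $f\in H^{1}(\RR)$ I would pick $f_{n}\in C^{\infty}_{c}(\RR)$ with $f_{n}\to f$ in $H^{1}$; the inequality just proved shows $(f_{n})$ is Cauchy in $L^{\infty}$, hence converges uniformly to a continuous function $g$, and since $f_{n}\to f$ in $L^{2}$ as well, $g=f$ almost everywhere. Letting $n\to\infty$ in $\|f_{n}\|_{L^{\infty}}\le\|f_{n}\|_{H^{1}}$ gives $\|f\|_{L^{\infty}}=\|g\|_{L^{\infty}}\le\|f\|_{H^{1}}$.

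For $\Lambda=S^{1}$ the cleanest route is through Fourier coefficients. Writing $f(e^{i\theta})=\sum_{n\in\ZZ}\hat f(n)e^{in\theta}$, the definitions give $\|f\|_{H^{1}}^{2}=\sum_{n}(1+n^{2})|\hat f(n)|^{2}$, so by Cauchy--Schwarz
\[
\sum_{n\in\ZZ}|\hat f(n)|\le\Bigl(\sum_{n\in\ZZ}\frac{1}{1+n^{2}}\Bigr)^{1/2}\|f\|_{H^{1}}.
\]
Hence the Fourier series converges absolutely and uniformly, $f$ has a continuous representative, and $\|f\|_{L^{\infty}}\le\sum_{n}|\hat f(n)|$, which together with the previous display yields the continuous embedding. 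Alternatively one can mimic the argument on $\RR$, integrating the derivative of $|f|^{2}$ starting from a point $\theta_{0}$ at which $|f(e^{i\theta_{0}})|^{2}$ does not exceed its average value $\|f\|_{L^{2}}^{2}$.

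I do not expect any genuine obstacle. The only point needing a little care is the density/limit step on $\RR$ --- namely that an $H^{1}$ function has a unique continuous representative and that the pointwise bound for smooth functions survives the limit --- and, on $S^{1}$, keeping track of the $2\pi$ normalizations built into the definitions of $\|\cdot\|_{L^{2}}$ and $\|\cdot\|_{H^{1}}$.
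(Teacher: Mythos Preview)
For $\Lambda=\RR$ your argument is exactly the paper's: write $|f(x)|^{2}=2\int_{-\infty}^{x}\re(\bar f f')$, apply Cauchy--Schwarz, then $2ab\le a^{2}+b^{2}$. The paper does this in one line directly for $f\in H^{1}$ without isolating a density step; your version, proving it first for $C^{\infty}_{c}$ and then passing to the limit, is the same idea handled more carefully.

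For $\Lambda=S^{1}$ there is a genuine issue, and your closing caveat about the $2\pi$ normalizations is exactly on point. The paper's proof as written only treats $\RR$ (the integral starts at $-\infty$), and in fact the constant-$1$ inequality is \emph{false} on $S^{1}$ with the paper's normalized norms. Take $f(\theta)=1+\varepsilon\cos\theta$: then $\|f\|_{L^{\infty}}=1+\varepsilon$, while $\|f\|_{L^{2}}^{2}=1+\varepsilon^{2}/2$ and $\|f'\|_{L^{2}}^{2}=\varepsilon^{2}/2$, so $\|f\|_{H^{1}}=\sqrt{1+\varepsilon^{2}}<1+\varepsilon$ for all $\varepsilon>0$. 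Your Fourier argument correctly gives $\|f\|_{L^{\infty}}\le C\|f\|_{H^{1}}$ with $C=\bigl(\sum_{n}(1+n^{2})^{-1}\bigr)^{1/2}=\sqrt{\pi\coth\pi}\approx 1.78$, and your alternative (integrating from a point where $|f|^{2}$ is below average) also only yields a constant larger than $1$ once the $1/2\pi$ is unwound. That weaker embedding is all that is actually true on $S^{1}$, and it is all that the subsequent Fact on products needs (the constant $2$ there is not sharp anyway), so your Fourier argument is the right thing to write down; just do not claim constant $1$ in the $S^{1}$ case.
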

\begin{proof}
By the Cauchy-Schwartz inequality we have \begin{eqnarray*}
|f(x)|^{2} & = & \left|2\int_{-\infty}^{x}f'(y)f(y)dy\right|\\
 & \leq & 2\int_{-\infty}^{x}|f'(y)||f(y)|dy\\
 & \leq & 2\left\langle f',f\right\rangle \\
 & \leq & 2\left\Vert f\right\Vert _{L^{2}}\left\Vert f'\right\Vert _{L^{2}}\\
 & \leq & \left\Vert f\right\Vert _{L^{2}}^{2}+\left\Vert f'\right\Vert _{L^{2}}^{2}\\
 & = & \left\Vert f\right\Vert _{H^{1}}^{2}.\end{eqnarray*}

\end{proof}
\begin{fact}
\label{fac:Sob 4}Let $\Lambda=\RR\textrm{ or }S^{1}$. If $f$ and
$g$ are in $H^{1}(\Lambda)$, then the product $fg$ is also in $H^{1}(\Lambda)$,
and \[
\left\Vert fg\right\Vert _{H^{1}}\leq2\left\Vert f\right\Vert _{H^{1}}\left\Vert g\right\Vert _{H^{1}}.\]

\end{fact}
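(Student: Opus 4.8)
The plan is to show that the weak derivative of $fg$ is given by the Leibniz rule $(fg)' = f'g + fg'$, and then to bound each term using Fact \ref{fac:Sob 3}.

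First I would establish the product rule for weak derivatives. The quickest route is density: smooth functions with bounded derivatives (Schwartz functions on $\RR$, trigonometric polynomials on $S^1$) are dense in $H^1(\Lambda)$, so pick $f_n \to f$ and $g_n \to g$ in $H^1(\Lambda)$. For the smooth approximants the classical identity $(f_n g_n)' = f_n' g_n + f_n g_n'$ holds, hence in particular it holds in the weak sense. By Fact \ref{fac:Sob 3} we have $f_n \to f$ and $g_n \to g$ in $L^\infty(\Lambda)$, while $f_n' \to f'$ and $g_n' \to g'$ in $L^2(\Lambda)$; therefore $f_n g_n \to fg$ in $L^2$ and $f_n' g_n + f_n g_n' \to f'g + fg'$ in $L^2$. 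Passing to the limit in the integral identity defining the weak derivative yields $(fg)' = f'g + fg'$, so in particular $fg \in H^1(\Lambda)$. (Alternatively, one may use that every $H^1(\Lambda)$ function has a representative that is absolutely continuous on bounded intervals and, on $\RR$, tends to $0$ at infinity, and then integrate by parts directly.)

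With the Leibniz formula in hand, the estimate is routine. Applying Fact \ref{fac:Sob 3} and the triangle inequality,
\begin{align*}
\|fg\|_{L^2} &\le \|f\|_{L^\infty}\|g\|_{L^2} \le \|f\|_{H^1}\|g\|_{L^2},\\
\|(fg)'\|_{L^2} &\le \|f'\|_{L^2}\|g\|_{L^\infty} + \|f\|_{L^\infty}\|g'\|_{L^2} \le \|f'\|_{L^2}\|g\|_{H^1} + \|f\|_{H^1}\|g'\|_{L^2}.
\end{align*}
Using $(a+b)^2 \le 2a^2+2b^2$ together with $\|g\|_{L^2}^2 + 2\|g'\|_{L^2}^2 \le 2\|g\|_{H^1}^2$ and $\|f'\|_{L^2}^2 \le \|f\|_{H^1}^2$, we get
\[
\|fg\|_{H^1}^2 = \|fg\|_{L^2}^2 + \|(fg)'\|_{L^2}^2 \le \|f\|_{H^1}^2\|g\|_{L^2}^2 + 2\|f'\|_{L^2}^2\|g\|_{H^1}^2 + 2\|f\|_{H^1}^2\|g'\|_{L^2}^2 \le 4\|f\|_{H^1}^2\|g\|_{H^1}^2,
\]
and taking square roots gives $\|fg\|_{H^1} \le 2\|f\|_{H^1}\|g\|_{H^1}$.

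The only mild obstacle is the justification of the Leibniz rule for weak derivatives; once that is granted, the norm bound is just Fact \ref{fac:Sob 3} plus the triangle inequality and an elementary inequality. Both the argument for $\Lambda = \RR$ and the one for $\Lambda = S^1$ go through verbatim.
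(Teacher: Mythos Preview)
Your proof is correct and follows essentially the same route as the paper: use the Leibniz rule $(fg)'=f'g+fg'$, bound each term via Fact~\ref{fac:Sob 3} and H\"older, and collect with $(a+b)^2\le 2a^2+2b^2$ to reach $\|fg\|_{H^1}^2\le 4\|f\|_{H^1}^2\|g\|_{H^1}^2$. The only difference is that you supply a density argument to justify the weak product rule, which the paper simply assumes; otherwise the chain of inequalities is the same up to minor regrouping.
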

\begin{proof}
We have \begin{eqnarray*}
\left\Vert fg\right\Vert _{H^{1}}^{2} & = & \left\Vert fg\right\Vert _{L^{2}}^{2}+\left\Vert (fg)'\right\Vert _{L^{2}}^{2}=\left\Vert fg\right\Vert _{L^{2}}^{2}+\left\Vert f'g+fg'\right\Vert _{L^{2}}^{2}\\
 & \leq & \left\Vert f\right\Vert _{L^{\infty}}^{2}\left\Vert g\right\Vert _{L^{2}}^{2}+2\left\Vert f'g\right\Vert _{L^{2}}^{2}+2\left\Vert fg'\right\Vert _{L^{2}}^{2}\\
 & \leq & \left\Vert f\right\Vert _{L^{\infty}}^{2}\left\Vert g\right\Vert _{L^{2}}^{2}+2\left\Vert f'\right\Vert _{L^{2}}^{2}\left\Vert g\right\Vert _{L^{\infty}}^{2}+2\left\Vert f\right\Vert _{L^{\infty}}^{2}\left\Vert g'\right\Vert _{L^{2}}^{2}\\
 & \leq & 2\left\Vert f\right\Vert _{L^{\infty}}^{2}(\left\Vert g\right\Vert _{L^{2}}^{2}+\left\Vert g'\right\Vert _{L^{2}}^{2})+2\left\Vert f'\right\Vert _{L^{2}}^{2}\left\Vert g\right\Vert _{L^{\infty}}^{2}\\
 & \leq & 4\left\Vert f\right\Vert _{H^{1}}^{2}\left\Vert g\right\Vert _{H^{1}}^{2}.\end{eqnarray*}
The last inequality uses Fact \ref{fac:Sob 3}. 
\end{proof}
\begin{fact}
\label{fac:Sob 1}Let $\Lambda=\RR\textrm{ or }S^{1}$. The Fourier
transform is a unitary map from $H^{k}(\Lambda)$ onto $\hat{H}^{k}(\Lambda)$.
\end{fact}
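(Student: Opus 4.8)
The plan is to reduce the statement to the single principle that the Fourier transform converts weak differentiation into multiplication by $i$ times the frequency variable. The first step will be to establish the following intertwining lemma: for $f\in L^{2}(\RR)$, $f$ has a weak derivative lying in $L^{2}(\RR)$ if and only if $\xi\hat{f}\in L^{2}(\RR)$, in which case $\widehat{f'}=i\xi\hat{f}$; correspondingly, for $f\in L^{2}(S^{1})$, $f$ has a weak derivative in $L^{2}(S^{1})$ if and only if the sequence $n\mapsto n\hat{f}(n)$ lies in $\hat{L}^{2}(S^{1})$, and then $\widehat{f'}(n)=in\hat{f}(n)$. On $S^{1}$ this is almost immediate: substituting the smooth test function $\phi(\theta)=e^{-in\theta}$ into the weak-derivative identity $\int f\phi'+\int f'\phi=0$ yields $\widehat{f'}(n)=in\hat{f}(n)$ directly, and conversely, if $n\hat{f}(n)\in\hat{L}^{2}(S^{1})$ one lets $g$ be the function in $L^{2}(S^{1})$ with $\hat{g}(n)=in\hat{f}(n)$ and verifies the weak-derivative identity against a general smooth $\phi$ by expanding $\phi$ in its rapidly convergent Fourier series and invoking Parseval. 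On $\RR$ one argues in the same spirit, testing against Schwartz functions (using density of $C_{c}^{\infty}$) and using Plancherel together with the elementary identity $\widehat{\phi'}=i\xi\hat{\phi}$ for Schwartz $\phi$; the only delicate point is that $\xi\hat{f}$ is a priori known only to be locally in $L^{2}$, so one first obtains $\widehat{f'}=i\xi\hat{f}$ on each ball $\{|\xi|<R\}$ and then lets $R\to\infty$.

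Next I would iterate the lemma $k$ times, using $\widehat{\partial^{j}f}=(i\xi)^{j}\hat{f}$ (resp. $(in)^{j}\hat{f}(n)$) at each stage, to conclude that $f\in H^{k}(\Lambda)$ if and only if $\xi^{j}\hat{f}\in L^{2}(\RR)$ for $j=0,\dots,k$ (resp. $n^{j}\hat{f}(n)\in\hat{L}^{2}(S^{1})$ for $j=0,\dots,k$). Since $\sum_{j=0}^{k}x^{2j}$ is bounded above and below by positive constant multiples of $(1+|x|)^{2k}$, this is precisely the condition $\hat{f}\in\hat{H}^{k}(\RR)$ (resp. $\hat{f}\in\hat{H}^{k}(S^{1})$). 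Thus $\FF$ maps $H^{k}(\Lambda)$ into $\hat{H}^{k}(\Lambda)$; and it maps onto $\hat{H}^{k}(\Lambda)$ because $\FF$ is already known to be a bijection at the $L^{2}$ level, so any $u\in\hat{H}^{k}(\Lambda)\subseteq\hat{L}^{2}(\Lambda)$ equals $\FF f$ for some $f\in L^{2}(\Lambda)$, and the equivalence just proved forces $f\in H^{k}(\Lambda)$.

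Finally I would check that $\FF$ preserves the Sobolev inner products, which is a term-by-term application of Plancherel. Writing $\langle f,g\rangle_{H^{k}(\Lambda)}=\sum_{j=0}^{k}\langle\partial^{j}f,\partial^{j}g\rangle_{L^{2}}$ and substituting $\widehat{\partial^{j}f}=(i\xi)^{j}\hat{f}$ (resp. $(in)^{j}\hat{f}(n)$), and using $(i\xi)^{j}\overline{(i\xi)^{j}}=\xi^{2j}$, each summand equals $\frac{1}{2\pi}\int\xi^{2j}\hat{f}\,\hat{g}^{*}$ on $\RR$ (resp. $\sum_{n}n^{2j}\hat{f}(n)\hat{g}(n)^{*}$ on $S^{1}$), and summing over $j$ reproduces exactly the defining inner products of $\hat{H}^{k}(\RR)$ and $\hat{H}^{k}(S^{1})$. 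Hence $\FF$ is an inner-product-preserving linear bijection of $H^{k}(\Lambda)$ onto $\hat{H}^{k}(\Lambda)$.

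The real work, and the only genuinely non-formal part, is the intertwining lemma of the first paragraph — in particular the minor integrability subtlety on $\RR$; everything after it is bookkeeping with Plancherel and with the elementary comparison of the weights $\sum_{j=0}^{k}x^{2j}$ and $(1+|x|)^{2k}$.
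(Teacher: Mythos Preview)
The paper does not prove this fact; it is stated without proof as a standard result from Sobolev-space theory. Your argument is the standard one and is correct: the key step is the intertwining of weak differentiation with multiplication by the frequency variable under $\FF$, after which the bijection and the exact match of inner products follow by iterating and applying Plancherel term by term (the factor of $2\pi$ in the paper's conventions is absorbed precisely by the $\frac{1}{2\pi}$ in the definition of $\langle\cdot,\cdot\rangle_{\hat H^{k}(\RR)}$, and on the $S^{1}$ side the normalizations already agree). The subtlety you flag on $\RR$---that $\xi\hat f$ is only locally in $L^{2}$ a priori---is the only point requiring care, and your localization argument handles it.
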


\subsection{Self-adjoint operators}

A bounded operator $A:H\ra H$ on a complex Hilbert space is called
\textit{self-adjoint if} \[
\left\langle Av,w\right\rangle =\left\langle v,Aw\right\rangle \;\;\;\forall v,w\in H.\]
It is easy to verify that for such an operator $\left\langle Av,v\right\rangle $
is real for every $v\in H$. We say that $A$ is \textit{positive}
if \[
\left\langle Av,v\right\rangle >0\]
 for all nonzero $v\in H$.

The following fact can be found in \cite{Lax}.

\begin{fact}
\label{pro:norm self-adjoint}If $A:H\ra H$ is a bounded, self-adjoint
operator on a complex Hilbert space, then its operator norm is given
by \[
\left\Vert A\right\Vert =\sup_{|v|=1}|\left\langle Av,v\right\rangle |.\]

\end{fact}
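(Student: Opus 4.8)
Write $M=\sup_{|v|=1}|\langle Av,v\rangle|$ (well-defined since $\langle Av,v\rangle$ is real for self-adjoint $A$). The plan is to establish the two inequalities $M\leq\|A\|$ and $\|A\|\leq M$ separately. Before starting I would record, by homogeneity, that the definition of $M$ is equivalent to the estimate $|\langle Au,u\rangle|\leq M\|u\|^{2}$ for \emph{every} $u\in H$; this homogeneous form is what gets applied to non-unit vectors later.

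The inequality $M\leq\|A\|$ is immediate from Cauchy--Schwarz: if $|v|=1$ then $|\langle Av,v\rangle|\leq\|Av\|\,\|v\|\leq\|A\|$. For the reverse inequality the starting point is the standard identity $\|A\|=\sup_{|v|=|w|=1}|\langle Av,w\rangle|$, which follows from $\|Av\|=\sup_{|w|=1}|\langle Av,w\rangle|$. So it suffices to bound $|\langle Av,w\rangle|$ by $M$ for unit vectors $v,w$. First I would bound the real part. Using self-adjointness, $\langle Aw,v\rangle=\langle w,Av\rangle=\overline{\langle Av,w\rangle}$, so expanding gives the polarization-type identity
\[
\langle A(v+w),v+w\rangle-\langle A(v-w),v-w\rangle=4\re\langle Av,w\rangle .
\]
Applying $|\langle Au,u\rangle|\leq M\|u\|^{2}$ to the two terms on the left and then the parallelogram law $\|v+w\|^{2}+\|v-w\|^{2}=2(\|v\|^{2}+\|w\|^{2})$ yields $4|\re\langle Av,w\rangle|\leq 2M(\|v\|^{2}+\|w\|^{2})$, hence $|\re\langle Av,w\rangle|\leq M$ whenever $\|v\|=\|w\|=1$.

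To pass from the real part to the full inner product, I would replace $w$ by $e^{i\theta}w$, choosing $\theta$ so that $e^{-i\theta}\langle Av,w\rangle=|\langle Av,w\rangle|$; then $\langle Av,e^{i\theta}w\rangle$ is real and nonnegative, $\|e^{i\theta}w\|=1$, and the previous bound gives $|\langle Av,w\rangle|=\re\langle Av,e^{i\theta}w\rangle\leq M$. Taking the supremum over unit $v,w$ gives $\|A\|\leq M$, and combined with the first inequality this proves $\|A\|=M$. I do not expect a real obstacle; the only steps needing care are using the homogeneous form of the definition of $M$ so that it applies to $v\pm w$, and the final phase-rotation trick that removes the real part.
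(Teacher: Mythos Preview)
Your proof is correct and is the standard textbook argument. The paper does not actually prove this statement; it simply records it as a fact and cites Lax's \emph{Functional Analysis}, so there is no in-paper proof to compare against. Your polarization-plus-phase-rotation argument is exactly the proof one would find in such a reference.
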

The \textit{adjoint} $A^{*}$ of a bounded operator $A:H\ra H$ on
a complex Hilbert space is defined by \[
\left\langle Av,w\right\rangle =\left\langle v,A^{*}w\right\rangle \;\;\;\forall v,w\in H.\]
 The following two facts are easy to verify.

\begin{fact}
\label{fac:adjoint pair}If $A$ and $A^{*}$ are bounded operators
on a complex Hilbert space, then the operator $A^{*}A$ is self-adjoint
and positive. 
\end{fact}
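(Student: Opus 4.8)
The plan is to verify both assertions directly from the definition of the adjoint, using nothing more than the inner-product axioms, in the spirit of the other elementary facts in this subsection. First I would observe that $A^{*}A$ is a bounded operator (with $\left\Vert A^{*}A\right\Vert \leq\left\Vert A^{*}\right\Vert \,\left\Vert A\right\Vert$), so that the notions of self-adjointness and positivity just introduced apply to it.

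For self-adjointness I would fix $v,w\in H$ and apply the defining relation $\left\langle Ax,y\right\rangle =\left\langle x,A^{*}y\right\rangle$ twice, grouping the composition $A^{*}A$ as $A^{*}\circ A$:
\[
\left\langle A^{*}Av,\,w\right\rangle =\left\langle Av,\,Aw\right\rangle =\left\langle v,\,A^{*}Aw\right\rangle .
\]
Since $v$ and $w$ are arbitrary, $A^{*}A$ is self-adjoint. For positivity I would specialize $w=v$ in the same chain of equalities, obtaining $\left\langle A^{*}Av,v\right\rangle =\left\Vert Av\right\Vert ^{2}\geq0$, so that $\left\langle A^{*}Av,v\right\rangle >0$ whenever $Av\neq0$.

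The only point I would treat with care is the word ``positive.'' With the strict convention of the preceding subsection ($\left\langle A^{*}Av,v\right\rangle >0$ for every nonzero $v$), $A^{*}A$ is positive exactly when $A$ is injective; in general it is merely positive semi-definite. Where the Fact is invoked later (for instance in Lemma \ref{lem:TME 1}), what is actually needed downstream is only that $1+A^{*}A$ is positive-definite and hence invertible (as in Proposition \ref{pro:TME 2}), and that follows from the nonnegativity displayed above with no injectivity hypothesis. So I would either restate the Fact with ``positive semi-definite'' or simply flag this subtlety in the proof. There is no genuine obstacle: the entire argument is the two uses of the adjoint identity, and the only thing requiring attention is this bookkeeping about strict versus weak positivity.
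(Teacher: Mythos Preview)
Your argument is exactly the standard two-line verification the paper has in mind; the paper does not actually write out a proof but simply states that this fact is ``easy to verify,'' and your computation $\langle A^{*}Av,w\rangle=\langle Av,Aw\rangle=\langle v,A^{*}Aw\rangle$ together with $\langle A^{*}Av,v\rangle=\Vert Av\Vert^{2}$ is precisely that verification.

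Your caveat about strict versus semi-definite positivity is well taken and in fact sharper than the paper: with the paper's strict convention, $A^{*}A$ is positive only when $A$ is injective, which need not hold for the operators $\Pi_{-}r\Pi_{+}$ to which Lemma~\ref{lem:TME 1} applies. As you correctly observe, the downstream use in Proposition~\ref{pro:1+A invertible} (via Lemma~\ref{lem:minus gam2}) only needs $\langle A^{*}Av,v\rangle\geq 0$, so the argument goes through; the paper is simply slightly loose with the word ``positive'' here.
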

$\;$

\begin{fact}
Let $A:X\ra X$ be an operator on a Banach space. If $\left\Vert A\right\Vert =\alpha<1$,
then $1+A$ is invertible.
\end{fact}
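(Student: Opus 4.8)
The plan is to exhibit the inverse explicitly as a Neumann series and verify convergence using the completeness of the operator space. Specifically, I would set
\[
S_N=\sum_{n=0}^{N}(-A)^n=1-A+A^2-A^3+\cdots+(-1)^NA^N,
\]
and argue that the sequence $(S_N)$ converges in $\LL(X,X)$ to an operator $S$ which will turn out to be $(1+A)^{-1}$.

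First I would record the two standard facts that make the series behave well. The space $\LL(X,X)$ of bounded operators on $X$, equipped with the operator norm, is itself a Banach space (completeness of $X$ passes to $\LL(X,X)$ in the usual way). Moreover the operator norm is submultiplicative, so $\|(-A)^n\|=\|A^n\|\le\|A\|^n=\alpha^n$. Since $\alpha<1$, the tail estimate
\[
\left\Vert S_M-S_N\right\Vert=\left\Vert\sum_{n=N+1}^{M}(-A)^n\right\Vert\le\sum_{n=N+1}^{M}\alpha^n\le\frac{\alpha^{N+1}}{1-\alpha}
\]
shows that $(S_N)$ is Cauchy. By completeness of $\LL(X,X)$ it converges to some $S\in\LL(X,X)$.

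It then remains to check that $S$ is a two-sided inverse of $1+A$. Here the key computation is the telescoping identity
\[
(1+A)S_N=S_N(1+A)=1-(-A)^{N+1}.
\]
Since $\|(-A)^{N+1}\|\le\alpha^{N+1}\to0$, the right-hand side converges to the identity in operator norm. Passing to the limit on the left (using that multiplication by the fixed operator $1+A$ is continuous on $\LL(X,X)$) gives $(1+A)S=S(1+A)=1$, so $1+A$ is invertible with inverse $S$.

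I do not expect a genuine obstacle here: the only place the hypothesis $\alpha<1$ is used is in summing the geometric series to obtain the Cauchy estimate, and the only slightly non-formulaic point is invoking completeness of $\LL(X,X)$ together with continuity of left/right multiplication to justify the limit in the telescoping step. Everything else is the submultiplicativity of the operator norm and a routine $\epsilon$-estimate.
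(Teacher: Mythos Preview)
Your argument is correct and is exactly the standard Neumann series proof of this fact. The paper itself states this result without proof (it is listed as a ``Fact'' in the appendix), so there is nothing to compare against; your write-up would serve perfectly well as the omitted justification.
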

$\;$

\begin{lem}
\label{lem:minus gam2}Suppose that $A:H\rightarrow H$ is bounded,
self-adjoint and positive. Then \[
\left\Vert A-\frac{\gamma}{2}\right\Vert \leq\frac{\gamma}{2},\]
 whenever $\gamma\geq\left\Vert A\right\Vert $.
\end{lem}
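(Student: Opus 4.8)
The plan is to pass to the quadratic form of $A$ and invoke Fact~\ref{pro:norm self-adjoint}. First I would observe that $A-\frac{\gamma}{2}$ (meaning $A-\frac{\gamma}{2}I$, where $I$ is the identity operator) is again a bounded self-adjoint operator, since the identity is self-adjoint and a real scalar multiple of a self-adjoint operator is self-adjoint. Applying Fact~\ref{pro:norm self-adjoint} to this operator gives
\[
\left\Vert A-\frac{\gamma}{2}\right\Vert=\sup_{\|v\|=1}\left|\left\langle\left(A-\frac{\gamma}{2}\right)v,v\right\rangle\right|=\sup_{\|v\|=1}\left|\left\langle Av,v\right\rangle-\frac{\gamma}{2}\right|.
\]

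The next step is to locate the scalar $t:=\langle Av,v\rangle$ for a unit vector $v$. Positivity of $A$ gives $t>0$ (in particular $t\ge 0$), while the Cauchy--Schwarz inequality together with the hypothesis $\gamma\ge\left\Vert A\right\Vert$ gives $t=\langle Av,v\rangle\le\left\Vert Av\right\Vert\le\left\Vert A\right\Vert\le\gamma$. Hence $t\in[0,\gamma]$ for every unit vector $v$.

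Finally, for any $t\in[0,\gamma]$ one has $\left|t-\frac{\gamma}{2}\right|\le\frac{\gamma}{2}$, so the supremum in the displayed identity is bounded above by $\frac{\gamma}{2}$, which is exactly the assertion. I do not anticipate any genuine difficulty here; the only points requiring a little care are the interpretation of $\frac{\gamma}{2}$ as $\frac{\gamma}{2}I$ (so that Fact~\ref{pro:norm self-adjoint} applies directly to $A-\frac{\gamma}{2}I$) and the use of \emph{positivity}, not just self-adjointness, to pin down the lower end of the range of the quadratic form.
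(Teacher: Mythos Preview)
Your proof is correct and follows essentially the same route as the paper: apply Fact~\ref{pro:norm self-adjoint} to the self-adjoint operator $A-\frac{\gamma}{2}$, then bound $\langle Av,v\rangle$ between $0$ and $\gamma$ using positivity and $\gamma\ge\|A\|$. The paper's write-up is slightly terser (it handles all $v$, not just unit vectors, writing $0\le\langle v,Av\rangle\le\gamma\langle v,v\rangle$), but the argument is the same.
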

\begin{proof}
This follows from Fact \ref{pro:norm self-adjoint} and the estimate\[
\left|\left\langle v,(A-\frac{\gamma}{2})v\right\rangle \right|=\left|\left\langle v,Av\right\rangle -\frac{\gamma}{2}\left\langle v,v\right\rangle \right|\leq\frac{\gamma}{2}\left\langle v,v\right\rangle ,\]
 which holds since \[
0\leq\left\langle v,Av\right\rangle \leq\left\Vert A\right\Vert \left\langle v,v\right\rangle \leq\gamma\left\langle v,v\right\rangle .\]

\end{proof}
\begin{prop}
\label{pro:1+A invertible} Suppose that $A:H\rightarrow H$ is a
bounded, positive self-adjoint operator on a complex Hilbert space.
Then $1+A$ is invertible, and \[
\left\Vert (1+A)^{-1}\right\Vert \leq1.\]

\end{prop}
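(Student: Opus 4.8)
The plan is to reduce invertibility of $1+A$ to the Fact that $1+B$ is invertible whenever $\|B\|<1$, by an affine rescaling that uses the positivity of $A$ only through Lemma \ref{lem:minus gam2}.

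If $A=0$ the statement is trivial, so set $\gamma:=\|A\|>0$. The key algebraic identity is
\[
1+A=\left(1+\frac{\gamma}{2}\right)\left(1+\frac{1}{1+\gamma/2}\left(A-\frac{\gamma}{2}\right)\right),
\]
which one checks by expanding. Writing $B:=\frac{1}{1+\gamma/2}\left(A-\frac{\gamma}{2}\right)$, Lemma \ref{lem:minus gam2} (applied with this $\gamma\geq\|A\|$) gives $\|A-\gamma/2\|\leq\gamma/2$, hence $\|B\|\leq\frac{\gamma/2}{1+\gamma/2}<1$. By the cited Fact, $1+B$ is invertible, and therefore so is $1+A=(1+\gamma/2)(1+B)$.

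For the norm bound I would use the quantitative form of that Fact: the inverse of $1+B$ is the Neumann series $\sum_{n\geq0}(-B)^{n}$, so $\|(1+B)^{-1}\|\leq\sum_{n\geq0}\|B\|^{n}=\frac{1}{1-\|B\|}\leq\frac{1}{1-\gamma/(2+\gamma)}=1+\frac{\gamma}{2}$. Therefore
\[
\|(1+A)^{-1}\|=\frac{1}{1+\gamma/2}\,\|(1+B)^{-1}\|\leq\frac{1}{1+\gamma/2}\left(1+\frac{\gamma}{2}\right)=1.
\]

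There is no genuine obstacle here; the only point worth stating carefully is that the bound $\|(1+B)^{-1}\|\leq(1-\|B\|)^{-1}$ is exactly what the proof of the cited Fact produces. If one prefers to avoid the Neumann series, an equally short direct route is available: positivity of $A$ gives $\langle(1+A)v,v\rangle=\|v\|^{2}+\langle Av,v\rangle\geq\|v\|^{2}$, so Cauchy--Schwarz yields $\|(1+A)v\|\geq\|v\|$; thus $1+A$ is bounded below, hence injective with closed range, and being self-adjoint its range is the orthogonal complement of $\ker(1+A)=\{0\}$, hence dense, hence all of $H$, while $\|(1+A)^{-1}\|\leq1$ is then just a restatement of $\|(1+A)v\|\geq\|v\|$.
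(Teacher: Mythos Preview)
Your proof is correct and follows essentially the same approach as the paper: the same affine rescaling $1+A=(1+\gamma/2)(1+B)$ with $\gamma=\|A\|$, the same appeal to Lemma \ref{lem:minus gam2} to get $\|B\|<1$, and the same norm estimate (the paper phrases it as the lower bound $|(1+B)v|\geq(1-\|B\|)|v|$ rather than via the Neumann series, but these are equivalent). Your closing alternative via $\langle(1+A)v,v\rangle\geq\|v\|^{2}$ is a genuinely shorter route that the paper does not take.
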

\begin{proof}
The following method was used by Epstein in \cite{Epstein}. The trick
is to set $\gamma=\left\Vert A\right\Vert $ and write \[
1+A=\frac{2+\gamma}{2}(1+B),\]
 where \[
B=\frac{2}{2+\gamma}(A-\frac{\gamma}{2}).\]
Lemma \ref{lem:minus gam2} tells us that \[
\left\Vert B\right\Vert \leq\frac{2}{2+\gamma}\cdot\frac{\gamma}{2}=\frac{\gamma}{2+\gamma}<1,\]
 and so $1+B$ is invertible. Clearly, $1+A$ is also invertible.

Given $v\in H$, we know that \[
|(1+B)v|\geq(1-\frac{\gamma}{2+\gamma})|v|,\]
 and hence \begin{eqnarray*}
|(1+A)v| & = & \frac{2+\gamma}{2}|(1+B)v|\\
 & \geq & \frac{2+\gamma}{2}(1-\frac{\gamma}{2+\gamma})|v|\\
 & = & |v|,\end{eqnarray*}
 which proves that \[
\left\Vert (1+A)^{-1}\right\Vert \leq1.\]

\end{proof}

\chapter{The error from softening a pulse\label{sub:The-error-from-softening-a-pulse}}

As mentioned in the introduction, it is necessary, in practice, to
replace a given hard pulse \[
\Omega_{\textrm{hard}}(t)=\sum_{j=-\infty}^{\infty}\omega_{j}\delta(t-j\Delta)\]
 by a softened version \[
\Omega_{\textrm{soft}}(t)=\sum_{j=-\infty}^{\infty}\frac{\omega_{j}}{\Delta}\chi_{[0,\Delta)}(t-j\Delta).\]
 In this section we estimate the difference between the magnetization
profiles $\Mb_{\textrm{hard}}$ and $\Mb_{\textrm{soft}}$ resulting
from $\Omega_{\textrm{hard}}$ and $\Omega_{\textrm{soft}}$, respectively.

Fix a frequency $z\in\RR$. Let $M_{-,\textrm{hard}}(z;\Delta j)$
and $M_{-,\textrm{soft}}(z;\Delta j)$ denote the magnetizations at
time $t=\Delta j$ (or the $j^{\textrm{th}}$ time step). These are
normalized by \[
\lim_{j\rightarrow-\infty}M_{-,\textrm{hard}}(z;\Delta j)=\lim_{j\rightarrow-\infty}M_{-,\textrm{soft}}(z;\Delta j)=\left[\begin{array}{c}
0\\
0\\
1\end{array}\right].\]
 We are interested in the difference between $M_{-,\textrm{hard}}(z;j\Delta)$
and $M_{-,\textrm{soft}}(z;j\Delta)$ for large $j$. 

Let us focus on the error introduced at the $j^{\textrm{th}}$ time
step. Without loss of generality, we can assume that $\omega_{j}$
is real and positive. For the hard pulse, $M_{-,\textrm{hard}}(z;j+1)$
is obtained from $M_{-,\textrm{hard}}(z;j)$ by a rotation of $\omega_{j}$
radians around the $y$-axis, followed by a rotation of $\Delta z$
radians around the $z$-axis. On the other hand, one can check that
$M_{-,\textrm{soft}}(z;j+1)$ is obtained from $M_{-,\textrm{soft}}(z;j)$
by a single rotation of $\gamma=\sqrt{\omega_{j}^{2}+(\Delta z)^{2}}$
radians around the $\left[\begin{array}{c}
0\\
\omega_{j}/\gamma\\
\Delta z/\gamma\end{array}\right]$-axis. The difference between these rotations is a rotation of the
sphere (around some axis) of some number $\theta_{j}$ of radians.
If we can estimate $\theta_{j}$, then we can estimate the maximum
spherical error introduced at the $j^{\textrm{th}}$ time step. 

Let $R_{y}$ be rotation by $\omega_{j}$ radians around the $y$-axis,
let $R_{z}$ be rotation by $\Delta z$ radians around the $z$-axis,
and let $R_{yz}$ be rotation by $\gamma$ radians around the $\left[\begin{array}{c}
0\\
\omega_{j}/\gamma\\
\Delta z/\gamma\end{array}\right]$-axis, where \[
\gamma=\sqrt{\omega_{j}^{2}+(\Delta z)^{2}}.\]
The question is: How well does $R_{yz}$ approximate $R_{z}R_{y}$? 

These rotations are represented in $SU_{2}$ by \begin{eqnarray*}
R_{y} & = & \left[\begin{array}{cc}
\cos\frac{\omega_{j}}{2} & \sin\frac{\omega_{j}}{2}\\
-\sin\frac{\omega_{j}}{2} & \cos\frac{\omega_{j}}{2}\end{array}\right]\\
R_{z} & = & \left[\begin{array}{cc}
\cos\frac{\Delta z}{2}+i\sin\frac{\Delta z}{2} & 0\\
0 & \cos\frac{\Delta z}{2}-i\sin\frac{\Delta z}{2}\end{array}\right]\\
R_{yz} & = & \left[\begin{array}{cc}
\cos\frac{\gamma}{2}+\frac{\Delta z}{\gamma}i\sin\frac{\gamma}{2} & \frac{\omega_{j}}{\gamma}\sin\frac{\gamma}{2}\\
-\frac{\omega_{j}}{\gamma}\sin\frac{\gamma}{2} & \cos\frac{\gamma}{2}-\frac{\Delta z}{\gamma}i\sin\frac{\gamma}{2}\end{array}\right].\end{eqnarray*}
Using a symbolic mathematics computer program, we compute the real
part of the upper left component of the matrix $R_{yz}^{-1}R_{z}R_{y}\in SU_{2}$
to be \begin{eqnarray*}
\phi(\omega_{j},\Delta z): & = & \cos(\frac{\omega_{j}}{2})\cos(\frac{\Delta z}{2})\cos(\frac{\gamma}{2})+\frac{\omega_{j}}{\gamma}\sin(\frac{\omega_{j}}{2})\cos(\frac{\Delta z}{2})\sin(\frac{\gamma}{2})\\
 &  & +\frac{\Delta z}{\gamma}\cos(\frac{\omega_{j}}{2})\sin(\frac{\Delta z}{2})\sin(\frac{\gamma}{2}),\end{eqnarray*}
 which implies that $R_{yz}^{-1}R_{z}R_{y}$ is a rotation of $2\cos^{-1}\phi(\omega_{j},\Delta z)$
radians around some axis. From the Taylor expansion \[
\left(\cos^{-1}\phi(\alpha,\beta)\right)^{2}=\frac{1}{16}\omega_{j}^{2}(\Delta z)^{2}-\frac{1}{288}\omega_{j}^{4}(\Delta z)^{2}-\frac{1}{288}\omega_{j}^{2}(\Delta z)^{4}+\dots,\]
 we see that, for reasonably small $\omega_{j}$ and $\Delta z$,
$R_{yz}$ is extremely close to $R_{z}R_{y}$ composed with a rotation
of $\frac{\Delta}{2}|\omega_{j}z|$ radians. In fact, numerical evidence
suggests that $2\cos^{-1}\phi(\omega_{j},\Delta z)\leq\frac{\Delta}{2}|\omega_{j}z|$
for all $\omega$ and $\Delta z$. 

\begin{rem}
The above conclusion would be the same if we replaced the axis of
rotation of $R_{y}$ by some other axis orthogonal to the $z$-axis.
\end{rem}
Applying the above result, the maximum error between $\Mb_{\textrm{hard}}(z)$
and $\Mb_{\textrm{soft}}(z)$ introduced at the $j^{th}$ step is
conjectured to be a rotation of at most $\frac{\Delta}{2}|\omega_{j}z|$
radians, which would indicate a total error of at most\[
E=\frac{\Delta|z|}{2}\sum_{j=-\infty}^{\infty}|\omega_{j}|=\frac{\Delta|z|}{2}\int_{-\infty}^{\infty}|\Omega_{\textrm{soft}}(t)|dt\]
 radians on the sphere. Thus we see that by reducing the time step,
$\Delta$, the error introduced by softening a pulse can be made as
small as we like over a fixed frequency interval.

\chapter{The case of non-simple zeros\label{app:non-simple}}

In this section we consider the case where the poles of $a$ in the
upper half plane (or unit disk) are not necessarily simple. We simply
outline the idea, and leave most of the details to the interested
reader. 

In the case of simple zeros, the bound state data for the continuum
scattering transform is encapsulated in the rational function \[
Q_{+,0}(\xi)=\sum_{k=1}^{m}\frac{C_{k}}{\xi-\xi_{k}}.\]
For each $t\in\RR$, the rational functions \begin{eqnarray*}
Q_{+,t}(\xi) & = & \sum_{k=1}^{m}\frac{C_{k}e^{2i\xi_{k}t}}{\xi-\xi_{k}}\\
Q_{-,t}(\xi) & = & \sum_{k=1}^{m}\frac{\tilde{C}_{k}e^{-2i\xi_{k}t}}{\xi-\xi_{k}}\end{eqnarray*}
 can be uniquely determined from $Q_{+,0}$ using the following properties:

\begin{enumerate}
\item $Q_{+,t}$ has the form $Q_{+,t}(\xi)=\sum_{k=1}^{m}\frac{C_{k,t}}{\xi-\xi_{k}}$; 
\item $Q_{-,t}$ has the form $Q_{-,t}(\xi)=\sum_{k=1}^{m}\frac{\tilde{C}_{k,t}}{\xi-\xi_{k}}$; 
\item $Q_{+,t}=\Pi_{-}Q_{+,s}e^{2i\xi(t-s)}$ for all $t>s$;
\item $Q_{-,t}=\Pi_{-}Q_{-,s}e^{-2i\xi(t-s)}$ for all $t<s$;
\item The function $aQ_{-,t}Q_{+,t}+\frac{1}{a}$ is analytic in the upper
half plane.
\end{enumerate}
The fifth property can be verified by observing that the residue of
$aQ_{-,t}Q_{+,t}+\frac{1}{a}$ at $\xi_{k}$ is \[
C_{k}\tilde{C}_{k}a^{\prime}(\xi_{k})+\frac{1}{a^{\prime}(\xi_{k})}=0.\]

Let us now handle the case where $a$ has non-simple zeros. Let $\xi_{1},\dots,\xi_{m}$
be complex numbers in the upper half plane, and let $d_{1},\dots,d_{m}$
be positive integers (representing the multiplicities). In this case,
we specify the bound state data by defining the rational function\[
Q_{+,0}(\xi)=\sum_{k=1}^{m}\frac{P_{k}(\xi)}{(\xi-\xi_{k})^{d_{k}}},\]
 where $P_{k}$ is a polynomial of degree $d_{k}-1$ which does not
vanish at $\xi_{k}$. For each $t\in\RR$, the rational functions
$Q_{+,t}$ and $Q_{-,t}$ are defined by the following properties:

\begin{enumerate}
\item $Q_{+,t}$ has the form $Q_{+,t}(\xi)=\sum_{k=1}^{m}\frac{P_{k,t}(\xi)}{\xi-\xi_{k}}$,
where $P_{k,t}$ is a polynomial of degree $k-1$; 
\item $Q_{-,t}$ has the form $Q_{-,t}(\xi)=\sum_{k=1}^{m}\frac{\tilde{P}_{k,t}}{\xi-\xi_{k}},$where
$\tilde{P}_{k,t}$ is a polynomial of degree $k-1$; 
\item $Q_{+,t}=\Pi_{-}Q_{+,s}e^{2i\xi(t-s)}$ for all $t>s$;
\item $Q_{-,t}=\Pi_{-}Q_{-,s}e^{-2i\xi(t-s)}$ for all $t<s$;
\item The function $aQ_{-,t}Q_{+,t}+\frac{1}{a}$ is analytic in the upper
half plane.
\end{enumerate}
Once these functions have been determined, one can define the right
and left Marchenko equations using\begin{eqnarray*}
r_{t} & = & \Pi_{-}re^{2i\xi t}-Q_{+,t}\\
s_{t} & = & \Pi_{-}se^{-2i\xi t}-Q_{-,t}.\end{eqnarray*}

A similar method can be used in the discrete case.

\chapter{Explicit implementation of DIST}

Let us write down some explicit formulas so that the DIST recursion
can easily be implemented on a computer. We leave the derivations
to the reader. 

Given discrete scattering data \[
S=(a,b;w_{1},\dots,w_{m};c_{1}^{\prime},\dots,c_{m}^{\prime}),\]
we set \begin{eqnarray*}
r & = & \frac{b}{a}\\
s & = & -\frac{b^{*}}{a}\end{eqnarray*}
 and \begin{eqnarray*}
c_{k} & = & \frac{c_{k}^{\prime}}{a^{\prime}(w_{k})}\\
\tilde{c}_{k} & = & -\frac{(c_{k}^{\prime})^{-1}w_{k}^{-1}}{a^{\prime}(w_{k})}.\end{eqnarray*}

\begin{itemize}
\item Step 1: Define the following sequences:\begin{eqnarray*}
f(n) & = & \hat{r}(n)-\sum_{k=1}^{m}c_{k}w_{k}^{-n}\\
g(n) & = & \hat{s}(n)-\sum_{k=1}^{m}\tilde{c}_{k}w_{k}^{-n-1}.\end{eqnarray*}

\item Step 2: Choose $M_{+}>>0$ and set \[
\left[\begin{array}{c}
K_{+,M_{+}}\\
L_{+,M_{+}}\end{array}\right]=\left[\begin{array}{c}
1\\
0\end{array}\right].\]
Then define the polynomials $K_{+,j}$ and $L_{+,j}$ (for $j=0,1,\dots,M_{+}$)
recursively using\[
\left[\begin{array}{c}
K_{+,j-1}\\
L_{+,j-1}\end{array}\right]=\left[\begin{array}{cc}
1 & -\gamma_{j}^{*}\\
\gamma_{j}w & w\end{array}\right]\left[\begin{array}{c}
K_{+,j}\\
L_{+,j}\end{array}\right]\]
 and \[
-\gamma_{j}^{*}=\frac{\sum_{n=0}^{\infty}f(-j-n)\hat{K}_{+,j}(n)}{\hat{K}_{+,j}(n)-\sum_{n=0}^{\infty}f(-j-n)\hat{L}_{+,j}(n)}.\]

\item Step 3: Choose $M_{-}>>0$ and set \[
\left[\begin{array}{c}
K_{-,-M_{-}}\\
L_{-,-M_{-}}\end{array}\right]=\left[\begin{array}{c}
1\\
0\end{array}\right].\]
Then define the polynomials $K_{-,j}$ and $L_{-,j}$ (for $j=-M_{-},\dots,-1,0$)
recursively using \[
\left[\begin{array}{c}
K_{+,j+1}\\
L_{+,j+1}\end{array}\right]=\left[\begin{array}{cc}
1 & \gamma_{j}\\
-\gamma_{j}^{*}w & w\end{array}\right]\left[\begin{array}{c}
K_{+,j}\\
L_{+,j}\end{array}\right]\]
 and\[
\gamma_{j}=\frac{\sum_{n=0}^{\infty}g(j-n)\hat{K}_{-,j}(n)}{\hat{K}_{-,j}(n)-\sum_{n=0}^{\infty}g(j-n)\hat{L}_{-,j}(n)}.\]

\item Step 4: Set \[
\omega(t)=\sum_{j=-M_{-}}^{M_{+}}\omega_{j}\delta(t-j\Delta),\]
 where \[
\omega_{j}=2i\frac{\gamma_{j}^{*}}{|\gamma_{j}|}\arctan|\gamma_{j}|.\]

\end{itemize}
\begin{rem}
If the left and right values of $\gamma_{0}$ are inconsistent, then
$M_{+}$ and $M_{-}$ should be increased. Initially, these integers
should be chosen so that \[
I_{+}=\sum_{n=M_{+}}^{\infty}|f(n)|\]
 and \[
I_{-}=\sum_{n=M_{-}}^{\infty}|g(n)|\]
 are small.
\end{rem}

\chapter{Scattering on Lie groups}

\section{\label{sub:The-scattering-transform-on-Lie-groups}The scattering
transform on Lie groups}

The selective excitation transform fits into a more general framework.
Let $G$ be a Lie group acting on a space $X$, with a special point
$x_{0}\in X$. Fix an element $J\in\Lg$ in the Lie algebra of $G$
such that the one parameter subgroup $\left\{ e^{Jz}:z\in\RR\right\} \subset G$
is isomorphic to $S^{1}$. Also, fix a subspace $\Lk\subset\Lg$.
Let $\omega:\RR\rightarrow\Lk\subset\Lg$ be a function with sufficient
smoothness and decay, so that there is a solution $v_{-}:\RR\times\RR\rightarrow X$
to the equation \begin{equation}
\partial_{t}v_{-}(z;t)=\left(\omega(t)+zJ\right)v_{-}(z;t)\label{eq:st 1}\end{equation}
normalized by \begin{equation}
\lim_{t\rightarrow-\infty}e^{-Jzt}v_{-}(z;t)=x_{0}.\label{eq:st 2}\end{equation}
 For now, we assume that the solution $v_{-}$ is unique, and that
\[
(\TT_{G,J,\Lk,X,x_{0}}\omega)(z):=\lim_{t\rightarrow\infty}e^{-Jzt}v_{-}(z;t)\in X\]
 exists for all $z$. The operator $\TT_{G,J,\Lk,X,x_{0}}$ is called
the \textit{scattering transform}. It maps $\Lk$-valued functions
of time into $X$-valued functions of frequency. 

If we take $G=SO_{3}(\RR)$, $X=\RR^{3}$, $x_{0}=\left[\begin{array}{c}
0\\
0\\
1\end{array}\right]$, $J=\left[\begin{array}{ccc}
0 & 1 & 0\\
-1 & 0 & 0\\
0 & 0 & 0\end{array}\right]\in so_{3}(\RR)$, and \[
\Lk=\left\{ \left[\begin{array}{ccc}
0 & 0 & -\im x\\
0 & 0 & \re x\\
\im x & -\re x & 0\end{array}\right]:\; x\in\CC\right\} ,\]
 then $\TT_{G,J,\Lk,X,x_{0}}$ coincides with the selective excitation
transform $\TT$ defined above. For other examples of scattering transforms,
see Section (\ref{sub:Three-fundamental-scattering-transforms}).

\section{The scattering equation\label{sub:The-scattering-equation}}

Let us consider the scattering transform in the special case where
$X=G$, $x_{0}=1\in G$, and $G$ acts on itself by left multiplication.
To simplify notation we write \[
\TT_{G,J,\Lk}:=\TT_{G,J,\Lk,G,1}.\]
 This scattering transform is universal in the following sense:

\begin{prop}
\label{pro:universal}Let $G$, $J$, $\Lk$, $X$ and $x_{0}$ be
as in Section \ref{sub:The-scattering-transform-on-Lie-groups}. Then
\[
\TT_{G,J,\Lk,X,x_{0}}\omega=(\TT_{G,J,\Lk}\omega)(x_{0}).\]

\end{prop}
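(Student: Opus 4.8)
The plan is to show that the $X$-valued solution $v_-$ of (\ref{eq:st 1})--(\ref{eq:st 2}) is nothing but the $G$-valued solution of the \emph{same} differential equation, evaluated at $x_0$ via the action, and then to pass to the limit $t\to\infty$. Concretely, let $g_-(z;\cdot):\RR\to G$ be the solution of $\partial_t g_-(z;t)=(\omega(t)+zJ)g_-(z;t)$ normalized by $\lim_{t\to-\infty}e^{-Jzt}g_-(z;t)=1$; this is the curve defining $\TT_{G,J,\Lk}\omega$. Set $w_-(z;t):=g_-(z;t)\cdot x_0\in X$, where $\cdot$ denotes the action of $G$ on $X$. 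The goal is to prove $w_-=v_-$.

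First I would check that $w_-$ satisfies (\ref{eq:st 1})--(\ref{eq:st 2}). The content of equation (\ref{eq:st 1}) on a general $X$ is that $\partial_t v_-$ equals the value at $v_-$ of the vector field attached to $\omega(t)+zJ$, namely $x\mapsto \frac{d}{ds}\big|_{s=0}e^{s(\omega(t)+zJ)}\cdot x$; on $G=X$ with the left-multiplication action this recovers the usual matrix-ODE form. Differentiating $w_-(z;t)=g_-(z;t)\cdot x_0$ in $t$, pushing the tangent vector $\partial_t g_-(z;t)=\frac{d}{ds}\big|_{s=0}e^{s(\omega(t)+zJ)}g_-(z;t)$ forward along $g\mapsto g\cdot x_0$, and using associativity of the action $e^{s\eta}\cdot(g\cdot x_0)=(e^{s\eta}g)\cdot x_0$, we get $\partial_t w_-(z;t)=(\omega(t)+zJ)\,w_-(z;t)$, which is (\ref{eq:st 1}). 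For the normalization, $e^{-Jzt}\cdot w_-(z;t)=(e^{-Jzt}g_-(z;t))\cdot x_0\to 1\cdot x_0=x_0$ as $t\to-\infty$, by continuity of the action $G\times X\to X$ and the normalization of $g_-$. By the uniqueness of the solution of (\ref{eq:st 1})--(\ref{eq:st 2}) assumed in Section \ref{sub:The-scattering-transform-on-Lie-groups}, it follows that $w_-=v_-$.

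Finally, pass to $t\to+\infty$:
\[
(\TT_{G,J,\Lk,X,x_0}\omega)(z)=\lim_{t\to\infty}e^{-Jzt}\cdot v_-(z;t)=\lim_{t\to\infty}(e^{-Jzt}g_-(z;t))\cdot x_0=\Bigl(\lim_{t\to\infty}e^{-Jzt}g_-(z;t)\Bigr)\cdot x_0=(\TT_{G,J,\Lk}\omega)(z)\cdot x_0,
\]
where the interchange of limit and action is once more continuity of the action, and the inner limit exists because $\TT_{G,J,\Lk}\omega$ is, by hypothesis, well defined. This is exactly the claimed identity $\TT_{G,J,\Lk,X,x_0}\omega=(\TT_{G,J,\Lk}\omega)(x_0)$. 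The only step needing genuine care — and the only plausible obstacle, though a mild one — is the differentiation in the second paragraph: one must be precise about what (\ref{eq:st 1}) means on an arbitrary $X$ (the infinitesimal action, i.e. the pushforward of the right-invariant vector field under $g\mapsto g\cdot x_0$) and verify that the associativity identity for the action transports the ODE on $G$ to the ODE on $X$. Everything else is bookkeeping with continuity of the action map.
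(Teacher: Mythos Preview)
Your argument is correct and is exactly the ``straightforward application of the definitions'' the paper alludes to; the paper in fact omits the proof entirely, so you have simply filled in the details of transporting the ODE on $G$ to $X$ via the action and invoking the assumed uniqueness.
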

We omit the proof, which is a straightforward application of the definitions.

In this universal case, we consider two solutions $v_{\pm}:\RR\times\RR\rightarrow G$
to the equation \begin{equation}
\partial_{t}v_{\pm}(z;t)=\left(\omega(t)+zJ\right)v_{\pm}(z;t)\label{eq:se 1}\end{equation}
normalized by \begin{equation}
\lim_{t\rightarrow\pm\infty}e^{-Jzt}v_{\pm}(z;t)=1\in G.\label{eq:se 2}\end{equation}
 The function \begin{equation}
S_{\omega}(z):=v_{+}(z;t)^{-1}v_{-}(z;t)\label{eq:se 3}\end{equation}
 is independent of $t$ because given any two solutions $v_{1}$ and
$v_{2}$ to equation (\ref{eq:se 1}) we have \begin{eqnarray*}
\partial_{t}v_{1}^{-1}v_{2} & = & -v_{1}^{-1}(\partial_{t}v_{1})v_{1}^{-1}v_{2}+v_{1}^{-1}\partial_{t}v_{2}\\
 & = & -v_{1}^{-1}\left(\omega(t)+zJ\right)v_{2}+v_{1}^{-1}\left(\omega(t)+zJ\right)v_{2}\\
 & = & 0.\end{eqnarray*}
This calculation also demonstrates the uniqueness of $v_{-}$ and
$v_{+}$. Letting $t$ tend to $\infty$, we find that \[
S_{\omega}(z)=\lim_{t\rightarrow\infty}e^{-Jzt}v_{-}(z;t)=(\TT_{G,J,\Lk}\omega)(z)\]
 so that $S_{\omega}$ is the scattering transformation $\TT_{G,J,\Lk}$
applied to $\omega$. By Proposition \ref{pro:universal} we have
\[
(\TT_{G,J,\Lk,X,x_{0}}\omega)(z)=S_{\omega}(z)\cdot x_{0}\]
 whenever $G$ acts on a space $X$.

It is useful to define \[
u_{\pm}(z;t):=v_{\pm}(z;t)e^{-Jzt}.\]
 In terms of this notation equations (\ref{eq:se 1}) through (\ref{eq:se 3})
become \begin{equation}
\partial_{t}u_{\pm}(z;t)=\omega(t)u_{\pm}(z;t)+z\left(Ju_{\pm}(z;t)-u_{\pm}(z;t)J\right)\label{eq:se 4}\end{equation}
\begin{equation}
\lim_{t\rightarrow\pm\infty}u_{\pm}(z;t)=1\in G,\label{eq:se 5}\end{equation}
 and \begin{equation}
S_{\omega}(z)=e^{-Jzt}v_{+}(z;t)^{-1}u_{-}(z;t)e^{Jzt}.\label{eq:se 6}\end{equation}
Equation (\ref{eq:se 6}) corresponds to the scattering equation (\ref{eq:ct6}).

\section{Three scattering transforms\label{sub:Three-fundamental-scattering-transforms}}

Let us fix $X=\CCh$ to be the Riemann Sphere with $x_{0}=0$ and
let $e^{J\RR}\cong S^{1}$ be the group of rotations around the origin.
There are three basic examples of scattering transforms (see Section
\ref{sub:The-scattering-transform-on-Lie-groups}):

\begin{itemize}
\item Euclidean: $G=G_{E}=Aut(\CC)$ is the group of invertible affine transformations
\[
G_{E}=\left\{ z\mapsto\lambda z+c:\;\lambda,c\in\CC,\;\lambda\neq0\right\} .\]

\item Spherical: $G=G_{S}\cong SO_{3}\RR$ is the group of rigid rotations
of the sphere.
\item Hyperbolic: $G=G_{H}\cong PSL_{2}\RR$ is the group of conformal automorphisms
of the unit disk.
\end{itemize}
The second example corresponds to the selective excitation transform.
Let us show that the first example (Euclidean) corresponds to the
inverse Fourier transform.

We can represent \[
G_{E}=\left\{ \left[\begin{array}{cc}
\lambda & x\\
0 & 1\end{array}\right]:\;\lambda\in\CC^{*},x\in\CC\right\} .\]
 It is natural to choose $\Lk=\left[\begin{array}{cc}
0 & \CC\\
0 & 0\end{array}\right]$. Let us write \[
\omega(t)=\left[\begin{array}{cc}
0 & \omega(t)\\
0 & 0\end{array}\right]\]
 and \[
u_{-}(z;t)=\left[\begin{array}{cc}
1 & u_{-}(z;t)\\
0 & 1\end{array}\right].\]
 Then equation (\ref{eq:se 4}) is \[
\partial_{t}\left[\begin{array}{cc}
1 & u_{-}(z;t)\\
0 & 1\end{array}\right]=\left[\begin{array}{cc}
0 & \omega(t)\\
0 & 0\end{array}\right]\left[\begin{array}{cc}
1 & u_{-}(z;t)\\
0 & 1\end{array}\right]+z\left[\begin{array}{cc}
0 & iu_{-}(z;t)\\
0 & 0\end{array}\right]\]
 or \[
\partial_{t}u_{-}(z;t)=\omega(t)+izu_{-}(z;t).\]
 The solution normalized at $-\infty$ is \[
u_{-}(z;t)=\int_{-\infty}^{t}\omega(s)e^{i(t-s)z}ds,\]
 and so \[
S_{\omega}(z)\cdot x_{0}=\int_{-\infty}^{\infty}\omega(s)e^{-isz}ds.\]
Therefore $T_{G,J,\Lk,X,x_{0}}$ is the inverse Fourier transform.

\section{The discrete scattering transform on Lie groups}

\label{sub:The-discrete-scattering-transform-on-Lie-groups}Let $G$,
$J$, $X$, and $x_{0}\in X$ be as in Section \ref{sub:The-scattering-transform-on-Lie-groups}.
That is, $G$ is a Lie group acting on $X$, and $J\in\Lg$ is an
element of the Lie algebra such that the one-parameter subgroup $e^{J\RR}\subset G$
is isomorphic to $S^{1}$. This time we consider functions of the
form $\Omega:\ZZ\rightarrow K\subset G$, where $K$ is some subset
of $G$. Let us identify $S^{1}$ with the subgroup $e^{J\RR}\subset G$,
and set $\Omega_{j}=\Omega(j)$. Suppose there is a solution $V_{-}:S^{1}\times\ZZ\rightarrow X$
to the recursion 

\[
V_{-}(w;j+1)=w\Omega_{j}V_{-}(w;j)\]
 normalized by\[
\lim_{j\rightarrow-\infty}w^{-j}V_{-}(w;j)=x_{0}.\]
 Again, we assume that the solution is unique, and that \[
(\TT_{G,J,K,X,x_{0}}^{\textrm{disc}}\Omega)(w):=\lim_{j\rightarrow+\infty}w^{-j}V_{-}(w;j)\in X\]
 exists for every $w$. The transform $\TT_{G,J,K,X,x_{0}}^{\textrm{disc}}$
is called the \textit{discrete scattering transform}. It maps sequences
in $K$ to loops in $X$. 

If we take $G=SO_{3}(\RR)$, $X=\RR^{3}$, $x_{0}=\left[\begin{array}{c}
0\\
0\\
1\end{array}\right]$, $J=\left[\begin{array}{ccc}
0 & 1 & 0\\
-1 & 0 & 0\\
0 & 0 & 0\end{array}\right]\in so_{3}(\RR)$, and \[
K=\exp\left\{ \left[\begin{array}{ccc}
0 & 0 & -\im x\\
0 & 0 & \re x\\
\im x & -\re x & 0\end{array}\right]:\; x\in\CC\right\} ,\]
 then $\TT_{G,J,K,X,x_{0}}^{\textrm{disc}}$ coincides with the discrete
selective excitation transform $\TT^{\textrm{disc}}$ defined above.
Notice that $K$ consists of the rotations around axes orthogonal
to the $z$-axis. For other examples of discrete scattering transforms,
see Section (\ref{sec:Three-discrete-scattering}).

\section{The discrete scattering equation\label{sub:The-discrete-scattering-equation}}

Let us consider the discrete scattering transform in the special case
where $X=G$, $x_{0}=1$ is the identity element, and $G$ acts on
$X$ by left multiplication. To simplify notation we write \[
\TT_{G,J,K}^{\textrm{disc}}:=\TT_{G,J,K,G,1}^{\textrm{disc}}.\]
 This discrete scattering transform is universal in the following
sense:

\begin{prop}
Let $G$, $J$, $K$, $X$ and $x_{0}$ be as in Section \ref{sub:The-discrete-scattering-transform-on-Lie-groups}.
Then \[
\TT_{G,J,K,X,x_{0}}^{\textrm{disc}}\Omega=(\TT_{G,J,K}^{\textrm{disc}}\Omega)(x_{0}).\]

\end{prop}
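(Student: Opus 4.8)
The plan is to imitate the omitted proof of Proposition~\ref{pro:universal}: build the solution of the $X$-valued recursion from the solution of the universal $G$-valued one by acting on the base point $x_{0}$, and then transport the defining limit through the group action.

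First I would introduce $\tilde V_{-}:S^{1}\times\ZZ\to G$, the unique solution of the universal recursion $\tilde V_{-}(w;j+1)=w\Omega_{j}\tilde V_{-}(w;j)$ normalized by $\lim_{j\to-\infty}w^{-j}\tilde V_{-}(w;j)=1\in G$, so that $(\TT_{G,J,K}^{\textrm{disc}}\Omega)(w)=\lim_{j\to+\infty}w^{-j}\tilde V_{-}(w;j)$ by definition. Then I would set $V_{-}(w;j):=\tilde V_{-}(w;j)\cdot x_{0}\in X$ and verify that it solves the $X$-valued problem: since $G$ acts on $X$ as a group, $(gh)\cdot x=g\cdot(h\cdot x)$, and hence
\[
V_{-}(w;j+1)=\bigl(w\Omega_{j}\tilde V_{-}(w;j)\bigr)\cdot x_{0}=(w\Omega_{j})\cdot\bigl(\tilde V_{-}(w;j)\cdot x_{0}\bigr)=w\Omega_{j}V_{-}(w;j).
\]
For the normalization at $-\infty$, I would write $w^{-j}V_{-}(w;j)=\bigl(w^{-j}\tilde V_{-}(w;j)\bigr)\cdot x_{0}$ and use continuity of the action map $G\times X\to X$ together with $w^{-j}\tilde V_{-}(w;j)\to 1$ in $G$ to conclude that $w^{-j}V_{-}(w;j)\to 1\cdot x_{0}=x_{0}$ as $j\to-\infty$.

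By the standing uniqueness hypothesis of Section~\ref{sub:The-discrete-scattering-transform-on-Lie-groups}, $V_{-}$ is then precisely the solution used to define $\TT_{G,J,K,X,x_{0}}^{\textrm{disc}}\Omega$, so
\[
(\TT_{G,J,K,X,x_{0}}^{\textrm{disc}}\Omega)(w)=\lim_{j\to+\infty}w^{-j}V_{-}(w;j)=\Bigl(\lim_{j\to+\infty}w^{-j}\tilde V_{-}(w;j)\Bigr)\cdot x_{0}=(\TT_{G,J,K}^{\textrm{disc}}\Omega)(w)\cdot x_{0},
\]
where the middle equality again invokes continuity of the action to move the limit past $\cdot\,x_{0}$. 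Read for every $w\in S^{1}$, this is exactly the asserted identity.

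The only delicate point — hence the closest thing to an obstacle — is the bookkeeping around the hypotheses of Section~\ref{sub:The-discrete-scattering-transform-on-Lie-groups} that the relevant limits exist and that the normalized solutions are unique. The computation above shows that these properties transfer automatically from the $G$-valued setting to the $X$-valued setting through the continuous action, so no new regularity needs to be assumed; the rest is a routine unwinding of the definitions, entirely parallel to the continuum statement in Proposition~\ref{pro:universal}.
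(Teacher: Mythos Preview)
Your argument is correct and is exactly the ``straightforward application of the definitions'' that the paper alludes to; the paper in fact omits the proof entirely, so there is nothing further to compare.
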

We omit the proof, which is a straightforward application of the definitions.

In this universal case, we consider two solutions $V_{\pm}:S^{1}\times\ZZ\rightarrow G$
to the recursion \begin{equation}
V_{\pm}(w;j+1)=w\Omega_{j}V_{\pm}(w;j)\label{eq:sed 1}\end{equation}
 normalized by\begin{equation}
\lim_{j\rightarrow\pm\infty}w^{-j}V_{\pm}(w;j)=1\in G.\label{eq:sed 2}\end{equation}
 The function \begin{equation}
S_{\Omega}^{\textrm{disc}}(w):=V_{+}(w;j)^{-1}V_{-}(w;j)\label{eq:sed 3}\end{equation}
 is independent of j because given any two solutions $V_{1}$ and
$V_{2}$ to the recursion (\ref{eq:sed 1}), we have\begin{eqnarray*}
V_{1}(w;j+1)^{-1}V_{2}(w;j+1) & = & \left(V_{1}(w;j)^{-1}\Omega_{j}^{-1}w^{-1}\right)\left(w\Omega_{j}V_{2}(w;j)\right)\\
 & = & V_{1}(w;j)^{-1}V_{2}(w;j).\end{eqnarray*}
 This calculation, in particular, shows that $V_{-}$ and $V_{+}$
are unique. Letting $j$ tend to $\infty$, we find that \[
S_{\Omega}^{\textrm{disc}}(w)=\lim_{j\rightarrow\infty}w^{-j}V_{-}(w;j)=(\TT_{G,J,\Lk}^{\textrm{disc}}\Omega)(w)\]
 so that $S_{\Omega}^{\textrm{disc}}$ is the discrete scattering
transformation $\TT_{G,J,\Lk}^{\textrm{disc}}$ applied to $\Omega$.

It is useful to define \[
U_{\pm}(w;j):=V_{\pm}(w;j)w^{-j}.\]
 In terms of this notation, equations (\ref{eq:sed 1}) through (\ref{eq:sed 3})
become \begin{equation}
U_{\pm}(w;j+1)=w\Omega_{j}U_{\pm}(w;j)w^{-1}\label{eq:sed 4}\end{equation}
\begin{equation}
\lim_{j\rightarrow\pm\infty}U_{\pm}(w;j)=1\in G,\label{eq:sed 5}\end{equation}
 and \begin{equation}
S_{\Omega}^{\textrm{disc}}(w)=w^{-j}U_{+}(w;j)^{-1}U_{-}(w;j)w^{j}.\label{eq:sed 6}\end{equation}
Equation (\ref{eq:sed 6}) corresponds to the scattering equation
(\ref{eq:dt7}).

\section{Three discrete scattering transforms\label{sec:Three-discrete-scattering}}

As in Section \ref{sub:Three-fundamental-scattering-transforms} ,
we fix $X=\CCh$ to be the Riemann Sphere with $x_{0}=0$ and let
$e^{J\RR}\cong S^{1}$ be the group of rotations around the origin.
There are three basic examples of discrete scattering transforms (see
Section \ref{sub:The-discrete-scattering-transform-on-Lie-groups}):

\begin{itemize}
\item Euclidean: $G=G_{E}=Aut(\CC)$ is the group of invertible affine transformations
\[
G_{E}=\left\{ z\mapsto\lambda z+c:\;\lambda,c\in\CC,\;\lambda\neq0\right\} .\]

\item Spherical: $G=G_{S}\cong SO_{3}\RR$ is the group of rigid rotations
of the sphere.
\item Hyperbolic: $G=G_{H}\cong PSL_{2}\RR$ is the group of conformal automorphisms
of the unit disk.
\end{itemize}
The second example corresponds to the discrete selective excitation
transform. Let us show that the first example (Euclidean) corresponds
to the discrete inverse Fourier transform.

We can represent \[
G_{E}=\left\{ \left[\begin{array}{cc}
\lambda & x\\
0 & 1\end{array}\right]:\;\lambda\in\CC^{*},x\in\CC\right\} .\]
 It is natural to choose $K=\left[\begin{array}{cc}
1 & \CC\\
0 & 1\end{array}\right]$. Let us write \[
\Omega_{j}=\left[\begin{array}{cc}
1 & \omega_{j}\\
0 & 1\end{array}\right]\]
 and \[
U_{-}(w;j)=\left[\begin{array}{cc}
1 & U_{-}(w;j)\\
0 & 1\end{array}\right].\]
 Then equation (\ref{eq:sed 4}) is \[
\left[\begin{array}{cc}
1 & U_{-}(w;j+1)\\
0 & 1\end{array}\right]=\left[\begin{array}{cc}
w & 0\\
0 & 1\end{array}\right]\left[\begin{array}{cc}
1 & \omega_{j}\\
0 & 1\end{array}\right]\left[\begin{array}{cc}
1 & U_{-}(w;j)\\
0 & 1\end{array}\right]\left[\begin{array}{cc}
w^{-1} & 0\\
0 & 1\end{array}\right]\]
 or \[
U_{-}(w;j+1)=w(U_{-}(w;j)+\omega_{j}).\]
 The solution normalized at $-\infty$ is \[
U_{-}(w;j)=\sum_{k=-\infty}^{j-1}\omega_{k}w^{(j-k)},\]
 and so \[
(T_{G,J,K,X,x_{0}}^{\textrm{disc}}\Omega)(w)=S_{\omega}^{\textrm{disc}}(w)\cdot x_{0}=\sum_{k=-\infty}^{\infty}\omega_{k}w^{-k}.\]
Therefore $T_{G,J,K,X,x_{0}}^{\textrm{disc}}$ is the discrete inverse
Fourier transform.

\end{document}